\documentclass[reqno,10pt]{amsart}
\usepackage{amsfonts}
\usepackage{a4wide}

%\usepackage{refcheck}
%%page size----------------------------------------------------------------------------
%\topmargin        -0.08in
%\oddsidemargin    -0.08in
%\evensidemargin   -0.08in
%\marginparwidth    0.00in
%\marginparsep 0.00in
%\textwidth 17.0cm \textheight 22.5cm
%--------------------------------------------------------------------------------------
%packages------------------------------------------------------------------------------
%\usepackage[dvips]{epsfig}                               % page numbers upper
%\usepackage{epsfig}                                     % page numbers below
%\usepackage[active]{srcltx}
%\usepackage{CJK}                                         %Chinese package
%\usepackage[notref,notcite]{showkeys}                    % show \label
%\usepackage[colorlinks]{hyperref}                        % book-mark
%\usepackage{graphics}                                   % graph package
%\usepackage{graphicx}

%\usepackage{amsmath,amsfonts,amssymb,amscd,amsthm,amsbsy,bm,latexsym}
%\usepackage{gensymb}
%\usepackage[pdftex,bookmarksnumbered=true,bookmarksopen=true,          %bookmark
            %colorlinks=true,pdfborder=001,citecolor=blue,
            %linkcolor=red,anchorcolor=green,urlcolor=blue]{hyperref}
\usepackage{amsmath,amssymb,amsthm,amsfonts}
\usepackage{mathrsfs}
\usepackage{bbm}
\usepackage[colorlinks]{hyperref}

\usepackage{color}

\newtheorem{lemma}{Lemma}[section]
\newtheorem{theorem}{Theorem}[section]

\newtheorem{proposition}{Proposition}[section]
\newtheorem{remark}{Remark}[section]

\numberwithin{equation}{section}

\arraycolsep=1.5pt

\newcommand{\dis}{\displaystyle}

\newcommand{\R}{\mathbb{R}}

\newcommand{\Z}{\mathbb{Z}}

\renewcommand{\S}{\mathbb{S}}
\newcommand{\T}{\mathbb{T}}

% macro for semi group and high-order moment functions

\newcommand{\FP}{\mathbf{P}}

\newcommand{\FX}{\mathbf{X}}
\newcommand{\FY}{\mathbf{Y}}

\newcommand{\Fb}{\mathbf{b}}
\newcommand{\Fk}{\mathbf{k}}

\newcommand{\SL}{\mathscr{L}}

\newcommand{\CA}{\mathcal{A}}

\newcommand{\CC}{\mathcal{C}}
\newcommand{\CE}{\mathcal{E}}
\newcommand{\CF}{\mathcal{F}}
\newcommand{\CG}{\mathcal{G}}

\newcommand{\CI}{\mathcal{I}}
\newcommand{\CJ}{\mathcal{J}}
\newcommand{\CK}{\mathcal{K}}
\newcommand{\CL}{\mathcal{L}}
\newcommand{\CN}{\mathcal{N}}
\newcommand{\CM}{\mathcal{M}}

\newcommand{\CW}{\mathcal{W}}
\newcommand{\CP}{\mathcal{P}}
\newcommand{\CQ}{\mathcal{Q}}
\newcommand{\CS}{\mathcal{S}}
\newcommand{\CV}{\mathcal{V}}

\newcommand{\RC}{\mathscr{C}}
\newcommand{\SH}{\mathscr{H}}

\newcommand{\RF}{\mathfrak{F}}
\newcommand{\RG}{\mathfrak{G}}
\newcommand{\RI}{\mathfrak{I}}

\newcommand{\RX}{\mathfrak{X}}
\newcommand{\RL}{\mathfrak{L}}

\newcommand{\al}{\alpha}
\newcommand{\bet}{\beta}
\newcommand{\ga}{\gamma}
\newcommand{\om}{\omega}

\newcommand{\la}{\lambda}
\newcommand{\de}{\delta}
\newcommand{\si}{\sigma}
\newcommand{\pa}{\partial}
\newcommand{\ka}{\kappa}
\newcommand{\eps}{\epsilon}

\newcommand{\ta}{\theta}

\newcommand{\vps}{\varepsilon}

\newcommand{\Ga}{\Gamma}

\newcommand{\lag}{\langle}
\newcommand{\rag}{\rangle}

\newcommand{\eqdef}{\overset{\mbox{\tiny{def}}}{=}}

\usepackage{cite}

%%%%%added for Pict%%%%%%%%
\usepackage{tikz}
\usepackage{tikz-cd}
\usepackage{tikz-3dplot}
\usepackage{amsmath,amssymb,amsthm,amsfonts,bm}
\usepackage{color}
\usepackage{pdfpages}
\usepackage{textcomp}
\usetikzlibrary{shapes.geometric}
%\usetikzlibrary{arrows.meta,arrows}
\usetikzlibrary{decorations.pathreplacing,decorations.pathmorphing}
\usetikzlibrary{hobby}
\usepackage{graphicx}
\usepackage{wrapfig}
\usepackage{framed}
\usepackage{tcolorbox}
\usetikzlibrary{shadows}

%%%%%added for Pict%%%%%%%%

%%%%

\makeatletter
\@namedef{subjclassname@2020}{%
  \textup{2020} Mathematics Subject Classification}
\makeatother
%%%%

\begin{document}

\title[Boltzmann equation for planar Couette flow]{The Boltzmann equation for plane Couette flow}

\author[R.-J. Duan]{Renjun Duan}
%\thanks{}
\address[RJD]{Department of Mathematics, The Chinese University of Hong Kong,
Shatin, Hong Kong, P.R.~China}
\email{rjduan@math.cuhk.edu.hk}

\author[S.-Q. Liu]{Shuangqian Liu}
\address[SQL]{School of Mathematics and Statistics, Central China Normal University, Wuhan 430079, P.R.~China;
	\\Hong Kong Institute for Advanced Study, City University of Hong Kong, Hong Kong, P.R~China}
\email{tsqliu@jnu.edu.cn}

\author[T. Yang]{Tong Yang}
\address[TY]{Department of Mathematics, City University of Hong Kong,
Hong Kong, P.R.~China}
\email{matyang@cityu.edu.hk}

\begin{abstract}
In the paper, we study the plane Couette flow of a rarefied gas between two parallel infinite plates at $y=\pm L$ moving relative to each other with opposite velocities $(\pm \alpha L,0,0)$ along the $x$-direction. Assuming that the stationary state takes the specific form of $F(y,v_x-\alpha y,v_y,v_z)$ with the $x$-component of the molecular velocity  sheared linearly along the $y$-direction, such steady flow is governed by a boundary value problem on a steady nonlinear Boltzmann equation driven by an external shear force under the homogeneous non-moving diffuse reflection boundary condition. In case of the Maxwell molecule collisions, we establish the existence of spatially inhomogeneous non-equilibrium stationary solutions to the steady problem for any small enough shear rate $\alpha>0$ via an elaborate perturbation approach using Caflisch's decomposition together with Guo's $L^\infty\cap L^2$ theory. The result indicates the polynomial tail at large velocities for the stationary distribution. Moreover,  the large time asymptotic stability of the stationary solution with an exponential convergence is also obtained
 % through the study of the initial boundary value problem in the framework of perturbation
 and as a consequence the nonnegativity of the steady profile is justified.
\end{abstract}

%\date{\today}

\subjclass[2020]{35Q20}

\keywords{Boltzmann equation, plane Couette flow, existence, dynamical stability}

\maketitle

%\thanks{}
%\maketitle
%\begin{abstract}
%\Red{To be added.}
%
%\end{abstract}

%\setcounter{tocdepth}{1}
\tableofcontents

%\thispagestyle{empty}

%\newpage
\section{Intoduction}

The steady state of a rarefied gas between two parallel plates with the same temperatures and opposite velocities is one of the most fundamental boundary-value problems in kinetic theory, see the books of Kogan \cite{Ko}, Cercignani \cite{CerBook}, Garz\'o-Santos \cite{GaSa}, and Sone \cite{Sone07}. In particular, numerical analysis of the plane Couette flow of  rarefied gas on the basis of the nonlinear Boltzmann equation has been extensively
conducted in the physical literatures, cf. \cite{LL,OSA,Ro,STO,SY}. On the other hand, the mathematical study on this problem, even in the case when there is a temperature gap between two plates and a constant external force parallel to the boundaries, has been carried out by  Esposito-Lebowitz-Marra \cite{ELM-94,ELM-95} for the hydrodynamic description of the steady rarefied gas flow via the approximation of the corresponding compressible Navier-Stokes equations with no-slip boundary condition. The result in \cite{ELM-95} for  hard sphere model was later extended in \cite{DE-96} to the case of hard intermolecular potentials with Grad's angular cutoff as well as the Maxwell molecule case for which only the polynomial decay of the stationary solution for large velocities is obtained compared to the exponential decay  for hard sphere model. In addition, closely related to the plane Couette flow, the stationary Boltzmann equation for  rarefied gas in a Couette flow setting between two coaxial rotating cylinders was also studied extensively by Arkeryd-Nouri \cite{AN06,AN05} in the fluid dynamic regime, see also  a recent work \cite{AEMN} for further investigation of ghost effect induced by curvature.

The current study of the plane Couette flow with boundaries is motivated by the previous work \cite{DL-2020} by
 the first two authors for uniform shear flow via the Boltzmann equation without boundaries. We refer readers to \cite{BNV-2019,CerCo,G1,JNV-ARMA,T,TM} and references therein for more details of the topic on uniform shear flow. In particular,  in a recent significant progress \cite{BNV-2019}, Bobylev-Nota-Vel\'azquez  studied the self-similar asymptotics of solutions in large time for the Boltzmann equation with a general deformation of small strength and they also showed that the self-similar profile can have the finite polynomial moments of higher order as long as the deformation strength is smaller. In this paper, we will take into account the effect of shear force induced by the relative motion of the boundaries. We hope that the current study can shed some light on the relation between the Couette flow with boundary  and the uniform shear flow without boundary. A rigorous justification of the behavior of solutions in the limit $L\to\infty$ is left for future research.

% and introduce a transformation to change to the nonlinear Boltzmann equation with the shear force subject to the homogeneous non-moving boundary condition, so that some techniques developed in \cite{DL-2020} for treating the only polynomial decay of solutions for large velocities can be adopted.

To specify the problem, we consider the rarefied gas between two parallel infinite plates with the same uniform temperature $T_0>0$, one at $y=+L$ is moving with velocity $(U_+,0,0)$ and $U_+=\al L$ and the other at $y=-L$ is moving with velocity  $(U_-,0,0)$ and $U_-=-\al L$, where $\al>0$ is a  parameter for the shear rate, see Figure \ref{pict1} below.  Moreover, we assume that the gas molecules are of the Maxwellian type and reflected diffusively on the plates $y=\pm L$.

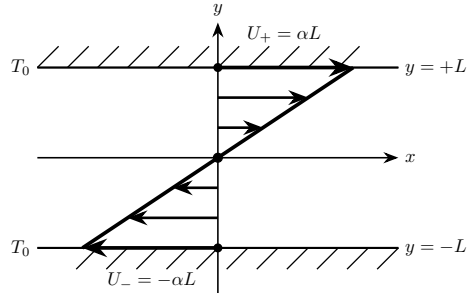
\begin{figure}[htbp]
	\centering
	\begin{tikzpicture}[scale=0.4,use Hobby shortcut,>=Stealth,interface/.style={postaction={draw,decorate,decoration={border,angle=45,amplitude=0.4cm,segment length=4mm}}},every node/.style={scale=0.7},box/.style ={draw,minimum height=0.9cm,minimum width=2.1cm,inner sep=4,align=center,fill=white,drop shadow={opacity=0.4,shadow xshift=.3ex,shadow yshift=-.3ex}}]
	\draw[line width=0.5pt,interface] (-6,3) .. (6,3);
	\draw[line width=0.5pt,interface] (6,-3)..(-6,-3);
	\filldraw (0,0) circle(0.15);
	\draw[line width=0.8pt] (-6,-3)node[left]{$T_0$}--(0,-3)--(6,-3)node[right]{$y=-L$};
	\draw[line width=0.8pt] (-6,3)node[left]{$T_0$}--(0,3)--(6,3)node[right]{${y=+L}$};
	\draw[line width=0.6pt,->] (-6,0)--(0,0)--(6,0)node[right]{$x$};
	\draw[line width=0.6pt,->] (0,-4.5)--(0,0)--(0,4.5)node[above]{$y$};
	\draw[line width=1.4pt,->] (0,3)--(4.5,3);
	\draw[line width=1.4pt,->] (0,-3)--(-4.5,-3);
	\draw[line width=1pt,->] (0,2)--(3,2);
	\draw[line width=1pt,->] (0,1)--(1.5,1);
	\draw[line width=1pt,->] (0,-1)--(-1.5,-1);
	\draw[line width=1pt,->] (0,-2)--(-3,-2);
	\draw[line width=1.4pt] (4.5,3)--(-4.5,-3);
	\draw (2.2,3.6) node[above]{$U_+=\alpha L$};
	\draw (-2.2,-3.6) node[below]{$U_-=-\alpha L$};
	\foreach \Point in {(0,3), (0,-3)}{\fill \Point circle(1.5mm);}
	\end{tikzpicture}
	\caption{Plane Couette flow}
	\label{pict1}
\end{figure}

Let the non-negative unknown function $F=F(y,v)\geq 0$ be the time-independent density distribution function of gas particles with velocity $v=(v_x,v_y,v_z)\in \R^3$ located at position $y\in (-L,L)$ along the vertical direction with the slab symmetry in the horizontal $(x,z)$-plane in space.  Then, the motion of such rarefied gas flow can be governed by the steady Boltzmann equation
\begin{equation}
\label{eqst}
v_y\pa_{y}F=\frac{1}{{\rm K\!n}}Q(F,F)
\end{equation}
subject to the diffuse reflection boundary conditions at $y=\pm L$ respectively, i.e.,
\begin{equation}
\label{eqstbdy}
F(\pm L,v)=\CM_{T_0}(v_x-U_\pm,v_y,v_z)\int_{v_y\lessgtr 0} F(\pm L,v)|v_2|\,dv\quad \text{for }v_y\gtrless 0,
\end{equation}
as well as a given total mass
\begin{equation}
\label{tm}
\frac{1}{2L}\int_{-L}^L\int_{\R^3}F(y,v)\,dvdy=M
\end{equation}
for some positive constant $M>0$. Here,
 the non-dimensional parameter ${\rm K\!n}>0$ is the Knudsen number given by the ratio of the mean free path to the typical length and $\CM_{T_0}=\CM_{T_0}(v)$ associated with the uniform wall temperature $T_0$ at $y=\pm L$ is a global Maxwellian of the form
\begin{equation}
%\label{ }
\CM_{T_0}(v)=\frac{1}{2\pi T_0^2} e^{-\frac{|v_x|^2+|v_y|^2+|v_z|^2}{2T_0}},\quad v=(v_x,v_y,v_z)\in \R^3.\notag
\end{equation}
For the Maxwell molecule model, the collision operator operator $Q$, which is bilinear and acts only on velocity variable, takes the form of
\begin{equation}\label{def.Q}
\begin{split}
Q(F_1,F_2)(v)=&\int_{\R^3}\int_{\S^2}B_0(\cos\ta)[F_1(v'_\ast)F_2(v')-F_1(v_\ast)F_2(v)]\,d\omega dv_{\ast},
%\eqdef Q_+(F_1,F_2)-Q_-(F_1,F_2),
\end{split}
\end{equation}
where the velocity pairs $(v_\ast,v)$ and $(v_\ast',v')$ satisfy the relation
\begin{equation}
\label{v.re}
v'_\ast=v_\ast-[(v_\ast-v)\cdot\omega]\om,\quad v'=v+[(v_\ast-v)\cdot\omega]\om,
\end{equation}
denoting the $\omega$-representation  according to conservation of momentum and energy in the elastic collision, i.e.,
%\begin{equation*}
$v_\ast+v=v_\ast'+v'$ and %,\quad
$|v_\ast|^2+|v|^2=|v'_\ast|^2+|v'|^2$, respectively.
%\end{equation*}
Throughout the paper, we assume that the collision kernel $B_0(\cos\theta)$ with $\cos\theta=(v-v_\ast)\cdot \om/|v-v_\ast|$, depending only on the angle $\theta$ between the relative velocity $v-v_\ast$ and $\omega$, satisfies the Grad's angular cutoff assumption
\begin{equation}\label{coa}
%\label{aca}
0\leq B_0(\cos\theta)\leq C |\cos\theta|,
\end{equation}
for a generic constant $C>0$.

In the paper, for the boundary-value problem \eqref{eqst}, \eqref{eqstbdy} and \eqref{tm} with  finite Knudsen number, we  look for stationary solutions of the following specific form
\begin{equation}
\label{def.fst}
F_{st}(y,v_x-\alpha y,v_y,v_z),
\end{equation}
where the horizontal molecular velocity $v_x-\alpha y$ is sheared linearly along the $y$-direction. After plugging  \eqref{def.fst} into \eqref{eqst}, \eqref{eqstbdy}, \eqref{tm} and normalizing $L$, $M$ and $T_0$ to be one for brevity, the stationary distribution function $F_{st}$ is determined by the following boundary-value problem
\begin{equation}\label{Fst}
\left\{\begin{split}%{rll}
&v_y\pa_{y}F_{st}-\al v_y\pa_{v_x}F_{st}=Q(F_{st},F_{st}),\ y\in(-1,1),\ v=(v_x,v_y,v_z)\in\R^3,\\
&F_{st}(\pm1,v)|_{v_y\lessgtr0}=\sqrt{2\pi}\mu\dis{\int_{v_y\gtrless0}}F_{st}(\pm1,v)|v_y|dv,\ v\in\R^3,\\
&\frac{1}{2}\int_{-1}^1\int_{\R^3}F_{st}(y,v)\,dvdy=1,
\end{split}\right.
\end{equation}
with the global Maxwellian $\mu=(2\pi)^{-3/2}e^{-|v|^2/2}$. This paper aims to establish the existence of solutions to the above boundary value problem \eqref{Fst} for any small enough shear rate $\alpha>0$, as well as its large time asymptotic stability.
% in the time-dependent situation to be specified later on.

To solve \eqref{Fst}, we will apply  the perturbation approach by taking the shear rate as a small parameter.  If $\al=0$,  $F_{st}=\mu$ is the unique equilibrium solution to the boundary value problem \eqref{Fst}. However, for $\alpha>0$, the external shear force drives the rarefied gas far from the equilibrium. Precisely, we set
% so that the stationary profile has to be of the non-Maxwellian form. Due to this, we set
\begin{equation}\label{Fst.ex}
F_{st}=\mu+\sqrt{\mu}\{\al G_1+\al^2G_R\},
\end{equation}
with
\begin{equation}
\label{Fst.ex.mc}
\int_{-1}^1\int_{\R^3}\sqrt{\mu}G_1\,dvdy=\int_{-1}^1\int_{\R^3}\sqrt{\mu}G_R\,dvdy=0.
\end{equation}
By plugging \eqref{Fst.ex} into \eqref{Fst} and comparing coefficients of the  equation in the order of $\al$, we obtain the equation for $G_1$
\begin{align}\label{G1}
v_y\pa_yG_1+LG_1=-v_xv_y\sqrt{\mu},
\end{align}
with  boundary condition
\begin{equation}
\label{G1drbc}
G_1(\pm1,v)|_{v_y\lessgtr0}=\sqrt{2\pi\mu}\dis{\int_{v_y\gtrless0}}\sqrt{\mu}G_1(\pm1,v)|v_y|\,dv,
\end{equation}
and the equation for the remainder $G_R$
\begin{align}\label{Gr}
v_y&\pa_yG_R-\al v_y\pa_{v_x}G_R+\frac{\al}{2}v_xv_yG_{R}+LG_R\notag\\&=v_y\pa_{v_x}G_1-\frac{1}{2}v_xv_yG_{1}
+\Ga(G_1,G_1)+\al\{\Ga(G_R,G_1)+\Ga(G_1,G_R)\}+\al^2\Ga(G_R,G_R),
\end{align}
with boundary condition
\begin{align}\label{Grbd}
G_R(\pm1,v)|_{v_y\lessgtr0}=\sqrt{2\pi\mu}\dis{\int_{v_y\gtrless0}}\sqrt{\mu}G_R(\pm1,v)|v_y|\,dv.
\end{align}
Here, the linear and nonlinear collision operator $L$ and $\Ga$ are given by
$$
Lf=-\mu^{-\frac{1}{2}}\{Q(\mu,\sqrt{\mu}f)+Q(\sqrt{\mu}f,\mu)\},
$$
and
$$
\Ga(f,g)=\mu^{-\frac{1}{2}}\{Q(\sqrt{\mu}f,\sqrt{\mu}g)+Q(\sqrt{\mu}g,\sqrt{\mu}f)\},
$$
respectively. Properties of these two operators will be presented in Section \ref{sec2}.
Note that to solve $G_1$, both \eqref{G1} and \eqref{G1drbc} with the restriction $\int_{-1}^1\int_{\R^3}\sqrt{\mu}G_1\,dvdy=0$ are invariant under the transformation $G_1(y,v)\to -G_1(y,-v_x,v_y,v_z)$. Thus, if the solution is unique,
%the uniqueness of solutions is ensured, one sees that
$G_1$ is odd in $v_x$, namely,
\begin{equation}
\label{G1odd}
G_1(y,v)= -G_1(y,-v_x,v_y,v_z),\quad -1\leq y\leq 1,\ v=(v_x,v_y,v_z)\in\R^3.
\end{equation}
Hence, the diffuse reflection boundary condition \eqref{G1drbc} for $G_1$ can be reduced to the homogeneous inflow boundary condition
\begin{align}\label{G1bd}
G_1(\pm1,v)|_{v_y\lessgtr0}=0.
\end{align}

The first result in the paper for the existence of the  Couette flow problem is stated as follows. To the end, we use
a velocity weight function
\begin{equation}
\label{def.vwwq}
w_q=w_q(v):=(1+|v|^2)^q
\end{equation}
with an integer $q>0$.

\begin{theorem}\label{st.sol}
Assume that the Boltzmann collision kernel is of the Maxwell molecule type \eqref{coa}. Then, the boundary value problem \eqref{Fst} admits a unique steady solution $F_{st}=F_{st}(y,v)\geq 0$ of the form \eqref{Fst.ex} satisfying \eqref{Fst.ex.mc} and the following estimates on $G_1$ and $G_R$, respectively.

\begin{itemize}
  \item[(i)] The first-order correction $G_1=G_1(y,v)$, uniquely solved by the boundary value problem \eqref{G1} and \eqref{G1bd}, satisfies \eqref{G1odd} and  for any integers $m\geq 0$ and $q\geq 0$,  \begin{align}\label{st.sole1}
\|w_{q}\pa_{v_x}^mG_1\|_{L^\infty}\leq \tilde{C}_1,
\end{align}
where $\tilde{C}_1>0$ is a constant depending only on $m$ and $q$.
%for any arbitrary integer $m\geq0$ and some constants $\tilde{C}_1>0$ depending only on $m$ and $q$.

  \item[(ii)] The remainder $G_R=G_R(y,v)$, uniquely solved by the boundary value problem \eqref{Gr} and \eqref{Grbd},  satisfies that there is an integer $q_0>0$ such that for any integer $q\geq q_0$, there is $\al_0=\al_0(q)>0$ depending on $q$ such that for any $\al\in (0,\al_0)$ and any integer $m\geq 0$  $\widetilde{G}_R:=\sqrt{\mu}G_R$ satisfies that
\begin{equation}
\label{st.sole2}
\|w_q\pa_{v_x}^m\widetilde{G}_R\|_{L^\infty}\leq \tilde{C}_{m,q},
\end{equation}
where $\tilde{C}_{m,q}>0$ is a constant depending only on $m$ and $q$ but independent of $\al$.
 \end{itemize}

\end{theorem}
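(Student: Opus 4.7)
The plan splits into two stages, matching parts (i) and (ii).

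For part (i), I would first exploit parity. The source $-v_xv_y\sqrt\mu$ is odd in $v_x$, and both $v_y\partial_y$ and $L$ commute with $v_x\mapsto-v_x$, so they preserve the odd-in-$v_x$ subspace. For any odd-in-$v_x$ function, $\int\sqrt\mu\,G_1|v_y|\,dv=0$, and hence the diffuse boundary condition \eqref{G1drbc} reduces on this subspace to the homogeneous inflow condition \eqref{G1bd}. Since $-v_xv_y\sqrt\mu\in(\ker L)^\perp$ (being orthogonal to each of $\sqrt\mu$, $v\sqrt\mu$, $|v|^2\sqrt\mu$), a weak $L^2$ solution of $v_y\partial_y G_1+LG_1=-v_xv_y\sqrt\mu$ with inflow data can be built by first adding an artificial damping $\varepsilon G_1$ to make the bilinear form coercive and then passing $\varepsilon\to0^+$, using the coercivity of $L$ on $(\ker L)^\perp$ together with the boundary gain produced by the inflow trace. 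The weighted $L^\infty$ bound \eqref{st.sole1} then follows from Guo's $L^\infty\cap L^2$ method along backward characteristics, decomposing $L=\nu-K$ and exploiting the Maxwell-molecule structure to control the compact part against $w_q$. For $v_x$-derivatives, $[\partial_{v_x},v_y\partial_y]=0$ and $[\partial_{v_x},L]$ is again a Maxwell-molecule collision-type operator of the same nature, so induction on $m$ closes the estimate.

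For part (ii), the key difficulty is that equation \eqref{Gr} contains the velocity-growing coefficient $\tfrac{\alpha}{2}v_xv_yG_R$ together with the nonlinearity $\alpha^2\Gamma(G_R,G_R)$, both incompatible with a Gaussian-weighted solution space. I would therefore work directly with $\widetilde G_R=\sqrt\mu G_R$ and apply Caflisch's decomposition $\widetilde G_R=\sqrt\mu f+h$: the Gaussian piece $f$ is controlled by the full linearized operator $L$ (with diffuse BC) using the machinery developed for $G_1$, while the polynomial piece $h$ satisfies a transport-plus-collision-frequency equation with homogeneous inflow BC whose right-hand side is built from $f$, from $G_1$, and from $h$ itself. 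The construction proceeds by linearization-iteration: at each step, solve the $f$-equation as in part (i), and solve the $h$-equation by integration along the backward characteristics of $v_y\partial_y-\alpha v_y\partial_{v_x}$. The iteration closes in a product norm of the form $\|w_q e^{c|v|^2}f\|_{L^\infty}+\|w_q h\|_{L^\infty}$, where smallness of $\alpha$ absorbs all cross-coupling between $f$, $h$, $G_1$ and the quadratic $\Gamma(G_R,G_R)$ term. The derivative bounds \eqref{st.sole2} follow by differentiating the coupled $(f,h)$ system in $v_x$ and inducting on $m$, exactly as in part (i).

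The main obstacle I expect is controlling the joint flow of $v_y\partial_y-\alpha v_y\partial_{v_x}$ together with the diffuse reflection cycles: along these characteristics $v_x$ is not conserved, so the polynomial weight $w_q$ shifts when trajectories bounce between $y=\pm1$. This has to be absorbed by paying explicit powers of $\alpha$ and demands careful bookkeeping in Guo's stochastic-cycle estimate. A secondary technical point is matching the Caflisch pieces at the plates, which I would resolve by assigning homogeneous inflow BC to $h$ and diffuse BC to $f$, so that $\widetilde G_R=\sqrt\mu f+h$ recovers exactly the original diffuse reflection condition on $G_R$.
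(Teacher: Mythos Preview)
Your plan for part (i) is essentially the paper's: the parity reduction to homogeneous inflow, the $\varepsilon$-regularization, and the $L^\infty$--$L^2$ interplay with induction on $m$ are exactly what the proof in Section~\ref{sp.sec} does (the paper adds a continuation parameter $\sigma\in[0,1]$ in front of $K$ to bootstrap from the explicitly solvable case $\sigma=0$, but this is a packaging choice, not a different idea).

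For part (ii) there is a genuine gap. In the Caflisch decomposition $\widetilde G_R=\sqrt\mu f+h$, the equation for the polynomial piece $h$ must carry a self-coupling term of the type $\mathcal K h$ (coming from $\mathcal L=\nu_0-\mathcal K$ acting on $\widetilde G_R$). This term is of order one in $\alpha$, so the smallness of $\alpha$ that you invoke cannot absorb it; and you cannot move all of $\mu^{-1/2}\mathcal K h$ to the $f$-equation because $h$ has only polynomial decay. The paper's resolution is a further splitting $\mathcal K h=\chi_M\mathcal K h+(1-\chi_M)\mathcal K h$: the compactly supported piece $(1-\chi_M)\mu^{-1/2}\mathcal K h$ is sent to the $f$-equation, while the large-velocity piece $\chi_M\mathcal K h$ stays in the $h$-equation. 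The crucial new ingredient is Lemma~\ref{CK} (special to Maxwell molecules), which gives
\[
\sup_{|v|\ge M}\,w_q|\mathcal K h|\le \frac{C}{q}\,\|w_q h\|_{L^\infty}\quad\text{for }M=M(q)\text{ large}.
\]
This $C/q$ smallness, not $\alpha$-smallness, is what closes the $h$-iteration, and it is precisely the origin of the threshold $q\ge q_0$ and of $\alpha_0(q)\sim \nu_0/q$ in the statement of the theorem. Without this mechanism your fixed-point argument for $h$ does not contract. A secondary point: your boundary assignment is the right one, but note that the diffuse condition on $f$ must contain the full flux $\int\sqrt\mu\,G_R|v_y|\,dv$ (hence also the trace of $h$), which introduces an additional $h\to f$ boundary coupling that the paper handles through the stochastic-cycle estimates in Lemma~\ref{k.cyc}.
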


Some remarks on Theorem \ref{st.sol} are given as follows.

\begin{remark}
The steady solution $F_{st}$ to the boundary value problem \eqref{Fst} is essentially constructed in the regime where the collision is dominated and the shearing effect is weak. By \eqref{st.sole1} and \eqref{st.sole2}, the steady solution takes the form of
\begin{equation}
\label{exp2o}
F_{st}=\mu+\alpha \sqrt{\mu}G_1+O(1)\alpha^2
\end{equation}
with the remainder of the second order decaying in large velocities only polynomially. The order of the polynomial decay can be arbitrarily large as long as the shear rate is sufficiently small. It generally holds $\alpha_0(q) \to 0$ as $q\to\infty$, and in particular,  one may take
$$
\alpha_0(q)=\frac{\nu_0}{8q}
$$
as shown in the proof.  The result is consistent with the one in \cite{DL-2020} for uniform shear flow without boundaries in the spatially homogeneous setting.
\end{remark}

\begin{remark}
Without using the odd-in-$v_x$ property as in \eqref{G1odd}, the existence of $G_1(y,v)$ to the BVP \eqref{G1} under the diffuse reflection boundary condition \eqref{G1drbc} also can be established by the same approach as for treating the  remainder $G_R$. Here, we take this formulation only for brevity of presentation because the proof for the homogeneous inflow boundary is relatively easier than that for the diffuse reflection boundary.
\end{remark}

\begin{remark}
We notice that it is necessary to deal with the $v_x$-derivative estimates due to the appearance of the shear force term $v_y\pa_{v_x} F_{st}$, in particular, the term $v_y\pa_{v_x}G_1$ becomes a source term in the equation \eqref{Gr} for $G_R$. We emphasize that although one can obtain the derivative estimates as in \eqref{st.sole1} and \eqref{st.sole2} in $v_x$, it is impossible to obtain similar estimate  on derivative in  $v_y$ because  $G_1(y,v)$ is discontinuous at $v_y=0$, see \eqref{def.G1noK} for an explicit form of $G_1$ when the non-local collision term is omitted.
\end{remark}

To establish the {nonnegativity} of the stationary profile $F_{st}(y,v)$, we  further  study the following initial boundary value problem of the Boltzmann equation with a shear force
\begin{align}\label{F}
\left\{\begin{array}{rll}
&\pa_tF+v_y\pa_{y}F-\al v_y\pa_{v_x}F=Q(F,F),\ t>0,\ y\in(-1,1),\ v=(v_x,v_y,v_z)\in\R^3,\\[2mm]
&F(0,y,v)=F_0(y,v),\ y\in(-1,1),\ v\in\R^3,\\[2mm]
&F(t,\pm1,v)|_{v_y\lessgtr0}=\sqrt{2\pi}\mu\dis{\int_{v_y\gtrless0}}F(t,\pm1,v)|v_y|dv,\ t\geq0,\ v\in\R^3.
\end{array}\right.
\end{align}
%Here, the unknown $F=F(t,y,v)\geq0$ is the time-dependent density distribution  function of rarefied gas particles at time $t\geq0$, position $y\in(-1,1)$ and velocity $v=(v_x,v_y,v_z)\in\R^3$.
One may expect that the solution of the time-dependent  problem \eqref{F} tends in large time toward that of the steady problem \eqref{Fst}. For this, the second result is concerned with the large time asymptotic stability of the stationary solution $F_{st}$ which  gives the nonnegativity  of $F_{st}$.
% Precisely, it can be stated as follows.

\begin{theorem}\label{ust.mth}
Let $F_{st}(y,v)$ be the steady state obtained in Theorem \ref{st.sol} corresponding to a shear rate $\alpha\in (0,\alpha_0)$. There are constants $\varepsilon_0>0$, $\la_0>0$ and $C>0$, independent of $\alpha$, such that if initial data $F_0(y,v)\geq0$ satisfy
\begin{equation}%\label{ge.th.ids}
\left\|w_q\left[F_0(y,v)-F_{st}(y,v))\right]\right\|_{L^\infty}\leq\varepsilon_0\notag
\end{equation}
with
\begin{align}\label{id.cons}
\int_{-1}^1\int_{\R^3}[F_0(y,v)-F_{st}(y,v)]\,dvdy=0,
\end{align}
then the initial boundary value problem \eqref{F} admits a unique solution $F(t,y,v)\geq0$ satisfying the following decay estimate:
\begin{equation}\label{lif.decay}
\left\|w_q\left[F(t,y,v)-F_{st}(y,v)\right]\right\|_{L^\infty}
\leq C e^{-\la_0 t}\left\|w_q\left[F_0(y,v)-F_{st}(y,v)\right]\right\|_{L^\infty},
\end{equation}
for any $t\geq 0$.
\end{theorem}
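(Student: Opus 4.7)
The plan is to linearize \eqref{F} around the steady state $F_{st}$ from Theorem \ref{st.sol} and prove weighted $L^\infty$ decay of the perturbation. Writing
\begin{equation*}
F(t,y,v)=F_{st}(y,v)+\sqrt{\mu}\,f(t,y,v),
\end{equation*}
one derives
\begin{equation*}
\pa_t f+v_y\pa_y f-\al v_y\pa_{v_x}f+Lf+\al\,\CL_1 f+\al^2\,\CL_R f=\Ga(f,f),
\end{equation*}
with $\CL_1 f=-\Ga(G_1,f)-\Ga(f,G_1)$ and an analogous $\CL_R$ generated by $\widetilde{G}_R$. The boundary condition becomes the homogeneous diffuse reflection law for $f$, and hypothesis \eqref{id.cons} gives the zero-mass constraint $\int_{-1}^1\int_{\R^3}\sqrt{\mu}\,f\,dvdy=0$ preserved in time. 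I would organize the proof in three steps.

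First I would establish an exponential $L^2$ decay for $f$. Testing the equation against $f$ and exploiting the coercivity $\langle Lf,f\rangle\geq c\|(I-P)f\|_\nu^2$, a macroscopic estimate for the hydrodynamic part $Pf$ adapted to the diffuse reflection boundary in the spirit of Guo's framework (the zero-mass constraint eliminating the density component of the kernel), and integration by parts for the shear term producing only a lower-order $O(\al)$ contribution $\tfrac{\al}{2}\int v_x v_y f^2$, I expect to close the differential inequality
\begin{equation*}
\tfrac{d}{dt}\|f\|_{L^2}^2+c\|f\|_{L^2}^2\lesssim \bigl(\al+\|w_q f\|_{L^\infty}\bigr)\|f\|_{L^2}^2.
\end{equation*}
Here the perturbative operators $\al\CL_1$, $\al^2\CL_R$ are absorbed using the bounds \eqref{st.sole1}, \eqref{st.sole2}. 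Smallness of $\al\leq\al_0$ and of $\|w_q f\|_{L^\infty}$ then yields exponential $L^2$ decay with a rate $\la_0$ independent of $\al$, since $\al$ only appears as a small coefficient in front of the perturbative terms and never needs to compete with the spectral gap of $L$.

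The second step is to upgrade this $L^2$ decay to the weighted $L^\infty$ bound \eqref{lif.decay}. Since $w_q$ is only polynomial and the Maxwell molecule kernel \eqref{coa} gives only a bounded collision frequency, the usual Gaussian-weighted iteration is unavailable, and I would rerun the Caflisch-type splitting already used in Theorem \ref{st.sol}: decompose $f=f_A+\mu^{-1/2}f_B$, where $f_A$ solves a transport equation with polynomial weight and source built from $Kf$, while $f_B$ is a Gaussian-weighted remainder containing the full linearized equation and the nonlinear source $\sqrt{\mu}\,\Ga(f,f)$. The component $f_B$ is controlled by Guo's $L^\infty$ iteration along the backward characteristics of the shear streaming operator $\pa_t+v_y\pa_y-\al v_y\pa_{v_x}$ combined with stochastic cycles at the diffuse reflection boundary; the $\al$-perturbation of the characteristics is small and does not spoil the almost-grazing velocity cutoff. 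The component $f_A$ is then controlled by Duhamel's formula in terms of $\|w_q f\|_{L^\infty}$ plus the $L^2$ bound from Step~one, and the standard $L^2$-to-$L^\infty$ interpolation closes \eqref{lif.decay} with constants $\la_0$, $\varepsilon_0$, $C$ all independent of $\al$. The nonnegativity of $F$, and thus of $F_{st}$, follows from a parallel construction by the monotone gain/loss iteration $\pa_t F^{n+1}+v_y\pa_y F^{n+1}-\al v_y\pa_{v_x}F^{n+1}+F^{n+1}R(F^n)=Q^+(F^n,F^n)$, which preserves $F^n\geq 0$ from $F_0\geq 0$; passing to the limit gives $F(t,\cdot,\cdot)\geq 0$, and \eqref{lif.decay} then forces $F_{st}\geq 0$ as $t\to\infty$.

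The main obstacle will be Step~two: calibrating the Caflisch splitting so that the polynomial weight $w_q$ suffices to close the nonlinear $L^\infty$ iteration in the presence of the shear streaming term, while at the same time ensuring the decay rate $\la_0$ remains independent of $\al$. Smallness of $\al$ can only be used to absorb the perturbative operators $\al\CL_1$, $\al^2\CL_R$ and the $O(\al)$ corrections coming from the $\al$-shifted characteristics at the boundary, not to create a spectral gap, so careful bookkeeping of $\al$-dependence through the coupled $L^2$-$L^\infty$ estimates is required, particularly at the diffuse reflection boundary where the almost-grazing set and the stochastic cycle estimates must be re-proved for the $\al$-perturbed trajectories.
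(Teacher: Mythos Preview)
Your overall architecture is right, and the nonnegativity argument is fine, but Step~1 as written does not close. When you test the equation for $f$ against $f$, the transport term $-\al v_y\pa_{v_x}f$ integrates to zero, but the perturbation equation also carries the linear growth term $\tfrac{\al}{2}v_xv_y f$ (this is exactly the term on the left of \eqref{f}), so the energy identity produces $\tfrac{\al}{2}\int v_xv_y f^2\,dv\,dy$. For Maxwell molecules the collision frequency is the constant $\nu_0$, so the coercivity of $L$ gives only $\de_0\|\FP_1 f\|^2$ with no velocity weight, and there is simply no way to bound $\int |v|^2 f^2$ by a constant times $\|f\|^2$. The best you can do is $\al\int v_xv_y f^2\lesssim \al\|w_q f\|_{L^\infty}\|f\|$ or $\al\|w_q f\|_{L^\infty}^2$, which puts the weighted $L^\infty$ norm on the right as a \emph{source}, not as a small coefficient in front of $\|f\|^2$. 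Your claimed inequality
\[
\tfrac{d}{dt}\|f\|^{2}+c\|f\|^{2}\lesssim (\al+\|w_qf\|_{L^\infty})\|f\|^{2}
\]
is therefore not obtainable, and Step~1 cannot produce a standalone $L^2$ decay to feed into Step~2.

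This is precisely why the paper performs the Caflisch split $\sqrt{\mu}f=f_1+\sqrt{\mu}f_2$ \emph{before} any $L^2$ estimate, not only for the $L^\infty$ bootstrap. In the $f_2$-equation \eqref{f2} the growth term is absent, so the basic energy identity on $f_2$ (see \eqref{laf2.l2}) closes cleanly; the dangerous term is shunted into the $f_1$-equation in the harmless form $-\tfrac{\al}{2}\sqrt{\mu}\,v_xv_y f_2$, where the extra $\sqrt{\mu}$ kills the velocity growth. The macroscopic estimate (Lemma~\ref{abc.lem}) is indeed done on the full $g_\la=e^{\la_0 t}f$, but against $\sqrt{\mu}$-weighted test functions, so the $v_xv_y$ contribution is again tame. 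The $L^2$ and $L^\infty$ estimates are then closed simultaneously under the a~priori assumption \eqref{aps}. A minor point: in your Step~2 the roles of $f_A$ and $f_B$ are swapped. With $f=f_A+\mu^{-1/2}f_B$ one has $\sqrt{\mu}f=\sqrt{\mu}f_A+f_B$; the component with only polynomial decay, carrying the nonlinear source and the truncated operator $\nu_0-\chi_M\CK$ (controlled via Lemma~\ref{CK}), is $f_B$ (the paper's $f_1$), while the component with the full $L$ and the stochastic-cycle $L^\infty$--$L^2$ machinery is $f_A$ (the paper's $f_2$).
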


\begin{remark}
Thanks to Theorem \ref{st.sol}, the expansion \eqref{exp2o} for the steady state $F_{st}(y,v)$ is uniform in all $\alpha\in (0,\alpha_0)$ when the large enough integer $q$  is chosen and hence $\alpha_0=\alpha_0(q)>0$ is fixed. Thus, the exponential time decay estimate \eqref{lif.decay} also holds uniformly for any  $\alpha\in (0,\alpha_0)$, in particular, $C$ and $\lambda_0$ are independent of $\al$. As $\al\to 0$, we are able to recover the exponential convergence of the solution $F(t,y,v)$ to the global Maxwellian $\mu$ in  $L^\infty$-norm weighted by the polynomial velocity weight $w_q(v)$.
\end{remark}

In what follows we present key points and strategy in the proof of the main results stated above. As pointed out in a recent nice survey by Esposito-Marra \cite{EM-JSP}, stationary non-equilibrium solutions to the Boltzmann equation, despite their relevance in applications, are much less studied than time-dependent solutions, and no general existence theory is available, due to technical difficulties. Readers may refer to \cite{EM-JSP} and references therein for a thorough review on this subject. As for the Boltzmann equation on the plane Couette flow, \cite{ELM-95} and \cite{DE-96}  mentioned before
 seem to be the only   mathematical works on the fluid dynamic approximation solutions in the steady case for small Knudsen number. But it remains unsolved how to justify the large time asymptotics toward the stationary solution for the time-dependent problem in the same setting of the fluid limit.  In this paper, motivated by \cite{DL-2020}, instead of constructing the fluid dynamic approximation solutions, we focus on the existence and dynamical stability of the plane Couette flow with the  finite Knudsen number  for both the steady and unsteady problems.

First of all, for the original Couette flow problem \eqref{eqst}, \eqref{eqstbdy} and \eqref{tm}, we note that a direct perturbation approach by linearization of the boundary condition in $\alpha$ in terms of the techniques in \cite{DE-96,ELM-94,ELM-95} or \cite{EGKM-13} can be applied to prove the existence of stationary solutions, because the inhomogeneous data appear only on the tangent $(x,z)$-plane. The solution thus obtained has the structure around global Maxwellians of the form
\begin{equation}
%\label{ }
F(y,v)=\mu(v)+\sqrt{\mu(v)} (\alpha g_1+\alpha^2g_2+\cdots)\notag
\end{equation}
corresponding to the linearization of the wall Maxwellians at $y=\pm L$
\begin{equation}
%\label{ }
\mu(v_x\pm \alpha L,v_y,v_z)=\mu(v)+(\pm \alpha L) \mu_1(v) + (\pm\alpha L)^2\mu_2 (v)+\cdots.\notag
\end{equation}
On the other hand, in the formulation used in this paper, we rather look for the solution of the specific structure {\eqref{def.fst}}, and hence the problem can be reduced to solve \eqref{Fst} for the Boltzmann equation driven by an external shear force under the homogeneous non-moving diffuse reflection boundary condition. This means that the solution to the Couette flow problem \eqref{eqst}, \eqref{eqstbdy} and \eqref{tm} is established around the local Maxwellian $\mu(v_x-\alpha y,v_y,v_z)$ instead of the global Maxwellian $\mu$ such that the kinetic diffusive reflection boundary condition \eqref{eqstbdy} is satisfied for the background solution $\mu(v_x-\alpha y,v_y,v_z)$.  In addition, as mentioned before, it seems more convenient to use the formulation with shear forces than the original one driven by the relative motion of boundaries in order to understand the asymptotic behavior of solutions in the limit $L\to \infty$, that is, how the Couette flow with boundaries converges to a shear flow without boundary that is closely related to what has been studied in the previous works \cite{DL-2020} for uniform shear flow in the spatially homogeneous setting.

We also comment on  the boundary value problem \eqref{G1} and \eqref{G1drbc} for determing the first order correction term $G_1(y,v)$. Notice that the inhomogeneous source term $-v_xv_y\sqrt{\mu}$ in \eqref{G1} does not satisfy the boundary condition \eqref{G1drbc}, so a space-dependent non-trivial solution is induced. If the boundary condition is omitted and only the spatially homogeneous equation is considered, the corresponding solution can be written as
\begin{equation}
\label{g1usf}
L^{-1}(-v_xv_y\sqrt{\mu})=-\frac{1}{2b_0} v_xv_y\sqrt{\mu}
\end{equation}
with the positive constant
%\begin{equation}
%\label{ }
$b_0:=3\pi \int_{-1}^1 B_0(z)z^2(1-z^2)\,dz$.
%\notag
%\end{equation}
The form \eqref{g1usf} is then consistent with  the uniform shear flow in \cite{DL-2020}. To solve the boundary value problem \eqref{G1} and \eqref{G1drbc}, the same approach as for treating the remainder $G_R$ can be applied. However, in order to simplify the proof, we have made use of an additional property \eqref{G1odd} to reduce the diffusive reflection boundary condition \eqref{G1drbc} to the homogeneous inflow boundary condition \eqref{G1bd}. To treat \eqref{G1} and \eqref{G1bd}, we develop a direct $L^\infty$-$L^2$ method without using the stochastic cycles as in \cite{Guo-2010}. In particular, thanks to the  splitting $L=\nu_0-K$, if the non-local term $KG_1$ is omitted, the solution to the boundary value problem
\begin{equation}
%\label{ }
v_y\pa_{y}G_1+\nu_0G_1=\RF,\quad G_1(\pm1,v)|_{v_y\lessgtr0}=0,\notag
\end{equation}
can be explicitly expressed as
\begin{equation}%\label{}
G_1(y,v)={\bf 1}_{v_y>0}\int_{-1}^ye^{-\frac{\nu_0(y-y')}{v_y}}v_y^{-1}\RF(y',v)dy'+{\bf 1}_{v_y<0}\int_{y}^1e^{-\frac{\nu_0(y-y')}{v_y}}v_y^{-1}\RF(y',v)dy'.\notag
\end{equation}
Moreover, we use the bootstrap argument as in \cite{DHWZ-19} to treat the following problem with a parameter $\si\in [0,1]$:
\begin{equation}
%\label{ }
v_y\pa_yG_1+\nu_0G_1=\si KG_1+\RF,\quad G_1(\pm1,v)|_{v_y\lessgtr0}=0.\notag
\end{equation}
With the solvability starting from $\si=0$, we are able to iteratively solve the above boundary value problem for $\si$ over the intervals $[0,\si_\ast]$, $[\si_\ast,2\si_\ast]$, and so on, where $\si_\ast>0$ is small enough such that $\si_\ast KG_1$ can be regarded as a source term in the $L^\infty$ estimation. Therefore, in the end, the original problem corresponding to $\si=1$ can be solved. In this procedure, the uniform $L^\infty$ estimate can be obtained through the interplay with the $L^2$ estimates in terms of the Guo's technique in \cite{Guo-2010}. Here, we have omitted the discussions on the mass conservation \eqref{Fst.ex.mc} for $G_1$. In fact, inspired by \cite{Guo-2010}, an extra damping term $\eps G_1$ with the vanishing parameter $\eps>0$ has to be used, cf.  Section \ref{sp.sec} for details.

We now discuss some key points about estimating the remainder $G_R$ for the existence of solutions to the boundary value problem \eqref{Gr} and \eqref{Grbd}. The direct $L^\infty$-$L^2$ approach is no longer available because the linear term $\frac{1}{2}\alpha v_xv_yG_R$ can not be controlled in the large velocity regime. Notice that this term arises from the action of the shear force on the exponential weight function $\sqrt{\mu}$ in the perturbation. To overcome it, as in \cite{DL-2020}, we apply the Caflisch's decomposition
\begin{equation}
%\label{ }
\sqrt{\mu}G_R=G_{R,1}+\sqrt{\mu}G_{R,2}\notag
\end{equation}
and $G_{R,1}$ and $G_{R,2}$ satisfy the coupled boundary value problems
\begin{equation}
\label{bvpgr1}
\left\{\begin{aligned}
&v_y\pa_yG_{R,1}-\al v_y\pa_{v_x}G_{R,1}+\nu_0 G_{R,1}=\chi_{M}\CK G_{R,1}-\frac{1}{2}\al\sqrt{\mu}v_xv_yG_{R,2}+\CF_1,\\
&G_{R,1}(\pm1,v)|_{v_y\lessgtr0}=0,
\end{aligned}\right.
\end{equation}
and
\begin{equation}
\label{bvpgr2}
\left\{\begin{aligned}
&v_y\pa_yG_{R,2}-\al v_y\pa_{v_x}G_{R,2}+LG_{R,2}=(1-\chi_{M})\mu^{-\frac{1}{2}}\CK G_{R,1}+\CF_2,\\
&G_{R,2}(\pm1,v)|_{v_y\lessgtr0}=\sqrt{2\pi \mu}\dis{\int_{v_y\gtrless0}}\sqrt{\mu}G_{R}(\pm1,v)|v_y|dv,
\end{aligned}\right.
\end{equation}
respectively. Then, in \eqref{bvpgr1}, the term $-\frac{1}{2}\al\sqrt{\mu}v_xv_yG_{R,2}$ can be controlled due to the appearance of $\sqrt{\mu}$. Here, since the operator norm of $\CK$ may not be small, the term $\chi_{M}\CK G_{R,1}$ over the large velocity regime can be viewed as a source in  \eqref{bvpgr1} for $G_{R,1}$, while the complementary term $(1-\chi_{M})\mu^{-\frac{1}{2}}\CK G_{R,1}$ is taken as a source in  \eqref{bvpgr2} for $G_{R,2}$. A crucial observation inspired by \cite{AEP-87} in estimating $G_{R,1}$ is that the norm of the weighted operator $w_q\chi_{M}\CK$ on $L^\infty_v$ with the polynomial velocity weight $w_q=(1+|v|^2)^q$ can be arbitrarily small as long as $M$ and $q$ are chosen sufficiently large, see Lemma \ref{CK}. Notice that Lemma \ref{CK} holds  only for the Maxwell molecule potential as shown in the proof. Compared to the previous work \cite{DL-2020} for uniform shear flow, it is more complicated to solve the coupling steady boundary value problems \eqref{bvpgr1} and \eqref{bvpgr2} in a bounded domain. We now list the main steps in the proof.
\begin{itemize}
  \item {\it Step 1.} We first modify the coupled boundary value problems with two parameters $\eps>0$ being small enough and $0\leq \si\leq 1$, see \eqref{pals}, and  obtain the a priori estimates uniform in $\eps$ and $\si$ in the $L^\infty$ framework, see Lemma \ref{lifpri} {and the proof for Proposition \ref{Gr.lem}}. For the proof of Lemma \ref{lifpri}, we apply  Guo's approach in  \cite{Guo-2010} to
  the  shear flow problem in a slab. In particular, we introduce the mild formulation \eqref{H2m} to treat the diffuse  boundary condition with the help of Lemma \ref{k.cyc}, and reprove  Ukai's trace theorem in Lemma \ref{ukai} for  the $L^2$ estimates.
  \item {\it Step 2.} Similar to solving the first order correction term $G_1$, we design an explicit procedure to solve the parametrized boundary value problem \eqref{pals} iteratively for $\si\in [0,1]$ from $\si=0$ to $\si=1$ for any fixed $\eps>0$, see Lemma \ref{ex.pals}.  Notice that the problem for $\si=0$ is reduced to the one without the nonlocal collision terms under the homogeneous inflow boundary condition so that the characteristic method can be directly applied.
  \item  {\it Step 3.} We study the limit $\eps\to 0$ to obtain the desired solution, see Subsection \ref{sub5.4} for details. The key point is to obtain the macroscopic estimates in order to bound the $L^2$ norm of $G_{R,2}$ in terms of the $L^\infty$ norm of $G_{R,1}$. We apply the dual argument developed first in \cite{EGKM-13}. Note that it is delicate to deduce these estimates  to be uniform for any small parameter $\eps>0$.
\end{itemize}

With the existence of stationary solution $F_{st}$,  the asymptotic stability of  the perturbation $F=F_{st}+\sqrt{\mu}f$ as \eqref{f.usp} is considered in  the reformulated IBVP as \eqref{f}. Technically, we follow the same strategy as for treating the steady problem.
%and hence the stability estimate \eqref{lif.decay} can be uniform in any small shear rate $\alpha>0$.
Precisely, we also use the decomposition $\sqrt{\mu}f=f_1+\sqrt{\mu}f_2$ with $f_1$, $f_2$ satisfying the coupled IBVPs \eqref{f1}, \eqref{f1bd} and \eqref{f2}, \eqref{f2bd}, respectively. In order to treat initial data with only the polynomial velocity weight, we  set $f_2(0,y,v)\equiv 0$ and  the boundary conditions on $f_1$ and $f_2$ both as diffuse reflections which are slightly different from \eqref{bvpgr1} and \eqref{bvpgr2} in the steady problem. Moreover, in contrast with the steady case, we need to construct suitable temporal energy functionals so as to close the a priori estimates. In particular, the energy functional for the second component $f_2$ in the Caflisch's decomposition is complicated, because there is a subtle interplay with $f_1$. For this, we make use of the linear combination of estimates for the two functionals, where the smallness of the shear rate $\alpha$ and finiteness of the domain play an important role. Specifically, we obtain estimates \eqref{g1.es} and \eqref{g2sum3} for the weighted $L^\infty$ norms. To treat $L^2$ estimates on the right hand side of \eqref{g2sum3}, we construct another functional $\CE_{int}(t)$ in Lemma \ref{abc.lem}, see \eqref{Edef}, to capture the macroscopic dissipation, and conclude the desired estimates \eqref{ttf2.es} and hence \eqref{f2.l2fl}.
% Therefore, the large time decay \eqref{lif.decay} can be proved.

Finally, we remark that there have been extensive studies on  the stability of shear flow in the multi-dimensional space domain in the context of fluid dynamic equations, cf.~\cite{ScHe}, in particular, we mention important  contributions to
the mathematical theories in \cite{BM-15,BMV-16,BGM-17} by Bedrossian et.~al. for either an infinite 2D channel domain $\T_{x}\times \R_{y}$ or an infinite 3D channel domain $\T_{x}\times \R_{y}\times \T_z$, and an interesting work by Ionescu-Jia \cite{IJ} for the asymptotic stability of the Couette flow for the 2D Euler equations in the 2D finite channel domain $\T_x\times [0, 1]$ with the zero normal velocities at two boundary planes $y=0,1$, see also the nice survey \cite{BGM-BAMS} and references therein. In fact, in comparison with the 1D problem \eqref{Fst} under consideration, it would be more interesting to study the existence and asymptotic stability of stationary solutions in the multi-dimensional setting corresponding to those works on fluid dynamic equations. Moreover, it is also challenging to study the fluid dynamic limit for these problems as in \cite{DE-96,ELM-94,ELM-95} when the vanishing  Knudsen number is taken into account. We expect that this paper together with  \cite{DL-2020} can shed some light on the future investigation on the above problems.

%\medskip
The rest of this paper is organized as follows. In Section \ref{sec2}, we give some basic estimates on the linearized and nonlinear collision operators. In particular, we obtain Lemma \ref{CK} which is crucially used to obtain the smallness of the nonlocal operator $\CK$ for large velocity. In Section \ref{sec3}, we revisit  Ukai's trace theorem in both the steady and time-dependent cases for the transport operator with the shear force in the 1D setting under consideration. In Section \ref{sp.sec} and Section \ref{sec.spr}, we establish estimates on the first order correction  $G_1$ and the remainder $G_R$, respectively, and hence complete the proof of Theorem \ref{st.sol} without showing nonnegativity of the stationary
solution. Then we study the time-dependent problem for the local in time existence in Section \ref{loc.ex} and the exponential time asymptotic stability of the stationary solution in Section \ref{ust-pro} so that  the nonnegativity of stationary solution follows.
%Therefore, the proofs of Theorem \ref{ust.mth} and Theorem \ref{st.sol} are complete.
The appendix Section \ref{app-sec} includes some estimates on the boundary product measure  when there are multiple bounces induced by  the diffuse boundary condition.

%\vspace{3mm}

\smallskip
\noindent{\it Notations.} We list some notations and norms used in the paper. Throughout this paper,  $C$ denotes some generic positive (generally large) constant and $\la$ denote some generic positive (generally small) constant.
%where $C$ and $\la$ may take different values in different places.
 $D\lesssim E$ means that  there is a generic constant $C>0$
such that $D\leq CE$. $D\sim E$
means $D\lesssim E$ and $E\lesssim D$. ${\bf 1}_A$ indicates the characteristic function on the set $A$.
 We denote $\Vert \cdot \Vert $ the $L^{2}((-1,1)
\times \R^{3})-$norm or the $L^{2}(-1,1)-$norm or $L^{2}(\R^3)-$norm.
Sometimes without any confusion, we use $\|\cdot \|_{L^\infty }$ to denote either the $L^{\infty }([-1,1]
\times \R^{3})-$norm or the $L^{\infty }(\R^3)-$norm. Moreover, %we denote $\|\cdot \|_{\nu}\equiv \|\nu^{1/2}\cdot\|_2$, and
$(\cdot,\cdot)$ denotes the $L^{2}$ inner product in
$(-1,1)\times {\R}^{3}$  with
the $L^{2}$ norm $\|\cdot\|$ and $\langle\cdot\rangle$ denotes the $L^{2}$ inner product in $\R^3_v$. We denote by $\ga_+=\{(1,v)|v\in\R^3,v_y>0\}\cup\{(-1,v)|v\in\R^3,v_y<0\}$ the outgoing set, by $\ga_-=\{(1,v)|v\in\R^3,v_y<0\}\cup\{(-1,v)|v\in\R^3,v_y>0\}$ the incoming set, and by $\ga_0=\{(\pm1,v)|v\in\R^3,v_y=0\}$ the grazing set. Furthermore $|f|_{2,\pm}= |f \mathbf{1}_{\gamma_{\pm}}|_2$
represent the $L^2$ norm of $f(y,v)$ at the boundary $y=\pm1.$
 Finally, we define
\begin{equation*}
P_{\gamma }f(\pm1,v)=\sqrt{\mu (v)}\int_{n(\pm1)\cdot v^{\prime }>0}f(x,v^{\prime
})\sqrt{\mu (v^{\prime })}(n(\pm1)\cdot v^{\prime })dv^{\prime },
\label{pgamma}
\end{equation*}%
where $n(\pm1)=(0,\pm1,0).$
One sees that $P_{\gamma }f$ defined on $\{\pm1\}\times\R^3$, %viewed as function on $\{v\in
%\R^{3}: \ v\cdot n(x)>0\}$ for any fixed $x\in \partial \Omega $,
is an $L_{v}^{2}$-projection with respect to the measure $|v_y|\sqrt{\mu (v)}dv$ for
any  function $f$ defined on $\gamma _{+}$.
%We also denote $\{I-P_\ga\}f=f-P_\ga f$.

\section{Basic estimates}\label{sec2}
In this section we summarize some basic estimates to be used in the following sections.
Let us first give some elementary estimates for the linearized collision operator $L$ and nonlinear collision operator $\Ga$, defined by
\begin{align}\label{def-L}
Lg=-\mu^{-1/2}\left\{Q(\mu,\sqrt{\mu}g)+Q(\sqrt{\mu}g,\mu)\right\},
\end{align}
and
\begin{align}\label{def.Ga}
\Gamma (f,g)=\mu^{-1/2} Q(\sqrt{\mu}f,\sqrt{\mu}g)=\int_{\R^3}\int_{\S^2}B_0\mu^{1/2}(v_\ast)[f(v_\ast')g(v')-f(v_\ast)g(v)]\,d\om dv_\ast,
\end{align}
respectively. It is known that
$$
Lf=\nu f-Kf
$$
with
\begin{align}\label{sp.L}
\left\{\begin{array}{rll}
&\nu=\dis{\int_{\R^3}\int_{\S^2}}B_0(\cos \ta)\mu(v_\ast)\, d\om dv_\ast=\nu_0,\\[2mm]
&Kf=\mu^{-\frac{1}{2}}\left\{Q(\mu^{\frac{1}{2}}f,\mu)+Q_{\textrm{gain}}(\mu,\mu^{\frac{1}{2}}f)\right\},
%=\Gamma (\mu^{\frac{1}{2}},f)+\Gamma (f,\mu^{\frac{1}{2}}),
\end{array}\right.
\end{align}
where $Q_{\textrm{gain}}$ denotes the positive part of $Q$ in \eqref{def.Q}. Note that $\nu_0$ is a positive constant in the case of Maxwell molecule collision. The kernel of $L$, denoted as $\ker L$, is a five-dimensional space spanned by
$$
\{1,v,|v|^2-3\}\sqrt{\mu}:= \{\phi_i\}_{i=1}^5.
$$
Define a projection from $L^2$ to $\ker L$ by
\begin{align*}%\label{P0}
\FP_0 g=\left\{a_g+\Fb_g\cdot v+(|v|^2-3)c_g\right\}\sqrt{\mu}
\end{align*}
for $g\in L^2$, and correspondingly denote the operator $\FP_1$ by $\FP_1g=g-\FP_0 g$, which is orthogonal to $\FP_0$ in $L^2$.

It is also convenient to define
\begin{align}\notag
\CL f=-\left\{Q(f,\mu)+Q(\mu,f)\right\}=\nu f-\CK f,
\end{align}
with
\begin{equation}\label{sp.cL}
\nu f=\nu_0f,\quad \CK f=Q(f,\mu)+Q_{\textrm{gain}}(\mu,f)=\sqrt{\mu}K(\frac{f}{\sqrt{\mu}}),
\end{equation}
according to \eqref{sp.L}.

The following lemma is concerned with the integral operator $K$ given by \eqref{sp.L}, and its proof in case of the hard sphere model was given by \cite[Lemma 3, pp.727]{Guo-2010}. Recall \eqref{def.vwwq} for the polynomial velocity weight $w_q$.

\begin{lemma}\label{Kop}
Let $K$ be defined as in \eqref{sp.L}, then it holds
\begin{align}\notag
Kf(v)=\int_{\R^3}\Fk(v,v_\ast)f(v_\ast)\,dv_\ast
\end{align}
with
\begin{equation*}
|\Fk(v,v_\ast)|\leq C\{1+|v-v_\ast|^{-2}\}e^{-
\frac{1}{8}|v-v_\ast|^{2}-\frac{1}{8}\frac{\left||v|^{2}-|v_\ast|^{2}\right|^{2}}{|v-v_\ast|^{2}}}. %\label{grad}
\end{equation*}
Moreover, let
\begin{align}\label{kw-def}
\Fk_w(v,v_\ast)=w_{q}(v)\Fk(v,v_\ast)w_{-q}(v_\ast)
\end{align}
with  $q\geq0$,
then it also holds
\begin{equation*}
\int_{\R^3} \Fk_w(v,v_\ast)e^{\frac{\varepsilon|v-v_\ast|^2}{8}}dv_\ast\leq \frac{C}{1+|v|},
\end{equation*}
for any $\varepsilon\geq 0$ small enough.
\end{lemma}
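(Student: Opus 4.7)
The plan follows the classical Grad argument used in \cite{Guo-2010} for hard spheres, with two modifications for the Maxwell molecule potential. First I would write $Kf$ as the sum of three explicit pieces obtained from \eqref{sp.L}: one loss term $-\mu^{1/2}(v)\nu_0\int\mu^{1/2}(v_\ast)f(v_\ast)\,dv_\ast$ coming from the loss part of $Q(\mu^{1/2}f,\mu)$, and two gain terms from $Q_{\text{gain}}(\mu^{1/2}f,\mu)$ and $Q_{\text{gain}}(\mu,\mu^{1/2}f)$. The loss piece already has the required integral form with kernel bounded by $Ce^{-(|v|^2+|v_\ast|^2)/4}$, which trivially satisfies the claimed pointwise estimate. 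For each of the two gain terms I would apply Carleman's change of variables $(v_\ast,\omega)\mapsto v'$ to rewrite them as integrals against $f(v_\ast)$. Using the Maxwell molecule assumption \eqref{coa} in the form $B_0(\cos\theta)\le C|(v_\ast-v)\cdot\omega|/|v-v_\ast|$, the Jacobian of the substitution contributes one factor $|v-v_\ast|^{-1}$ while the bound on $B_0$ contributes an extra $|v-v_\ast|^{-1}$, producing the $|v-v_\ast|^{-2}$ singularity that appears in the statement. This is the one structural difference from the hard-sphere kernel, which carries only $|v-v_\ast|^{-1}$.

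Next I would estimate the Maxwellian factor $\mu^{1/2}(v_\ast)\mu(v')$ present in each gain term. Writing $v'=v+[(v_\ast-v)\cdot\omega]\omega$ and invoking the conservation identity $|v|^2+|v_\ast|^2=|v'|^2+|v'_\ast|^2$, the exponent reorganizes through a completion of the square (carried out in detail in \cite[Lemma 3]{Guo-2010}) into the Gaussian factor $e^{-\frac{1}{8}|v-v_\ast|^2-\frac{1}{8}(|v|^2-|v_\ast|^2)^2/|v-v_\ast|^2}$. Combined with the kernel representation above, this yields the pointwise bound on $\Fk(v,v_\ast)$ claimed in the lemma.

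For the weighted integral estimate I would split the $v_\ast$-integration into the far region $|v-v_\ast|\ge\delta(1+|v|)$ and the near region $|v-v_\ast|<\delta(1+|v|)$ for a small $\delta>0$. On the far region the Gaussian factor $e^{-|v-v_\ast|^2/8}$ absorbs both the polynomial ratio $w_q(v)/w_q(v_\ast)\le C(1+|v-v_\ast|)^{2q}$ and the exponential weight $e^{\varepsilon|v-v_\ast|^2/8}$ whenever $\varepsilon$ is small enough, leaving a contribution that decays faster than any polynomial. On the near region one has $|v_\ast|\sim|v|$, so $w_q(v)/w_q(v_\ast)$ is uniformly bounded. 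Introducing coordinates $z=v_\ast-v=z_\parallel\hat v+z_\perp$ with $\hat v=v/|v|$, one computes $|v|^2-|v_\ast|^2=-2|v|z_\parallel-|z|^2$, so that the second exponential behaves like $e^{-c|v|^2z_\parallel^2/|z|^2}$ for small $|z|$. Integrating out $z_\parallel$ produces a factor proportional to $|z|/(1+|v|)$, and the remaining two-dimensional integration in $z_\perp$ against the singular factor $|z|^{-2}$ is locally integrable in three dimensions and produces the desired bound $C/(1+|v|)$.

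The main obstacle is controlling the stronger $|v-v_\ast|^{-2}$ singularity of the Maxwell-molecule kernel: one must verify local integrability in $v_\ast$ near $v_\ast=v$ \emph{and} show that enough power of $|z|$ can still be recovered from the Gaussian collapse in the near region to beat this singularity while still extracting the decay factor $1/(1+|v|)$. The three-dimensional volume element $|z|^2\,d|z|$ near the origin exactly compensates the $|z|^{-2}$ singularity, so the scheme is tight but does go through.
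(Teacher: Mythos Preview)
Your proposal is correct and is precisely the adaptation the paper has in mind: the paper itself does not prove this lemma but simply refers to \cite[Lemma~3, pp.~727]{Guo-2010} for the hard-sphere case, leaving the Maxwell-molecule modification implicit. You have correctly identified that modification---the extra factor $|v-v_\ast|^{-1}$ in the kernel arising because $B_0$ lacks the $|v-v_\ast|$ prefactor of the hard-sphere kernel---and your near-region analysis (the Gaussian collapse in the direction parallel to $v$ producing the $1/(1+|v|)$ decay, with the three-dimensional volume element compensating the $|z|^{-2}$ singularity) is exactly the computation needed.
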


For the velocity weighted derivative-in-$v_x$ estimates on the nonlinear operator $\Ga$,
we have the following lemma.

\begin{lemma}\label{Ga}
In the Maxwell molecular case, it holds that
\begin{align}\label{es1.Ga}
\|w_q\pa_{v_x}^m\Ga(f,g)\|_{L_v^2}\leq C\sum\limits_{m'\leq m}\|w_q\pa_{v_x}^{m'}f\|_{L_v^2}\|w_q\pa_{v_x}^{m-m'}g\|_{L_v^2},
\end{align}
and
\begin{align}\label{es11.Ga}
\|w_q\pa_{v_x}^m\Ga(f,g)\|_{L^\infty}\leq C\sum\limits_{m'\leq m}\|w_q\pa_{v_x}^{m'}f\|_{L^\infty}\|w_q\pa_{v_x}^{m-m'}g\|_{L^\infty},
\end{align}
for any integers $m\geq 0$ and $q\geq0$.
\end{lemma}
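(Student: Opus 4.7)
The plan is to reduce \eqref{es1.Ga}--\eqref{es11.Ga} to the case $m=0$ via a clean derivative identity available in the Maxwell molecule setting: since $B_0(\cos\ta)$ is independent of $|v-v_\ast|$, under the substitution $u=v_\ast-v$ the integration variable becomes independent of $v$ and $\pa_{v_x}$ passes through cleanly. First I would establish the one-derivative identity
\[
\pa_{v_x}\Ga(f,g)=\Ga(\pa_{v_x}f,g)+\Ga(f,\pa_{v_x}g)-\tfrac{1}{2}\Ga^{(1)}(f,g),
\]
where, for $k\geq 0$,
\[
\Ga^{(k)}(f,g)(v):=\int_{\R^3}\int_{\S^2}B_0(\cos\ta)\,v_{\ast x}^{k}\mu^{1/2}(v_\ast)\bigl[f(v_\ast')g(v')-f(v_\ast)g(v)\bigr]\,d\om\,dv_\ast.
\]
Writing $\Ga(f,g)(v)=\int B_0\mu^{1/2}(v+u)[f(v+u_\perp)g(v+u_\parallel)-f(v+u)g(v)]\,d\om\,du$ with $u_\parallel=(u\cdot\om)\om$ and $u_\perp=u-u_\parallel$, every factor depends on $v$ only through a translation, so $\pa_{v_x}$ lands directly on $f$, $g$, or $\mu^{1/2}$; the correction $\Ga^{(1)}$ arises from $\pa_{v_x}\mu^{1/2}(v+u)=-\tfrac{1}{2}v_{\ast x}\mu^{1/2}(v_\ast)$.

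Iterating (or applying the Leibniz rule directly to the substituted form) then yields
\[
\pa_{v_x}^{m}\Ga(f,g)=\sum_{j_1+j_2+j_3=m}c_{j_1,j_2,j_3}\,\widetilde{\Ga}^{(j_1)}(\pa_{v_x}^{j_2}f,\pa_{v_x}^{j_3}g),
\]
where $\widetilde{\Ga}^{(j_1)}$ is defined exactly like $\Ga$ but with $\mu^{1/2}(v_\ast)$ replaced by $(-\tfrac{1}{2})^{j_1}H_{j_1}(v_{\ast x})\mu^{1/2}(v_\ast)$, $H_{j_1}$ being a polynomial of degree $j_1$. It therefore suffices to prove the two estimates with $\widetilde{\Ga}^{(j_1)}$ in place of $\Ga$ and then absorb finitely many such contributions into the right-hand sums, using the trivial inclusion of lower-order derivative norms when $j_1>0$.

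For each fixed $j_1$, I would split $\widetilde{\Ga}^{(j_1)}(F,G)$ into gain and loss. The key weight inequality for the gain is
\[
w_q(v)\leq(1+|v|^{2}+|v_\ast|^{2})^{q}=(1+|v'|^{2}+|v_\ast'|^{2})^{q}\leq w_q(v')w_q(v_\ast'),
\]
from energy conservation together with $(1+a+b)^q\leq(1+a)^q(1+b)^q$; this pushes the outer weight onto $f(v_\ast')$ and $g(v')$. The $L^{\infty}$ gain bound then follows from the uniform integrability $\int B_0|v_{\ast x}|^{j_1}\mu^{1/2}(v_\ast)\,d\om\,dv_\ast<\infty$, which uses \eqref{coa} and the Gaussian decay of $\mu^{1/2}$. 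For the $L^{2}$ gain estimate, after Cauchy--Schwarz in $\om,v_\ast$ to extract one $B_0\mu^{1/2}$ factor, I would invoke the involutive collisional change of variables $(v,v_\ast)\leftrightarrow(v',v_\ast')$ at fixed $\om$ (unit Jacobian), which exchanges $w_q^{2}(v')w_q^{2}(v_\ast')|FG|^{2}$ for $w_q^{2}(v)w_q^{2}(v_\ast)|FG|^{2}$ while picking up only the harmless factor $\mu^{1/2}(v_\ast')\leq 1$. The loss parts are handled directly: $w_q(v)|G(v)|$ is bounded by $\|w_q G\|$, while $\int B_0|v_{\ast x}|^{j_1}\mu^{1/2}(v_\ast)w_q^{-2}(v_\ast)\,dv_\ast\,d\om$ is finite and absorbs $|F(v_\ast)|$ via Cauchy--Schwarz.

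The main obstacle I expect is obtaining the derivative identity in the clean form above. A direct chain rule on $v'=v+[(v_\ast-v)\cdot\om]\om$ produces $\na g(v')\cdot(e_1-\om_1\om)$ and $\na f(v_\ast')\cdot(\om_1\om)$, bringing $\pa_{v_y}g,\pa_{v_z}g,\pa_{v_y}f,\pa_{v_z}f$ onto the right-hand side, which would destroy the claimed structure. The translation $u=v_\ast-v$---equivalently the identity $(\pa_{v_x}+\pa_{v_{\ast x}})v'=(\pa_{v_x}+\pa_{v_{\ast x}})v_\ast'=e_1$ followed by integration by parts in $v_{\ast x}$---is what converts the chain rule into a pure $v_x$-derivative on $f$ and $g$, and it relies crucially on $B_0$ being independent of $|v-v_\ast|$ in the Maxwell molecule setting. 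Once this identity is in hand, the remainder is a standard weighted bilinear collision estimate.
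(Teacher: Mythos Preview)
Your approach is correct and essentially identical to the paper's: both rely on the substitution $\tilde{u}=v_\ast-v$ (available because $B_0$ is independent of $|v-v_\ast|$) to make $\pa_{v_x}$ act by translation on each factor, followed by a Leibniz expansion producing terms of the form $\int B_0\,(\pa_{v_x}^{m-m_1-m_2}\mu^{1/2})(v_\ast)\,(\pa_{v_x}^{m_1}f)(v_\ast')\,(\pa_{v_x}^{m_2}g)(v')\,d\om\,dv_\ast$, which are bounded directly using $\int B_0\,|\pa_{v_x}^{k}\mu^{1/2}|\,d\om\,dv_\ast<\infty$. The paper omits the $L^2$ argument entirely (citing \cite{Guo-vpb}), whereas you sketch the standard gain/loss splitting with the pre-post collisional change of variables; your additional detail is sound.
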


\begin{proof}
We prove \eqref{es11.Ga} only, since the proof for \eqref{es1.Ga} is similar and it follows from the proof of \cite[Lemma 2.3, pp.1111]{Guo-vpb}.
By definition \eqref{def.Ga},  we have
\begin{align}
\pa_{v_x}^m\Ga(f,g)=&\pa_{v_x}^m\int_{\R^3}\int_{\S^2}B_0\mu^{1/2}(v_\ast)f(v_\ast')g(v')\,d\om dv_\ast
-\pa_{v_x}^m\int_{\R^3}\int_{\S^2}B_0\mu^{1/2}(v_\ast)f(v_\ast)g(v)\,d\om dv_\ast\notag\\
=&\pa_{v_x}^m\int_{\R^3}\int_{\S^2}B_0\mu^{1/2}(v_\ast)f(v_\ast')g(v')\,d\om dv_\ast
-c_0\pa_{v_x}^m g(v)\int_{\R^3}\mu^{1/2}(v_\ast)f(v_\ast)\,dv_\ast,
\notag
\end{align}
where we have used $\int_{\S^2}B_0\,d\omega=c_0$ for a constant $c_0>0$.  Recalling \eqref{v.re}, by a change of variable $\tilde{u}=v_\ast-v$, we then have
\begin{align}
\pa_{v_x}^m&\int_{\R^3}\int_{\S^2}B_0\mu^{1/2}(v_\ast)f(v_\ast')g(v')\,d\om dv_\ast\notag\\
=&\pa_{v_x}^m\int_{\R^3}\int_{\S^2}B_0\mu^{1/2}(\tilde{u}+v)f(v+\tilde{u}_{\perp})g(v+\tilde{u}_\parallel)\,d\om d\tilde{u}\notag\\
\notag =&\sum\limits_{m_1+m_2\leq m}C_{m}^{m_1,m_2}\int_{\R^3}\int_{\S^2}B_0(\pa_{v_x}^{m-m_1-m_2}\mu^{1/2})(\tilde{u}+v)
(\pa_{v_x}^{m_1}f)(v+\tilde{u}_{\perp})(\pa_{v_x}^{m_2}g)(v+\tilde{u}_\parallel)\,d\om d\tilde{u},\notag
\end{align}
where
$
\tilde{u}_\parallel=(\tilde{u}\cdot \om)\om$ and $\tilde{u}_{\perp}=\tilde{u}-\tilde{u}_\parallel.
$
Then, by taking directly the $L^\infty$ norm,
\eqref{es11.Ga}  holds because
$$
\int_{\R^3}\int_{\S^2}B_0(\partial_{v_x}^m\mu^{1/2})(v_\ast)\,d\omega dv_\ast<\infty,
$$
for any integer $m\geq 0$.
This completes the proof of Lemma \ref{Ga}.
\end{proof}

The following lemma can be found in \cite[Lemmas 3.2 and 3.3, pp.638-639]{Guo-2006}, where the case for the hard sphere model  was considered.

\begin{lemma}\label{es.L}
In the Maxwell molecular case, there is a constant $\de_0>0$ such that
\begin{align}\label{bL}
\lag Lf,f\rag=\lag L\FP_1f,\FP_1f\rag\geq\de_0\|\FP_1f\|^2.
\end{align}
%and for $q>0$ it holds
%\begin{align}\label{w-L}
%\lag w^2_qL f,f\rag\geq\de_0\|w_q f\|^2-C\|f\|^2.
%\end{align}
Moreover, for any integer $m>0$, there are constants $\de_1>0$ and $C>0$ such that
\begin{align}\label{wd-L}
\lag \pa_{v_x}^m L f,\pa_{v_x}^m  f\rag\geq\de_1\|\pa_{v_x}^m f\|^2-C\|f\|^2.
\end{align}
\end{lemma}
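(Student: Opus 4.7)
My plan is to prove the two statements in turn, reducing the second to the first via a commutator argument that relies crucially on the Maxwell--molecule assumption.

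For Part (1), since $L$ is self-adjoint on $L^2(\R^3_v)$ and $L\FP_0=0$, the identity $\langle Lf,f\rangle=\langle L\FP_1 f,\FP_1 f\rangle$ is automatic. For the spectral gap $\langle L\FP_1 f,\FP_1 f\rangle\ge\delta_0\|\FP_1 f\|^2$, I would invoke the classical Wang Chang--Uhlenbeck diagonalization of the Maxwell--molecule linearized collision operator, whose nonzero spectrum is a discrete set of strictly positive eigenvalues bounded below by some $\delta_0>0$; equivalently, one applies \cite{Guo-2006} in the Maxwell-molecule setting where the collision frequency $\nu(v)\equiv\nu_0$ is constant and hence $\|\cdot\|_\nu\sim\|\cdot\|$, so the weighted spectral gap reduces to the unweighted one claimed here.

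For Part (2), the decisive point is that $\nu=\nu_0$ is a \emph{constant} for Maxwell molecules, so $\partial_{v_x}^m$ commutes with the multiplicative part of $L=\nu_0 I-K$ and $[\partial_{v_x}^m,L]=-[\partial_{v_x}^m,K]$. Accordingly I split
\begin{equation*}
\langle\partial_{v_x}^m Lf,\partial_{v_x}^m f\rangle=\langle L\partial_{v_x}^m f,\partial_{v_x}^m f\rangle-\langle[\partial_{v_x}^m,K]f,\partial_{v_x}^m f\rangle
\end{equation*}
and estimate the two pieces separately. Part (1) applied to $\partial_{v_x}^m f$ gives
\begin{equation*}
\langle L\partial_{v_x}^m f,\partial_{v_x}^m f\rangle\ge\delta_0\|\FP_1\partial_{v_x}^m f\|^2=\delta_0\bigl(\|\partial_{v_x}^m f\|^2-\|\FP_0\partial_{v_x}^m f\|^2\bigr).
\end{equation*}
The macroscopic piece is controlled by $\|f\|^2$ alone: writing $\FP_0\partial_{v_x}^m f=\sum_{i=1}^{5}\langle\partial_{v_x}^m f,\phi_i\rangle\phi_i/\|\phi_i\|^2$ and integrating by parts $m$ times in $v_x$ transfers all derivatives onto $\phi_i=(\text{polynomial})\sqrt{\mu}$, whose higher-order $v_x$-derivatives remain in $L^2(\R^3_v)$; this yields $\|\FP_0\partial_{v_x}^m f\|^2\le C\|f\|^2$.

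For the commutator I use the same $\omega$-representation and change of variable $\tilde u=v_\ast-v$ as in the proof of Lemma~\ref{Ga}. Under this substitution all four velocities in $Kf(v)$ become translates of $v$ by explicit functions of $(\tilde u,\omega)$, so $\partial_{v_x}^m$ passes inside the integral and Leibniz distributes the derivatives between the $f$-factor and the $\mu$-factors (the latter produce polynomial times Gaussian weights that remain integrable in $(\tilde u,\omega)$ thanks to the Grad cutoff \eqref{coa}). The genuine commutator keeps only those terms in which at least one derivative lands on a $\mu$-factor, giving the schematic bound
\begin{equation*}
\|[\partial_{v_x}^m,K]f\|\le C\sum_{m'=0}^{m-1}\|\partial_{v_x}^{m'}f\|.
\end{equation*}
Cauchy--Schwarz together with the interpolation $\|\partial_{v_x}^{m'}f\|^2\le\eta\|\partial_{v_x}^m f\|^2+C_\eta\|f\|^2$, valid for $0\le m'<m$ via the Fourier representation in $v_x$, then yields $\bigl|\langle[\partial_{v_x}^m,K]f,\partial_{v_x}^m f\rangle\bigr|\le\tfrac{\delta_0}{2}\|\partial_{v_x}^m f\|^2+C\|f\|^2$. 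Combining all bounds gives Part (2) with $\delta_1=\delta_0/2$. The main obstacle is precisely this commutator estimate: it is the Maxwell-molecule structure (constant collision frequency plus translation invariance of the kernel after the shift $\tilde u=v_\ast-v$) that renders $[\partial_{v_x}^m,K]$ strictly lower order and absorbable by $C\|f\|^2$; for a hard-potential kernel the commutator would carry extra unabsorbable $\nu$-weights and the integration-by-parts control of $\FP_0\partial_{v_x}^m f$ would itself have to be revisited.
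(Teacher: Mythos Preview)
Your proposal is correct and follows essentially the same approach as the paper: both split $\langle\partial_{v_x}^m Lf,\partial_{v_x}^m f\rangle$ into the leading term $\langle L\partial_{v_x}^m f,\partial_{v_x}^m f\rangle$ plus a commutator (the paper writes this via a Leibniz-type expansion $\sum_{m_1<m}C_m^{m_1}\langle\partial_{v_x}^{m-m_1}L\,\partial_{v_x}^{m_1}f,\partial_{v_x}^m f\rangle$, which is the same object), apply the spectral gap \eqref{bL} to the leading term, bound $\|\FP_0\partial_{v_x}^m f\|\le C\|f\|$, and control the commutator using the change of variable $\tilde u=v_\ast-v$ followed by interpolation $\|\partial_{v_x}^{m'}f\|^2\le\eta\|\partial_{v_x}^m f\|^2+C_\eta\|f\|^2$. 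Your explicit observation that $[\partial_{v_x}^m,L]=-[\partial_{v_x}^m,K]$ because $\nu=\nu_0$ is constant is precisely the Maxwell-molecule point the paper emphasizes.
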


\begin{proof}
Since \eqref{bL} is quite elementary,
%and \eqref{w-L} has been given in \cite[Lemma 3.3, pp.638-639]{Guo-2006},
we only show \eqref{wd-L}. As in Lemma \ref{Ga}, the key point here is to show that the action of the derivatives $\pa_{v_x}^m$ on the nonlocal operator $L$ does not involve any other partial derivatives such as $\pa_{v_y}$ or $\pa_{v_z}$. By \eqref{def-L} and \eqref{bL}, we have
\begin{align}\label{dL-sp}
\lag \pa_{v_x}^m Lf,\pa_{v_x}^m f\rag
=&\lag  L \pa_{v_x}^mf,\pa_{v_x}^m  f\rag
+\sum\limits_{m_1<m}C_m^{m_1}\lag \pa_{v_x}^{m-m_1}L\pa_{v_x}^{m_1}f,\pa_{v_x}^m f\rag\notag\\
\geq&\de_0\|\FP_1[\pa_{v_x}^mf]\|^2-\sum\limits_{m_1<m}C_m^{m_1}|\lag \pa_{v_x}^{m-m_1}L\pa_{v_x}^{m_1}f,\pa_{v_x}^m f\rag|,
\end{align}
with
\begin{align}
&{\bf 1}_{ m_1<m}\pa_{v_x}^{m-m_1}L\pa_{v_x}^{m_1}f
\notag\\=&-\sum\limits_{m_1+m_2< m}C_{m}^{m_1,m_2}\int_{\R^3}\int_{\S^2}B_0(\pa_{v_x}^{m-m_1-m_2}\mu^{1/2})(\tilde{u}+v)
(\pa_{v_x}^{m_1}f)(v+\tilde{u}_{\perp})(\pa_{v_x}^{m_2}\mu^{1/2})(v+\tilde{u}_\parallel)\,d\om d\tilde{u}\notag\\
&+\pa_{v_x}^{m}\mu^{1/2}(v)\int_{\R^3}\int_{\S^2}B_0\mu^{1/2}(v_\ast)f(v_\ast)\,d\om dv_\ast
\notag\\&-\sum\limits_{m_1+m_2< m}C_{m}^{m_1,m_2}\int_{\R^3}\int_{\S^2}B_0(\pa_{v_x}^{m-m_1-m_2}\mu^{1/2})(\tilde{u}+v)
(\pa_{v_x}^{m_1}f)(v+\tilde{u}_\parallel)(\pa_{v_x}^{m_2}\mu^{1/2})(v+\tilde{u}_{\perp})\,d\om d\tilde{u},\notag
\end{align}
where we have used the change of variable $\tilde{u}=v_\ast-v$ again.
Consequently, as for \eqref{es1.Ga}, it follows
\begin{align}\label{dL-err}
\sum\limits_{m_1<m}C_m^{m_1}|\lag \pa_{v_x}^{m-m_1}L\pa_{v_x}^{m_1}f,\pa_{v_x}^m f\rag|
\leq& \eta\|\pa_{v_x}^m f\|^2+C_\eta\sum\limits_{m_1<m}\|\pa_ {v_x}^{m_1} f\|^2\notag\\
\leq& \eta\|\pa_{v_x}^m f\|^2+C_\eta\eta_1\|\pa_{v_x}^{m} f\|^2+C_{\eta,\eta_1}\|f\|^2,
\end{align}
for small enough constants $\eta>0$ and $\eta_1>0$, where Sobolev's interpolation inequality $\|\pa_{v_x}^{m_1} f\|^2\leq \eta_1\|\pa_{v_x}^{m} f\|^2+C_{\eta_1}\|f\|^2$ has been used.

One the other hand, it can be easily checked that
\begin{align}\label{P1-P0}
\|\FP_1[\pa_{v_x}^m f]\|\geq \|\pa_{v_x}^m f\|-\|\FP_0[\pa_{v_x}^m f]\|\geq \|\pa_{v_x}^m f\|
-C\|f\|.
\end{align}
Finally, plugging \eqref{dL-err} and \eqref{P1-P0} into \eqref{dL-sp} gives \eqref{wd-L}.
This completes the proof of Lemma \ref{es.L}.
\end{proof}

Next, the following lemma which was proved in \cite[Proposition 3.1, pp.13]{DL-2020} plays a significant role in obtaining the $L^\infty$ estimates of the first component in the Caflisch's decomposition of solutions.

\begin{lemma}\label{CK}
Let $\CK$ be given by \eqref{sp.cL}, then for any nonnegative integer $m\geq 0$, there is $C>0$ such that for any arbitrarily large $q>0$ we have
\begin{align}\label{CK1}
\sup_{|v|\geq M} w_{q}|\pa_{v_x}^m\CK f|\leq \frac{C}{q} \sum\limits_{0\leq m'\leq m}\|w_{q}\pa_{v_x}^{m'}f\|_{L^\infty},
\end{align}
for some $M=M(q)>0$.
In particular, one can choose $M=q^2$.
%where $C>0$ is independent of $l$.
%for $l$ is positive and suitably large.
\end{lemma}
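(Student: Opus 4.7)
The plan is to decompose $\CK f$, using \eqref{sp.cL} and \eqref{def.Q}, as
\[
\CK f(v)=-c_0\mu(v)\int_{\R^3} f(v_\ast)\,dv_\ast+\int_{\R^3\times\S^2} B_0\, f(v'_\ast)\mu(v')\,d\omega dv_\ast+\int_{\R^3\times\S^2} B_0\, \mu(v'_\ast) f(v')\,d\omega dv_\ast,
\]
and to bound each of the three pieces separately in the regime $|v|\geq M=q^2$. The loss piece carries the prefactor $w_q(v)\mu(v)=(1+|v|^2)^q e^{-|v|^2/2}$, which is super-exponentially small once $|v|\geq q^2$, hence much smaller than $C/q$.

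For the two gain pieces I would use a Carleman-type change of variables: for each $\omega\in\S^2$ decompose the relative velocity $v_\ast-v=V_\parallel\omega+V_\perp$ with $V_\perp\perp\omega$, so that $v'_\ast=v+V_\perp$, $v'=v+V_\parallel\omega$, and $dv_\ast=dV_\parallel\,dV_\perp$. In the first gain term the $V_\parallel$-integration of $\mu(v+V_\parallel\omega)$ yields a Gaussian in $\omega$ of the form $e^{-|v-(v\cdot\omega)\omega|^2/2}$, while the remaining $V_\perp$-integration of $f(v+V_\perp)$ is controlled by the pointwise identity
\[
\int_{V_\perp\perp\omega}\frac{dV_\perp}{(1+|v+V_\perp|^2)^q}=\frac{\pi}{q-1}\bigl(1+(v\cdot\omega)^2\bigr)^{-(q-1)},
\]
which produces the crucial $1/(q-1)$ factor. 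The $\omega$-Gaussian concentrates near $\omega=\pm v/|v|$ with angular width $\sim 1/|v|$, producing an $\omega$-integral of order $|v|^{-2q}/(q-1)$; multiplying by $w_q(v)\lesssim|v|^{2q}$ yields $\leq (C/q)\|w_qf\|_{L^\infty}$ for $|v|\geq q^2$.

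For the second gain piece the same change of variables collapses the $V_\perp$-Gaussian in $\mu(v+V_\perp)$ to $e^{-(v\cdot\omega)^2/2}$, peaked at the equator $\omega\perp v$, and leaves a 1-D integration of $f$ along the line $\{v+V_\parallel\omega:V_\parallel\in\R\}$. Using only the crude bound $B_0\leq C$ the resulting 1-D weighted integral scales as $C(1+|v-(v\cdot\omega)\omega|^2)^{-q+1/2}/\sqrt q$ and gives only $C/\sqrt q$, so to obtain the full $1/q$ I would use the Maxwell cutoff $B_0\leq C|\cos\theta|=C|V_\parallel|/|V|$ from \eqref{coa}. On the effective support of the $V_\perp$-Gaussian one has $|V_\parallel|/|V|\lesssim|V_\parallel|/|v-(v\cdot\omega)\omega|$, and the identity
\[
\int_{\R}\frac{|V_\parallel|\,dV_\parallel}{(1+V_\parallel^2+|v-(v\cdot\omega)\omega|^2)^q}=\frac{(1+|v-(v\cdot\omega)\omega|^2)^{-(q-1)}}{q-1}
\]
restores the $1/(q-1)$ factor; combining with the equatorial Gaussian in $\omega$ (width $\sim 1/|v|$) gives an $\omega$-integral of order $|v|^{-2q}/(q-1)$ and, after multiplying by $w_q(v)$, again $\leq (C/q)\|w_qf\|_{L^\infty}$.

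For $m\geq 1$, because the Carleman change of variables only translates the arguments of $f$ and $\mu$ by $v$, Leibniz's rule distributes $\pa_{v_x}^m$ as $\pa_{v_x}^{m'}f$ with $m'\leq m$ together with $\pa_{v_x}^{m-m'}\mu$; each derivative of $\mu$ produces only a bounded polynomial prefactor in $v+V_\perp$ or $v+V_\parallel\omega$ times the same Gaussian, which is harmlessly absorbed into the integration, and the argument of the $m=0$ case carries over term-by-term to produce the sum $\sum_{m'\leq m}\|w_q\pa_{v_x}^{m'}f\|_{L^\infty}$. The main obstacle will be the bookkeeping for the second gain integral: obtaining the $1/q$ (rather than only $1/\sqrt q$) scaling genuinely relies on the Maxwell-molecule structure $B_0\leq C|\cos\theta|$ via the extra $|V_\parallel|$ factor in the 1-D line integral.
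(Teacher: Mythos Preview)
Your proposal is correct and follows the same route as the paper's own proof. The paper defers the core $m=0$ estimate to \cite[Proposition~3.1]{DL-2020} and only supplies the Leibniz-rule computation (via the shift $\tilde u=v_\ast-v$) showing that $\pa_{v_x}^m\CK f$ involves only $\pa_{v_x}^{m'}f$ with $m'\le m$; your sketch of the loss/two-gain decomposition and the Carleman change of variables is exactly the content of that referenced argument, including the crucial use of the Maxwell cutoff $B_0\le C|\cos\theta|$ in the second gain term to upgrade the $1/\sqrt q$ bound to $1/q$.
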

\begin{proof}
Since the general case
\begin{align}%\label{CK2}
\sup_{|v|\geq M} w_{q}|\pa_{v}^m\CK f|\leq \frac{C}{q} \sum\limits_{0\leq m'\leq m}\|w_{q}\pa_{v}^{m'}f\|_{L^\infty}\notag
\end{align}
was proved in \cite[Proposition 3.1, pp.13]{DL-2020}, as in Lemma \ref{es.L} we only point out that the derivative  $\pa_{v_x}^m$ acting on the nonlocal operator $\CK$ does not involve other  derivatives such  $\pa_{v_y}$ or $\pa_{v_z}$. Indeed, in view of \eqref{sp.cL}, similar to the proof of Lemma \ref{es.L}, we have
\begin{align}
\pa_{v_x}^m\CK f=&\sum\limits_{m_1\leq m}C_m^{m_1}\int_{\R^3}\int_{\S^2}B_0
(\pa_{v_x}^{m_1}f)(v+\tilde{u}_{\perp})(\pa_{v_x}^{m-m_1}\mu)(v+\tilde{u}_\parallel)\,d\om d\tilde{u}
-\pa^m_{v_x}\mu(v)\int_{\R^3}\int_{\S^2}B_0
f(v_\ast)\,d\om dv_\ast\notag\\
&+\sum\limits_{m_1\leq m}C_m^{m_1}\int_{\R^3}\int_{\S^2}B_0
(\pa_{v_x}^{m_1}f)(v+\tilde{u}_\parallel)(\pa_{v_x}^{m-m_1}\mu)(v+\tilde{u}_{\perp})\,d\om d\tilde{u}\notag
\\=&\sum\limits_{m_1\leq m}C_m^{m_1}\int_{\R^3}\int_{\S^2}B_0
(\pa_{v_x}^{m_1}f)(v_\ast')(\pa_{v_x}^{m-m_1}\mu)(v')\,d\om dv_\ast
-(\pa^m_{v_x}\mu)(v)\int_{\R^3}\int_{\S^2}B_0
f(v_\ast)\,d\om dv_\ast\notag\\
&+\sum\limits_{m_1\leq m}C_m^{m_1}\int_{\R^3}\int_{\S^2}B_0
(\pa_{v_x}^{m_1}f)(v')(\pa_{v_x}^{m-m_1}\mu)(v_\ast')\,d\om dv_\ast.\notag
\end{align}
Then, by  the similar calculation for estimating $\CI_1$ and $\CI_2$ in \cite[Proposition 3.1, pp.13]{DL-2020} yields
\eqref{CK1}.  This completes the proof of Lemma \ref{CK}.
\end{proof}

\section{A trace theorem}\label{sec3}

In this section, we present the following Ukai's trace theorem, see also Lemma 2.3 in \cite[Page 22 of 119]{EGKM-18} and Lemma 3.2 in \cite[Page 56 of 119]{EGKM-18}, respectively.

\begin{lemma}\label{ukai}
Let $\vps>0$ and $y\in[-h,h]$ with $0<h<+\infty$, and denote the near-grazing set of $\gamma_+$ or $\gamma_-$ as
\begin{equation*}
\gamma^{\varepsilon}_{\pm}\ \equiv \
\left\{(y,v)\in \gamma_{\pm}: |v_y|\leq \varepsilon \ \text{or} \ |v_y|\geq \frac{1}{\varepsilon },\ v=(v_x,v_y,v_z)\right\}.
\end{equation*}%
Then, there exists a constant $C_{\varepsilon,h}>0$ depending on $\vps$ and $h$ such that
\begin{equation}\label{utrace}
\begin{split}
|f\mathbf{1}_{\gamma_\pm\backslash\gamma^{\varepsilon }_\pm}|_{L^1}
&\leq C_{\varepsilon,h}\left\{\|f\|_{L^1} +\|\{v_y\pa _{y}-\al v_y\pa_{v_x}\}f\| _{L^1}\right\}.
\end{split}
\end{equation}
Moreover, it also holds
%There exists constant $C_{\varepsilon ,\Omega }>0$ depends only on $\vps$ and $\Omega$ such that
\begin{equation}\label{ttrace}
%\begin{split}
\int_{0}^{T}|f\mathbf{1}_{\gamma_+\backslash\gamma^{\varepsilon}_+}(t)|_{L^1}dt
\leq  C_{\varepsilon,h}\left\{\| f(0)\|_{L^1}+\int_{0}^{T}
\left[\|f(t)\|_{L^1}+\|\{\partial _{t}+v_y\pa_{y}-\al v_y\pa_{v_x}\}f(t)\|_{L^1}\right]dt\right\},
%\end{split}
\end{equation}
for any $T\geq0.$
\end{lemma}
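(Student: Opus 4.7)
The plan is to integrate along the characteristics of the transport operators $v_y\pa_y - \al v_y\pa_{v_x}$ (for \eqref{utrace}) and $\pa_t + v_y\pa_y - \al v_y\pa_{v_x}$ (for \eqref{ttrace}). These characteristics fix $v_y$ and $v_z$, move $y$ at constant speed $v_y$, and drift $v_x$ linearly via $v_x(y) = v_x(y_0) - \al(y - y_0)$. On the non-grazing set one has $\vps < |v_y| < 1/\vps$, so the reciprocal $1/|v_y|$ is bounded by $1/\vps$ and the characteristic traversal time from one plate to the other is at most $2h/\vps$; these two facts deliver the uniform constants $C_{\vps, h}$ in both estimates.

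For \eqref{utrace}, fix $(y_0, v) \in \gamma_\pm \setminus \gamma_\pm^\vps$ with $y_0 = \pm h$. For each auxiliary $\tilde y \in [-h, h]$, the fundamental theorem of calculus along the characteristic of the steady operator joining $(\tilde y, v_x + \al(y_0 - \tilde y), v_y, v_z)$ to $(y_0, v)$ yields
\begin{equation*}
f(y_0, v) = f(\tilde y, v_x + \al(y_0 - \tilde y), v_y, v_z) + \int_{\tilde y}^{y_0} \frac{1}{v_y}\{v_y\pa_{y}-\al v_y\pa_{v_x}\}f(y, v_x + \al(y_0 - y), v_y, v_z)\, dy.
\end{equation*}
Averaging this identity in $\tilde y$ over $[-h, h]$ removes the pointwise interior dependence. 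Taking absolute values, integrating in $v$ over the non-grazing set, applying Fubini, and performing the unit-Jacobian translation $v_x \mapsto v_x + \al(y_0 - \tilde y)$ (resp.\ $v_x \mapsto v_x + \al(y_0 - y)$) express the right side as $(2h)^{-1}\|f\|_{L^1}$ plus a term controlled by $\vps^{-1}\|\{v_y\pa_y-\al v_y\pa_{v_x}\}f\|_{L^1}$, yielding \eqref{utrace}.

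For \eqref{ttrace}, trace the characteristic of $\pa_t + v_y\pa_y - \al v_y\pa_{v_x}$ backward from a point $(t, y_0, v)$ with $t \in [0, T]$ and $(y_0, v) \in \gamma_+ \setminus \gamma_+^\vps$. Setting $s^* := \min(t, 2h/|v_y|)$, for each $s \in [0, s^*]$ the analogous identity reads
\begin{equation*}
f(t, y_0, v) = f(t - s, y_0 - v_y s, v_x + \al v_y s, v_y, v_z) + \int_0^s \{\pa_\tau + v_y\pa_y - \al v_y\pa_{v_x}\}f(t - \tau, y_0 - v_y\tau, v_x + \al v_y\tau, v_y, v_z)\, d\tau.
\end{equation*}
Averaging $s$ over $[0, s^*]$ and splitting according to whether the backward trajectory first reaches the slab boundary or the initial slice $\{t = 0\}$ produces two contributions. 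After integrating in $t$ over $[0, T]$, performing the $v_x$-translation, and applying Fubini together with a Jacobian change of the form $s \mapsto y_0 - v_y s$, the interior contribution gives $\int_0^T (\|f(\sigma)\|_{L^1} + \|\{\pa_\sigma + v_y\pa_y - \al v_y\pa_{v_x}\}f(\sigma)\|_{L^1})\, d\sigma$ up to the factor $C_{\vps, h}$, while the initial-slice contribution gives $\|f(0)\|_{L^1}$.

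The main technical nuisance is tracking the linear $v_x$-shift induced by the shear term $-\al v_y \pa_{v_x}$: along each characteristic, $v_x$ is displaced by an amount proportional to $\al$ times the traversed $y$- or time-increment. One must arrange both the pointwise identity and the final change of variables so that this displacement appears only as a translation under the outer $dv_x$-integration, where it disappears by translation invariance of Lebesgue measure. Because $h$ is bounded and $|v_y| \leq 1/\vps$ on the non-grazing set, the shift along any trajectory is a finite constant, and no new contribution arises from it in the final bound.
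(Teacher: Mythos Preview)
Your approach is correct and follows essentially the same characteristic-integration strategy as the paper. For \eqref{utrace} the paper introduces an artificial time parameter and averages over $s\in[t-t_{\Fb},t]$, while you parametrize directly by $\tilde y\in[-h,h]$; these are equivalent since along a characteristic $dy/ds=v_y$ with $|v_y|\sim 1$ on the non-grazing set. For \eqref{ttrace} the paper first proves an auxiliary change-of-variables estimate and then splits the time integral at an artificial threshold $\vps_1\le h\vps$, whereas you split according to $s^*=\min(t,2h/|v_y|)$; both serve to separate the case where the backward trajectory reaches $\{t=0\}$ before the opposite plate.

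One point to tighten in your write-up: in the initial-slice case $s^*=t$ you should not average over $s\in[0,t]$, since the factor $1/s^*=1/t$ is not integrable in $t$ near $0$. Instead take $s=t$ directly there (as the paper does for $t\in[0,\vps_1]$), so that $f(t,y_0,v)=f(0,y_0-v_y t,\ldots)+\int_0^t\{\ldots\}\,d\tau$, then integrate in $t$ and perform the change $t\mapsto y_0-v_yt$ (with $1/|v_y|\le 1/\vps$) to recover $\|f(0)\|_{L^1}$. Reserve the averaging for the case $s^*=2h/|v_y|$, where $1/s^*\le 1/(2h\vps)$ is harmless.
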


\begin{proof}
To prove \eqref{utrace}, we only consider the case that the boundary phase is outgoing, because the incoming case can be treated similarly. We introduce a parameter $t\in\R$ and treat $(y,v)$ as functions of $t$.
Define the characteristic line $[s,Y(s;t,y,v),V(s;t,y,v)]$ passing through $(y,v)=(t,y(t),v(t))$ such that
\begin{align}\label{chl}
\frac{dY}{ds}=v_y,\quad
\frac{dV_x}{ds}=-\al v_y.
\end{align}
Then it follows
\begin{align}
Y(s;t,y,v)=y-(t-s)v_y,\ \ V(s;t,y,v)=(v_x+\al(t-s)v_y,v_y,v_z),
\label{traj}
\end{align}
for $(y,v)\in\gamma_+\backslash\gamma^{\varepsilon}_+$. Along this trajectory, one has the identity
\begin{align}\label{f-ex-ch}
f(y,v)=f(Y(s;t,y,v),V(s;t,y,v))+\int_{s}^t\frac{d}{d\tau}f(Y(\tau;t,y,v),V(\tau;t,y,v))d\tau.
\end{align}
On the other hand, $(y,v)\in\gamma_+\backslash\gamma^{\varepsilon}_+$ also implies $h\vps\leq t_{\Fb}(y,v)\leq\frac{h}{\vps}$, {where $t_{\Fb}$ is given as \eqref{def-tb}}.
Therefore, by taking $s\in[t-t_{\Fb}(y,v),t]$, we get from \eqref{f-ex-ch} that
\begin{align}\label{f-ex-ch-p1}
\int_{\gamma_+\backslash\gamma^{\varepsilon}_+}|f(y,v)||v_y|dv\leq& C_{\vps,h}\int_{\gamma_+\backslash\gamma^{\varepsilon}_+}\int_{t-{\Fb}(y,v)}^t|f(Y(s;t,y,v),V(s;t,y,v))||v_y|dsdv\notag\\
&+C_{\vps,h}\int_{\gamma_+\backslash\gamma^{\varepsilon}_+}\int_{t-{\Fb}(y,v)}^t|\frac{d}{ds}f(Y(s;t,y,v),V(s;t,y,v))||v_y|ds dv\notag\\
=& C_{\vps,h}\int_{\gamma_+\backslash\gamma^{\varepsilon}_+}\int_{t-{\Fb}(y,v)}^t|f(Y(s;t,y,v),V(s;t,y,v))||v_y|dsdv\notag\\
&+C_{\vps,h}\int_{\gamma_+\backslash\gamma^{\varepsilon}_+}\int_{t-{\Fb}(y,v)}^t|[v_y\pa_Y-\al v_y\pa_{V_x}]f(Y(s;t,y,v),V(s;t,y,v))||v_y|ds dv.
\end{align}
Next, in light of the Jacobian
\begin{eqnarray}\label{def.matrM}
\begin{aligned} \frac{\pa(Y(s),V(s))}{\pa(s,v)}=\left|\begin{array} {cccc}
v_y \  \  \  & 0 \ \ \ & s \ \ \ &0\\[3mm]
-\al v_y \  \  \  & 1 \ \ \ & -\al s \ \ \ &0\\[3mm]
0 \  \  \  & 0 \ \ \ & 1 \ \ \ &0\\[3mm]
0 \  \  \  & 0 \ \ \ & 0 \ \ \ &1
\end{array} \right|=v_y,
\end{aligned}
\end{eqnarray}
and by a change of variable
$$[\tilde{y},u]=[Y(s;t,y,v),V(s;t,y,v)]=[y-(t-s)v_y,v_x+\al(t-s)v_y,v_y,v_z],$$
one gets
\begin{align}\label{bb-p1}
\int_{\gamma_+\backslash\gamma^{\varepsilon}_+}\int_{t-t_b(y,v)}^t|f(Y(s;t,y,v),V(s;t,y,v))||v_y|dsdv
\leq \int_{\R^3}\int_{-h}^h|f(\tilde{y},u)|d\tilde{y}du.
\end{align}
Similarly, by noticing that $\pa_{Y}f(Y,V)=\pa_{\tilde{y}}f(\tilde{y},u)$, $\pa_{V_x}f(Y,V)=\pa_{u_x}f(\tilde{y},u)$ and $v_y=u_y$, one has
\begin{align}\label{bb-p2}
\int_{\gamma_+\backslash\gamma^{\varepsilon}_+}&\int_{t-t_b(y,v)}^t|[v_y\pa_Y-\al v_y\pa_{V_x}]f(Y(s;t,y,v),V(s;t,y,v))||v_y|ds dv\notag\\
\leq& \int_{\R^3}\int_{-h}^h|[u_y\pa_{\tilde{y}}-\al u_y\pa_{u_x}]f(\tilde{y},u))|d\tilde{y}du.
\end{align}
Consequently, the desired estimate \eqref{utrace} in the case of outgoing boundary follows from \eqref{bb-p1}, \eqref{bb-p2} and \eqref{f-ex-ch-p1}.

We now turn to prove \eqref{ttrace}. For $f\in L^1([T_1,T]\times[-h,h]\times\R^3)$, we first show that
\begin{align}\label{axch}
\int_{T_1}^T&\int_{u\cdot n(y_f)>0}\int_{\max\{-t_{b}(y_f,u),T_1-\tilde{t}\}}^0
|f(\tilde{t}+s,Y(\tilde{t}+s;\tilde{t},y_f,u),V(\tilde{t}+s;\tilde{t},y_f,u))||u_y|dsdud\tilde{t}\notag\\
\leq&\int_{T_1}^T\int_{-h}^h\int_{\R^3}|f(t,y,v)|dydvdt,
\end{align}
where $y_f=\pm h$, $T\geq T_1\geq0$ and
\begin{align}
Y(\tilde{t}+s;\tilde{t},y_f,u)=y_f+su_y,\ \ V(\tilde{t}+s;\tilde{t},y_f,u)=(u_x-\al su_y,u_y,u_z),\notag
\end{align}
with
\begin{align}
Y(\tilde{t};\tilde{t},y_f,u)=y_f,\ \ V(\tilde{t};\tilde{t},y_f,u)=u=(u_x,u_y,u_z).\notag
\end{align}
Actually, given $(t,y,u)\in[T_1,T]\times[-h,h]\times\R^3$, let us define $y_f=y+t_{\Fb}(y,-u)u_y=\pm h$, and
denote
\begin{align}
y=Y(t;t-s,y_f,u)=y_f+su_y,\ \ v=V(t;t-s,y_f,u)=(u_x-\al su_y,u_y,u_z),\notag
\end{align}
for $u\cdot n(y_f)>0$. It is easy to see that $0\geq s\geq-t_{\Fb}(y_f,u),$ and it is natural to require that $t-s\leq T.$
%We now compute the Jacobian determinant of the above transformation as follows
By a change of variable $(y,v)\rightarrow(s,u)$ and using \eqref{def.matrM},
one has
\begin{align}%\label{axch}
\int_{T_1}^T&\int_{u\cdot n(y_f)>0}\int_{\max\{-t_{b}(y_f,u),-(T-t)\}}^0
|f(t,Y(t;t-s,y_f,u),V(t;t-s,y_f,u))||u_y|dsdudt\notag\\
\leq&\int_{T_1}^T\int_{-h}^h\int_{\R^3}|f(t,y,v)|dydvdt.\label{chv-v}
\end{align}
On the other hand, if we denote $\tilde{t}=t-s$, then it follows
$s\geq T_1-\tilde{t}$ due to $t\geq T_1.$  In summary, one has
$$
s\geq\max\{-t_{\Fb}(y_f,u),T_1-\tilde{t}\}, \ T_1\leq\tilde{t}\leq T.
$$
Therefore, we have by  change of variable  $t\rightarrow\tilde{t}$ that
\begin{align}%\label{axch}
\int_{T_1}^T&\int_{u\cdot n(y_f)>0}\int_{\max\{-t_{\Fb}(y_f,u),-(T-t)\}}^0
|f(t,Y(t;t-s,y_f,u),V(t;t-s,y_f,u))||u_y|dsdudt\notag\\
=&\int_{T_1}^T\int_{u\cdot n(y_f)>0}\int_{\max\{-t_{\Fb}(y_f,u),T_1-\tilde{t}\}}^0
|f(\tilde{t}+s,Y(\tilde{t}+s;\tilde{t},y_f,u),V(\tilde{t}+s;\tilde{t},y_f,u))|u_y|dsdud\tilde{t}.\label{cht-t}
\end{align}
Consequently,
\eqref{chv-v} and \eqref{cht-t} imply \eqref{axch}.
In addition, it follows that
\begin{align}\label{difff}
f(t,y_f,u)=&f(t+s,Y(t+s;t,y_f,u),V(t+s;t,y_f,u))\notag\\&+\int_{s}^0\frac{d}{d\tau}f(t+\tau,Y(t+\tau;t,y_f,u),V(t+\tau;t,y_f,u))d\tau\notag\\
=&f(t+s,Y(t+s;t,y_f,u),V(t+s;t,y_f,u))\notag\\&
+\int_{s}^0[\pa_t+u_y\pa_y-\al u_y\pa_{u_x}]f(t+\tau,Y(t+\tau;t,y_f,u),V(t+\tau;t,y_f,u))d\tau.
\end{align}
%one has
%\begin{align}
%|f(t,y_f,u)|
%\leq &|f(t+s,Y(t+s;t,y_f,u),V(t+s;t,y_f,u))|\notag\\&+\int_{s}^0|[\pa_t+v_y\pa_y-\al v_y\pa_{v_x}]f(t+\tau,Y(t+\tau),V(t+\tau)|d\tau.
%\end{align}
For any $(t,y_f,u)\in[\vps_1,T]\times\gamma_+\backslash\gamma^{\varepsilon}_+$ with $\vps_1>0$ to be determined later and for $0\geq s\geq\max\{-t_{\Fb}(y_f,u),\vps_1-t\}$,
we then get from \eqref{difff} and \eqref{axch} that
\begin{align}\label{bd.es1}
\min\{&h\vps,\vps_1\}\int_{\vps_1}^T\int_{u\cdot n(y_f)>0}|f(t,y_f,u)||u_y|dudt\notag\\
\leq&\int_{\vps_1}^T\int_{u\cdot n(y_f)>0}\int_{\max\{-t_{\Fb}(y_f,u),-t\}}^0|f(t+s,Y(t+s;t,y_f,u),V(t+s;t,y_f,u))||u_y|dtdsdu\notag\\&
+\int_{\vps_1}^T\int_{\max\{-t_{\Fb}(y_f,u),-t\}}^0\int_{u\cdot n(y_f)>0}
\int_{s}^0|[\pa_t+u_y\pa_y-\al u_y\pa_{u_x}]f(t+\tau,Y(t+\tau),V(t+\tau))||u_y|d\tau dudt\notag\\
\leq&\int_{0}^T\int_{u\cdot n(y_f)>0}\int_{\max\{-t_{\Fb}(y_f,u),-t\}}^0|f(t+s,Y(t+s;t,y_f,u),V(t+s;t,y_f,u))||u_y|dtdsdu\notag\\&
+\int_{0}^T\int_{\max\{-t_{\Fb}(y_f,u),-t\}}^0\int_{u\cdot n(y_f)>0}
\int_{s}^0|[\pa_t+u_y\pa_y-\al u_y\pa_{u_x}]f(t+\tau,Y(t+\tau),V(t+\tau))||u_y|d\tau dudt
\notag\\
\leq&\int_{0}^T\int_{-h}^h\int_{\R^3}|f(t,y,u))|dtdydu\notag\\&
+\int_{0}^T\int_{\max\{-t_{\Fb}(y_f,u),-t\}}^0\int_{u\cdot n(y_f)>0}
\int_{s}^0|[\pa_t+u_y\pa_y-\al u_y\pa_{u_x}]f(t+\tau,Y(t+\tau),V(t+\tau))||u_y|d\tau dudt,
\end{align}
where we have used the fact that  $h\vps\leq t_{\Fb}(y_f,u)\leq\frac{h}{\vps}$ due to $(y_f,u)\in\gamma_+\backslash\gamma^{\varepsilon}_+.$

Next, applying Fubini's Theorem and  using \eqref{axch} again, one also has
\begin{align}\label{bd.es2}
\int_{0}^T&\int_{u\cdot n(y_f)>0}\int_{\max\{-t_{\Fb}(y_f,u),-t\}}^0\int_{s}^0|[\pa_t+u_y\pa_y-\al u_y\pa_{u_x}]f(t+\tau,Y(t+\tau),V(t+\tau))||u_y|d\tau dudtds\notag\\
=& \int_{0}^Tdt\int_{u\cdot n(y_f)>0}\int^{\tau}_{\max\{-t_{\Fb}(y_f,u),-t\}}ds\int_{\max\{-t_{\Fb}(y_f,u),-t\}}^0 d\tau
|[\pa_t+u_y\pa_y-\al u_y\pa_{u_x}]f(t+\tau)||u_y|
\notag\\
\leq&\int_{0}^Tdt\int_{u\cdot n(y_f)>0}\int_{\max\{-t_{\Fb}(y_f,u),-t\}}^0 d\tau|\max\{-t_{\Fb}(y_f,u),-t\}|
|[\pa_t+u_y\pa_y-\al u_y\pa_{u_x}]f(t+\tau)||u_y|
\notag\\
\leq&\max\{h\vps,\vps_1\}\int_{0}^Tdt\int_{u\cdot n(y_f)>0}\int_{\max\{-t_{\Fb}(y_f,u),-t\}}^0 d\tau
|[\pa_t+u_y\pa_y-\al u_y\pa_{u_x}]f(t+\tau)||u_y|
\notag\\
\leq&\max\{h\vps,\vps_1\}\int_{0}^Tdt\int_{-h}^hdy\int_{\R^3}du
|[\pa_t+u_y\pa_y-\al u_y\pa_{u_x}]f(t,y,u)|.
\end{align}
Once \eqref{bd.es1} and \eqref{bd.es2} are obtained, it remains now to compute
\begin{align*}%\label{bd.es1}
\int^{\vps_1}_0\int_{u\cdot n(y_f)>0}|f(t,y_f,u)||u_y|dudt.
\end{align*}
In fact, if we choose $\vps_1$ to be small enough so that $\vps_1\leq h\vps$, at this stage, the backward trajectory hits the initial plane first. Therefore, for $(t,y_f,u)\in[0,\vps_1]\times\gamma_+\backslash\gamma^{\varepsilon}_+$, by directly using \eqref{def.matrM} and applying \eqref{axch} once again, it follows
\begin{align}%\label{0difff}
\int_0^{\vps_1}&\int_{u\cdot n(y_f)>0}|f(t,y_f,u)||u_y|dudt\notag\\
\leq&\int_0^{\vps_1}\int_{u\cdot n(y_f)>0}|f(0,Y(0;t,y_f,u),V(0;t,y_f,u))||u_y|dudt\notag\\
&+\int_0^{\vps_1}\int_{u\cdot n(y_f)>0}\int_{-t}^0|[\pa_t+u_y\pa_y-\al u_y\pa_{u_x}]|f(t+\tau)||u_y|d\tau dudt
\notag\\
\leq&C\int_{-h}^h\int_{\R^3}|f(0,y,u)|dydu+C\int_0^{\vps_1}\int_{-h}^h\int_{\R^3}|[\pa_t+u_y\pa_y-\al u_y\pa_{u_x}]|f(t)|dydudt.\notag
\end{align}
The proof of Lemma \ref{ukai} is then completed.
\end{proof}

\section{Steady problem: the first order correction}\label{sp.sec}

In this and the next sections, we are going to show Theorem \ref{st.sol} for the existence of solutions to the steady problem \eqref{Fst}. Recall \eqref{Fst.ex} and \eqref{Fst.ex.mc}. For the purpose, we will first study in this section the first order correction term $G_1$ determined by the boundary value problem \eqref{G1} and \eqref{G1bd}. Notice that \eqref{G1odd} and \eqref{G1drbc} are satisfied. Existence of the remainder  $G_R$ for the boundary value problem \eqref{Gr} and \eqref{Grbd} will be considered in the next section. Indeed, we have the following proposition.

\begin{proposition}\label{G1.lem}
The boundary value problem \eqref{G1} and \eqref{G1bd} admits a unique solution $G_1=G_1(y,v)$ satisfying
\begin{align}\label{G1-pp}
G_1(-v_x)=-G_1(v_x),\ \ \int_{-1}^1\int_{\R^3}G_1(y,v)dvdy=0,
\end{align}
and
\begin{align}\label{G1-bdd}
\|w_{q}\pa_{v_x}^mG_1\|_{L^\infty}\leq \tilde{C}_1,
\end{align}
for any integers $m\geq 0$ and $q\geq 0$, where $\tilde{C}_1>0$ is a constant depending only on $m$ and $q$.
\end{proposition}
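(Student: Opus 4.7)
The plan is to follow the strategy outlined in the introduction: introduce an auxiliary damped, parametrized problem, solve it by iteration on the parameter, and then pass to vanishing damping limits. Precisely, for $\varepsilon>0$ small and $\sigma\in[0,1]$, I consider
\begin{equation*}
\varepsilon G_1+v_y\pa_y G_1+\nu_0 G_1=\sigma K G_1-v_xv_y\sqrt{\mu},\qquad G_1(\pm 1,v)|_{v_y\lessgtr 0}=0.
\end{equation*}
When $\sigma=0$ the equation decouples in $v$ and the solution is given by the explicit characteristic formula already displayed in the introduction, so existence is immediate; one also reads off from this formula that the solution is odd in $v_x$ (because the source $-v_xv_y\sqrt{\mu}$ is) and satisfies the polynomial weighted $L^\infty$ bound with constant independent of $\varepsilon$.

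Next I bootstrap in $\sigma$. Fix a small step $\sigma_\ast>0$ and treat $\sigma K G_1$ as a given source to pass from the solvability at $\sigma=\sigma_0$ to solvability at $\sigma=\sigma_0+\sigma_\ast$. The key a~priori estimates are a coupled $L^\infty$–$L^2$ pair, in the spirit of Guo. For the $L^\infty$ bound I use the mild formulation along the straight characteristics in $y$ with exponentially decaying weight $e^{-\nu_0(y-y')/v_y}$; iterating the $K$-term once yields a double integral involving the weighted kernel $\Fk_w$, and Lemma~\ref{Kop} gives $\int \Fk_w\,dv_\ast\lesssim (1+|v|)^{-1}$. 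Splitting the resulting integral in $v_\ast$ into a large-velocity part (absorbable, since $\sigma_\ast$ can be made small) and a bounded-velocity part (controlled by the $L^2$ norm via Cauchy–Schwarz on a compact velocity ball), one closes the estimate in terms of $\|w_q G_1\|_{L^2}$ and the source. For the $L^2$ bound, I pair the equation with $G_1$, use the coercivity $\langle L G_1,G_1\rangle\geq \delta_0\|\FP_1G_1\|^2$ from Lemma~\ref{es.L}, and control the boundary traces through Ukai's trace Lemma~\ref{ukai}; the damping term $\varepsilon\|G_1\|^2$ and the smallness of $\sigma_\ast$ combine with the macroscopic control provided by oddness in $v_x$ to absorb the $\FP_0$ part. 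Thus for each $\varepsilon>0$ we reach $\sigma=1$ with uniform-in-$\sigma$ bounds.

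The next step is the limit $\varepsilon\to 0$. Here oddness in $v_x$ is crucial: for any odd-in-$v_x$ solution the moments $\int a_{G_1}\,dvdy$ and $\int c_{G_1}\,dvdy$ vanish automatically, so the zero-mass condition from \eqref{Fst.ex.mc} is built in, and the $b_x$ component of $\FP_0 G_1$ is the only potentially troublesome macroscopic mode; this is handled by the standard macroscopic argument (or equivalently a dual/test-function argument on $v_x\sqrt{\mu}$) together with the homogeneous inflow boundary condition, giving an $\varepsilon$-independent $L^2$ bound. Compactness then yields a weak limit $G_1$ solving \eqref{G1}--\eqref{G1bd} with $\int\sqrt{\mu}G_1\,dvdy=0$ and satisfying \eqref{G1-bdd} for $m=0$.

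For the derivative estimates \eqref{G1-bdd} with $m\geq 1$, I differentiate the equation in $v_x$. In the Maxwell molecule case $\pa_{v_x}$ commutes cleanly with $K$ and $L$ modulo controllable commutators (exactly as exploited in Lemmas~\ref{Ga}, \ref{es.L} and \ref{CK}), while the source $\pa_{v_x}^m(-v_xv_y\sqrt{\mu})$ remains rapidly decaying. Inductively applying the same $L^\infty$–$L^2$ scheme to $\pa_{v_x}^m G_1$, using the homogeneous inflow boundary condition which is preserved by $\pa_{v_x}$, gives \eqref{G1-bdd} for all $m$ and $q$. Uniqueness is obtained by taking the difference of two solutions, noting the source and boundary data vanish, and applying the same $L^\infty$–$L^2$ scheme with zero forcing to conclude the difference is zero. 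I anticipate the main obstacle to be making the $L^\infty$ iteration closed when $\sigma=1$: since $\|K\|$ is not small, the absorption at each bootstrap step relies delicately on the interplay between the small parameter $\sigma_\ast$ (which reduces the nonlocal contribution at the current iteration), the smallness of $\int\Fk_w$ at large velocity from Lemma~\ref{Kop}, and the uniform $L^2$ bound passed back in from the energy identity via Ukai's trace theorem with shear-force drift.
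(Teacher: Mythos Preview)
Your proposal is correct and follows essentially the same route as the paper: the parametrized problem in $(\varepsilon,\sigma)$, explicit solution at $\sigma=0$, bootstrap in $\sigma$ with coupled $L^\infty$--$L^2$ a~priori estimates uniform in both parameters, oddness in $v_x$ to kill all macroscopic modes except $b_{1,1}$ (which is handled by a dual test-function argument), and finally the limit $\varepsilon\to 0$.

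Two small remarks. First, in the $\varepsilon\to 0$ step the paper does not use weak compactness but instead shows directly that $\{G_1^{\varepsilon_n}\}$ is Cauchy in the weighted $L^\infty$ space: taking $|\varepsilon_{n+1}-\varepsilon_n|\le 2^{-n}$ and applying the uniform a~priori bound to the equation for the difference $G_1^{\varepsilon_{n+1}}-G_1^{\varepsilon_n}$ (whose source is $-(\varepsilon_{n+1}-\varepsilon_n)G_1^{\varepsilon_n}$) gives strong convergence and uniqueness in one stroke, which is a bit cleaner than extracting a weak limit and then upgrading. Second, Ukai's trace lemma is not actually needed here: the $G_1$ equation has no shear-force term $-\alpha v_y\partial_{v_x}$, and with homogeneous inflow the boundary contribution in the energy identity is simply the nonnegative outflow flux $\tfrac12\int_{v_y\gtrless 0}|v_y|G_1^2(\pm1)\,dv$.
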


%\begin{proof}
To prove this proposition, let $0<\eps<1$ and $0\leq \si\leq1$, then we consider the following general approximation equations
\begin{align}\label{G1-it}
\eps G_1+v_y\pa_yG_1+\nu_0G_1=\si KG_1+\RF,
\end{align}
and
\begin{align}\label{G1bd-it}
G_1(\pm1,v)|_{v_y\lessgtr0}=0,
\end{align}
where the source term $\RF=\RF(y,v)$ is given and satisfies $\RF(-v_x)=-\RF(v_x)$. Recall that $\nu_0$ and $K$ are defined by \eqref{sp.L}. The above boundary value problem can be formally reduced to
\begin{align}\notag
v_y\pa_yG_1+LG_1=\RF,
\end{align}
and
\begin{align}\notag
G_1(\pm1,v)|_{v_y\lessgtr0}=0,
\end{align}
as $\si\rightarrow1^-$ and $\eps\rightarrow0^+$.  To prove this rigorously,
we deduce the following {\it a priori} estimate.

\begin{lemma}[{\it a priori} estimate]\label{lem.stap}
The solution to the boundary value problem \eqref{G1-it} and \eqref{G1bd-it} satisfies the following uniform estimate with respect to both $\si$ and $\eps$:
\begin{align}\label{ubdG1}
\sum\limits_{0\leq m\leq N_0}\|w_q\pa_{v_x}^mG_1\|_{L^\infty}\leq\RC_0\sum\limits_{0\leq m\leq N_0}\|w_q\pa_{v_x}^m\RF\|_{L^\infty},
\end{align}
where $N_0$ is an arbitrary non-negative integer and the constant $\RC_0>0$ is independent of  $\eps$ and $\si$.
\end{lemma}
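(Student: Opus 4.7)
My plan is to adapt Guo's $L^\infty$--$L^2$ interplay to the stationary problem, taking care that the bounds do not degenerate as $\eps\to 0^+$ or $\si\to 1^-$. The essential structural input is the oddness in $v_x$: since $\RF(-v_x,\cdot)=-\RF(v_x,\cdot)$ and the homogeneous inflow condition is symmetric under $v_x\mapsto -v_x$, any solution of \eqref{G1-it}--\eqref{G1bd-it} is odd in $v_x$, so its hydrodynamic projection collapses to the single mode $\FP_0 G_1=b(y)\,v_x\sqrt{\mu}$. The derivative estimates will follow by induction on $m$ once the $m=0$ case is done, since, as in Lemmas \ref{es.L} and \ref{CK}, the change of variable $\tilde u=v_\ast-v$ shows that $\pa_{v_x}^m$ acting on the non-local operator $K$ never produces $\pa_{v_y}$ or $\pa_{v_z}$, and the inflow condition \eqref{G1bd-it} is preserved under $\pa_{v_x}$ (which does not change the sign of $v_y$).

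\textbf{Step 1: uniform $L^2$ bound.} Test \eqref{G1-it} against $G_1$. The homogeneous inflow data \eqref{G1bd-it} make the transport boundary term $\tfrac12\int_{v_y>0}v_y G_1^2(1,v)\,dv-\tfrac12\int_{v_y<0}v_y G_1^2(-1,v)\,dv$ nonnegative, and writing $\nu_0-\si K=\si L+(1-\si)\nu_0$ together with \eqref{bL} gives
\begin{align*}
\eps\|G_1\|^2+\si\de_0\|\FP_1 G_1\|^2+(1-\si)\nu_0\|G_1\|^2\leq \|\RF\|\,\|G_1\|.
\end{align*}
For $\si\in[0,\tfrac12]$ this already closes. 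For $\si\in[\tfrac12,1]$ a macroscopic bound on $b(y)$ uniform in $(\eps,\si)$ is needed. Taking the $v_x\sqrt{\mu}$ moment of \eqref{G1-it} and using $v_x\sqrt{\mu}\in\ker L$ produces
\begin{align*}
\eps\,b(y)+\pa_y A(y)=R_1(y),\qquad A(y):=\int v_x v_y\,(\FP_1 G_1)\sqrt{\mu}\,dv,\quad R_1(y):=\int v_x\sqrt{\mu}\,\RF\,dv,
\end{align*}
with $|A(y)|+|R_1(y)|\lesssim\|\FP_1 G_1(y,\cdot)\|_{L^2_v}+\|\RF(y,\cdot)\|_{L^2_v}$. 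Pairing this identity against a test function $\Phi(y)$ constructed so that $\Phi'=b-\overline{b}$ and $\Phi(\pm 1)=0$, and handling the mean $\overline{b}$ via the odd trace $G_1|_{y=\pm 1,\,v_y\lessgtr 0}=0$, yields $\|b\|\lesssim \|\FP_1 G_1\|+\|\RF\|$, hence $\|G_1\|\lesssim \|\RF\|$ uniformly in $(\eps,\si)$. This macroscopic closure and the need to keep it free of the parameters is the main technical obstacle.

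\textbf{Step 2: $L^\infty$ bootstrap.} Setting $h=w_q G_1$, integration along the characteristic $\dot Y=v_y$ combined with \eqref{G1bd-it} produces the mild formula
\begin{align*}
h(y,v)=\int_{y_b(v_y)}^{y}e^{-(\nu_0+\eps)(y-y')/v_y}v_y^{-1}\bigl[\si(w_q K w_q^{-1}h)(y',v)+(w_q\RF)(y',v)\bigr]dy',
\end{align*}
on each sign of $v_y$ with $y_b=\mp 1$. The source integral is bounded by $\nu_0^{-1}\|w_q\RF\|_{L^\infty}$ uniformly in $v_y$. Iterating once more inside the $K$-term produces a double integral with kernel $\Fk_w(v,v_*)\Fk_w(v_*,v_{**})$; the region $|v_*|\geq N$ contributes $\lesssim N^{-1}\|h\|_{L^\infty}$ by Lemma \ref{Kop}, while on $\{|v_*|,|v_{**}|\leq N\}$ the Cauchy--Schwarz inequality together with a change of variable with bounded Jacobian (cf.~\eqref{def.matrM}) converts the bounded-velocity contribution into $C_N\|G_1\|_{L^2}$, which is already controlled by Step 1. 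Absorbing $N^{-1}\|h\|_{L^\infty}$ after choosing $N$ large gives $\|w_q G_1\|_{L^\infty}\lesssim \|w_q\RF\|_{L^\infty}$.

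\textbf{Step 3: derivatives in $v_x$.} For $m\geq 1$, applying $\pa_{v_x}^m$ to \eqref{G1-it} yields
\begin{align*}
\eps \pa_{v_x}^m G_1+v_y\pa_y \pa_{v_x}^m G_1+\nu_0\pa_{v_x}^m G_1=\si K\pa_{v_x}^m G_1+\si[\pa_{v_x}^m,K]G_1+\pa_{v_x}^m\RF,
\end{align*}
and by the calculation behind Lemma \ref{CK} the commutator $[\pa_{v_x}^m,K]G_1$ is a finite sum of Maxwell-type kernel operators acting only on $\pa_{v_x}^{m'}G_1$ with $m'<m$; the inflow data $\pa_{v_x}^m G_1|_{\pm 1,v_y\lessgtr 0}=0$ also hold. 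An induction on $m$ using Steps 1--2 with source $\pa_{v_x}^m\RF+\text{(commutator)}$, whose $L^\infty$ norm is controlled by the previous induction step, produces \eqref{ubdG1}. The induction bases cleanly on the $m=0$ estimate because the constant $\RC_0$ depends only on structural constants ($\nu_0$, $\de_0$, kernel bounds), not on $\eps$ or $\si$.
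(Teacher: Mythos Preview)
Your Steps 2 and 3 are essentially the paper's approach and are fine (modulo the small technicality that the inner integral over the region $|y''-y'|\ll 1$ needs the fractional-power estimate \eqref{k-es1} rather than a straight Jacobian bound; the reference to \eqref{def.matrM} is not quite the right mechanism here, since there is no shear in \eqref{G1-it}).

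The genuine gap is in Step 1, in the macroscopic bound for $b(y)$. Your moment identity $\eps\,b+\pa_y A=R_1$ is correct (up to the harmless $(1-\si)\nu_0 b$ term), but pairing it against a one-dimensional test function $\Phi$ with $\Phi'=b-\bar b$, $\Phi(\pm1)=0$ does \emph{not} produce a coercive term: after integration by parts the transport contribution becomes $-\int_{-1}^1 A\,(b-\bar b)\,dy$, and since $A$ is controlled only by $\|\FP_1 G_1\|$ rather than by $b$ itself, the best you can extract is $|\int A(b-\bar b)|\lesssim \|\FP_1 G_1\|\,\|b-\bar b\|$, not $\|b-\bar b\|^2$. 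The estimate therefore does not close, and no trace argument for $\bar b$ can repair this.

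The paper's fix is to test the \emph{full} equation \eqref{G1-it} (not its $v_x\sqrt{\mu}$-moment) against the phase-space test function $\Psi_{b}=v_y v_x\,\phi'(y)\sqrt{\mu}$ with $-\phi''=b$, $\phi(\pm1)=0$. The extra factor of $v_y$ is exactly what makes the argument coercive: the transport term yields
\[
-(v_y G_1,\pa_y\Psi_b)=\int_{-1}^1 b(y)\int_{\R^3} v_y^2 v_x\, G_1\sqrt{\mu}\,dv\,dy,
\]
and the macroscopic part $\FP_0 G_1=b\,v_x\sqrt{\mu}$ contributes $\|b\|^2\int v_y^2 v_x^2\mu\,dv=\|b\|^2$, with the microscopic remainder bounded by $\|\FP_1 G_1\|$. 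The boundary terms are controlled by the outgoing trace $\int_{v_y\gtrless0}|v_y|G_1^2(\pm1)\,dv$, which the basic energy identity \eqref{mi-G1} already dominates. No separate treatment of $\bar b$ is needed, since the Dirichlet problem $-\phi''=b$, $\phi(\pm1)=0$ carries no compatibility constraint. Replacing your 1D pairing by this dual test-function argument is the missing ingredient; the rest of your outline then goes through.
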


\begin{proof}
The proof of \eqref{ubdG1} is divided into two steps.

\medskip
\noindent\underline{{\it $L^\infty$ estimates.}}
Let $\mathfrak{G}_m=w_q\pa_{v_x}^mG_1$ for $m\geq0$ and $q\geq 0$, then $\RG_m$ satisfies
\begin{align}\label{G1m-it}
\eps \RG_m&+v_y\pa_y\RG_m+\nu_0\RG_m\notag\\=&\si w_qK\pa^m_{v_x}G_1
+\si{\bf 1}_{m>0}\sum\limits_{m'<m}C_m^{m'} w_q(\pa_{v_x}^{m-m'}K)(\pa^{m'}_{v_x}G_1)-w_q\pa^m_{v_x}\RF,
\end{align}
and
\begin{align}\label{G1mbd-it}
\RG_m(\pm1,v)|_{v_y\lessgtr0}=0.
\end{align}
We write the solution of \eqref{G1m-it} and \eqref{G1mbd-it} in the following mild form
\begin{align}\label{G1-np}
\RG_m(y,v)=&\si\int_{-1}^ye^{-\frac{\nu_0+\eps}{v_y}(y-y')} \frac{w_q}{v_y}K(w_{-q}\RG_m)(y')dy'
\notag\\&
+\si{\bf 1}_{m>0}\sum\limits_{m'<m}C_m^{m'}\int_{-1}^ye^{-\frac{\nu_0+\eps}{v_y}(y-y')}\frac{w_q}{v_y}(\pa_{v_x}^{m-m'}K)(\pa^{m'}_{v_x}G_1)(y')dy'
\notag\\&-\int_{-1}^ye^{-\frac{\nu_0+\eps}{v_y}(y-y')}\frac{w_q}{v_y}\pa^m_{v_x}\RF~dy':=\sum\limits_{i=1}^3\RI_i,\ \ \textrm{for}\ v_y>0,
\end{align}
and
\begin{align}
\RG_m(y,v)=&-\si\int_{y}^1e^{-\frac{\nu_0+\eps}{v_y}(y-y')} \frac{w_q}{v_y}K(w_{-q}\RG_m)(y')dy'
\notag\\&
-\si{\bf 1}_{m>0}\sum\limits_{m'<m}C_m^{m'}\int_{y}^1e^{-\frac{\nu_0+\eps}{v_y}(y-y')}\frac{w_q}{v_y}(\pa_{v_x}^{m-m'}K)(\pa^{m'}_{v_x}G_1)(y')dy'
\notag\\&+\int_{y}^1e^{-\frac{\nu_0+\eps}{v_y}(y-y')}\frac{w_q}{v_y}\pa^m_{v_x}\RF~dy':=\sum\limits_{i=4}^6\RI_i,\ \ \textrm{for}\ v_y<0.\notag
\end{align}
We next compute $\RI_i$ $(1\leq i\leq6)$ term by term.
Since
\begin{align}\label{bfr}
\left\{\begin{array}{rll}
&{\bf 1}_{v_y>0}\dis{\int_{-1}^y}e^{-\frac{\nu_0+\eps}{v_y}(y-y')}v_y^{-1}dy'
\leq \frac{1}{\nu_0+\eps}(1-e^{-\frac{2(\nu_0+\eps)}{|v_y|}})< \frac{1}{\nu_0+\eps},\\[2mm]
&{\bf 1}_{v_y<0}\left|\dis{\int_{y}^1}e^{-\frac{\nu_0+\eps}{v_y}(y-y')}v_y^{-1}dy'\right|< \frac{1}{\nu_0+\eps},
\end{array}\right.
\end{align}
we see that
\begin{align*}
|\RI_3|, |\RI_6|\leq C\|w_q\pa^m_{v_x}\RF\|_{L^\infty}.
\end{align*}
In view of definition \eqref{sp.L} and Lemma \ref{Ga}, it follows
\begin{align*}
|\RI_2|, |\RI_5|\leq C{\bf 1}_{m>0}\sum\limits_{m'<m}\|w_q(\pa_{v_x}^{m-m'}K)(\pa^{m'}_{v_x}G_1)\|_{L^\infty}
\leq C{\bf 1}_{m>0}\sum\limits_{m'<m}\|w_q\pa^{m'}_{v_x}G_1\|_{L^\infty}.
\end{align*}
Consequently, we have
\begin{align}\label{RG-st}
|\RG_m(y,v)|\leq& {\bf 1}_{v_y>0}\si\int_{-1}^ye^{-\frac{\nu_0+\eps}{v_y}(y-y')}v_y^{-1} \int_{\R^3}{\bf k}_w(v,v')|\RG_m(v',y')|dv'dy'\notag\\
&+{\bf 1}_{v_y<0}\si\int_{y}^1e^{-\frac{\nu_0+\eps}{v_y}(y-y')}|v_y|^{-1} \int_{\R^3}{\bf k}_w(v,v')|\RG_m(v',y')|dv'dy'\notag\\
&+C{\bf 1}_{m>0}\sum\limits_{m'<m}\|w_q\pa^{m'}_{v_x}G_1\|_{L^\infty}+C\|w_q\pa^m_{v_x}\RF\|_{L^\infty},
\end{align}
where ${\bf k}_w$ is given in Lemma \ref{Kop}.
Then we iterate \eqref{RG-st} once more to obtain
\begin{align}\label{RG-sec}
|\RG_m(y,v)|\leq&\sum\limits_{i=1}^6\RI_{1,i},
\end{align}
with
\begin{align*}
\RI_{1,1}=& {\bf 1}_{v_y>0}\si^2\int_{-1}^ye^{-\frac{\nu_0+\eps}{v_y}(y-y')}v_y^{-1}
\int_{\R^3}{\bf k}_w(v,v'){\bf 1}_{v'_y>0}\int_{-1}^{y'}e^{-\frac{\nu_0+\eps}{v'_y}(y'-y'')}{v'_y}^{-1}\notag\\
&\qquad\times\int_{\R^3}{\bf k}_w(v',v'')|\RG_m(v'',y'')|dv''dy'' dv'dy',
\end{align*}
\begin{align*}
\RI_{1,2}=&{\bf 1}_{v_y>0}\si^2\int_{-1}^ye^{-\frac{\nu_0+\eps}{v_y}(y-y')}v_y^{-1}
\int_{\R^3}{\bf k}_w(v,v'){\bf 1}_{v'_y<0}\int_{y'}^{1}e^{-\frac{\nu_0+\eps}{v'_y}(y'-y'')}|v'_y|^{-1}\notag\\
&\qquad\times\int_{\R^3}{\bf k}_w(v',v'')|\RG_m(v'',y'')|dv''dy'' dv'dy',
\end{align*}
\begin{align*}
\RI_{1,3}=&{\bf 1}_{v_y<0}\si^2\int_{y}^1e^{-\frac{\nu_0+\eps}{v_y}(y-y')}|v_y|^{-1}
\int_{\R^3}{\bf k}_w(v,v'){\bf 1}_{v'_y>0}\int_{-1}^{y'}e^{-\frac{\nu_0+\eps}{v'_y}(y'-y'')}{v'_y}^{-1}\notag\\
&\qquad\times\int_{\R^3}{\bf k}_w(v',v'')|\RG_m(v'',y'')|dv''dy'' dv'dy',
\end{align*}
\begin{align*}
\RI_{1,4}=&{\bf 1}_{v_y<0}\si^2\int_{y}^1e^{-\frac{\nu_0+\eps}{v_y}(y-y')}|v_y|^{-1}
\int_{\R^3}{\bf k}_w(v,v'){\bf 1}_{v'_y<0}\int_{y'}^{1}e^{-\frac{\nu_0+\eps}{v'_y}(y'-y'')}|v'_y|^{-1}\notag\\
&\qquad\times\int_{\R^3}{\bf k}_w(v',v'')|\RG_m(v'',y'')|dv''dy'' dv'dy',
\end{align*}
\begin{align*}
\RI_{1,5}=&{\bf 1}_{v_y>0}\si\int_{-1}^ye^{-\frac{\nu_0+\eps}{v_y}(y-y')}v_y^{-1} \int_{\R^3}{\bf k}_w(v,v')
\left(C{\bf 1}_{m>0}\sum\limits_{m'<m}\|w_q\pa^{m'}_{v_x}G_1\|_{L^\infty}+C\|w_q\pa^m_{v_x}\RF\|_{L^\infty}\right)dv'dy',
\end{align*}
\begin{align*}
\RI_{1,6}=&{\bf 1}_{v_y<0}\si\int_{y}^1e^{-\frac{\nu_0+\eps}{v_y}(y-y')}|v_y|^{-1} \int_{\R^3}{\bf k}_w(v,v')
\left(C{\bf 1}_{m>0}\sum\limits_{m'<m}\|w_q\pa^{m'}_{v_x}G_1\|_{L^\infty}+C\|w_q\pa^m_{v_x}\RF\|_{L^\infty}\right)dv'dy'.
\end{align*}
By using \eqref{bfr} and Lemma \ref{Kop}, we see that the last two terms can be bounded as
$$
|\RI_{1,5}|,\ |\RI_{1,6}|\leq C{\bf 1}_{m>0}\sum\limits_{m'<m}\|w_q\pa^{m'}_{v_x}G_1\|_{L^\infty}+C\|w_q\pa^m_{v_x}\RF\|_{L^\infty}.
$$
For the other four  terms, we only compute $\RI_{1,2}$  because the other three terms can be treated similarly.
The estimates are divided into three cases. First of all, we take $M>0$ large enough.

%\medskip
\noindent\underline{{\it Case 1. $|v|> M$.}} In this case, Lemma \ref{Kop} and \eqref{bfr} directly give
\begin{align*}
\RI_{1,2}\leq\frac{C}{1+M}\|\RG_m\|_{L^\infty}.
\end{align*}

\noindent\underline{{\it Case 2. $|v|\leq M$ and $|v'|> 2M$, or $|v'|\leq 2M$ and $|v''|> 3M$.}} In this case, we  have either $|v-v'|> M$ or $|v'-v''|> M$ so that  one of the following two estimates holds correspondingly
\begin{equation*}
\begin{split}
{\bf k}_{w}(v,v')
\leq Ce^{-\frac{\vps M^2}{16}}{\bf k}_{w}(v,v')e^{\frac{\vps |v-v'|^2}{16}},\
{\bf k}_{w}(v',v'')
\leq Ce^{-\frac{\vps M^2}{16}}{\bf k}_{w}(v',v'')e^{\frac{\vps |v'-v''|^2}{16}}.
\end{split}
\end{equation*}
This together with Lemma
\ref{Kop} and \eqref{bfr} gives
\begin{align*}
\RI_{1,2}\leq Ce^{-\frac{\vps M^2}{16}}\|\RG_m\|_{L^\infty}.
\end{align*}

\noindent\underline{{\it Case 3. $|v|\leq M$, $|v'|\leq 2M$ and $|v''|\leq 3M$.}} In this situation, we make use of the boundedness of the operator $K$ on the complement of
a singular set.
For any large $N>0$,
we choose a number $M(N)$ to define
\begin{equation}\label{kw-M}
{\bf k}_{w,M}(v,v')\equiv \mathbf{1}
_{|v-v^{\prime }|\geq \frac{1}{M},|v^{\prime}|\leq 2M}{\bf k}_{w}(v,v'), {\bf k}_{w,M}(v',v'')\equiv \mathbf{1}
_{|v'-v''|\geq \frac{1}{M},|v''|\leq 3M}{\bf k}_{w}(v',v''),
\end{equation}%
such that
$$\sup_{v}\int_{\R^{3}}|{\bf k}_{w,M}(v,v^{\prime})-{\bf k}_{w}(v ,v^{\prime})|dv^{\prime}\leq\frac{1}{N},$$
and
$$\sup_{v'}\int_{\R^{3}}|{\bf k}_{w,M}(v',v'')-{\bf k}_{w}(v',v'')|dv''\leq\frac{1}{N}.$$
Moreover, note that
${\bf k}_{w,M}(v,v'),\ {\bf k}_{w,M}(v',v'')\leq C_M.$
We further rewrite
\begin{align}
{\bf k}_{w}(v ,v^{\prime}){\bf k}_{w}(v',v'')=&[{\bf k}_{w}(v ,v^{\prime})-{\bf k}_{w,M}(v,v')]{\bf k}_{w}(v',v'')
\notag\\&+{\bf k}_{w,M}(v,v')[{\bf k}_{w}(v',v'')-{\bf k}_{w,M}(v',v'')]+{\bf k}_{w,M}(v',v''){\bf k}_{w,M}(v,v').\notag
\end{align}
The first two difference terms lead to the small contribution of $\RI_{1,2}$ as
$$
\frac{C}{N}\|\RG_m\|_{L^\infty}.
$$
For the last  term, we use the following decomposition
\begin{align*}
{\bf 1}_{v_y>0}&\si^2\int_{-1}^ye^{-\frac{\nu_0+\eps}{v_y}(y-y')}v_y^{-1}
\int_{|v'|\leq 2M,\ |v''|\leq3M}{\bf k}_{w,M}(v,v'){\bf k}_{w,M}(v',v'')\notag\\
&\times{\bf 1}_{v'_y<0}\left[\int_{y'+\eta_0}^{1}+\int_{y'}^{y'+\eta_0}\right]
e^{-\frac{\nu_0+\eps}{v'_y}(y'-y'')}|v'_y|^{-1}
 |\RG_m(v'',y'')|dv''dy'' dv'dy':=\RI_{1,2}^I+\RI_{1,2}^{II},
\end{align*}
where $\eta_0>0$ is suitably small.
For $\RI_{1,2}^I$, since $y''-y'\geq\eta_0$, it follows that
\begin{align*}
{\bf 1}_{v'_y<0}e^{-\frac{\nu_0+\eps}{v'_y}(y'-y'')}|v'_y|^{-1}\leq \frac{C}{\eta_0},
\end{align*}
which together with Lemma \ref{Kop} as well as \eqref{bfr} implies
\begin{align}
\RI_{1,2}^I\leq\frac{C_M}{\eta_0}\left\{\int_{|v''|\leq 3M}\int_{-1}^1|\pa_{v_x}^mG_1(v'',y'')|^2dv''dy''\right\}^{\frac{1}{2}}.\notag
\end{align}
As to $\RI_{1,2}^{II}$, since $y''-y'\leq\eta_0$, we have that for $\beta\in(0,1)$,
\begin{align}
\int_{|v'|\leq2M}&{\bf 1}_{v'_y<0}\int_{y'}^{y'+\eta_0}
e^{-\frac{\nu_0+\eps}{v'_y}(y'-y'')}|v'_y|^{-1}dy''dv'\notag\\=&\int_{|v'|\leq2M}{\bf 1}_{v'_y<0}\int_{y'}^{y'+\eta_0}
e^{-\frac{\nu_0+\eps}{v'_y}(y'-y'')}\left|\frac{y'-y''}{v'_y}\right|^{\bet}|y'-y''|^{-\bet}|v'_y|^{-1+\beta}dy''dv'
\notag\\
\leq & C\int_{|v'|\leq2M}|v'_y|^{-1+\beta}dv'\int_{y'}^{y'+\eta_0}|y'-y''|^{-\bet}dy''\leq C_M\eta_0^{1-\beta},\label{k-es1}
\end{align}
where we have used the fact that
$$
e^{-\frac{\nu_0+\eps}{|v'_y|}|y'-y''|}\left|\frac{y'-y''}{v'_y}\right|^{\bet}<+\infty.
$$
Plugging \eqref{k-es1} into $\RI_{1,2}^{II}$, we get
\begin{align}
\RI_{1,2}^{II}\leq C_M\eta_0^{1-\beta}\|\RG_m\|_{L^\infty}.\notag
\end{align}
As a consequence, one has
\begin{align}
\RI_{1,2}
\leq& \left\{\frac{C}{N}+C_M\eta_0^{1-\beta}+Ce^{-\frac{\vps M^2}{16}}\right\}\|\RG_m\|_{L^\infty}+C_M\|\pa_{v_x}^mG_1\|.\notag
\end{align}
Substituting the above estimates into \eqref{RG-sec}, we conclude
\begin{align}\label{G1m-sum1}
\|\RG_m\|_{L^\infty}\leq C{\bf 1}_{m>0}\sum\limits_{m'<m}\|w_q\pa^{m'}_{v_x}G_1\|_{L^\infty}+C\|\pa_{v_x}^mG_1\|+C\|w_q\pa^m_{v_x}\RF\|_{L^\infty}.
\end{align}
A linear combination of \eqref{G1m-sum1} from $m=0$ to $m=N_0$ gives the following {\it a priori} estimate
\begin{align}\label{G1m-sum2}
\sum\limits_{0\leq m\leq N_0}\|\RG_m\|_{L^\infty}\leq C\sum\limits_{0\leq m\leq N_0}\|\pa_{v_x}^mG_1\|+C\sum\limits_{0\leq m\leq N_0}\|w_q\pa^m_{v_x}\RF\|_{L^\infty},
\end{align}
where $C>0$ depends on $N_0$ and $q$. This concludes the $L^\infty$ estimate.

\medskip
\noindent\underline{{\it $L^2$ estimates.}} To close the $L^\infty$ estimate \eqref{G1m-sum2}, we need to derive the $L^2$ estimate for $G_1$. For this, we first consider the zero-th order $L^2$ estimate $\|G_1\|$. Notice that $G_1=\FP_0G_1+\FP_1G_1$ and
$\FP_0G_1=[a_1+\Fb_1\cdot v+c_1(|v|^2-3)]\sqrt{\mu}$ with $\Fb_1=[b_{1,1},b_{1,2},b_{1,3}]$.
Moreover, it holds
\begin{align*}
a_1=\lag G_1,\sqrt{\mu}\rag,\ \Fb_1=\lag G_1,v\sqrt{\mu}\rag, \ c_1=\frac{1}{6}\lag G_1,|v|^2\sqrt{\mu}\rag.
\end{align*}
On the other hand, from \eqref{G1-np} with $m=0$, it holds $G_1(y,-v_x,v_y,v_z)=-G_1(y,v_x,v_y,v_z)$, namely $G_1$ is odd in $v_x$. This implies
\begin{align}\label{mG10}
a_1=b_{1,2}=b_{1,3}=c_1=0.
\end{align}
To obtain the $L^2$ estimate of the macroscopic component, it remains now to deduce the $L^2$ estimate of $b_{1,1}.$
Actually, one can show that
\begin{align}\label{mG1}
\|b_{1,1}\|^2\leq C\|\FP_1G_1\|^2+C\int_{v_y\gtrless0}|v_y|G^2_1(\pm1)dv+C\|w_q\RF\|_{L^\infty}^2,
\end{align}
where $C>0$ is a constant independent of $\eps$ and $\si$. For this, we
define
\begin{align*}
\Psi=\Psi_{b_{1,1}}=v_yv_{x}\frac{d}{dy}\phi_{b_{1,1}}(y)\sqrt{\mu},
\end{align*}
where
\begin{align*}
-\phi''_{b_{1,1}}=b_{1,1},\ \phi_{b_{1,1}}(\pm1)=0.
\end{align*}
For the above boundary value problem on $b_{1,1}$, it follows
\begin{align}\label{ep-G1b}
\|\phi_{b_{1,1}}\|_{H^2}\leq C\|b_{1,1}\|, \ |\phi'_{b_{1,1}}(\pm1)|\leq C\|b_{1,1}\|.
\end{align}
Taking the inner product of \eqref{G1-it} and $\Psi_{b_{1,1}}$
over $(-1,1)\times\R^3$, we get
\begin{align}\label{G1-tt}
\eps(G_1,\Psi_{b_{1,1}})&-(v_yG_1,\pa_y\Psi_{b_{1,1}})+\lag v_yG_1(1),\Psi_{b_{1,1}}(1)\rag-\lag v_yG_1(-1),\Psi_{b_{1,1}}(-1)\rag
\notag\\& +(1-\si)\nu_0(G_1,\Psi_{b_{1,1}})+\si (LG_1,\Psi_{b_{1,1}})
=(\RF,\Psi_{b_{1,1}}).
\end{align}
We now compute the terms in \eqref{G1-tt} one by one.
By Cauchy-Schwarz's inequality and  \eqref{ep-G1b}, one has
\begin{align}
[\eps+(1-\si)\nu_0]|(G_1,\Psi_{b_{1,1}})|\leq&[\eps+(1-\si)\nu_0]|(\FP_0G_1,\Psi_{b_{1,1}})|+[\eps+(1-\si)\nu_0]|(\FP_1G_1,\Psi_{b_{1,1}})|
\notag\\ \leq& \eta[\eps+(1-\si)\nu_0]\|b_{1,1}\|^2+C_\eta[\eps+(1-\si)\nu_0]\|\FP_1G_1\|^2,\notag
\end{align}
\begin{align}
-(v_yG_1,\pa_y\Psi_{b_{1,i}})=&-(v_y\FP_0G_1,\pa_y\Psi_{b_{1,i}})-(v_y\FP_1G_1,\pa_y\Psi_{b_{1,i}})\notag\\
\geq&\|b_{1,1}\|^2-\eta\|b_{1,1}\|^2-C_\eta\|\FP_1G_1\|^2,\notag
\end{align}
$$
|(\RF,\Psi_{b_{1,1}})|\leq \eta\|b_{1,1}\|^2+C_\eta\|w_q\RF\|_{L^\infty}^2.
$$
And by Lemma \ref{Ga}, it follows
\begin{align*}
\si|(LG_1,\Psi_{b_{1,1}})|
\leq& \eta\|b_{1,1}\|^2+C_\eta\|\FP_1G_1\|^2.
\end{align*}
For the boundary term, one has from \eqref{G1bd-it} and \eqref{ep-G1b} that
\begin{align}
\lag v_yG_1(1),\Psi_{b_{1,1}}(1)\rag&-\lag v_yG_1(-1),\Psi_{b_{1,1}}(-1)\rag\notag
\\=&\int_{v_y>0}v_yG_1(1)\Psi_{b_{1,1}}(1)dv-\int_{v_y<0}v_yG_1(-1)\Psi_{b_{1,1}}(-1)dv\notag\\
\leq& \eta\|b_{1,1}\|^2+C_\eta\int_{v_y\gtrless0}|v_y|G^2_1(\pm1)dv.\notag
\end{align}
Combining the above estimates for the terms in \eqref{G1-tt}, we have \eqref{mG1}.

We now deduce the $L^2$ estimate on the microscopic component $\FP_1G_1.$  Direct energy estimate
 on \eqref{G1-it} gives
\begin{equation}\label{mi-G1}
[\eps+(1-\si)\nu_0]\|G_1\|^2+\de_0\si\|\FP_1G_1\|^2+\frac{1}{2}\int_{v_y\gtrless0}|v_y|G^2_1(\pm1)dv\leq \eta\|G_1\|^2+C_\eta\|w_q\RF\|_{L^\infty}^2.
\end{equation}
Thus, \eqref{mG1} and \eqref{mi-G1} as well as \eqref{mG10} yield
\begin{align}\label{G1-bl2}
\|G_1\|^2+\int_{v_y\gtrless0}|v_y|G^2_1(\pm1)dv\leq C\|w_q\RF\|_{L^\infty}^2.
\end{align}
Furthermore, for the higher order $L^2$ estimates on $G_1$, we have from $(\pa_{v_x}^mG_1,\pa_{v_x}^m\eqref{G1-it})$ with $m\geq1$ that
\begin{align}\label{mih-G1}
[\eps+(1-\si)\nu_0]&\|\pa_{v_x}^mG_1\|^2+\de_0\si\|\pa_{v_x}^mG_1\|^2\notag\\&+\frac{1}{2}\int_{v_y\gtrless0}|v_y|\pa_{v_x}^mG^2_1(\pm1)dv\leq C\|G_1\|^2+C\|w_q\pa_{v_x}^m\RF\|_{L^\infty}^2,
\end{align}
where Lemma \ref{es.L} has been used for $\si(\pa_{v_x}^m LG_1,\pa_{v_x}^mG_1).$

%Noticing that $\|\pa_{v_x}^m\FP_0G_1\|^2\leq C_m\|\FP_0G_1\|$,
Finally, the {\it a priori} estimate \eqref{ubdG1} follows from \eqref{G1m-sum2}, \eqref{G1-bl2} and \eqref{mih-G1}. This completes the proof of Lemma \ref{lem.stap}.
\end{proof}

With the {\it a priori} estimate \eqref{ubdG1}, we now prove the following existence result for general linear equations \eqref{G1-it} and \eqref{G1bd-it}.
Before doing this, we first define the following function space
$$
\RX_{N_0}=\{g=g(y,v)|\sum\limits_{0\leq m\leq N_0}\|w_q\pa^m_{v_x}g\|_{L^\infty}<+\infty,\ g(-v_x)=-g(v_x)\},
$$
associated with the norm
$$
\|g\|_{\RX_{N_0}}=\sum\limits_{0\leq m\leq N_0}\|w_q\pa^m_{v_x}g\|_{L^\infty}.
$$
And for  convenience, we also define a linear operator $\RL_\si$ by
$$
\RL_\si g=[\eps +v_y\pa_y+\nu_0-\si K]g.
$$

\begin{lemma}\label{ex-G1-rf}
Assume $\RF=\RF(y,v)$ satisfies
\begin{align}\label{RF-as}
\RF(-v_x)=-\RF(v_x),\
\sum\limits_{0\leq m\leq N_0}\|w_q\pa^m_{v_x}\RF\|_{L^\infty}<+\infty,
\end{align}
then there exists a unique solution $G_1=G_1(y,v)$
to \eqref{G1-it} and \eqref{G1bd-it} with $\si=1$ satisfying
\begin{align}\notag
G_1(-v_x)=-G_1(v_x),
\end{align}
and
\begin{align}\label{G1-rf}
\sum\limits_{0\leq m\leq N_0}\|w_{q}\pa_{v_x}^{m}G_1\|_{L^\infty}
\leq
\RC_0\sum\limits_{0\leq m\leq N_0}\|w_q\pa^m_{v_x}\RF\|_{L^\infty},
\end{align}
where  $\RC_0>0$ is a constant depending only on $N_0$ and $q$.
\end{lemma}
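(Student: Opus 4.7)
The plan is to establish existence at $\sigma = 1$ by a continuation argument in $\sigma$ starting from $\sigma = 0$, leveraging the $\sigma$-uniform a priori estimate from Lemma \ref{lem.stap}. Throughout, $\epsilon > 0$ stays fixed and all steps take place within the space $\RX_{N_0}$, which already encodes the odd-in-$v_x$ symmetry.

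First I would handle the base case $\sigma = 0$ directly. Then \eqref{G1-it} reduces to the pure transport problem $(\epsilon + \nu_0) G_1 + v_y \partial_y G_1 = \RF$ with homogeneous inflow data \eqref{G1bd-it}, whose unique solution admits the explicit characteristic representation
$$G_1(y,v) = {\bf 1}_{v_y > 0} \int_{-1}^y e^{-\frac{(\nu_0 + \epsilon)(y-y')}{v_y}} \frac{\RF(y',v)}{v_y}\,dy' + {\bf 1}_{v_y < 0} \int_y^1 e^{-\frac{(\nu_0 + \epsilon)(y-y')}{v_y}} \frac{\RF(y',v)}{v_y}\,dy'.$$
The $v_y^{-1}$ factor is absorbed by the exponential via \eqref{bfr}, so $G_1 \in \RX_{N_0}$ with norm controlled by that of $\RF$, and since the kernel depends only on $v_y$ the oddness in $v_x$ is inherited. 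This defines a bounded solution operator $\mathcal{T}_0 : \RX_{N_0} \to \RX_{N_0}$.

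Next I would propagate the existence in $\sigma$ by a short-step bootstrap. Assuming the solution operator $\mathcal{T}_{\sigma_0}$ is already available for some $\sigma_0 \in [0,1)$, for $\sigma = \sigma_0 + \tau$ I rewrite \eqref{G1-it} as the fixed-point equation $G_1 = \mathcal{T}_{\sigma_0}(\tau K G_1 + \RF)$. The a priori bound \eqref{ubdG1} applied to $\mathcal{T}_{\sigma_0}$, together with the mapping property $\sum_{m\leq N_0}\|w_q \partial_{v_x}^m KG\|_{L^\infty} \lesssim \|G\|_{\RX_{N_0}}$ that follows from Lemma \ref{Kop} (and the reasoning in Lemma \ref{Ga} showing that $\partial_{v_x}^m$ does not spawn other velocity derivatives when commuted through $K$), shows this map is a contraction on $\RX_{N_0}$ as soon as $\tau \leq \sigma_\ast$ for some $\sigma_\ast > 0$ depending only on $\RC_0$, $N_0$, $q$ — crucially independent of $\sigma_0$ and of $\epsilon$. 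Moreover $K$ commutes with the reflection $v_x \mapsto -v_x$ because $\mu$ and the Maxwell molecule cross section are invariant under it, so the contraction preserves the odd-in-$v_x$ subspace. Banach's fixed-point theorem then produces $\mathcal{T}_{\sigma_0 + \tau}$. Iterating at most $\lceil 1/\sigma_\ast \rceil$ times carries the construction from $\sigma = 0$ up to $\sigma = 1$. The final estimate \eqref{G1-rf} follows by applying \eqref{ubdG1} once to the solution produced, and uniqueness is obtained by feeding the difference of two solutions with identical $\RF$ back into \eqref{ubdG1}.

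The main obstacle I foresee is that the iteration terminates only if the step size $\sigma_\ast$ can be chosen uniformly in $\sigma_0$. This is exactly the role of the $\sigma$- and $\epsilon$-uniform constant $\RC_0$ in Lemma \ref{lem.stap}: without this uniformity, $\sigma_\ast$ could shrink along the continuation and fail to cover the interval $[0,1]$ in finitely many steps. A secondary technical nuisance is to verify that $K$ preserves the odd-in-$v_x$ subspace at the level of all derivatives $\partial_{v_x}^m$, but this is immediate from the $v_x$-reflection symmetry of the kernel already exploited in Lemmas \ref{Kop} and \ref{Ga}.
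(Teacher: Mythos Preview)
Your proposal is correct and follows essentially the same bootstrap-in-$\sigma$ strategy as the paper: explicit solution at $\sigma=0$, then a contraction/iteration step of uniform size $\sigma_\ast$ (guaranteed by the $\sigma$-independent constant $\RC_0$ from Lemma~\ref{lem.stap}) to reach $\sigma=1$ in finitely many steps. The paper phrases the short-step argument as convergence of an explicit Picard sequence rather than invoking Banach's fixed-point theorem, but the content is identical.
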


\begin{proof}
The proof is based on a bootstrap argument in the following three steps.

\medskip
\noindent\underline{{\it Step 1. Existence for $\si=0$.}} If $\si=0$, then \eqref{G1-it} and \eqref{G1bd-it} are reduced to
\begin{align}\notag
\eps G_1+v_y\pa_yG_1+\nu_0G_1=\RF,
\end{align}
and
\begin{align}\notag
G_1(\pm1,v)|_{v_y\lessgtr0}=0,
\end{align}
which has a unique explicit solution
\begin{equation}\label{def.G1noK}
G_1(y,v)={\bf 1}_{v_y>0}\int_{-1}^ye^{-\frac{(\nu_0+\eps)(y-y')}{v_y}}v_y^{-1}\RF(y',v)dy'+{\bf 1}_{v_y<0}\int_{y}^1e^{-\frac{(\nu_0+\eps)(y-y')}{v_y}}v_y^{-1}\RF(y',v)dy'.
\end{equation}
Moreover, one sees that $G_1(-v_x)=-G_1(v_x)$ according to \eqref{RF-as},
and a direct calculation implies
\begin{align}\notag
\sum\limits_{0\leq m\leq N_0}\|w_{q}\pa_{v_x}^{m}G_1\|_{L^\infty}
\leq
\RC_0\sum\limits_{0\leq m\leq N_0}\|w_q\pa^m_{v_x}\RF\|_{L^\infty}.
\end{align}
\noindent\underline{{\it Step 2. Existence for any $\si\in[0,\si_\ast]$ with some $\si_\ast>0$.}}
Suppose $\si\in(0,1]$, we consider a more general equation
\begin{align}\label{G1-it-si1}
\eps G_1+v_y\pa_yG_1+\nu_0G_1=\si KG_1+\RF,
\end{align}
with
\begin{align}\label{G1bd-it-si1}
G_1(\pm1,v)|_{v_y\lessgtr0}=0.
\end{align}
To solve this boundary value problem, we design the following approximation equations
\begin{align}\notag
\eps G^{n+1}_1+v_y\pa_yG^{n+1}_1+\nu_0G^{n+1}_1=\si KG^n_1+\RF,
\end{align}
with
\begin{align}\notag
G^{n+1}_1(\pm1,v)|_{v_y\lessgtr0}=0,
\end{align}
starting from $G_1^0=0.$ Once $G_1^n$ is given, $G_1^{n+1}$ is well defined by step 1  and satisfies the estimate
\begin{align}\label{G1-s1-es}
\sum\limits_{0\leq m\leq N_1}\|w_{q}\pa_{v_x}^{m}G^{n+1}_1\|_{L^\infty}
\leq& \RC_0\si\sum\limits_{0\leq m\leq N_1}\|w_{q}\pa_{v_x}^{m}KG^n_1\|_{L^\infty}+
\RC_0\sum\limits_{0\leq m\leq N_0}\|w_q\pa^m_{v_x}\RF\|_{L^\infty}\notag\\
\leq& \RC_0\RC_1\si\sum\limits_{0\leq m\leq N_1}\|w_{q}\pa_{v_x}^{m}G^n_1\|_{L^\infty}+
\RC_0\sum\limits_{0\leq m\leq N_0}\|w_q\pa^m_{v_x}\RF\|_{L^\infty},
\end{align}
where $\RC_1>0$ depends only on $K$. If we choose $\si_\ast>0$ such that $\RC_0\RC_1\si_\ast\leq\frac{1}{2}$, then \eqref{G1-s1-es} implies
\begin{align}\label{G1-s1-es2}
\sum\limits_{0\leq m\leq N_0}\|w_{q}\pa_{v_x}^{m}G^{n}_1\|_{L^\infty}
\leq
2\RC_0\sum\limits_{0\leq m\leq N_0}\|w_q\pa^m_{v_x}\RF\|_{L^\infty},
\end{align}
for any $n\geq0.$ Furthermore, one can also show that for $\si\in[0,\si_\ast]$,
\begin{align}
\|[G^{n+1}_1-G^{n}_1]\|_{\RX_{N_0}}
\leq& \RC_0\RC_1\si\|[G^{n}_1-G^{n-1}_1]\|_{\RX_{N_0}}
\leq\frac{1}{2}\|[G^{n}_1-G^{n-1}_1]\|_{\RX_{N_0}},\label{G1-s1-cau}
\end{align}
which implies that $G_1^{n}\rightarrow G_1$ strongly in $\RX_{N_0}$. In addition, it is easy to see $G_1^{n+1}(-v_x)=-G_1^{n+1}(v_x)$
if $G_1^{n}(-v_x)=-G_1^{n}(v_x)$ holds.
Thus, for $\si\in[0,\si_\ast]$, there exists a unique solution $G_1\in\RX_{N_0}$ to the problem \eqref{G1-it-si1}
and \eqref{G1bd-it-si1}. Actually, the {\it a priori} estimate \eqref{ubdG1} implies that we still have the bound
\begin{align*}%\label{G1-s1-es2}
\sum\limits_{0\leq m\leq N_0}\|w_{q}\pa_{v_x}^{m}G_1\|_{L^\infty}
\leq
\RC_0\sum\limits_{0\leq m\leq N_0}\|w_q\pa^m_{v_x}\RF\|_{L^\infty}.
\end{align*}
In other words, it follows
\begin{align}\label{RL-si1}
\|\RL_{\si_\ast}^{-1}\RF\|_{\RX_{N_0}}\leq \RC_0\|\RF\|_{\RX_{N_0}}.
\end{align}

\medskip
\noindent\underline{{\it Step 3. Existence for $\si\in[0,2\si_\ast]$.}} By using \eqref{RL-si1} and performing similar calculations as for obtaining \eqref{G1-s1-es2} and \eqref{G1-s1-cau},  one can see that there exists a unique solution $G_1\in\RX_{N_0}$ to the lifted equation
\begin{align}\notag
\eps G_1+v_y\pa_yG_1+\nu_0G_1-\si_\ast K G_1=\si KG_1+\RF,\quad G_1(\pm1,v)|_{v_y\lessgtr0}=0,
\end{align}
%and
%\begin{align}\notag
%.
%\end{align}
with $\si\in[0,\si_\ast]$.
Therefore, the solution mapping $\RL^{-1}_{2\si_\ast}$ is also well-defined on $\RX_{N_0}$ and the estimate \eqref{G1-rf} holds  for $\si=2\si_\ast$.

Finally, repeating the above procedure step by step,
one can reach $\si=1$ so that $\RL^{-1}_1$ exists and \eqref{G1-rf} also follows simultaneously. This completes the proof of Lemma \ref{ex-G1-rf}.
\end{proof}

\noindent{\it Proof of Proposition \ref{G1.lem}:}
By
setting $\RF=-v_xv_y\sqrt{\mu}$ in Lemma \ref{ex-G1-rf}, we see that for any $\eps>0$ there exists a unique solution $G_1^\eps\in\RX_{N_0}$
to the boundary value problem
\begin{align}\notag
\eps G^\eps_1+v_y\pa_yG^\eps_1+LG^\eps_1=-v_xv_y\sqrt{\mu},\quad
%\end{align}
%and
%\begin{align}\notag
G^\eps_1(\pm1,v)|_{v_y\lessgtr0}=0.
\end{align}
Notice that $G_1^\eps$ satisfies \eqref{G1-pp} and the estimate
\begin{align}%\label{G1-eps-bd}
\|G_1^\eps\|_{\RX_{N_0}}\leq \tilde{C}_1,\notag
\end{align}
where $\tilde{C}_1>0$ is independent of $\eps.$
Furthermore, we
define a positive sequence $\{\eps_n\}_{n=1}^\infty$ such that $|\eps_{n+1}-\eps_{n}|\leq 2^{-n}$,
then $\eps_n\rightarrow0^+$ as $n\to+\infty$. We consider the following approximation equations
\begin{align}\notag
\eps_n G^{\eps_n}_1+v_y\pa_yG^{\eps_n}_1+LG^{\eps_n}_1=-v_xv_y\sqrt{\mu},
\end{align}
with
\begin{align}\notag
G^{\eps_n}_1(\pm1,v)|_{v_y\lessgtr0}=0.
\end{align}
Then letting $\bar{\RG}_{n+1}=G^{\eps_{n+1}}_1-G^{\eps_n}_1$, one sees that $\bar{\RG}_{n+1}$ satisfies
\begin{align}%\label{G1-eps}
\eps^{n+1} \bar{\RG}_{n+1}+v_y\pa_y\bar{\RG}_{n+1}+LG\bar{\RG}_{n+1}=-(\eps^{n+1}-\eps^{n})G^{\eps_n}_1,\notag
\end{align}
with
\begin{align}%\label{G1bd-eps}
\bar{\RG}_{n+1}|_{v_y\lessgtr0}=0.\notag
\end{align}
Thanks to Lemma \ref{ex-G1-rf}, it follows that
\begin{align}%\label{G1-eps-bd}
\|\bar{\RG}_{n+1}\|_{\RX_{N_0}}\leq \RC_0|\eps^{n+1}-\eps^{n}|\|G^{\eps_n}_1\|_{\RX_{N_0}}\leq \RC \tilde{C}_1|\eps^{n+1}-\eps^{n}|.\notag
\end{align}
This means that $\{G^{\eps_n}_1\}_{n=1}^\infty$ is a Cauchy sequence in $\RX_{N_0}$. Thus,
letting $n\to \infty$, the limit function
denoted by $G_1$ is the unique solution of \eqref{G1} and \eqref{G1bd}. Moreover, $G_1$ satisfies \eqref{G1-pp} and the bound \eqref{G1-bdd}. The proof of Proposition \ref{G1.lem} is then completed.
\qed
%\end{proof}

\section{Steady problem: remainder}\label{sec.spr}

Based on Proposition \ref{G1.lem}, one can further show the following existence result for the remainder $G_R$. Recall the steady problem \eqref{Fst} as well as \eqref{Fst.ex} and \eqref{Fst.ex.mc}.

\begin{proposition}\label{Gr.lem}
The boundary value problem \eqref{Gr} and \eqref{Grbd} admits a unique solution $G_R=G_R(y,v)$ with $\widetilde{G}_R=\sqrt{\mu}G_R$ satisfying
\begin{align}\notag
\int_{-1}^1\int_{\R^3}\widetilde{G}_R(y,v)\,dvdy=0.
%\ \|w_{q}\pa_{v_x}^m\widetilde{G}_R\|_\infty\leq \tilde{C}_R,
\end{align}
And  there is an integer $q_0>0$ such that for any integer $q\geq q_0$, there is $\al_0=\al_0(q)>0$ depending on $q$ such that for any $\al\in (0,\al_0)$ and any integer $m\geq 0$, it holds that
\begin{equation}
%\label{ }
\|w_q\pa_{v_x}^m\widetilde{G}_R\|_{L^\infty}\leq \tilde{C}_R,\notag
\end{equation}
where $\tilde{C}_R>0$ is a constant depending only on $m$ and $q$ but independent of $\al$.
%
%for any integers $m\geq 0$ arbitrary integer $m\geq0$ and some constants $\tilde{C}_R>0$ depending on $\tilde{C}_1$.
\end{proposition}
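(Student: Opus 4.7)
The plan is to construct $G_R$ via the Caflisch-type decomposition $\sqrt{\mu}G_R = G_{R,1} + \sqrt{\mu}G_{R,2}$ alluded to in the introduction, since the linear term $\tfrac{\alpha}{2}v_xv_y G_R$ has no exponential weight to absorb its quadratic-in-$v$ growth and therefore rules out a direct polynomial-weight $L^\infty$ estimate on $G_R$ itself. The system to be analysed is \eqref{bvpgr1}--\eqref{bvpgr2}: $G_{R,1}$ carries polynomial weight $w_q$ and receives the cut-off $\chi_M\CK G_{R,1}$ as a forcing (whose smallness for large $M,q$ is exactly Lemma \ref{CK}), while $G_{R,2}$ carries the Gaussian weight so that $\tfrac{\alpha}{2}v_xv_y\sqrt{\mu}G_{R,2}$ is bounded and can be fed back into the $G_{R,1}$ equation as a source controlled by $\alpha$.

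To solve the coupled system rigorously, I would introduce a doubly parametrized version with a damping $\varepsilon>0$ and a homotopy parameter $\sigma\in[0,1]$ in front of the nonlocal collision pieces, in analogy with the scheme used for $G_1$ in Section \ref{sp.sec}. The first task is an \emph{a priori} $L^\infty$-estimate uniform in $(\varepsilon,\sigma)$: applying the weighted characteristic representation on $G_{R,1}$ (as in the mild formulation used for $G_1$), iterating once to use the smoothing of $\Fk_w$, and invoking Lemma \ref{CK} together with the smallness of $\alpha$, one obtains that $\|w_q G_{R,1}\|_{L^\infty}$ plus the $v_x$-derivatives is controlled by an $L^2$ norm of $G_{R,2}$ plus the source. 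For the $G_{R,2}$ equation one runs the Guo $L^\infty$ argument for the diffuse boundary problem, replacing stochastic cycles by the mild representation \eqref{H2m} and controlling grazing boundary pieces through the trace lemma \ref{ukai}. The $v_x$-derivative estimates follow by differentiating, using that $\pa_{v_x}$ commutes cleanly with the transport operator $v_y\pa_y - \alpha v_y\pa_{v_x}$ up to a commutator term and Lemma \ref{Ga}.

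Once the uniform-in-$(\varepsilon,\sigma)$ estimates are in place, existence for each fixed $\varepsilon$ is produced by the same bootstrap in $\sigma$ as for $G_1$: start at $\sigma=0$, where the characteristic representation gives a direct solution, iterate on segments $[k\sigma_\ast,(k+1)\sigma_\ast]$ with $\sigma_\ast$ dictated by the operator norm of $\CK$, and reach $\sigma=1$. This yields, for each $\varepsilon>0$, a unique solution $(G_{R,1}^\varepsilon,G_{R,2}^\varepsilon)$ satisfying the weighted $L^\infty$ estimates, and passing to the limit $\varepsilon\to 0$ along a Cauchy subsequence (mimicking the argument ending the proof of Proposition \ref{G1.lem}) produces the desired $G_R$. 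The odd-in-$v_x$ structure of $G_1$ guarantees that the source terms $v_y\pa_{v_x}G_1 - \tfrac12 v_xv_y G_1 + \Gamma(G_1,G_1)$ together with \eqref{Fst.ex.mc} are compatible with the mass condition on $\widetilde{G}_R$, which propagates through the scheme.

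The main obstacle will be closing the $L^2$ estimate for $G_{R,2}$ uniformly in $\varepsilon$ so as to control the coupling source $-\tfrac{\alpha}{2}\sqrt{\mu}v_xv_yG_{R,2}$ in the $G_{R,1}$ equation. Because $\FP_0 G_{R,2}$ has no $\varepsilon$-independent damping in \eqref{bvpgr2}, the standard energy identity gives only microscopic coercivity $\delta_0\|\FP_1 G_{R,2}\|^2$ plus the boundary outflow. To gain $\|\FP_0 G_{R,2}\|^2$ uniformly in $\varepsilon$, I would apply the dual/test-function argument of \cite{EGKM-13}: construct test functions $\Psi_a,\Psi_b,\Psi_c$ solving appropriate Poisson-type problems on $(-1,1)$ with boundary conditions adapted to the diffuse reflection, and test the transport equation against them to recover $L^2$ bounds on the components $(a,\Fb,c)$ of $\FP_0 G_{R,2}$ in terms of $\|\FP_1 G_{R,2}\|$, boundary traces, and $\|w_q G_{R,1}\|_{L^\infty}$. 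The smallness of $\alpha$ (coupled to the choice of $q\geq q_0$ and $M=M(q)$ from Lemma \ref{CK}) is what finally closes the loop between the $L^\infty$ bound on $G_{R,1}$ and the $L^2$ bound on $G_{R,2}$ in the form $\|w_q\pa_{v_x}^m\widetilde{G}_R\|_{L^\infty}\leq \tilde C_R$, completing the proof.
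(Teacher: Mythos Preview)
Your overall strategy matches the paper's: Caflisch decomposition, $(\varepsilon,\sigma)$-parametrized linear problem, $L^\infty$ a priori estimates via the mild formulation and Lemma~\ref{CK}, $\sigma$-bootstrap, and the dual test-function argument for macroscopic $L^2$ control. There is, however, one genuine technical point you gloss over. You propose to apply the dual argument directly to the $G_{R,2}$ equation in order to recover $\|\FP_0 G_{R,2}\|$, but this fails for the $a$-component: the Neumann problem $\phi_a''=a_2$, $\phi_a'(\pm1)=0$ is solvable only if $\int_{-1}^1 a_2\,dy=0$, and mass conservation holds for the full $G_R$ (cf.\ \eqref{m-m}), not for $G_{R,2}$ alone. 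The paper's fix is to run the dual argument on the equation for $G_R$ itself (see \eqref{Grn1}--\eqref{CGtt}), obtain $\|[a,\Fb,c]\|$ for $\FP_0 G_R$, and then recover $\|[a_2,\Fb_2,c_2]\|$ via the splitting $a=a_1+a_2$ together with the trivial bound $\|[a_1,\Fb_1,c_1]\|\le C\|w_q G_{R,1}\|_{L^\infty}$ (cf.\ \eqref{m1-if}, \eqref{abc2.es}).

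A second, more structural point: the $(\varepsilon,\sigma)$-scheme solves only the \emph{linear} coupled system \eqref{pals}; to handle the genuinely nonlinear source $\alpha^2 Q(\sqrt{\mu}G_R,\sqrt{\mu}G_R)+\alpha\{Q(\sqrt{\mu}G_R,\sqrt{\mu}G_1)+\cdots\}$ the paper inserts an outer Picard iteration in $n$ (equations \eqref{Gr1n}--\eqref{Gr2nbd}), proves the uniform bound \eqref{umbd} by induction, and then uses the smallness of $\alpha$ to make $[G_{R,1}^n,G_{R,2}^n]\mapsto[G_{R,1}^{n+1},G_{R,2}^{n+1}]$ a contraction in $\FX_{\alpha,N_0}$ (cf.\ \eqref{cau}). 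Your sketch conflates this layer with the $\sigma$-bootstrap, which would leave the nonlinear closure unexplained.
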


%In what follows, we prove Proposition \ref{Gr.lem} only, because the proof of Lemma \ref{G1.lem} is similar and much easier.

\subsection{Caflisch's decomposition}

To prove Proposition \ref{Gr.lem}, we follow the strategy of the proof in \cite{DL-2020} for treating the shear force term in the framework of perturbation. In fact, notice that there is a  growth term $\frac{\al}{2}v_xv_yG_{R}$ in the equation \eqref{Gr}. To treat this growth in velocity, the key point is to use the Caflisch's decomposition \cite{Ca-1980} and an algebraic weighted estimate introduced  originally by Arkeryd-Esposito-Pulvirenti \cite{AEP-87}. For the purpose, we first decompose the remainder $G_R$ as
\begin{equation}\label{def.split}
\sqrt{\mu}G_R=G_{R,1}+\sqrt{\mu}G_{R,2},
\end{equation}
where $G_{R,1}$ and $G_{R,2}$ satisfy the following two boundary value problems, respectively,
\begin{align}\label{Gr1}
v_y&\pa_yG_{R,1}-\al v_y\pa_{v_x}G_{R,1}+\nu_0 G_{R,1}\notag\\=&\chi_{M}\CK G_{R,1}-\frac{\al}{2}\sqrt{\mu}v_xv_yG_{R,2}
-\frac{1}{2}\sqrt{\mu}v_xv_yG_{1}+\sqrt{\mu}v_y\pa_{v_x}G_{1}
+ Q(\sqrt{\mu}G_1,\sqrt{\mu}G_1)\notag\\&+\al\{Q(\sqrt{\mu}G_R,\sqrt{\mu}G_1)+Q(\sqrt{\mu}G_1,\sqrt{\mu}G_R)\}
+\al^2Q(\sqrt{\mu}G_R,\sqrt{\mu}G_R),
\ y\in(-1,1),\ v\in\R^3,
\end{align}
\begin{align}\label{Gr1bd}
G_{R,1}(\pm1,v)|_{v_y\lessgtr0}=0,\ v\in\R^3,
\end{align}
and
\begin{align}\label{Gr2}
v_y&\pa_yG_{R,2}-\al v_y\pa_{v_x}G_{R,2}+LG_{R,2}=(1-\chi_{M})\mu^{-\frac{1}{2}}\CK G_{R,1},\ y\in(-1,1),\ v\in\R^3,
\end{align}
\begin{align}\label{Gr2bd}
G_{R,2}(\pm1,v)|_{v_y\lessgtr0}=\sqrt{2\pi \mu}\dis{\int_{v_y\gtrless0}}\sqrt{\mu}G_{R}(\pm1,v)|v_y|dv,\ v\in\R^3.
\end{align}
Here $\chi_{M}(v)$ is a non-negative smooth cutoff function such that
\begin{align}%\label{def.chiM}
\chi_{M}(v)=\left\{\begin{array}{rll}
1,&\ |v|\geq M+1,\\[2mm]
0,&\ |v|\leq M,
\end{array}\right.\notag
\end{align}
and $\CK$ is defined by \eqref{sp.cL}.
The existence of \eqref{Gr1}, \eqref{Gr1bd}, \eqref{Gr2}  and  \eqref{Gr2bd} can be constructed via the approximation sequence by iteratively solving the following system
\begin{align}\label{Gr1n}
\eps G^{n+1}_{R,1}&+v_y\pa_yG^{n+1}_{R,1}-\al v_y\pa_{v_x}G^{n+1}_{R,1}+\nu_0 G^{n+1}_{R,1}\notag\\=&\chi_{M}\CK G^{n+1}_{R,1}-\frac{\al}{2}\sqrt{\mu}v_xv_yG^{n+1}_{R,2}
-\frac{1}{2}\sqrt{\mu}v_xv_yG_{1}+\sqrt{\mu}v_y\pa_{v_x}G_{1}
+Q(\sqrt{\mu}G_1,\sqrt{\mu}G_1)\notag\\&+\al\{Q(\sqrt{\mu}G^{n}_R,\sqrt{\mu}G_1)+Q(\sqrt{\mu}G_1,\sqrt{\mu}G^{n}_R)\}
+\al^2Q(\sqrt{\mu}G^{n}_R,\sqrt{\mu}G^{n}_R),
\ y\in(-1,1),\ v\in\R^3,
\end{align}
\begin{align}\label{Gr1nbd}
G^{n+1}_{R,1}(\pm1,v)|_{v_y\lessgtr0}=0,\ v\in\R^3,
\end{align}
and
\begin{align}\label{Gr2n}
\eps &G^{n+1}_{R,2}+v_y\pa_yG^{n+1}_{R,2}-\al v_y\pa_{v_x}G^{n+1}_{R,2}+LG^{n+1}_{R,2}=(1-\chi_{M})\mu^{-\frac{1}{2}}\CK G^{n+1}_{R,1},\ y\in(-1,1),\ v\in\R^3,
\end{align}
\begin{align}\label{Gr2nbd}
G^{n+1}_{R,2}(\pm1,v)|_{v_y\lessgtr0}=\sqrt{2\pi \mu}\dis{\int_{v_y\gtrless0}}\sqrt{\mu}G^{n+1}_{R}(\pm1,v)|v_y|dv,\ v\in\R^3,
\end{align}
for a small parameter $\eps>0$, where we have set $[G^{0}_{R,1},G^{0}_{R,2}]=[0,0]$ for $n=0$.

The proof of Proposition \ref{Gr.lem} follows by three steps. First, similarly for treating the existence of $G_1$, we introduce a modified coupled boundary value problem with two parameters $\eps>0$ and $0\leq \si\leq 1$. This
boundary value problem is directly solvable via the characteristic method in case of $\si=0$ corresponding to the homogeneous inflow data, and we then lift the value of $\si$ from $\si=0$ for the zero inflow data to $\si=1$ for the full diffuse reflection boundary condition by a bootstrap argument. Second, we establish the limit $n\rightarrow+\infty$ for any fixed parameter $\eps>0$. Third, we pass the limit $\eps\rightarrow 0^+$ to obtain the
desired solution which satisfies \eqref{Gr1}, \eqref{Gr1bd}, \eqref{Gr2}  and  \eqref{Gr2bd}. As a result, with the help of \eqref{def.split}, we get the solution to the original boundary value problem \eqref{Gr} and \eqref{Grbd}.

%{\bf Step 1. taking $n\rightarrow+\infty$ by fixing $\eps>0$.}

\subsection{A priori estimates with  parameters $\epsilon$ and $\si$}

Let us first show that $[G^{n+1}_{R,1},G^{n+1}_{R,2}]$ is well-defined once $[G^{n}_{R,1},G^{n}_{R,2}]$ is given.
To do this, we apply the method of contraction mapping.
We define the  linear vector operator
parameterized by $\si\in[0,1]$ as follows:
$$
\mathscr{L}_{\si}[\CG_1,\CG_2]=[\mathscr{L}^1_{\si},\mathscr{L}^2_{\si}][\CG_1,\CG_2],
$$
where
\begin{eqnarray*}%\label{laop}
\mathscr{L}^1_{\si}[\CG_1,\CG_2]=\left\{\begin{array}{rll}
\begin{split}&\eps\CG_1+v_y\pa_y\CG_1-\al v_y\pa_{v_x}\CG_1
+\nu_0\CG_1-\si \chi_{M}\CK G_{1}
+\al\frac{v_xv_y}{2}\sqrt{\mu}\CG_2,\ y\in(-1,1),\\
&\CG_{1}(\pm1,v){\bf 1}_{\{v_y\lessgtr0\}},
\end{split}
\end{array}\right.
\end{eqnarray*}
and
\begin{eqnarray*}%\label{laop}
\mathscr{L}^2_{\si}[\CG_1,\CG_2]=\left\{\begin{array}{rll}
\begin{split}&\eps\CG_2+v_y\pa_y\CG_2-\al v_y\pa_{v_x}\CG_2
+\nu_0\CG_2-\si  K\CG_2-\si (1-\chi_{M})\mu^{-\frac{1}{2}}\CK \CG_1,\ y\in(-1,1),\\
&\CG_{2}(\pm1,v){\bf 1}_{\{v_y\lessgtr0\}}-\si \sqrt{2\pi \mu}\dis{\int_{v_y\gtrless0}}(\CG_1+\sqrt{\mu}\CG_2)(\pm1,v)|v_y|dv.
\end{split}
\end{array}\right.
\end{eqnarray*}
%where ${\bf 1}_{\{v_y\lessgtr0\}}$ is the characteristic function defined on the set $\{v|v_y\lessgtr0\}$.
%and
%\begin{align}
%\CG_{1}(\pm1,v)|_{v_y\lessgtr0}=0,\
%\CG_{2}(\pm1,v)|_{v_y\lessgtr0}=\si_2\sqrt{2\pi \mu}\dis{\int_{v_y\gtrless0}}\sqrt{\mu}\CG_{2}(\pm1,v)|v_y|dv,\ v\in\R^3.
%\end{align}
We then consider the solvability of the following coupled linear system
\begin{align}\label{pals}
\left\{\begin{array}{rll}
&\mathscr{L}^1_{\si}[\CG_1,\CG_2]=\CF_1,\ \mathscr{L}^2_{\si }[\CG_1,\CG_2]=\CF_2,\ y\in(-1,1),\\[2mm]
&\mathscr{L}^1_{\si }[\CG_1,\CG_2]=0,\ \mathscr{L}^2_{\si }[\CG_1,\CG_2]=\CF_{2,b},\ y=\pm1,
 \end{array}\right.
\end{align}
where $\CF_1$, $\CF_1$ and $\CF_{2,b}$ are given, and $\lag\CF_1,1\rag+\lag\CF_2,\sqrt{\mu}\rag=0$.
In the rest of the proof, for  brevity, we denote
\begin{align}\notag
\tilde{\CF}_1=\left\{\begin{array}{rll}
&\CF_1,\ y\in(-1,1),\\
&0,\ y=\pm1,\end{array}\right.\ \ \
\tilde{\CF}_2=\left\{\begin{array}{rll}
&\CF_2,\ y\in(-1,1),\\
&\CF_{2,b}(\pm1,v),\ y=\pm1.\end{array}\right.
\end{align}

In what follows, we look for solutions to the system \eqref{pals} in the Banach space
\begin{align}
\FX_{\al,N_0}=\bigg\{[\CG_1,\CG_2]&\big|\sum\limits_{0\leq m\leq N_0}\|w_{q}\pa_{v_x}^m[\CG_1,\CG_2]\|_{L^\infty}<+\infty
\bigg\},
\label{def.sX}
\end{align}
supplemented with the norm
$$
\|[\CG_1,\CG_2]\|_{\FX_{\al,N_0}}=\sum\limits_{0\leq m\leq N_0}\left\{\|w_{q}\pa_{v_x}^m\CG_1\|_{L^\infty}
+\|w_{q}\pa_{v_x}^m\CG_2\|_{L^\infty}\right\}.
$$
Let us now deduce the {\it a priori} estimate for the parameterized linear system \eqref{pals}.

\begin{lemma}[{\it a priori} estimate]\label{lifpri}
Let $[\CG_1,\CG_2]\in\FX_{\al,N_0}$ with $\al>0$ and $N_0\geq 0$ be a solution to \eqref{pals} with $\eps>0$ suitably small and $\si \in[0,1]$, and let $[\tilde{\CF}_1,\tilde{\CF}_2]\in\FX_{\al,N_0}$ with $\lag\CF_1,1\rag+\lag\CF_2,\sqrt{\mu}\rag=0$. There is $q_0>0$ such that for any $q\geq q_0$ arbitrarily large, there are $\al_0=\al_0(q)>0$ and large $M=M(q)>0$ such that for any $0<\al<\al_0$,  the solution $[\CG_1,\CG_2]$ satisfies the following estimate
\begin{align}\label{Lif.es1}
\|[\CG_1,\CG_2]\|_{\FX_{\al,N_0}}=&\|\mathscr{L}_{\si}^{-1}[\tilde{\CF}_1,\tilde{\CF}_2]\|_{\FX_{\al,N_0}}\notag\\
\leq& C_\mathscr{L}\sum\limits_{0\leq m\leq N_0}\left\{\|w_{q}\pa_{v_x}^m\CF_1\|_{L^\infty}
+\|w_{q}\pa_{v_x}^m\CF_2\|_{L^\infty}+\|w_q\CF_{2,b}\|_{L^\infty}\right\},
\end{align}
where the constant $C_\mathscr{L}>0$ may depend on $\eps$ but not on $\si$ and $\al$.
\end{lemma}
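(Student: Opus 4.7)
The plan is to combine an $L^\infty$ bound on the pair $[\CG_1,\CG_2]$, obtained by integrating along the characteristics of $v_y\pa_y-\al v_y\pa_{v_x}$ and using stochastic-cycle arguments for the diffuse reflection of $\CG_2$, with an auxiliary $L^2$ estimate derived by testing the system against itself and suitable macroscopic dual multipliers, together with boundary-trace control from Ukai's theorem (Lemma \ref{ukai}). The two structural features that make this work are the smallness of $w_q\chi_M\CK$ in $L^\infty_v$ at large velocities (Lemma \ref{CK}), which lets $\chi_M\CK\CG_1$ be absorbed perturbatively, and the smallness of $\al$, which tames the growth-in-velocity coupling $\tfrac{\al}{2}v_xv_y\sqrt{\mu}\CG_2$ in the $\CG_1$-equation. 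Since $\pa_{v_x}$ commutes with the transport operator $v_y\pa_y-\al v_y\pa_{v_x}$, estimates for $m\geq 1$ follow from the $m=0$ case up to lower-order commutator terms controlled by Lemma \ref{Ga}, so I will concentrate on $m=0$.

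First I would bound $\CG_1$ in $L^\infty$. The characteristics $(Y(s),V(s))$ with $\dot Y=v_y$, $\dot V_x=-\al v_y$ hit the inflow boundary $\{v_y\lessgtr 0\}$ at $y=\pm1$ in finite time whenever $v_y\ne 0$, and there $\CG_1$ vanishes. Integrating the $\mathscr{L}^1_\si$-equation along these curves produces a mild formula whose source consists only of $\si\chi_M\CK\CG_1$, $-\tfrac{\al}{2}v_xv_y\sqrt{\mu}\CG_2$ and $\CF_1$. Multiplying by $w_q$, applying Lemma \ref{CK} with $M=q^2$ to bound $\sup_{|v|\ge M}w_q|\chi_M\CK\CG_1|\le \tfrac{C}{q}\|w_q\CG_1\|_{L^\infty}$, and using the pointwise bound $w_q|v_xv_y|\sqrt{\mu}\le C$, I obtain
\begin{equation*}
\|w_q\CG_1\|_{L^\infty}\le \tfrac{C}{q}\|w_q\CG_1\|_{L^\infty}+C\al\|w_q\CG_2\|_{L^\infty}+C_M\|\CG_1\|+C\|w_q\CF_1\|_{L^\infty}.
\end{equation*}
Choosing $q\ge q_0$ large absorbs the first term, leaving control of $\|w_q\CG_1\|_{L^\infty}$ by $\al\|w_q\CG_2\|_{L^\infty}$, the $L^2$ norm of $\CG_1$, and the data.

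Next I would bound $\CG_2$ by iterating the mild formulation under the diffuse reflection, using stochastic-cycle estimates so that after finitely many bounces the self-coupling of $\CG_2$ at the boundary becomes smaller than any preassigned $\delta>0$ (at the cost of a large constant $C_\delta$). The volume source $(1-\chi_M)\mu^{-1/2}\CK\CG_1$ is supported in $\{|v|\le M+1\}$ and is pointwise controlled by $C_M|\CG_1|$ via Lemma \ref{Kop}, while $K\CG_2$ is handled by the classical near-/far-velocity splitting as in the proof of Lemma \ref{lem.stap}. The outcome is a bound of the form
\begin{equation*}
\|w_q\CG_2\|_{L^\infty}\le \delta\|w_q\CG_2\|_{L^\infty}+C_\delta\bigl(\|\CG_1\|+\|\CG_2\|+\|w_q\CF_2\|_{L^\infty}+\|w_q\CF_{2,b}\|_{L^\infty}\bigr)+C\|w_q\CG_1\|_{L^\infty},
\end{equation*}
after which $\delta$ small absorbs the first term. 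The $L^2$ pieces $\|\CG_1\|$, $\|\CG_2\|$ come from testing the first equation against $\CG_1$ and the second against $\CG_2$, exploiting $\lag L\CG_2,\CG_2\rag\ge \de_0\|\FP_1\CG_2\|^2$ of Lemma \ref{es.L}, recovering the macroscopic modes of $\CG_2$ via elliptic dual test functions exactly as in Lemma \ref{lem.stap} (the orthogonality $\lag\CF_1,1\rag+\lag\CF_2,\sqrt{\mu}\rag=0$ fixes the zeroth mode), and controlling boundary traces through \eqref{utrace}. Combining the three ingredients and choosing $\al_0=\al_0(q)$ small then yields \eqref{Lif.es1}.

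The main obstacle is the tight two-way coupling between the components: the $\CG_1$-bound gains $C\al\|w_q\CG_2\|_{L^\infty}$ from the volume shear term, while the $\CG_2$-bound acquires $\|\CG_1\|$ and $\|w_q\CG_1\|_{L^\infty}$ through both $(1-\chi_M)\mu^{-1/2}\CK\CG_1$ and the boundary integral $\sqrt{2\pi\mu}\int_{v_y\gtrless0}\CG_1|v_y|dv$. Closing the loop requires tracking constants carefully so that $q$ is large enough to kill the $C/q$ factor in the $\CG_1$-step and simultaneously $\al$ is small enough to absorb the $C\al$ factor in the $\CG_2$-step, which is the precise origin of the threshold $\al_0(q)$ in the statement. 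A secondary technical point is that $C_\mathscr{L}$ is allowed to depend on $\eps$; restoring $\eps$-uniformity is deferred to the subsequent vanishing-$\eps$ step and plays no role here.
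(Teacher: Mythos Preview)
Your overall strategy matches the paper's: a weighted $L^\infty$ estimate along characteristics with stochastic cycles for $\CG_2$, an $L^2$ estimate, and closure by choosing $q$ large and $\al$ small. Two points are worth flagging.

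First, in your $\CG_1$ bound the extra term $C_M\|\CG_1\|$ is spurious. Since $\chi_M$ is supported in $\{|v|\ge M\}$, Lemma~\ref{CK} gives $w_q|\chi_M\CK\CG_1|\le \tfrac{C}{q}\|w_q\CG_1\|_{L^\infty}$ uniformly in $v$, with no $L^2$ remainder; the paper's estimate \eqref{H1sum} reflects this. Consequently there is no need to test the first equation against $\CG_1$ at all: wherever $\CG_1$ appears in the $L^2$ analysis of $\CG_2$ it is simply bounded by $\|w_q\CG_1\|_{L^\infty}$.

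Second, and more substantively, your $L^2$ step diverges from the paper. The paper does \emph{not} use dual test functions here; it exploits the artificial damping $\eps\|\CG_2\|^2$ coming from the $\eps\CG_2$ term in $\SL^2_\si$ to absorb the full macroscopic part directly, which is precisely why $C_{\SL}$ is permitted to depend on $\eps$ (see \eqref{g2.l2}--\eqref{g2.zl2}). The dual argument you describe is what the paper carries out \emph{later}, in Subsection~\ref{sub5.4}, to obtain $\eps$-independent bounds---and there it is applied to the combined unknown $G_R$ (with $\sqrt{\mu}G_R=G_{R,1}+\sqrt{\mu}G_{R,2}$), not to $\CG_2$ alone, because mass conservation $\int_{-1}^1 a\,dy=0$ holds for $G_R$ but not for $\CG_2$. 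If you run the dual argument on $\CG_2$ directly, the Neumann problem $\phi_a''=a_{\CG_2}$, $\phi_a'(\pm1)=0$ has no solution without $\int_{-1}^1 a_{\CG_2}\,dy=0$; the orthogonality $\langle\CF_1,1\rangle+\langle\CF_2,\sqrt{\mu}\rangle=0$ constrains the combined system, not the $\CG_2$-component. So either switch to the $\eps$-damping route (simpler, and what the paper does here), or redo the dual argument on the combined quantity as in Subsection~\ref{sub5.4}.
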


\begin{proof}
The proof is divided into two steps.

\medskip
\noindent{\bf Step 1. $L^\infty$ estimates.} Let $0\leq m\leq N_0$ and $q>0$, we denote
$$[H_{1,m},H_{2,m}]=[w_{q}\pa_{v_x}^m\CG_1,w_{q}\pa_{v_x}^m\CG_2],$$
then we see that $[H_{1,m},H_{2,m}]$ satisfies
\begin{align}\label{H1k}
\eps H_{1,m}&+v_y\pa_yH_{1,m}-\al v_y\pa_{v_x}H_{1,m}+2q \al \frac{v_yv_x}{{1+|v|^2}}H_{1,m}
+\nu_0H_{1,m}-\si \chi_{M}w_{q}\CK \left(\frac{H_{1,m}}{w_{q}}\right)\notag\\&
-\si {\bf 1}_{m>0}\sum\limits_{1\leq m'\leq m}C_m^{m'}w_{q}\pa^{m'}_{v_x}(\chi_{M}\CK) \pa_{v_x}^{m-m'}\CG_1
\notag\\&+\al \sum\limits_{0\leq m'\leq m}C_m^{m'}w_q\pa_{v_x}^{m'}\left(\frac{v_xv_y}{2}\sqrt{\mu}\right)\pa_{v_x}^{m-m'}\CG_{2}
=w_{q}\pa_{v_x}^m\CF_{1},
\end{align}
\begin{align}\label{H1kbd}
H_{1,m}(\pm1,v)|_{\{v_y\lessgtr0\}}=0,
\end{align}
and
\begin{align}\label{H2k}
\eps H_{2,m}&+v_y\pa_yH_{2,m}-\al v_y\pa_{v_x}H_{2,m}+2q \al \frac{v_yv_x}{{1+|v|^2}}H_{1,m}
+\nu_0H_{2,m}-\si  w_{q}K\left(\frac{H_{2,m}}{w_{q}}\right)
\notag\\&-\si {\bf 1}_{m>0}\sum\limits_{1\leq m'\leq m}C_m^{m'}w_{q}\pa^{m'}_{v_x}K \pa_{v_x}^{m-m'}\CG_2
\notag\\&-\si \sum\limits_{0\leq m'\leq m}C_m^{m'}w_q\pa_{v_x}^{m'}\left((1-\chi_{M})\mu^{-\frac{1}{2}}\right)\pa_{v_x}^{m-m'}\CK \CG_1=w_{q}\pa_{v_x}^m\CF_2,
\end{align}
\begin{align}\label{H2kbd}
H_{2,m}(\pm1,v){\bf 1}_{\{v_y\lessgtr0\}}
-\si w_{q}\pa_{v_x}^m(\sqrt{2\pi \mu})\dis{\int_{v_y\gtrless0}}(\CG_1+\sqrt{\mu}\CG_{2})(\pm1,v)|v_y|dv=w_{q}\pa_{v_x}^{m}\CF_{2,b}(\pm1).
\end{align}
Recall the trajectory $[Y(s;t,y,v),V(s;t,y,v)]$ defined as \eqref{traj}. In addition, for $(y,v)\in[-1,1]\times\R^3$, we define the backward exit time $t_{\Fb}(y,v)$ as
\begin{align}
t_{\mathbf{b}}(y,v)=\inf\{s:y-sv_y\notin(-1,1), s>0\},\label{def-tb}
\end{align}
which is the first time at which the backward characteristic line $[Y(s; t, y, v),$ $V (s; t, y, v)]$
emerges from $(-1,1)$.
Note that at the boundary $y=\pm1$, $t_{\mathbf{b}}(y,v)$ is well-defined if $\pm v_y>0.$  For any $(y,v)$,
we use $t_{\mathbf{b}}(y,v)$ when it is well-defined.
Furthermore, we denote $y_{\mathbf{b}}(y,v)=y-t_{\mathbf{b}}(y,v)v_y\in\{-1,1\},$
and for random variable $v_k$, we define the backward time cycle
\begin{align}\label{cyc}
(t_0,y_0,v_0)=(t,y,v),\ (t_{k+1},y_{k+1},v_{k+1})=(t_k-t_{\mathbf{b}}(y_k,v_k),y_{\mathbf{b}}(y_k,v_k),v_{k+1}), \ k\geq0.
\end{align}
We also set
\begin{eqnarray*}
Y^{l}_{\mathbf{cl}}(s;t,y,v) &=&\mathbf{1}_{[t_{l+1},t_{l})}(s)%
\{y_{l}+(s-t_{l})v_{ly}\}, \\
V^{l}_{\mathbf{cl}}(s;t,y,v) &=&\mathbf{1}%
_{[t_{l+1},t_{l})}(s)(v_{lx}+\al(t_l-s)v_{ly},v_{ly},v_{lz}).
\end{eqnarray*}
Note that $[Y^{0}_{\mathbf{cl}}(s),V^{0}_{\mathbf{cl}}(s)]=[Y(s),V(s)]$ and $y_{l}=\pm1$ for $l\geq1$. Moreover, $t_l$ can be negative.

Define $
\mathcal{V}_{j}=\{v_j\in \R^{3}\ |\ v_j\cdot
n(y_{j})>0\},$
where $n(y_{j})=(0,1,0)$ if $y_{j}=1$ and $n(y_{j})=(0,-1,0)$ if $y_{j}=-1$. Let the iterated integral for $k\geq 2$ be defined as
\begin{equation}\label{def.sil}
\int_{\prod\limits_{l=1}^{k-1}
\mathcal{V}_{l}}\prod\limits_{l=1}^{k-1}d\sigma_{l}\equiv \int_{\mathcal{V}_{1}}\cdots
\left\{ \int_{\mathcal{V}%
_{k-1}}d\sigma_{k-1}\right\} d\sigma_{1},
\end{equation}%
where $d\sigma _{l}=\sqrt{2\pi}\mu (v_l)|v_{ly}|dv_l$ is a probability
measure.

Without lost of generality, we assume $\lim\limits_{|t|\rightarrow+\infty}[H_{1,m},H_{2,m}](t)=0$.
Along the characteristic line \eqref{chl}, for $(y,v)\in[-1,1]\times\R^3\backslash(\ga_-\cup\ga_0)$, we write the solution of the system \eqref{H1k} and \eqref{H1kbd}
in  the  mild form as follows:
\begin{align}%\label{H10m}
H_{1,m}(t)=&H_{1,m}(y(t),v(t))
\notag\\=&\si \int_{t_1}^{t}e^{-\int_{s}^t\CA^\eps(\tau,V(\tau))d\tau}\left\{\chi_{M}w_{q}\CK
\left(\frac{H_{1,m}}{w_{q}}\right)\right\}(Y(s),V(s))\,ds\notag\\
&+\si {\bf1}_{m>0}\int_{t_1}^{t}e^{-\int_{s}^t\CA^\eps(\tau,V(\tau))d\tau}\sum\limits_{1\leq m'\leq m}C_m^{m'}\left\{w_{q}\pa^{m'}_{v_x}(\chi_{M}\CK) \pa_{v_x}^{m-m'}\CG_1\right\}(Y(s),V(s))\,ds\notag
\\
&-\al\int_{t_1}^{t}e^{-\int_{s}^t\CA^\eps(\tau,V(\tau))d\tau}
\sum\limits_{0\leq m'\leq m}C_m^{m'}\left\{w_q\pa_{v_x}^{m'}
\left(\frac{v_xv_y}{2}\sqrt{\mu}\right)\pa_{v_x}^{m-m'}\CG_{2}\right\}(Y(s),V(s))\,ds\notag\\
&+\int_{t_1}^{t}e^{-\int_{s}^t\CA^\eps(\tau,V(\tau))d\tau}\left(w_{q}\pa_{v_x}^m\CF_1\right)(Y(s),V(s))\,ds,\notag
\end{align}
where
\begin{align}
\CA^\eps(\tau,V(\tau))=\nu_0+\eps+2q \al \frac{V_y(\tau)V_x(\tau)}{{1+|V(\tau)|^2}}\geq\nu_0/2,\label{CAe-lbd}
\end{align}
provided that $\eps>0$ and $q\al>0$ are suitably small.
By Lemma \ref{CK}, it is straightforward to see
\begin{align}\label{H1m.p1}
\sup\limits_{-\infty<t<+\infty}\|H_{1,m}(t)\|_{L^\infty}\leq& \frac{C}{q}\sum\limits_{m'\leq m}\sup\limits_{-\infty<t<+\infty}\|H_{1,m'}(t)\|_{L^\infty}+C\al \sum\limits_{m'\leq m}\sup\limits_{-\infty<t<+\infty}\|H_{2,m'}(t)\|_{L^\infty}
\notag\\&+\sup\limits_{-\infty<t<+\infty}\|w_{q}\pa_{v_x}^m\CF_1(t)\|_{L^\infty}.
\end{align}
By taking $q$ sufficiently large, \eqref{H1m.p1} further gives
\begin{align}\label{H1sum}
\sum\limits_{0\leq m\leq N_0}\|H_{1,m}\|_{L^\infty}\leq C\al\sum\limits_{0\leq m\leq N_0}\|H_{2,m}\|_{L^\infty}
+\sum\limits_{0\leq m\leq N_0}\|w_{q}\pa_{v_x}^m\CF_1\|_{L^\infty}.
\end{align}
Similarly, one can also write the solution of \eqref{H2k} and \eqref{H2kbd} in the mild form of
\begin{align}\label{H2m}
H_{2,m}(t)
=&\underbrace{\si \int_{t_1}^{t}e^{-\int_{s}^t\CA^\eps(\tau,V(\tau))d\tau}\left\{w_{q}K
\left(\frac{H_{2,m}}{w_{q}}\right)\right\}(Y(s),V(s))\,ds}_{I_1}\notag\\
&+\underbrace{\si {\bf 1}_{m>0}\int_{t_1}^{t}e^{-\int_{s}^t\CA^\eps(\tau,V(\tau))d\tau}
\sum\limits_{1\leq m'\leq m}C_m^{m'}\left\{w_{q}(\pa^{m'}_{v_x}K) \pa_{v_x}^{m-m'}\CG_2\right\}(Y(s),V(s))\,ds}_{I_2}\notag
\\
&+\underbrace{\si \int_{t_1}^{t}e^{-\int_{s}^t\CA^\eps(\tau,V(\tau))d\tau}
\sum\limits_{0\leq m'\leq m}C_m^{m'}\left\{w_q\pa_{v_x}^{m'}\left((1-\chi_{M})\mu^{-\frac{1}{2}}\right)\pa_{v_x}^{m-m'}\CK \CG_1\right\}(Y(s),V(s))\,ds}_{I_3}\notag\\
&+\underbrace{\int_{t_1}^{t}e^{-\int_{s}^t\CA^\eps(\tau,V(\tau))d\tau}\left(w_{q}\pa_{v_x}^m\CF_2\right)(Y(s),V(s))\,ds
+e^{-\int_{t_1}^t\CA^\eps(\tau,V(\tau))d\tau}\left(w_{q}\pa_{v_x}^m\CF_{2,b}\right)(t_1,y_1,V(t_1))}_{I_4}\notag\\
&+\sum\limits_{n=5}^{10}I_n,
\end{align}
where $(y,v)\in[-1,1]\times\R^3\backslash(\ga_-\cup\ga_0)$, and for $k\geq2$,
\begin{align}
I_5=&\si ^{k-1}\underbrace{\sqrt{2\pi}e^{-\int_{t_1}^t\CA^\eps(\tau,V(\tau))d\tau}
\left[w_{q}\pa_{v_x}^m(\sqrt{\mu})\right](V(t_1))}_{\CW}\int_{\prod\limits_{j=1}^{k-1}\CV_j}
(w_q\CG_2)(t_k,y_k,V_{\mathbf{cl}}^{k-1}(t_k))\,d\Sigma_{k-1}(t_k),\notag
\end{align}
\begin{align}
I_6=&\sum\limits_{l=2}^{k-1}\si ^{l-1}\CW\int_{\prod\limits_{j=1}^{k-1}\CV_j}
(w_q\CF_{2,b})(t_l,y_l,V_{\mathbf{cl}}^{l-1}(t_l))\,d\Sigma_{l}(t_l),\notag
\end{align}
\begin{align}
I_7=&\si ^{l}\sum\limits_{l=1}^{k-1}\CW\int_{\prod\limits_{j=1}^{k-1}\CV_j}
\int_{t_{l+1}}^{t_l}w_q\CF_2(Y^{l}_{\mathbf{cl}},V^{l}_{\mathbf{cl}})(s)\,d\Sigma_{l}(s)ds,\notag
\end{align}
\begin{align}
I_8=& \si ^{l}\sum\limits_{l=1}^{k-1}\CW\int_{\prod\limits_{j=1}^{k-1}\CV_j}
\int_{t_{l+1}}^{t_l}\left\{w_qK\left(\frac{H_{2,0}}{w_{q}}\right)\right\}(Y^{l}_{\mathbf{cl}},V^{l}_{\mathbf{cl}})(s)\,d\Sigma_{l}(s)ds,\notag
\end{align}
\begin{align}
I_9=& \si ^{l}\sum\limits_{l=1}^{k-1}\CW\int_{\prod\limits_{j=1}^{k-1}\CV_j}
\int_{t_{l+1}}^{t_l}\left\{(1-\chi_{M})w_{q}\mu^{-\frac{1}{2}}\CK
\left(\frac{H_{1,0}}{w_{q}}\right)\right\}(Y^{l}_{\mathbf{cl}},V^{l}_{\mathbf{cl}})(s)\,d\Sigma_{l}(s)ds,\notag
\end{align}
\begin{align}
I_{10}=&\si^l\CW \sum\limits_{l=1}^{k-1}\int_{\prod\limits_{j=1}^{k-1}\CV_j}\left(\frac{w_q}{\sqrt{\mu}}\CG_1\right)(t_l,y_l,V_{\mathbf{cl}}^{l-1}(t_l))d\Sigma_{l}(t_l).\notag
\end{align}
Moreover, in the above expressions we have used the following notations
\begin{equation}\label{Sigma}
\Sigma_{l}(s)=\prod\limits_{j=l+1}^{k-1}d\si_j e^{-\int_s^{t_l}
\CA^\eps(\tau,V_{\mathbf{cl}}^l(\tau))d\tau}\tilde{w}_2(v_l)d\si_l \prod\limits_{j=1}^{l-1}\frac{\tilde{w}_2(v_j)}{\tilde{w}_2(V^{j}_{\mathbf{cl}}(t_{j+1}))}
\prod\limits_{j=1}^{l-1}e^{-\int_{t_{j+1}}^{t_j}
\CA^\eps(\tau,V_{\mathbf{cl}}^l(\tau))d\tau}d\si_j,
\end{equation}
and
\begin{align}\label{wt-2}
\tilde{w}_2(v)=(\sqrt{2\pi}w_{q}\sqrt{\mu})^{-1}.
\end{align}

The $L^\infty$ estimates for $H_{2,m}$ is more complicated because $K$ has no smallness property. To overcome this, we have to iterate \eqref{H2m} twice. Let us first compute $I_n$ $(1\leq n\leq10)$ term by term.
Recalling the definition \eqref{kw-def} for ${\bf k}_{w}$, one directly has by \eqref{CAe-lbd}
\begin{align*}
|I_1|\leq \int_{t_1}^te^{-\frac{\nu_0}{2}(t-s)}\int_{\R^3}{\bf k}_{w}(V(s),v')|H_{2,m}(s,Y(s),v')|dv'ds.
\end{align*}
By Lemma \ref{Ga}, it follows
\begin{align*}
|I_2|\leq C{\bf 1}_{m>0}\sum\limits_{m'\leq m-1}\|w_q\pa_{v_x}^{m'}\CG_2\|_{L^\infty}
\int_{t_1}^te^{-\frac{\nu_0}{2}(t-s)}ds\leq C{\bf 1}_{m>0}\sum\limits_{m'\leq m-1}\|w_q\pa_{v_x}^{m'}\CG_2\|_{L^\infty},
\end{align*}
and similarly,
\begin{align*}
|I_3|\leq C\sum\limits_{m'\leq m}\|w_q\pa_{v_x}^{m'}\CG_1\|_{L^\infty}.
\end{align*}
It is straightforward to see
\begin{align*}
|I_4|\leq C\|w_q\pa_{v_x}^{m}\CF_2\|_{L^\infty}+C\|w_q\pa_{v_x}^{m}\CF_{2,b}\|_{L^\infty}.
\end{align*}
Next, notice that
\begin{align}
|\CW|\leq Cm!4^qq!e^{-\frac{\nu_0(t-t_1)}{2}}.
\notag
\end{align}
In the sequel, for simplicity, we denote $\CC_{m,q}$ for the constant $m!4^qq!$.
By Lemma \ref{k.cyc}, it follows
\begin{align*}
|I_5|\leq C\CC_{m,q}2^{-C_2T^{\frac{5}{4}}_0} e^{-\frac{\nu_0}{2}(t-t_1)}\|H_{2,0}\|_{L^\infty},\ |I_6|+|I_7|\leq Ck\CC_{m,q}e^{-\frac{\nu_0}{2}(t-t_1)}\left\{\|w_q\CF_2\|_{L^\infty}+\|w_q\CF_{2,b}\|_{L^\infty}\right\},
\end{align*}
\begin{align*}
\ |I_9|,\ |I_{10}|\leq C\CC_{m,q}ke^{-\frac{\nu_0}{2}(t-t_1)}\|H_{1,0}\|_{L^\infty},
\end{align*}
and
\begin{align*}
|I_8|\leq C\CC_{m,q}e^{-\frac{\nu_0}{2}(t-t_1)}\int_{\prod\limits_{j=1}^{k-1}\CV_j}
\int_{t_{l+1}}^{t_l}\int_{\R^3}{\bf k}_{w}(V^{l}_{\mathbf{cl}}(s),v')|H_{2,0}(s,Y^{l}_{\mathbf{cl}}(s),v')|dv'\,d\Sigma_{l}(s)ds,
\end{align*}
where we have taken $T_0=t-t_k$ with $k=C_1T^{\frac{5}{4}}_0$, and  both $C_1>0$ and $C_2>0$ are given in Lemma \ref{k.cyc}.

Putting all the estimates for $I_n$ $(1\leq n\leq 10)$ above together and adjusting the constants, we have
\begin{align}\label{H2sum1}
|H_{2,m}(t)|
\leq& C\CC_{m,q}e^{-\frac{\nu_0}{2}(t-t_1)}\int_{t_1}^te^{-\frac{\nu_0}{2}(t-s)}\int_{\R^3}{\bf k}_{w}(V(s),v')|H_{2,m}(s,Y(s;t,y,v),v')|dv'ds\notag\\
&+C\CC_{m,q}e^{-\frac{\nu_0}{2}(t-t_1)}\sum\limits_{l=1}^{k-1}\int_{\prod\limits_{j=1}^{k-1}\CV_j}
\int_{t_{l+1}}^{t_l}\int_{\R^3}{\bf k}_{w}(V^{l}_{\mathbf{cl}}(s),v')|H_{2,0}(s,Y^{l}_{\mathbf{cl}}(s;t,y,v),v')|dv'\,d\Sigma_{l}(s)ds
\notag\\&+\CQ(t),
\end{align}
where
\begin{align}
\CQ(t)=&C{\bf 1}_{m>0}\sum\limits_{m'\leq m-1}\sup\limits_{-\infty< s\leq t}\|H_{2,m'}(s)\|_{L^\infty}
+C\sum\limits_{m'\leq m}\sup\limits_{-\infty< s\leq t}\|H_{1,m'}(s)\|_{L^\infty}
\notag\\&+C\sup\limits_{-\infty< s\leq t}\|w_q\pa_{v_x}^{m}\CF_2(s)\|_{L^\infty}
+C\sup\limits_{-\infty< s\leq t}\|w_q\pa_{v_x}^{m}\CF_{2,b}(s)\|_{L^\infty}
\notag\\&+C\CC_{m,q}2^{-C_2T^{\frac{5}{4}}_0} \sup\limits_{-\infty< s\leq t}\|H_{2,0}(s)\|_{L^\infty}
+C\CC_{m,q}k\sup\limits_{-\infty< s\leq t}\|H_{1,0}(s)\|_{L^\infty}
\notag\\&+C\CC_{m,q}k\sup\limits_{-\infty< s\leq t}\|w_q\CF_2(s)\|_{L^\infty}
+C\CC_{m,q}k\sup\limits_{-\infty< s\leq t}\|w_q\CF_{2,b}(s)\|_{L^\infty}.\notag
\end{align}
Then let us define a new  backward time cycle as
\begin{align}\notag
(t_{\ell+1}',y_{\ell+1}',v_{\ell+1}')=(t_{\ell}'-t_{\mathbf{b}}(y_{\ell}',v_\ell'),y_{\mathbf{b}}(y_\ell',v_\ell'),v_{\ell+1}'),
\end{align}
and the starting point
\begin{align}\notag
(t_{0}',y_{0}',v_{0}')=(s,y',v'):=(s,Y(s),v')\ \textrm{or}\ (s,Y^l_{\mathbf{cl}}(s),v'),
\end{align}
for some $s\in\R$ and $l\in\Z^+$. Furthermore, for $\ell\in\Z^+$, we also denote
\begin{eqnarray*}
\bar{Y}^\ell_{\mathbf{cl}}(s';s,y',v') &=&\mathbf{1}_{[t'_{\ell+1},t'_{\ell})}(s')\{y'_\ell+(s'-t'_\ell)v'_{\ell y}\}, \\
\bar{V}^{\ell}_{\mathbf{cl}}(s';s,y',v') &=&\mathbf{1}_{[t'_{\ell+1},t'_{\ell})}(s')(v'_{\ell x}
+\al(t_\ell'-s')v_{\ell y},v'_{\ell y},v'_{\ell z}).
\end{eqnarray*}
To be consistent, we set $[\bar{Y}^{0}_{\mathbf{cl}}(s'),\bar{V}^{0}_{\mathbf{cl}}(s')]:=[\bar{Y}(s'),\bar{V}(s')].$

Iterating \eqref{H2sum1} again, one has
\begin{align}\label{H2sum2}
|H_{2,m}(t)|\leq& C\CC_{m,q}^2\int_{t_1}^te^{-\frac{\nu_0}{2}(t-s)}\int_{\R^3}{\bf k}_{w}(V(s),v')
\int_{t'_1}^{s}e^{-\frac{\nu_0}{2}(s-s')}\int_{\R^3}{\bf k}_{w}(\bar{V}(s';Y(s),v'),v'')
\notag\\&\quad\times|H_{2,m}(s',\bar{Y}(s';Y(s),v'),v'')|~dv''ds'dv'ds\notag\\
&+C\CC_{m,q}^2\int_{t_1}^te^{-\frac{\nu_0}{2}(t-s)}\int_{\R^3}{\bf k}_{w}(V(s),v')e^{-\frac{\nu_0}{2}(s-t_1')}
\sum\limits_{\ell=1}^{\imath-1}\int_{\prod\limits_{\jmath=1}^{\imath-1}\CV_\jmath}
\int_{t'_{\ell+1}}^{t'_\ell}\int_{\R^3}{\bf k}_{w}(\bar{V}^{\ell}_{\mathbf{cl}}(s';Y(s),v'),v'')
\notag
\\&\qquad\times|H_{2,0}(s',\bar{Y}^{\ell}_{\mathbf{cl}}(s';Y(s),v'),v'')|dv''\,d\Sigma_{\ell}(s')ds'dv'ds\notag\\
&+C\CC_{m,q}\CC_{0,q}\sum\limits_{l=1}^{k-1}\int_{\prod\limits_{j=1}^{k-1}\CV_j}
\int_{t_{l+1}}^{t_l}\int_{\R^3}{\bf k}_{w}(V^{l}_{\mathbf{cl}}(s),v')
\int_{t'_1}^{s}e^{-\frac{\nu_0}{2}(s-s')}\int_{\R^3}{\bf k}_{w}(\bar{V}(s';Y^{l}_{\mathbf{cl}}(s),v'),v'')
\notag\\&\quad\times|H_{2,0}(s',\bar{Y}(s';Y^{l}_{\mathbf{cl}}(s),v'),v'')|~dv''ds'dv'\,d\Sigma_{l}(s)ds
\notag\\&+C\CC_{m,q}\CC_{0,q}\sum\limits_{l=1}^{k-1}\int_{\prod\limits_{j=1}^{k-1}\CV_j}
\int_{t_{l+1}}^{t_l}\int_{\R^3}{\bf k}_{w}(V^{l}_{\mathbf{cl}}(s;v),v')e^{-\frac{\nu_0}{2}(s-t_1')}
\sum\limits_{\ell=1}^{\imath-1}\int_{\prod\limits_{\jmath=1}^{\imath-1}\CV_\jmath}
\int_{t'_{\ell+1}}^{t'_\ell}\int_{\R^3} \notag\\
&\quad\times {\bf k}_{w}(\bar{V}^{\ell}_{\mathbf{cl}}(s';Y^{l}_{\mathbf{cl}}(s),v'),v'')
|H_{2,0}(s',\bar{Y}^{\ell}_{\mathbf{cl}}(s';Y^{l}_{\mathbf{cl}}(s),v'),v'')|dv''\,d\Sigma_{\ell}(s')ds'dv'\,d\Sigma_{l}(s)ds
\notag\\
&+C\CC_{m,q}\CC_{0,q}\int_{t_1}^te^{-\frac{\nu_0}{2}(t-s)}\int_{\R^3}{\bf k}_{w}(V(s),v')\CQ(s)dv'ds
\notag\\&+C\CC_{m,q}^2\sum\limits_{l=1}^{k-1}\int_{\prod\limits_{j=1}^{k-1}\CV_j}
\int_{t_{l+1}}^{t_l}\int_{\R^3}{\bf k}_{w}(V^{l}_{\mathbf{cl}}(s),v')\CQ(s)dv'\,d\Sigma_{l}(s)ds,
\end{align}
where according to Lemma \ref{k.cyc} and Remark \ref{ed-cyc}, we choose $\imath\in\Z^+$ such that $\imath\sim(\tilde{T}_0)^{\frac{5}{4}}$ with $\tilde{T}_0=s-t'_{\imath}$ being suitably large.
We claim that
\begin{align}\label{H2sum3}
\|H_{2,m}\|_{L^\infty}\leq \eta\left\{\|H_{2,m}\|_{L^\infty}+\|H_{2,0}\|_{L^\infty}\right\}
+C(\tilde{T}_0)\left\{\|\pa_{v_x}^m\CG_2\|+\|\CG_2\|\right\}
+C\sup\limits_{s\leq t}\CQ(s),
\end{align}
where $\eta>0$ is suitably small.
To prove \eqref{H2sum3}, we only estimate the fourth term on the right hand side of \eqref{H2sum2}, because the other terms can be estimated similarly. For any sufficiently small $\eta_0>0$, we first divide $[t_{\ell+1}',t_{\ell}']$ as $[t_{\ell+1}',t_{\ell}'-\eta_0]\cup(t_{\ell}'-\eta_0,t_{\ell}']$, then rewrite the fourth term on the right hand side of \eqref{H2sum2} as
\begin{align}
\CJ:=& C\CC_{m,q}\CC_{0,q}\sum\limits_{l=1}^{k-1}\int_{\prod\limits_{j=1}^{k-1}\CV_j}
\int_{t_{l+1}}^{t_l}\int_{\R^3}{\bf k}_{w}(V^{l}_{\mathbf{cl}}(s;v),v')e^{-\frac{\nu_0}{2}(s-t_1')}
\sum\limits_{\ell=1}^{\imath-1}\int_{\prod\limits_{\jmath=1}^{\imath-1}\CV_\jmath}
\left(\int_{t'_{\ell+1}}^{t'_\ell-\eta_0}+\int_{t'_\ell-\eta_0}^{t'_\ell}\right)\notag\\
&\times\int_{\R^3} {\bf k}_{w}(\bar{V}^{\ell}_{\mathbf{cl}}(s';Y^{l}_{\mathbf{cl}}(s),v'),v'')
|H_{2,0}(s',\bar{Y}^{\ell}_{\mathbf{cl}}(s';Y^{l}_{\mathbf{cl}}(s),v'),v'')|dv''\,d\Sigma_{\ell}(s')ds'dv'\,d\Sigma_{l}(s)ds\notag\\
\eqdef& \CJ_1+\CJ_2.\notag
\end{align}
By Lemma \ref{k.cyc}, it is easy to see
\begin{align*}
\CJ_2\leq C\CC_{m,q}\CC_{0,q}\eta_0\imath\|H_{2,0}\|_{L^\infty}.
\end{align*}
For  $\CJ_1$, the computation is divided into the following three cases.

\noindent\underline{{\it Case 1. $|V^{l}_{\mathbf{cl}}(s;v)|>M$ or $|\bar{V}^{\ell}_{\mathbf{cl}}(s';Y^{l}_{\mathbf{cl}}(s),v')|> M$.}} In this case, by  Lemma \ref{Kop}, it follows that
\begin{align}
\int_{\R^3}|{\bf k}_{w}(V^{l}_{\mathbf{cl}}(s;v),v')|dv',\ \textrm{or}\ \int_{\R^3}|{\bf k}_{w}(\bar{V}^{\ell}_{\mathbf{cl}}(s';Y^{l}_{\mathbf{cl}}(s),v'),v'')|dv''\leq\frac{C(q)}{1+M},\notag
\end{align}
where $C(q)>0$ and depends on $q!$.
Therefore, one has by using Lemma \ref{k.cyc} again
\begin{align}
|\CJ_1|\leq\frac{C\CC_{m,q}\CC_{0,q}C^2(q)}{1+M}\|H_{2,0}\|_{L^\infty}.\notag
\end{align}
Note that here and in the sequel, $l$ and $\ell$ run over $[1,k-1]$ and $[1,\imath-1]$, respectively.

\noindent\underline{{\it Case 2. $|V^{l}_{\mathbf{cl}}(s;v)|\leq M$ and $|v'|> 2M$, or $|\bar{V}^{\ell}_{\mathbf{cl}}(s';Y^{l}_{\mathbf{cl}}(s),v')|\leq M$ and $|v''|> 2M$.}} In this regime, we have either $|V^{l}_{\mathbf{cl}}(s;v)-v'|>M$ or $|\bar{V}^{\ell}_{\mathbf{cl}}(s';Y^{l}_{\mathbf{cl}}(s),v')-v''|>M$. Then either of the following two estimates holds correspondingly
\begin{equation*}
\begin{split}
&{\bf k}_{w}(V^{l}_{\mathbf{cl}}(s;v),v')
\leq Ce^{-\frac{\vps M^2}{16}}{\bf k}_{w}(V^{l}_{\mathbf{cl}}(s;v),v')e^{\frac{\vps |V^{l}_{\mathbf{cl}}-v'|^2}{16}},\\
&{\bf k}_{w}(\bar{V}^{\ell}_{\mathbf{cl}}(s';Y^{l}_{\mathbf{cl}}(s),v'),v'')
\leq Ce^{-\frac{\vps M^2}{16}}{\bf k}_{w}(\bar{V}^{\ell}_{\mathbf{cl}}(s';Y^{l}_{\mathbf{cl}}(s),v'),v'')e^{\frac{\vps |\bar{V}^{\ell}_{\mathbf{cl}}-v''|^2}{16}}.
\end{split}
\end{equation*}
This together with Lemma \ref{Kop} imply
\begin{align}
|\CJ_1|\leq C\CC_{m,q}\CC_{0,q}C^2(q)e^{-\frac{\vps M^2}{16}}\|H_{2,0}\|_{L^\infty}.\notag
\end{align}

\noindent\underline{{\it Case 3. $|V^{l}_{\mathbf{cl}}(s;v)|\leq M$, $|v'|\leq2M$, $|\bar{V}^{\ell}_{\mathbf{cl}}(s';Y^{l}_{\mathbf{cl}}(s),v')|\leq M$ and $|v''|\leq 2M$.}} The key point here is to convert  the $L^1$ integral with respect to the double $v$ variables into the $L^2$ norm with respect to the variables $y$ and $v$. To do so, for any large $N>0$,
we choose a number $M(N)$ to define ${\bf k}_{w,M}(u,v')$ as \eqref{kw-M}, then decompose
\begin{equation*}
\begin{split}
{\bf k}_{w}(V^{l}_{\mathbf{cl}},v')
{\bf k}_{w}(\bar{V}^{\ell}_{\mathbf{cl}},v^{\prime \prime})
=&\{{\bf k}_{w}(V^{l}_{\mathbf{cl}},v^{\prime})-{\bf k}_{w,M}(V^{l}_{\mathbf{cl}},v^{\prime})\}{\bf k}_{w}(\bar{V}^{\ell}_{\mathbf{cl}},v^{\prime \prime})
\\&+\{{\bf k}_{w}(\bar{V}^{\ell}_{\mathbf{cl}},v^{\prime \prime})
-{\bf k}_{w,M}(\bar{V}^{\ell}_{\mathbf{cl}},v^{\prime \prime})\}{\bf k}_{w,M}(V^{l}_{\mathbf{cl}},v')+{\bf k}_{w,M}(V^{l}_{\mathbf{cl}},v^{\prime}){\bf k}_{w,M}(\bar{V}^{\ell}_{\mathbf{cl}},v^{\prime \prime}).
\end{split}
\end{equation*}
From Lemma \ref{k.cyc}, the first two difference terms lead to a small
contribution in $\CJ_1$ as
\begin{equation}
\frac{C\CC_{m,q}\CC_{0,q}C^2(q)}{N}\|H_{2,0}\|_{L^\infty}.  \notag
\end{equation}
For the remaining main contribution of the bounded product ${\bf k}_{w,M}(V^{l}_{\mathbf{cl}},v^{\prime}){\bf k}_{w,M}(\bar{V}^{\ell}_{\mathbf{cl}},v^{\prime \prime})$, we denote $\tilde{y}=\bar{Y}^{\ell}_{\mathbf{cl}}(s';Y^{l}_{\mathbf{cl}}(s),v')=y_\ell'-(t_\ell'-s')v'_{\ell y}$ and apply a change of variable $v_{ly}'\rightarrow\tilde{y}$. Then one has
\begin{align}
|\frac{\pa \tilde{y}}{\pa v_{ly}'}|=\left|\frac{\pa(y_\ell'-(t_\ell'-s')v'_{\ell y})}{\pa v_{ly}'}\right|
=|t_\ell'-s'|\geq \eta_0.\notag
\end{align}
We now estimate this part as follows
\begin{align}
C\CC_{m,q}\CC_{0,q}&\sum\limits_{l=1}^{k-1}\int_{\prod\limits_{j=1}^{k-1}\CV_j}
\int_{t_{l+1}}^{t_l}\int_{|v'|\leq2M}{\bf k}_{w,M}(V^{l}_{\mathbf{cl}}(s;v),v')e^{-\frac{\nu_0}{2}(s-t_1')}\sum\limits_{\ell=1}^{\imath-1}
\int_{\prod\limits_{\jmath=1}^{\imath-1}\CV_\jmath}
\int_{t'_{\ell+1}}^{t'_\ell-\eta_0}\notag\\
&\times\int_{|v''|\leq2M} {\bf k}_{w,M}(\bar{V}^{\ell}_{\mathbf{cl}}(s';Y^{l}_{\mathbf{cl}}(s),v'),v'')
|H_{2,0}(s',\bar{Y}^{\ell}_{\mathbf{cl}}(s';Y^{l}_{\mathbf{cl}}(s),v'),v'')|dv''\,d\Sigma_{\ell}(s')ds'dv'\,d\Sigma_{l}(s)ds\notag\\
\leq &C(M,m,q)\sum\limits_{l=1}^{k-1}\int_{\prod\limits_{j=1}^{k-1}\CV_j}
\int_{t_{l+1}}^{t_l}e^{-\frac{\nu_0}{2}(s-t_1')}\sum\limits_{\ell=1}^{\imath-1}\int_{\prod\limits_{\jmath=1}^{\imath-1}\CV_\jmath}
\int_{t'_{\ell+1}}^{t'_\ell-\eta_0}\notag\\
&\times\int_{|v'|\leq 2M, |v''|\leq2M}
|\CG_2(s',\bar{Y}^{\ell}_{\mathbf{cl}}(s';Y^{l}_{\mathbf{cl}}(s),v'),v'')|dv''dv'\,d\Sigma_{\ell}(s')ds'\,d\Sigma_{l}(s)ds\notag\\
\leq& \frac{C(M,m,q)}{\sqrt{\eta_0}}\sup\limits_{s\leq t}\|\CG_2(s)\|\sup\limits_{v,v'}\left\{\int_{t_k}^{t_1}e^{-\frac{\nu_0(t_1-s)}{2}}e^{-\frac{\nu_0}{2}(s-t_1')}
\int_{t'_\imath}^{t'_1}e^{-\frac{\nu_0(t'_1-s')}{2}} ds'ds\right\}\notag\\
\leq& \frac{C(M,m,q)}{\sqrt{\eta_0}}\sup\limits_{s\leq t}\|\CG_2(s)\|.\notag
\end{align}
%where the last inequality we have used the fact that $\imath\backsim (s-t_\imath')^{\frac{5}{4}}$ by Lemma \ref{k.cyc}.

Putting all the estimates for $\CJ_1$ and $\CJ_2$ together, we now obtain
\begin{align}
\CJ\leq& C\CC_{m,q}\CC_{0,q}\eta_0\imath\|H_{2,0}\|_{L^\infty}+\frac{C\CC_{m,q}\CC_{0,q}C^2(q)}{1+M}\|H_{2,0}\|_{L^\infty}+
C\CC_{m,q}\CC_{0,q}C^2(q)e^{-\frac{\vps M^2}{16}}\|H_{2,0}\|_{L^\infty}\notag\\&+\frac{C\CC_{m,q}\CC_{0,q}C(q)}{N}\|H_{2,0}\|_{L^\infty}
+\frac{C(M,m,q)}{\sqrt{\eta_0}}\sup\limits_{s\leq t}\|\CG_2(s)\|.\notag
\end{align}

As mentioned before, by performing the similar calculations for the other terms on the right hand side of \eqref{H2sum2}, one has
\begin{align}
\|H_{2,m}\|_{L^\infty}\leq& \frac{C\CC_{m,q}\CC_{0,q}C^2(q)}{1+M}\|H_{2,m}\|_{L^\infty}
+C\CC_{m,q}\CC_{0,q}C^2(q)e^{-\frac{\vps M^2}{16}}\|H_{2,m}\|_{L^\infty}\notag\\&+C\CC_{m,q}\CC_{0,q}\eta_0\imath\|H_{2,0}\|_{L^\infty}
+\frac{C\CC_{m,q}\CC_{0,q}(4^qq!)^2}{1+M}\|H_{2,0}\|_{L^\infty}
\notag\\&+C\CC_{m,q}\CC_{0,q}C^2(q)e^{-\frac{\vps M^2}{16}}\|H_{2,0}\|_{L^\infty}
+\frac{C\CC_{m,q}\CC_{0,q}C(q)}{N}\|H_{2,0}\|_{L^\infty}
\notag\\&+\frac{C(M,m,q)}{\sqrt{\eta_0}}\sup\limits_{s\leq t}\|\CG_2(s)\|+C(M,m,q)\sup\limits_{s\leq t}\|\pa_{v_x}^m\CG_2(s)\|
+C\sup\limits_{s\leq t}\CQ(s).
\label{H2sum4}
\end{align}
Since $\imath\sim(\tilde{T}_0)^{\frac{5}{4}}$, by taking $M$ and $N$ large enough and $\eta_0=(\tilde{T}_0)^{-\frac{5}{2}}$ small enough, \eqref{H2sum4} further yields \eqref{H2sum3}.
Finally, taking a linear combination of \eqref{H2sum3} with $m=0,1,\cdots,N_0$, we conclude
\begin{align}\label{H2sum5}
\sum\limits_{0\leq m\leq N_0}\|H_{2,m}\|_{L^\infty}\leq& C(N_0,q,\tilde{T}_0)\sum\limits_{0\leq m\leq N_0}\|\pa_{v_x}^m\CG_2\|
+C(N_0,q,T_0)\sum\limits_{0\leq m\leq N_0}\|H_{1,m}\|_{L^\infty}
\notag\\&+C(N_0,q,T_0)\sum\limits_{0\leq m\leq N_0}\left\{\|w_q\pa_{v_x}^{m}\CF_2\|_{L^\infty}
+\|w_q\pa_{v_x}^{m}\CF_{2,b}\|_{L^\infty}\right\}.
\end{align}
\begin{remark}\label{ideps}
We point out that the estimates \eqref{H1sum} and \eqref{H2sum5} obtained above are independent of $\eps.$
Moreover, both $\tilde{T}_0$ and $T_0$ are independent of $t,$ because starting from any $t\in(-\infty,+\infty)$
%, starting from there,
we can  trace back $k$ times to some $t_k$ which can be negative.
\end{remark}

\noindent{\bf Step 2. $L^2$ estimate.} To close the final estimate, we turn to obtain the $L^2$ estimate of $\pa_{v_x}^m\CG_2$ with $0\leq m\leq N_0$. The goal is to prove that for given $\eps>0$ there exists $C(\eps)>0$ depending on $\eps$ such that
\begin{align}\label{g2.tl2}
\sum\limits_{0\leq m\leq N_0}\|\pa^m_{v_x}\CG_2\|^2
+\sum\limits_{0\leq m\leq N_0}|\pa^m_{v_x}\CG_2|^2_{2,+}
\leq&C(\eps)\sum\limits_{0\leq m\leq N_0}\|w_q\pa^{m}_{v_x}\CG_1\|^2_{L^\infty}
+C(\eps)\sum\limits_{0\leq m\leq N_0}\|w_q\pa^m_{v_x}\CF_2\|_{L^\infty}^2\notag\\&+C(\eps)\sum\limits_{0\leq m\leq N_0}\|w_q\pa^m_{v_x}\CF_{2,b}\|_{L^\infty}^2.
\end{align}
For this, we begin with the following equations for $\CG_2$
\begin{eqnarray}\label{F2.eq}
\left\{\begin{array}{rll}
\begin{split}&\eps\CG_2+v_y\pa_y\CG_2-\al v_y\pa_{v_x}\CG_2
+\nu_0\CG_2-\si  K\CG_2-\si (1-\chi_{M})\mu^{-\frac{1}{2}}\CK \CG_1=\CF_2,\ y\in(-1,1),\\
&\CG_{2}(\pm1,v){\bf 1}_{\{v_y\lessgtr0\}}-\si \sqrt{2\pi \mu}\dis{\int_{v_y\gtrless0}}\sqrt{\mu}\CG(\pm1,v)|v_y|dv=\CF_{2,b}.
\end{split}
\end{array}\right.
\end{eqnarray}
Taking the inner product of $\eqref{F2.eq}_1$ and $\CG_2$ over $(y,v)\in(-1,1)\times\R^3,$ we have, for $\eta>0$
\begin{multline}\label{g2.l2}
(\eps+(1-\si)\nu_0)\|\CG_2\|^2+\si\de_0\|\FP_1\CG_2\|^2
+\frac{1}{2}|\{I-P_\ga\}\CG_2|^2_{2,+}+\frac{1}{2}(1-\si)|P_\ga\CG_2|^2_{2,+}\\
\leq|(\CG_2,\CF_2)|+|((1-\chi_{M})\mu^{-\frac{1}{2}}\CK \CG_1,\CG_2)|
+\eta|P_\ga\CG_2|^2_{2,+}+C_{\eta}\|w_q\CG_1\|^2_{L^\infty}+C_{\eta}\|w_q\CF_{2,b}\|_{L^\infty}^2,
\end{multline}
where the following estimate on the boundary term has been used
\begin{align}
\int_{\R^3}&v_y\CG_2^2(1)dv-\int_{\R^3}v_y\CG_2^2(-1)dv\notag\\=&\int_{v_y>0}v_y\CG_2^2(1)dv
-\int_{v_y<0}v_y(\si P_\ga\CG_2+\si \bar{P}_{\ga}\CG_1+\CF_{2,b})^2(1)dv
\notag\\&-\int_{v_y<0}v_y\CG_2^2(-1)dv-\int_{v_y>0}v_y(\si P_\ga\CG_2+\si \bar{P}_{\ga}\CG_1+\CF_{2,b})^2(-1)dv\notag\\
\geq& (1-\si^2)\int_{v_y>0}v_y(P_\ga\CG_2)^2(1)dv+\int_{v_y>0}v_y(\{I-P_\ga\}\CG_2)^2(1)dv
\notag\\&+(1-\si^2)\int_{v_y<0}|v_y|(P_\ga\CG_2)^2(-1)dv+\int_{v_y<0}|v_y|(\{I-P_\ga\}\CG_2)^2(-1)dv
\notag\\&-\eta\int_{v_y<0}v_y(P_\ga\CG_2)^2(1)dv
-C_\eta\|w_q\CF_{2,b}(1)\|_{L^\infty}^2-\eta\int_{v_y>0}|v_y|(P_\ga\CG_2)^2(-1)dv
\notag\\
&-C_\eta\|w_q\CF_{2,b}(-1)\|_{L^\infty}^2-C_\eta\int_{v_y<0}|v_y||\bar{P}_{\ga}\CG_1(1)|^2dv
-C_\eta\int_{v_y>0}|v_y||\bar{P}_{\ga}\CG_1(-1)|^2dv\notag\\
\geq& |\{I-P_\ga\}\CG_2|^2_{2,+}+(1-\si)|P_\ga\CG_2|^2_{2,+}-\eta|P_\ga\CG_2|^2_{2,+}-C_{\eta}\|w_q\CF_{2,b}(\pm1)\|_{L^\infty}^2
-C_\eta\|w_q\CG_1\|^2_{L^\infty}.
\notag
\end{align}
Here we have used the notation
$$
\bar{P}_{\ga}\CG_1(\pm1)=\sqrt{2\pi\mu}\int_{v_y\gtrless0}\CG_1(\pm1)|v_y|dv,
$$
%and moreover, the following estimation has been also used
and the estimate
\begin{align*}
\left|\int_{v_y\gtrless0}\CG_1(\pm1)|v_y|dv\right|\leq C\|w_q\CG_1\|_{L^\infty},\ \textrm{for}\ q>5/2.
\end{align*}
Next, since
\begin{equation}
|P_{\gamma }\CG_2(\pm1)|_{2,\pm }^{2} =\int_{v_y\gtrless0}\left[ \int_{\{v_y\gtrless0\}}\CG_2(\pm1)\sqrt{\mu }|v_y|dv\right] ^{2}2\pi\mu (v)|v_y| dv=\left[ \int_{\{v_y\gtrless0\}}\CG_2(\pm1)\sqrt{\mu }|v_y|dv\right] ^{2},
\notag
\end{equation}%
by dividing the domain for integration as
\begin{equation}\nonumber
\begin{split}
\{v\in\R^{3}:v_y>0\}& =\underbrace{\{v\in \R^{3}:0<v_y<\varepsilon \ \text{or}\ v_y>1/\varepsilon\}}_{V^\vps}
\cup \{v\in \R^{3}:\varepsilon \leq v_y\leq 1/\varepsilon \},
\end{split}
\end{equation}%
one sees that
the grazing part of
$|P_{\gamma}\CG_2(1)|_{2,+}^{2}$ is bounded by the H\"{o}lder inequality as
\begin{equation}
\begin{split}
 \Bigg(\int_{V^\vps}\mu (v)|v_y|dv\Bigg)&
\int_{v_y>0}|\CG_2(1)|^{2}v_ydv  \lesssim \vps\int_{v_y>0}|\CG_2(1)|^{2}v_ydv.
\end{split}
\label{trace1}
\end{equation}
For non-grazing region, we have by using the trace  Lemma \ref{ukai} that
\begin{align}\label{trace2}
\int_{\{v\in \R^{3}:v_y>0\}\backslash V^\vps}|\CG_2(1)|^{2}v_ydv
\leq& C\|\CG_2\|^{2}+C\| v_y\pa_{y}\CG_2^{2}-\al v_{y}
\pa_{v_x}\CG_2^{2}\|_{L^1}\notag\\
\leq& C\|\CG_2\|^{2}+C|(L\CG_2,\CG_2)|+C|((1-\chi_{M})\mu^{-\frac{1}{2}}\CK \CG_1,,\CG_2)|+|(\CF_2,\CG_2)|\notag\\
\leq& C\|\CG_2\|^{2}+C\| w_q\CG_1\|_{L^\infty}^{2}+C\|\CF_2\|^2.
\end{align}%
Putting \eqref{trace1} and \eqref{trace2} together, one has
\begin{align}\label{Pbd}
|P_{\gamma }\CG_2|_{2,+}^{2}\leq \vps|\{I-P_\ga\}\CG_2|^2_{2,+}
+C\|\CG_2\|^{2}+C\| w_q\CG_1\|_{L^\infty}^{2}+C\|\CF_2\|^2.
\end{align}
Consequently,% we get from a linear combination of %\eqref{abc.es},
\eqref{g2.l2} and \eqref{Pbd} give
\begin{align}\label{g2.zl2}
\|\CG_2\|^2
+|\CG_2|^2_{2,+}
\leq&C(\eps)\{\|w_q\CF_2\|_{L^\infty}^2+\|w_q\CF_{2,b}\|_{L^\infty}^2+\|w_q\CG_1\|^2_{L^\infty}\}.
\end{align}
\begin{remark}
Note that the constant $C(\eps)$ in \eqref{g2.zl2}  is independent of the parameter $\si$.
%, that's the reason why we require the constant $C$ in the right hand side depends on $\eps.$
\end{remark}

It remains to deduce the $L^2$ estimate of the higher order velocity derivatives. For this, applying $\pa^m_{v_x}$ $(m\geq1)$ to $\eqref{F2.eq}$ to have
\begin{eqnarray}\label{dF2.eq}
\left\{\begin{array}{rll}
\begin{split}&\eps\pa^m_{v_x}\CG_2+v_y\pa_y\pa^m_{v_x}\CG_2-\al v_y\pa^{m+1}_{v_x}\CG_2
+\nu_0\pa^m_{v_x}\CG_2-\si  \pa^m_{v_x}K\CG_2\\&\qquad\qquad-\si \pa^m_{v_x}[(1-\chi_{M})\mu^{-\frac{1}{2}}\CK \CG_1]=\pa^m_{v_x}\CF_2,\ y\in(-1,1),\\
&\pa^m_{v_x}\CG_{2}(\pm1,v){\bf 1}_{\{v_y\lessgtr0\}}-\si \sqrt{2\pi}\pa^m_{v_x} (\mu^{\frac{1}{2}})\dis{\int_{v_y\gtrless0}}\sqrt{\mu}\CG(\pm1,v)|v_y|dv=\pa^m_{v_x}\CF_{2,b}.
\end{split}
\end{array}\right.
\end{eqnarray}
Taking the inner product of \eqref{dF2.eq} and $\pa^m_{v_x}\CG_2$, we  deduce
\begin{align}\label{dg2.l2}
(\eps&+(1-\si)\nu_0)\|\pa^m_{v_x}\CG_2\|^2+\si\de_0\|\pa^m_{v_x}\CG_2\|^2
+\frac{1}{2}|\pa^m_{v_x}\CG_2|^2_{2,+}\notag\\
\leq&C\|\CG_2\|^2+C\sum\limits_{m'\leq m}\|w_q\pa^{m'}_{v_x}\CG_1\|^2_{L^\infty}
+C(m)|P_\ga\CG_2|^2_{2,+}+C\|\pa^m_{v_x}\CF_2\|^2+C\|w_q\pa^m_{v_x}\CF_{2,b}\|_{L^\infty}^2,
\end{align}
where we have used the following estimates for the incoming boundary term by $\eqref{dF2.eq}_2$
\begin{align}
\int_{v_y\lessgtr0}|v_y||\pa^m_{v_x}\CG_2(\pm1,v){\bf 1}_{\{v_y\lessgtr0\}}|^2dv\leq C(m)|P_\ga\CG_2|^2_{2,+}
+C\| w_q\CG_1\|_{L^\infty}^{2}
+C\|w_q\pa^m_{v_x}\CF_{2,b}\|_{L^\infty}^2.\notag
\end{align}
Then, \eqref{dg2.l2} and \eqref{g2.zl2} give \eqref{g2.tl2}. With \eqref{g2.tl2}, \eqref{Lif.es1} follows from \eqref{H1sum}, \eqref{H2sum5} and \eqref{g2.tl2}. This completes the proof of the lemma.
\end{proof}

\subsection{Existence for the linear problem with $\si=1$ and $\epsilon>0$}
With Lemma \ref{lifpri}, we now turn to prove the existence of solution to \eqref{pals} for a fixed parameter $\eps>0$ in $L^\infty$ framework by the contraction mapping argument.

\begin{lemma}\label{ex.pals}
%For any $m\geq0$, Let  $[\CF_1,\CF_2]$ satisfy \eqref{F12} with $0\leq k\leq m$, then
Under the same assumption of Lemma \ref{lifpri},
there exists a unique solution $[\CG_1,\CG_2]\in\FX_{\al,N_0}$
to \eqref{pals} with $\si=1$ satisfying
\begin{align}\label{Lif.es2}
\sum\limits_{0\leq m\leq N_0}&\left\{\|w_{q}\pa_{v_x}^{m}\CG_1\|_{L^\infty}+\|w_{q}\pa_{v_x}^{m}\CG_2\|_{L^\infty}\right\}\notag\\
\leq&
C\sum\limits_{0\leq m\leq N_0}\left\{\|w_{q}\pa_{v_x}^{m}\CF_1\|_{L^\infty}+\|w_{q}\pa_{v_x}^{m}\CF_2\|_{L^\infty}
+\|w_q\pa_{v_x}^{m}\CF_{2,b}(s)\|_{L^\infty}\right\}.
\end{align}
\end{lemma}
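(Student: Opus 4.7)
The strategy mirrors the three-step bootstrap from Lemma \ref{ex-G1-rf}: solve \eqref{pals} at $\si=0$ directly by integration along characteristics, extend solvability from $\si_0$ to $\si_0+\si_\ast$ via a contraction mapping (with step size $\si_\ast$ controlled by the uniform constant $C_\mathscr{L}$ in \eqref{Lif.es1}), and iterate to reach $\si=1$ in finitely many steps. Throughout, $\eps>0$ is fixed.

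\textbf{Base case $\si=0$.} When $\si=0$ the $\CG_2$-equation decouples from $\CG_1$ and reduces to
\[
\eps\CG_2+v_y\pa_y\CG_2-\al v_y\pa_{v_x}\CG_2+\nu_0\CG_2=\CF_2,\qquad \CG_2|_{\gamma_-}=\CF_{2,b}.
\]
Along the trajectory $[Y(s),V(s)]=[y-(t-s)v_y,\,v_x+\al(t-s)v_y,v_y,v_z]$ this is an ODE, yielding an explicit Duhamel representation in the spirit of \eqref{def.G1noK}. Because $\pa_{v_x}$ commutes with the transport operator $v_y\pa_y-\al v_y\pa_{v_x}$ and with the polynomial weight $w_q$, we obtain $\CG_2\in\FX_{\al,N_0}$ with the desired bound. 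Substituting this $\CG_2$ as a source into the $\CG_1$-equation (whose $\CK$-contribution vanishes at $\si=0$, leaving only the coupling term $\al\tfrac{v_xv_y}{2}\sqrt\mu\,\CG_2$), a second characteristic integration yields $\CG_1\in\FX_{\al,N_0}$. Hence $\mathscr{L}_0^{-1}$ is well defined on $\FX_{\al,N_0}$.

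\textbf{Bootstrap step.} Suppose $\mathscr{L}_{\si_0}^{-1}$ has been constructed on $\FX_{\al,N_0}$. For $\si=\si_0+\de$ with $\de\ge 0$ small, rewrite \eqref{pals} as
\[
\mathscr{L}_{\si_0}[\CG_1,\CG_2]=[\tilde\CF_1,\tilde\CF_2]+\de\,\mathscr{N}[\CG_1,\CG_2],
\]
where $\mathscr{N}$ collects the $\si$-linear pieces: $\chi_M\CK\CG_1$, $K\CG_2$, $(1-\chi_M)\mu^{-1/2}\CK\CG_1$, and the diffuse reflection boundary trace $\sqrt{2\pi\mu}\int_{v_y\gtrless 0}(\CG_1+\sqrt\mu\,\CG_2)(\pm 1,v)|v_y|dv$. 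Lemmas \ref{Kop}, \ref{Ga}, and \ref{CK} show that $\mathscr{N}$ is bounded on $\FX_{\al,N_0}$ with norm $C_\mathscr{N}$ independent of $\si_0$; the boundary trace is handled via $|\int_{v_y\gtrless 0}(\CG_1+\sqrt\mu\,\CG_2)|v_y|dv|\lesssim\|w_q\CG_1\|_{L^\infty}+\|w_q\CG_2\|_{L^\infty}$ for $q>5/2$, together with the fast decay of $\sqrt\mu$ ensuring that arbitrarily many $v_x$-derivatives of $\sqrt{2\pi\mu}$ remain integrable with weight $w_q$. The map $[\CG_1,\CG_2]\mapsto\mathscr{L}_{\si_0}^{-1}\bigl([\tilde\CF_1,\tilde\CF_2]+\de\,\mathscr{N}[\CG_1,\CG_2]\bigr)$ is then a contraction on $\FX_{\al,N_0}$ provided $\de\,C_\mathscr{L} C_\mathscr{N}\le 1/2$. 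Choosing $\si_\ast:=1/(2C_\mathscr{L} C_\mathscr{N})$ gives $\mathscr{L}_{\si_0+\si_\ast}^{-1}$ with a uniform bound inherited from Lemma \ref{lifpri}.

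\textbf{Conclusion and main difficulty.} Iterating the bootstrap step at most $\lceil 1/\si_\ast\rceil$ times reaches $\si=1$, giving existence; the estimate \eqref{Lif.es2} is then an immediate consequence of Lemma \ref{lifpri} applied at $\si=1$, and uniqueness follows from \eqref{Lif.es1} applied to the difference of two putative solutions with zero data. The only delicate point is to confirm that $C_\mathscr{N}$ (and hence $\si_\ast$) is genuinely independent of $\si_0$: this is automatic because $\mathscr{N}$ is a fixed linear operator determined by $K$, $\CK$, the cutoff $\chi_M$, and the boundary trace, none of which depend on $\si_0$. Dependence of $C_\mathscr{N}$ on $\eps$ is harmless since $\eps$ is fixed throughout this lemma, and the dependence on $\al$ is absorbed because the shear drift $-\al v_y\pa_{v_x}$ only appears through the unperturbed operator $\mathscr{L}_{\si_0}$.
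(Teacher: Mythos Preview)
Your proposal is correct and follows essentially the same three-step bootstrap as the paper: explicit solvability at $\si=0$ via characteristics (noting that the $\CG_2$-equation decouples), then a contraction/Picard argument to advance by a fixed step $\si_\ast$ determined by the uniform-in-$\si$ constant $C_{\mathscr{L}}$ from Lemma~\ref{lifpri} and the $\si$-independent operator norm of the $K,\CK$ and boundary-trace terms, iterated finitely many times to reach $\si=1$. The only cosmetic difference is that the paper writes out the Picard iterates \eqref{CG1stn}--\eqref{CG2bdstn} explicitly and verifies boundedness and the Cauchy property by hand, whereas you invoke the contraction mapping theorem directly; these are equivalent.
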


\begin{proof} The proof is based on the {\it a priori} estimate \eqref{Lif.es1} established in Lemma \ref{lifpri} and a bootstrap argument.
As for Lemma \ref{ex-G1-rf}, the proof  is also divided into the following three steps.

\noindent\underline{{\it Step 1. Existence for $\si=0$.}}
If $\si=0$, then \eqref{pals} is reduced to
\begin{align*}%\label{laop}
\eps\CG_1+v_y\pa_y\CG_1-\al v_y\pa_{v_x}\CG_1
+\nu_0\CG_1+\al\frac{v_xv_y}{2}\sqrt{\mu}\CG_2=\CF_1,\ y\in(-1,1),
\end{align*}
\begin{align*}
&\CG_{1}(\pm1,v){\bf 1}_{\{v_y\lessgtr0\}}=0,
\end{align*}
and
\begin{align*}%\label{laop}
\eps\CG_2+v_y\pa_y\CG_2-\al v_y\pa_{v_x}\CG_2
+\nu_0\CG_2=\CF_2,\ y\in(-1,1),
\end{align*}
\begin{align*}
\CG_{2}(\pm1,v){\bf 1}_{\{v_y\lessgtr0\}}=\CF_{2,b},
\end{align*}
respectively. Then, in this simple case, the existence of $L^\infty$-solutions can be directly proved by the characteristic method %and the contraction mapping theorem, since there is no trouble term involving $K$ and
%$\CK$, and in the meantime, the boundary condition is the simple inflow case. Therefore, one can directly show that
to have
\begin{align}\label{L0}
\|\mathscr{L}_0^{-1}[\tilde{\CF}_1,\tilde{\CF}_2]\|_{\FX_{\al,N_0}}\leq C_\mathscr{L}\|[\tilde{\CF}_1,\tilde{\CF}_2]\|_{\FX_{\al,N_0}}.
\end{align}

\noindent\underline{{\it Step 2. Existence for $\si\in[0,\si_\ast]$ for some $\si_\ast>0$.}}
Let $\si\in(0,1]$, we now consider
\begin{align}\label{CG1st}
\eps\CG_1+v_y\pa_y\CG_1-\al v_y\pa_{v_x}\CG_1
+\nu_0\CG_1+\al\frac{v_xv_y}{2}\sqrt{\mu}\CG_2=\si \chi_{M}\CK \CG_{1}+\CF_1,\ y\in(-1,1),
\end{align}
\begin{align}\label{CG1stbd}
&\CG_{1}(\pm1,v){\bf 1}_{\{v_y\lessgtr0\}}=0,
\end{align}
and
\begin{align}\label{CG2st}
\eps\CG_2+v_y\pa_y\CG_2-\al v_y\pa_{v_x}\CG_2
+\nu_0\CG_2=\si  K\CG_2+\si (1-\chi_{M})\mu^{-\frac{1}{2}}\CK\CG_1+\CF_2,\ y\in(-1,1),
\end{align}
\begin{align}\label{CG2stbd}
\CG_{2}(\pm1,v){\bf 1}_{\{v_y\lessgtr0\}}=\si \sqrt{2\pi \mu}\dis{\int_{v_y\gtrless0}}(\CG_1+\sqrt{\mu}\CG_2)(\pm1,v)|v_y|dv+\CF_{2,b}.
\end{align}
For the above system, we design the following approximation scheme
\begin{align}\label{CG1stn}
\eps\CG^{n+1}_1+v_y\pa_y\CG^{n+1}_1-\al v_y\pa_{v_x}\CG^{n+1}_1
+\nu_0\CG^{n+1}_1+\al\frac{v_xv_y}{2}\sqrt{\mu}\CG^{n+1}_2=\si \chi_{M}\CK \CG^{n}_{1}+\CF_{1}
:=\CF_1^{(1)},
\end{align}
\begin{align}\label{CG1bdstn}
\CG^{n+1}_{1}(\pm1,v){\bf 1}_{\{v_y\lessgtr0\}}=0,
\end{align}
and
\begin{align}\label{CG2stn}
\eps\CG^{n+1}_2+v_y\pa_y\CG^{n+1}_2-\al v_y\pa_{v_x}\CG^{n+1}_2
+\nu_0\CG^{n+1}_2=\si  K\CG^{n}_2+\si (1-\chi_{M})\mu^{-\frac{1}{2}}\CK\CG^{n}_1+\CF_2:=\CF^{(1)}_2,
\end{align}
\begin{align}\label{CG2bdstn}
\CG^{n+1}_{2}(\pm1,v){\bf 1}_{\{v_y\lessgtr0\}}=\si \sqrt{2\pi \mu}\dis{\int_{v_y\gtrless0}}(\CG^{n}_1+\sqrt{\mu}\CG^{n}_2)(\pm1,v)|v_y|dv+\CF_{2,b}:=\CF^{(1)}_{2,b},
\end{align}
with $[\CG^{0}_1,\CG^{0}_{2}]=[0,0]$. The goal in the following proof has twofold: (i) $[\CG_1^n,\CG_2^n]_{n=0}^\infty$ is uniformly bounded in $\FX_{\al,N_0}$, and (ii)
$[\CG_1^n,\CG_2^n]_{n=0}^\infty$ is a Cauchy sequence in $\FX_{\al,N_0}$. By \eqref{L0}, it follows
\begin{align}\label{siast.re}
\|[\CG_1^{n+1},\CG_2^{n+1}]\|_{\FX_{\al,N_0}}\leq& C_\SL \{\|\CF_1^{(1)},\CF_2^{(1)}]\|_{\FX_{\al,N_0}}+\sum\limits_{0\leq m\leq N_0}\|w_q\pa_{v_x}^m\CF_{2,b}\|_{L^\infty}\}\notag\\
\leq& C_\SL \si \bar{C}_1\|[\CG_1^{n},\CG_2^{n}]\|_{\FX_{\al,N_0}}\\&
+\underbrace{C_\mathscr{L}\sum\limits_{0\leq m\leq N_0}\left\{\|w_{q}\pa_{v_x}^m\CF_1\|_{L^\infty}
+\|w_{q}\pa_{v_x}^m\CF_2\|_{L^\infty}+\|w_q\pa_{v_x}^m\CF_{2,b}\|_{L^\infty}\right\}}_{\CM_0},\notag
\end{align}
where $\bar{C}_1>0$ is independent of $\si$ and $n$. Choosing $0<\si_\ast<1$ suitably small such that
\begin{align}\label{siast}
C_\SL \si_\ast \bar{C}_1\leq\frac{1}{2},
\end{align}
\eqref{siast.re} implies that
\begin{align}\label{CGumbd}
\|[\CG_1^{n},\CG_2^{n}]\|_{\FX_{\al,N_0}}\leq 2\CM_0,
\end{align}
for all $n\geq0$. Moreover, by \eqref{CG1stn}, \eqref{CG1bdstn}, \eqref{CG2stn} and \eqref{CG2bdstn} and applying \eqref{L0},  one has \begin{align}\label{CGcase}
\|[\CG_1^{n+1},\CG_2^{n+1}]-[\CG_1^{n},\CG_2^{n}]\|_{\FX_{\al,N_0}}\leq& C_\SL \si \bar{C}_1\|[\CG_1^{n},\CG_2^{n}]-[\CG_1^{n-1},\CG_2^{n-1}]\|_{\FX_{\al,N_0}}\notag\\
\leq&\frac{1}{2}\|[\CG_1^{n},\CG_2^{n}]-[\CG_1^{n-1},\CG_2^{n-1}]\|_{\FX_{\al,N_0}}
\end{align}
with the condition \eqref{siast}. Consequently, \eqref{CGcase} and \eqref{CGumbd} imply that the systems \eqref{CG1st}-\eqref{CG1stbd} and \eqref{CG2st}-\eqref{CG2stbd} has a unique solution $[\CG_1,\CG_2]\in\FX_{\al,N_0}$ for any $\si\in[0,\si_\ast].$
Moreover, by Lemma \ref{lifpri}, we have the following uniform estimate
\begin{align}%\label{umbd1st}
\|[\CG_1,\CG_2]\|_{\FX_{\al,N_0}}\leq
C_\mathscr{L}\sum\limits_{0\leq m\leq N_0}\left\{\|w_{q}\pa_{v_x}^m\CF_1\|_{L^\infty}
+\|w_{q}\pa_{v_x}^m\CF_2\|_{L^\infty}+\|w_q\pa_{v_x}^m\CF_{2,b}\|_{L^\infty}\right\},\notag
\end{align}
which is  equivalent to
\begin{align}\label{Last}
\|\mathscr{L}_{\si_\ast}^{-1}[\tilde{\CF}_1,\tilde{\CF}_2]\|_{\FX_{\al,N_0}}\leq C_\mathscr{L}\|[\tilde{\CF}_1,\tilde{\CF}_2]\|_{\FX_{\al,N_0}}.
\end{align}

\noindent\underline{{\it Step 3. Existence for $\si\in[0,2\si_\ast]$ for some $\si_\ast>0$.}} By using \eqref{Last} and performing the similar calculations as for obtaining \eqref{CGumbd} and \eqref{CGcase}, for $\si\in[0,\si_\ast],$ one can see that there exists a unique solution $[\CG_1,\CG_2]\in\FX_{\al,N_0}$ to the lifted system
\begin{align}\notag
\eps\CG_1+v_y\pa_y\CG_1-\al v_y\pa_{v_x}\CG_1
+\nu_0\CG_1+\al\frac{v_xv_y}{2}\sqrt{\mu}\CG_2-\si_\ast \chi_{M}\CK \CG_{1}=\si \chi_{M}\CK \CG_{1}+\CF_1,\ y\in(-1,1),
\end{align}
\begin{align}\notag
&\CG_{1}(\pm1,v){\bf 1}_{\{v_y\lessgtr0\}}=0,
\end{align}
and
\begin{align}
\eps\CG_2&+v_y\pa_y\CG_2-\al v_y\pa_{v_x}\CG_2
+\nu_0\CG_2-\si_\ast  K\CG_2-\si_\ast (1-\chi_{M})\mu^{-\frac{1}{2}}\CK\CG_1\notag\\&=\si  K\CG_2+\si (1-\chi_{M})\mu^{-\frac{1}{2}}\CK\CG_1+\CF_2,\ y\in(-1,1),\notag
\end{align}
\begin{align}
\CG_{2}(\pm1,v){\bf 1}_{\{v_y\lessgtr0\}}&-\si_\ast \sqrt{2\pi \mu}\dis{\int_{v_y\gtrless0}}(\CG_1+\sqrt{\mu}\CG_2)(\pm1,v)|v_y|dv
\notag\\&=\si \sqrt{2\pi \mu}\dis{\int_{v_y\gtrless0}}(\CG_1+\sqrt{\mu}\CG_2)(\pm1,v)|v_y|dv+\CF_{2,b}.\notag
\end{align}
In other words, we have shown the existence of $\mathscr{L}^{-1}_{2\si_\ast}$ on $\FX_{\al,N_0}$ and \eqref{Lif.es1} holds true for $\si=2\si_\ast$.

Therefore, by repeating this procedure in finite time, ,
one can see that $\mathscr{L}^{-1}_1$ exists when $\si=1$  and \eqref{Lif.es2}  follows correspondingly. This completes the proof of the lemma.
\end{proof}

\subsection{Estimates on  remainder}\label{sub5.4}

We are ready to complete the proof of Proposition \ref{Gr.lem}.

\begin{proof}[Proof of Proposition \ref{Gr.lem}] We now prove the existence of the coupled system \eqref{Gr1} and \eqref{Gr2} under the diffuse boundary conditions \eqref{Gr1bd} and  \eqref{Gr2bd}, respectively.

Let us first go back to the approximation system \eqref{Gr1n}, \eqref{Gr1nbd}, \eqref{Gr2n} and \eqref{Gr2nbd}.
By applying Lemma \ref{ex.pals}, for fixed $\eps>0$, we see that $[G_{R,1}^{n+1},G_{R,2}^{n+1}]$ is well defined when $[G_{R,1}^{n},G_{R,2}^{n}]$ is given and the solution
 belongs to $\FX_{\al,N_0}$ defined in \eqref{def.sX} for $N_0\geq0$.

We now show that $\{G_{R,1}^{n},G_{R,2}^{n}\}_{n=0}^\infty$ is a Cauchy sequence in $\FX_{\al,N_0}$, which hence implies that
% it is convergent and the
its limit denoted by $[G^{\eps}_{R,1},G^{\eps}_{R,2}]$ is the unique solution of the following system
\begin{align}
\eps G^{\eps}_{R,1}&+v_y\pa_yG^{\eps}_{R,1}-\al v_y\pa_{v_x}G^{\eps}_{R,1}+\nu_0 G^{\eps}_{R,1}-\chi_{M}\CK G^{\eps}_{R,1}\notag\\=&-\frac{\al}{2}\sqrt{\mu}v_xv_yG^{\eps}_{R,2}
-\frac{1}{2}\sqrt{\mu}v_xv_yG_{1}+\sqrt{\mu}v_y\pa_{v_x}G_{1}
+Q(\sqrt{\mu}G_1,\sqrt{\mu}G_1)\notag\\&+\al\{Q(\sqrt{\mu}G^{\eps}_R,\sqrt{\mu}G_1)+Q(\sqrt{\mu}G_1,\sqrt{\mu}G^{\eps}_R)\}
+\al^2Q(\sqrt{\mu}G^{\eps}_R,\sqrt{\mu}G^{\eps}_R)\notag\\
:=&\CN_\eps,
\ y\in(-1,1),\ v\in\R^3,\label{lmeGr1}
\end{align}
\begin{align}\label{lmeGr1bd}
G^{\eps}_{R,1}(\pm1,v)|_{v_y\lessgtr0}=0,\ v\in\R^3,
\end{align}
and
\begin{align}\label{lmeGr2}
\eps G^{\eps}_{R,1}&+v_y\pa_yG^{\eps}_{R,2}-\al v_y\pa_{v_x}G^{\eps}_{R,2}+LG^{\eps}_{R,2}=(1-\chi_{M})\mu^{-\frac{1}{2}}\CK G^{\eps}_{R,1},\ y\in(-1,1),\ v\in\R^3,
\end{align}
\begin{align}\label{lmeGr2bd}
G^{\eps}_{R,2}(\pm1,v)|_{v_y\lessgtr0}=\sqrt{2\pi \mu}\dis{\int_{v_y\gtrless0}}\sqrt{\mu}G^{\eps}_{R}(\pm1,v)|v_y|dv,\ v\in\R^3.
\end{align}
Furthermore, we will show that the convergence of the sequence $\{G_{R,1}^{n},G_{R,2}^{n}\}_{n=0}^\infty$ is independent of $\eps$.
For this, we first prove that
\begin{align}\label{umbd}
\|[G^{n}_{R,1},G^{n}_{R,2}]\|_{\FX_{\al,N_0}}\leq 2\CC_0,
\end{align}
where $\CC_0>0$ is independent of $\eps$ and $n$ for all $n\geq0$.  We apply induction in $n$. Notice $[G^{0}_{R,1},G^{0}_{R,2}]=[0,0]$. If $n=1$, then the system \eqref{Gr1n}, \eqref{Gr1nbd}, \eqref{Gr2n} and \eqref{Gr2nbd} reads
\begin{align}\label{Gr10}
\eps G^{1}_{R,1}&+v_y\pa_yG^{1}_{R,1}-\al v_y\pa_{v_x}G^{1}_{R,1}+\nu_0 G^{1}_{R,1}-\chi_{M}\CK G^{1}_{R,1}+\frac{\al}{2}\sqrt{\mu}v_xv_yG^{1}_{R,2}
\notag\\&=-\frac{1}{2}\sqrt{\mu}v_xv_yG_{1}+\sqrt{\mu}v_y\pa_{v_x}G_{1}+Q(G_1,G_1):=\CS^0,
\ y\in(-1,1),\ v\in\R^3,
\end{align}
\begin{align}\label{Gr10bd}
G^{1}_{R,1}(\pm1,v)|_{v_y\lessgtr0}=0,\ v\in\R^3,
\end{align}
and
\begin{align}\label{Gr20}
\eps &G^{1}_{R,2}+v_y\pa_yG^{1}_{R,2}-\al v_y\pa_{v_x}G^{1}_{R,2}+LG^{1}_{R,2}=(1-\chi_{M})\mu^{-\frac{1}{2}}\CK G^{1}_{R,1},\ y\in(-1,1),\ v\in\R^3,
\end{align}
\begin{align}\label{Gr20bd}
G^{1}_{R,2}(\pm1,v)|_{v_y\lessgtr0}=\sqrt{2\pi \mu}\dis{\int_{v_y\gtrless0}}\sqrt{\mu}G^{1}_{R}(\pm1,v)|v_y|dv,\ v\in\R^3.
\end{align}
Performing similar calculations as for deriving \eqref{H1sum} and \eqref{H2sum5}, one has
\begin{align}\label{GR1}
\sum\limits_{0\leq m\leq N_0}\|w_q\pa_{v_x}^mG_{R,1}^1\|_{L^\infty}\leq& C\al\sum\limits_{0\leq m\leq N_0}\|w_q\pa_{v_x}^mG_{R,2}^1\|_{L^\infty}
+C\sum\limits_{0\leq m\leq N_0}\|w_{q}\pa_{v_x}^m\CS^0\|_{L^\infty}\notag\\
\leq& C\al\sum\limits_{0\leq m\leq N_0}\|w_q\pa_{v_x}^mG_{R,2}^1\|_{L^\infty}
+C,
\end{align}
and
\begin{align}\label{GR2}
\sum\limits_{0\leq m\leq N_0}\|w_q\pa_{v_x}^mG_{R,2}^1\|_{L^\infty}\leq& C\sum\limits_{0\leq m\leq N_0}\|\pa_{v_x}^mG^1_{R,2}\|
+C\sum\limits_{0\leq m\leq N_0}\|w_q\pa_{v_x}^mG_{R,1}^{k+1}\|_{L^\infty},
\end{align}
where the constant $C>0$ is independent of $\eps$, see also Remark \ref{ideps}.

Since the mass of $G_{R,2}^1$ is not conserved, in order to estimate the macroscopic component of $G_{R,2}^1$ we instead turn to obtain the $L^2$ estimate of $G_{R}^1$. Recall $\sqrt{\mu}G_{R}^1=G_{R,1}^1+\sqrt{\mu}G_{R,2}^1$. By \eqref{Gr10}, \eqref{Gr10bd}, \eqref{Gr20} and \eqref{Gr20bd}, it is easy to see that $G_{R}^1$ satisfies
\begin{align}\label{Grn1}
\eps G^1_R&+v_y\pa_yG^1_R-\al v_y\pa_{v_x}G^1_R+\frac{\al}{2}v_xv_yG^1_{R}+LG^1_R=\mu^{-\frac{1}{2}}\CS^0,
\end{align}
and
\begin{align}%\label{Grn1bd}
G^1_R(\pm1,v)|_{v_y\lessgtr0}=\sqrt{2\pi\mu}\dis{\int_{v_y\gtrless0}}\sqrt{\mu}G^1_R(\pm1,v)|v_y|dv.\notag
\end{align}
Next, for $n\geq1$, denote
\begin{align}\label{abc.def}
\FP_0G^n_R=(a^n+\Fb^n\cdot v+c^n(|v|^2-3))\sqrt{\mu},\ \FP_0G^n_{R,2}=(a^n_{2}+\Fb^n_{2}\cdot v+c^n_{2}(|v|^2-3))\sqrt{\mu},
\end{align}
and define the projection $\bar{\FP}_0$, from $L^2$ to $\ker \CL$, as
\begin{align}\label{abc1.def}
\bar{\FP}_0G^n_{R,1}=(a^n_{1}+\Fb^n_{1}\cdot v+c^n_{1}(|v|^2-3))\mu.
\end{align}
In addition, we will also use the following notations
\begin{align}
\Fb^n_{i}=[b^n_{i,1},b^n_{i,2},b^n_{i,3}],\ i=1,2, \ \Fb^n=[b^n_{1},b^n_{2},b^n_{3}].\notag
\end{align}
Note that
\begin{align}\label{m-m}
a^n=a^n_1+a^n_2,\ \Fb^n=\Fb^n_1+\Fb^n_2,\ c^n=c^n_1+c^n_2,\ \int_{-1}^1a^n(y)dy=0.
\end{align}
Since
\begin{align}\label{m1-if}
\|[a^n_1,\Fb^n_1,c^n_1]\|\leq C\|\bar{\FP}_0G^n_{R,1}\|\leq C\|w_qG^n_{R,1}\|_{L^\infty},
\end{align}
for $q>5/2$, to obtain the estimate of $\|[a^1_2,\Fb^1_2,c^1_2]\|$, it suffices to derive the $L^2$ estimates of $[a^1,\Fb^1,c^1]$.
In what follows, we will show that the $L^2$ norm of the macroscopic part of $G^1_R$ can be indeed dominated by its microscopic component and other known terms.
We estimate $[a^1,\Fb^1,c^1]$ by the dual argument. First of all, we let $\Psi(y,v)\in C^\infty([-1,1]\times\R^3)$, and take the inner product of \eqref{Grn1} and $\Psi$
over $(-1,1)\times\R^3$, to obtain
\begin{align}\label{CGtt}
\eps(G^1_{R},\Psi)&-(v_yG^1_{R},\pa_y\Psi)+\lag v_yG^1_{R}(1),\Psi(1)\rag-\lag v_yG^1_{R}(-1),\Psi(-1)\rag
+\al (v_yG^1_{R},\pa_{v_x}\Psi)\notag\\&+\frac{\al}{2}(v_xv_yG^1_{R},\Psi)+ (LG^1_{R},\Psi)
=(\mu^{-\frac{1}{2}}\CS^0,\Psi).
\end{align}
%where the identity $\nu_0\CG-\si K\CG=(1-\si )\nu_0\CG+\si L\CG$ was used.

%Note that the mass and the mass flux of the boundary value problem \eqref{CG} and \eqref{CGbd} do not conserved in the case of $0\leq\si <1.$
%We divided our computations into following two cases:

%{\bf CASE I.} $0\leq \si <1.$

\noindent\underline{Estimate on $a^1$.} Let
$$\Psi=\Psi_{a^1}=v_y\frac{d}{dy}\phi_{a^1}(y) (|v|^2-10)\sqrt{\mu},$$
where
\begin{align}\label{a1eq}
\phi''_{a^1}={a^1},\ \phi'_{a^1}(\pm1)=0.
\end{align}
Thus
\begin{align}\label{epa}
\|\phi_{a^1}\|_{H^2}\leq C\|{a^1}\|.
\end{align}
Plugging $\Psi=\Psi_{a^1}$ into \eqref{CGtt}, we now compute the equation term by term. First of all, by Cauchy-Schwarz inequality with $\eta>0$ and using \eqref{epa}, one has
\begin{align}
\eps|(G^1_{R},\Psi_{a^1})|\leq&\eps|(\FP_0G^1_{R},\Psi)|+\eps|(\FP_1G^1_{R},\Psi)|
\notag\\ \leq& C\eps\|a^1\|^2+C\eps\{\|\FP_1G^1_{R,2}\|^2+\|w_qG^1_{R,1}\|^2_{L^\infty}\}+C\eps\|b^1_{2}\|^2,\notag
\end{align}
%where we used the notation $\Fb^1=$
\begin{align}
-(v_yG^1_{R},\pa_y\Psi_{a^1})=&-(v_y\FP_0G^1_{R},\pa_y\Psi_{a^1})-(v_y\FP_1G^1_{R},\pa_y\Psi_{a^1})\notag\\
\geq&5\|a^1\|^2-\eta\|a^1\|^2-C_\eta\{\|\FP_1G^1_{R,2}\|^2+\|w_qG^1_{R,1}\|^2_{L^\infty}\},\notag
\end{align}
\begin{align}
\al |(v_yG^1_{R},\pa_{v_x}\Psi_{a^1})|+\frac{\al}{2}|(v_xv_yG^1_{R},\Psi_{a^1})|\leq C\al(\|b_1^1\|^2+\|w_qG^1_{R,1}\|_{L^\infty}^2+\|G^1_{R,2}\|^2).\notag
\end{align}
Then by Lemmas \ref{G1.lem} and \ref{Ga}, it follows
\begin{align}
(\mu^{-\frac{1}{2}}\CS^0,\Psi_{a^1})=&|( v_y\pa_{v_x}G_1-\frac{1}{2}v_xv_yG_{1}
+\Ga(G_1,G_1),\Psi_{a^1})|\notag\\
\leq& \eta\|a^1\|^2+C_{\eta}\|G_1\|^2+C_\eta\int_{\R^3}\int_{-1}^1|\Ga(G_1,G_1)|^2dvdy
\notag\\
\leq& \eta\|a^1\|^2+C_\eta\|w_qG_1\|_{L^\infty}^2+C_\eta\int_{-1}^1\|G_1\|^4dy
\notag\\
\leq& \eta\|a^1\|^2+C_\eta\|w_qG_1\|_{L^\infty}^2+C_\eta\|w_qG_1\|_{L^\infty}^2\|G_1\|^2
\leq \eta\|a^1\|^2+C_\eta,\label{gal2}
\end{align}
provided that $q>3/2$.

Next, noting that $LG^1_{R}=-\{\Ga(G^1_{R},\sqrt{\mu})+\Ga(\sqrt{\mu},G^1_{R})\}$, one has by a similar argument as above that
\begin{align}\label{Ll2}
|(LG^1_{R},\Psi_{a^1})|\leq& |(L\left(G^1_{R,1}\mu^{-\frac{1}{2}}\right),\Psi_{a^1})|+|(LG^1_{R,2},\Psi_{a^1})|\notag\\
\leq& \eta\|a^1\|^2+C_\eta(\|w_qG^1_{R,1}\|_{L^\infty}^2+\|\FP_1G^1_{R,2}\|^2).
\end{align}
The last  boundary term $\lag v_yG_R^1(1),\Psi_{a^1}(1)\rag-\lag v_yG_R^1(-1),\Psi_{a^1}(-1)\rag$ vanishes because of
 \eqref{a1eq}.
%because $\int_{-1}^1a(y)dy=0$ according to the boundary condition \eqref{CGbd} with $\si=1$

Putting all the estimates above together, we have
\begin{align}\label{a.es}
\|a^1\|^2 \leq& C\|\FP_1G^1_{R,2}\|^2+C\|w_qG^1_{R,1}\|^2_{L^\infty}+C\al\{\|G^1_{R,2}\|^2+\|b_1^1\|^2\}+C\|b^1_2\|^2+C.
\end{align}

\noindent\underline{Estimate on $\Fb^1$.}
Let
\begin{align*}
\Psi=\Psi_{b^1_i}=\left\{\begin{array}{rll}
&v_yv_{x}\frac{d}{dy}\phi_{b^1_1}(y)\sqrt{\mu},\ i=1,\\[2mm]
&v_yv_{z}\frac{d}{dy}\phi_{b^1_3}(y)\sqrt{\mu},\ i=3,\\[2mm]
&v_y^2(|v|^2-5)\frac{d}{dy}\phi_{b^1_2}(y)\sqrt{\mu},\ i=2,
\end{array}
\right.
\end{align*}
where
\begin{align*}
-\phi''_{b^1_i}=b^1_i,\ \phi_{b_i}(\pm1)=0.
\end{align*}
Then it holds
\begin{align}\label{epb}
\|\phi_{b^1_i}\|_{H^2}\leq C\|b^1_i\|, \ |\phi'_{b^1_i}(\pm1)|\leq C\|b^1_i\|.
\end{align}
%Setting $\Psi=\Psi_{b_i}$ and performing the similar calculations as for obtaining \eqref{a.es}, one has
%\begin{align}\label{b.es}
%\|b\|^2 \leq& C\|\FP_1\CG_2\|^2+C\|w_q\CG_1\|^2_{L^\infty}+C\|b_2\|^2+C\al\|\CG_2\|^2+C\|w_q\CF_1\|^2_{L^\infty}+C\|\CF_2\|^2
%\notag\\&+C\frac{1-\si }{\eps+1-\si }\left\{|P_\ga\CG_2|^2_{2,+}+|\{I-P_\ga\}\CG_2|_{2,+}+\|w_q\CG_1\|^2_{L^\infty}+|\CF_{2,b}|^2_2\right\}.
%\end{align}
We now compute each  term in \eqref{CGtt} with $\Psi=\Psi_{b^1_i}$.
By Cauchy-Schwarz inequality and  \eqref{epb}, one has
\begin{align}
\eps|(G_R^1,\Psi_{b^1_i})|\leq&\eps|(\FP_0G_R^1,\Psi_{b^1_i})|+\eps|(\FP_1G_R^1,\Psi_{b^1_i})|
\notag\\ \leq& C\eps\|b^1_i\|^2+C\eps\{\|\FP_1G_{R,2}^1\|^2+\|w_qG_{R,1}^1\|^2_{L^\infty}\}+C\eps{\bf 1}_{i=2}\|c^1\|^2,\notag
\end{align}
\begin{align}
-(v_yG_R^1,\pa_y\Psi_{b^1_i})=&-(v_y\FP_0G_R^1,\pa_y\Psi_{b^1_i})-(v_y\FP_1G_R^1,\pa_y\Psi_{b^1_i})\notag\\
\geq&\left\{\begin{array}{rll}
&\|b^1_i\|^2-\eta\|b^1_i\|^2-C_\eta\{\|\FP_1G^1_{R,2}\|^2+\|w_qG^1_{R,1}\|^2_{L^\infty}\},\ i=1,3,\\[2mm]
&6\|b^1_i\|^2-\eta\|b^1_i\|^2-C_\eta\{\|\FP_1G^1_{R,2}\|^2+\|w_qG^1_{R,1}\|^2_{L^\infty}\},\ i=2,
\end{array}\right.\notag
\end{align}
\begin{align}
\al |(v_yG_R^1,\pa_{v_x}\Psi_{b^1_i})|+\frac{\al}{2}|(v_xv_yG_R^1,\Psi_{b^1_i})|\leq C\al(\|[a^1,c^1]\|^2+\|w_qG^1_{R,1}\|_{L^\infty}^2+\|G^1_{R,2}\|^2).\notag
\end{align}
Similar to  \eqref{gal2} and \eqref{Ll2}, it follows
\begin{align}
(\mu^{-\frac{1}{2}}\CS^0,\Psi_{b_i^1})\leq \eta\|b^1_i\|^2+C,\notag
\end{align}
and
\begin{align}
|(LG_R^1,\Psi_{b_i})|\leq& |(L\left(G_{R,1}^1\mu^{-\frac{1}{2}}\right),\Psi_{b^1_i})|+|(LG_{R,2},\Psi_{b^1_i})|\notag\\
\leq& \eta\|b_i\|^2+C_\eta(\|w_qG_{R,1}\|_{L^\infty}^2+\|\FP_1G_{R,2}\|^2).\notag
\end{align}
For the boundary term, noting that
\begin{align}\label{bddec}
G_R^1(\pm1)|_{v_y\neq0}=P_{\ga}G_R^1(\pm1)+\{I-P_{\ga}\}G_R^1(\pm1)|_{v_y\gtrless0},
\end{align}
we have
\begin{align}%\label{bbd}
\lag v_y&G_R^1(1),\Psi_{b_i}(1)\rag-\lag v_y\CG(-1),\Psi_{b_i}(-1)\rag\notag\\=& \lag v_yP_\ga G_R^1(1),\Psi_{b_i}(1)\rag
+\lag v_y\{I-P_\ga\}G_R^1(1)|_{v_y>0},\Psi_{b_i}(1)\rag\notag\\&-\lag v_yP_\ga G_R^1(-1),\Psi_{b_i}(-1)\rag
-\lag v_y\{I-P_\ga\}G_R^1(-1)|_{v_y<0},\Psi_{b_i}(-1)\rag\notag\\
=&\lag v_y\{I-P_\ga\}G_R^1(1)|_{v_y>0},\Psi_{b_i}(1)\rag-\lag v_y\{I-P_\ga\}G_R^1(-1)|_{v_y<0},\Psi_{b_i}(-1)\rag\notag
\\ \leq &\eta\|b^1_i\|^2+C_\eta|\{I-P_\ga\}G_{R,2}^1|^2_{2,+}+C_\eta\|w_qG_{R,1}^1\|^2_{L^\infty},\notag
\end{align}
where the fact that $\lag v_yP_\ga G_R^1(\pm1),\Psi_{b_i}(\pm1)\rag=0$ has been used.

We now conclude from the above estimates for $b^1_i$ with $1\leq i\leq3$ that
\begin{align}\label{b.es}
\|\Fb^1\|^2 \leq& C\|\FP_1G^1_{R,2}\|^2+C\|w_qG^1_{R,1}\|^2_{L^\infty}+C\al\{\|G^1_{R,2}\|^2+\|[a^1,c^1]\|^2\}\notag\\&
+C|\{I-P_\ga\}G^1_{R,2}|^2_{2,+}+C\|c^1\|^2+C.
\end{align}

\noindent\underline{Estimate on $c^1$.}
Let
\begin{align*}
\Psi=\Psi_{c^1}=v_y(|v|^2-5)\frac{d}{dy}\phi_{c^1}(y)\sqrt{\mu},
\end{align*}
where
$$
-\phi_{c^1}''=c^1,\ \phi_{c^1}(\pm1)=0.
$$
One has
\begin{align}\label{epc}
\|\phi_{c^1}\|_{H^2}\leq C\|c^1\|, \ |\phi'_{c^1}(\pm1)|\leq C\|c^1\|.
\end{align}
By Cauchy-Schwarz inequality and \eqref{epc}, it follows
\begin{align}
\eps|(G_{R}^1,\Psi_{c^1})|\leq&\eps|(\FP_0G_{R}^1,\Psi_{c^1})|
+\eps|(\FP_1G_{R}^1,\Psi_{c^1})|
\notag\\ \leq& C\eps\|c\|^2+C\eps\{\|\FP_1G_{R,2}^1\|^2+\|w_qG_{R,1}^1\|^2_{L^\infty}\},\notag
\end{align}
\begin{align}
-(v_yG_{R}^1,\pa_y\Psi_{c^1})=&-(v_y\FP_0G_{R}^1,\pa_y\Psi_{c^1})-(v_y\FP_1G_{R}^1,\pa_y\Psi_{c^1})\notag\\
\geq&30\|c^1\|^2-\eta\|c^1\|^2-C_\eta\{\|\FP_1G_{R,2}^1\|^2+\|w_qG_{R,1}^1\|^2_{L^\infty}\},\notag
\end{align}
\begin{align}
\al |(v_yG_{R}^1,\pa_{v_x}\Psi_{c^1})|+\frac{\al}{2}|(v_xv_yG_{R}^1,\Psi_{c^1})|
\leq C\al(\|b_1^1\|^2+\|w_qG_{R,1}^1\|_{L^\infty}^2+\|G_{R,2}^1\|^2).\notag
\end{align}
Also similar to  derive \ref{gal2} and \eqref{Ll2}, one has
\begin{align}
(\mu^{-\frac{1}{2}}\CS^0,\Psi_{c^1})\leq \eta\|c^1\|^2+C,\notag
\end{align}
and
\begin{align}
|(LG_R^1,\Psi_{c^1})|\leq& |(L\left(G_{R,1}^1\mu^{-\frac{1}{2}}\right),\Psi_{c^1})|+|(LG_{R,2},\Psi_{c^1})|\notag\\
\leq& \eta\|c^1\|^2+C_\eta(\|w_qG^1_{R,1}\|_{L^\infty}^2+\|\FP_1G^1_{R,2}\|^2).\notag
\end{align}
For the boundary term, by applying \eqref{bddec} and using
$$
\lag v_yP_\ga G_{R}^1(\pm1),\Psi_{c^1}(\pm1)\rag=0,
$$
we have
\begin{align}%\label{bbd}
\lag v_y&G_{R}^1(1),\Psi_{c^1}(1)\rag-\lag v_yG_{R}^1(-1),\Psi_{c^1}(-1)\rag\leq\eta\|c^1\|^2+C_\eta|\{I-P_\ga\}G_{R,2}^1|^2_{2,+}+C_\eta\|w_qG_{R,1}^1\|^2_{L^\infty}.\notag
\end{align}
Combining the above estimates on $c^1$ give
\begin{align}\label{c.es}
\|c^1\|^2 \leq& C\|\FP_1G_{R,2}^1\|^2+C\|w_qG_{R,1}^1\|^2_{L^\infty}+C\al\{\|G_{R,2}^1\|^2+\|b_1^1\|^2\}+C\|w_qG_{R,1}^1\|^2_{L^\infty}
\notag\\&+C|\{I-P_\ga\}G^1_{R,2}|^2_{2,+}+C\al^2.
\end{align}
Finally, a linear combination of \eqref{a.es}, \eqref{b.es} and \eqref{c.es} gives
\begin{align}\label{abc.es}
\|[a^1,\Fb^1,c^1]\|^2 \leq& C\|\FP_1G^1_{R,2}\|^2+C\|w_qG^1_{R,1}\|^2_{L^\infty}+C\al\|G^1_{R,2}\|^2\notag\\&+C|\{I-P_\ga\}G^1_{R,2}|^2_{2,+}+C.
\end{align}
This together with \eqref{m-m} and \eqref{m1-if}  implies that $[a^1_2,\Fb^1_2,c^1_2]$ satisfies
\begin{align}\label{abc2.es}
\|[a_2^1,\Fb_2^1,c_2^1]\|^2 \leq& C\|\FP_1G^1_{R,2}\|^2+C\|w_qG^1_{R,1}\|^2_{L^\infty}+C|\{I-P_\ga\}G^1_{R,2}|^2_{2,+}+C.
\end{align}
In order to obtain the estimates for $\|\FP_1G^1_{R,2}\|$, we have to further consider the BVP for $G^1_{R,2}$ as follows:
%satisfies
\begin{eqnarray}%\label{G1r2.eq}
\left\{\begin{array}{rll}
\begin{split}&\eps G^1_{R,2}+v_y\pa_yG^1_{R,2}-\al v_y\pa_{v_x}G^1_{R,2}
+LG^1_{R,2}-(1-\chi_{M})\mu^{-\frac{1}{2}}\CK G^1_{R,1}=0,\\
&G^1_{R,2}(\pm1,v){\bf 1}_{\{v_y\lessgtr0\}}=\sqrt{2\pi \mu}\dis{\int_{v_y\gtrless0}}\sqrt{\mu}G^1_{R}(\pm1,v)|v_y|dv.
\end{split}
\end{array}\right.\notag
\end{eqnarray}
Applying the estimates \eqref{g2.l2}, \eqref{Pbd}  and \eqref{dg2.l2} with $\si=1$, $\CF_2=0$ and $\CF_{2,b}=0$, one has
\begin{align}\label{mi-G1r2}
\eps\|G^1_{R,2}\|^2&+\de_0\|\FP_1G^1_{R,2}\|^2
+\frac{1}{2}|\{I-P_\ga\}G^1_{R,2}|^2_{2,+}
\leq\eta|P_\ga G^1_{R,2}|^2_{2,+}+\eta\|G^1_{R,2}\|^2+C_\eta\|w_qG^1_{R,1}\|^2_{L^\infty},
\end{align}
\begin{align}\label{trG1r2}
|P_{\gamma }G^1_{R,2}|_{2,+}^{2}\leq \vps|\{I-P_\ga\}G^1_{R,2}|^2_{2,+}
+C\|G^1_{R,2}\|^{2}+C\| w_qG^1_{R,1}\|_{L^\infty}^{2},
\end{align}
and
\begin{align}\label{hmi-G1r2}
\eps\|\pa^m_{v_x}G^1_{R,2}\|^2&+\de_0\|\pa^m_{v_x}G^1_{R,2}\|^2
+\frac{1}{2}|\pa^m_{v_x}G^1_{R,2}|^2_{2,+}\notag\\
\leq&C\|G^1_{R,2}\|^2+C\sum\limits_{m'\leq m}\|w_q\pa^{m'}_{v_x}G^1_{R,1}\|^2_{L^\infty}
+C(m)|P_\ga G^1_{R,2}|^2_{2,+},
\end{align}
where all the constants on the right hand side are independent of $\eps.$
Then \eqref{abc.es}, \eqref{mi-G1r2}, \eqref{trG1r2} and \eqref{hmi-G1r2} give
\begin{align}\label{ttG1r2}
\sum\limits_{0\leq m\leq N_0}\|\pa^m_{v_x}G^1_{R,2}\|^2
&+\sum\limits_{0\leq m\leq N_0}|\pa^m_{v_x}G^1_{R,2}|^2_{2,+}
\leq C\sum\limits_{0\leq m\leq N_0}\|w_q\pa^{m}_{v_x}G^1_{R,1}\|^2_{L^\infty}+C.
\end{align}
Consequently, a linear combination of \eqref{GR1}, \eqref{GR2} and \eqref{ttG1r2} gives
\begin{align}
\sum\limits_{0\leq m\leq N_0}\{\|w_q\pa^{m}_{v_x}G^1_{R,1}\|_{L^\infty}+\|w_q\pa^{m}_{v_x}G^1_{R,1}\|_{L^\infty}\}\leq \CC_0,\notag
\end{align}
for some suitably large $\CC_0>0$. Therefore \eqref{umbd} holds for $n=1$.

We now assume that \eqref{umbd} is valid for $n=k\geq 1$ and then prove that it  holds for $n=k+1$.
In fact, applying the estimates \eqref{H1sum} and \eqref{H2sum5} to the system \eqref{Gr1n}-\eqref{Gr1nbd}
and \eqref{Gr2n}-\eqref{Gr2nbd} with $n=k$, one has
\begin{align}\label{GRk1}
\sum\limits_{0\leq m\leq N_0}\|w_q\pa_{v_x}^mG_{R,1}^{k+1}\|_{L^\infty}\leq C\al\sum\limits_{0\leq m\leq N_0}\|w_q\pa_{v_x}^mG_{R,2}^{k+1}\|_{L^\infty}
+C\sum\limits_{0\leq m\leq N_0}\|w_{q}\pa_{v_x}^m\CS^k\|_{L^\infty},
\end{align}
and
\begin{align}\label{GRk2}
\sum\limits_{0\leq m\leq N_0}\|w_q\pa_{v_x}^mG_{R,2}^{k+1}\|_{L^\infty}\leq& C\sum\limits_{0\leq m\leq N_0}\|w_q\pa_{v_x}^mG_{R,2}^{k+1}\|
+C\sum\limits_{0\leq m\leq N_0}\|w_q\pa_{v_x}^mG_{R,1}^{k+1}\|_{L^\infty},
\end{align}
where
\begin{align}
\CS^k=&-\frac{1}{2}\sqrt{\mu}v_xv_yG_{1}+\sqrt{\mu}v_y\pa_{v_x}G_{1}
+Q(\sqrt{\mu}G_1,\sqrt{\mu}G_1)\notag\\&+\al\{Q(\sqrt{\mu}G^{k}_R,\sqrt{\mu}G_1)+Q(\sqrt{\mu}G_1,\sqrt{\mu}G^{k}_R)\}
+\al^2Q(\sqrt{\mu}G^{k}_R,\sqrt{\mu}G^{k}_R).\notag
\end{align}
By Lemma \ref{G1.lem}, Lemma \ref{Ga} and the induction assumption, we have
\begin{align}\label{Sk}
\sum\limits_{0\leq m\leq N_0}\|w_{q}\pa_{v_x}^m\CS^k\|_{L^\infty}\leq& C+C\al\sum\limits_{0\leq m\leq N_0}\{\|w_q\pa_{v_x}^mG^{k}_{R,1}\|_{L^\infty}+\|w_q\pa_{v_x}^mG^{k}_{R,1}\|_{L^\infty}\}
\notag\\&+C\al^2\sum\limits_{0\leq m\leq N_0}\{\|w_q\pa_{v_x}^mG^{k}_{R,1}\|^2_{L^\infty}+\|w_q\pa_{v_x}^mG^{k}_{R,1}\|^2_{L^\infty}\}.
\end{align}

For the $L^2$ estimate, by performing a parallel calculation as for  \eqref{abc2.es}, one has
\begin{align}\label{abck.es}
\|[a_1^{k+1},\Fb_1^{k+1},c_1^{k+1}]\|^2 \leq& C\|\FP_1G^{k+1}_{R,2}\|^2+C\|w_qG^{k+1}_{R,1}\|^2_{L^\infty}+C|\{I-P_\ga\}G^{k+1}_{R,2}|^2_{2,+}
\notag\\&+C\sum\limits_{j=1}^3|(\mu^{-\frac{1}{2}}\CS^k,\Psi_j)|.
\end{align}
Here $\Psi_j$ $(1\leq j\leq 3)$ are chosen as  $\Psi_{a^{k+1}}$, $\Psi_{b_i^{k+1}}$ and $\Psi_{c^{k+1}}$  in the same way as for
$\Psi_{a^{1}}$, $\Psi_{b_i^{1}}$ and $\Psi_{c^{1}}$, respectively. Hence, \eqref{abck.es} also gives
\begin{align}\label{abc1k.es}
\|[a_2^{k+1},\Fb_2^{k+1},c_2^{k+1}]\|^2 \leq& C\|\FP_1G^{k+1}_{R,2}\|^2+C\|w_qG^{k+1}_{R,1}\|^2_{L^\infty}+C|\{I-P_\ga\}G^{k+1}_{R,2}|^2_{2,+}
+C\notag\\&+C\al^2\{\|w_qG^{k}_{R,1}\|^2_{L^\infty}+\|w_qG^{k}_{R,2}\|^2_{L^\infty}\}
\notag\\&+C\al^4\{\|w_qG^{k}_{R,1}\|^4_{L^\infty}+\|w_qG^{k}_{R,2}\|^4_{L^\infty}\},
\end{align}
by applying Lemma \ref{Ga} and the relation \eqref{m-m}.

On the other hand, similar to the estimates \eqref{mi-G1r2}, \eqref{trG1r2} and \eqref{hmi-G1r2}, it also follows
\begin{align}\label{mi-Gkr2}
\eps\|G^{k+1}_{R,2}\|^2&+\de_0\|\FP_1G^{k+1}_{R,2}\|^2
+\frac{1}{2}|\{I-P_\ga\}G^{k+1}_{R,2}|^2_{2,+}\notag\\
\leq&\eta|P_\ga G^{k+1}_{R,2}|^2_{2,+}+\eta\|G^{k+1}_{R,2}\|^2+C_\eta\|w_qG^{k+1}_{R,1}\|^2_{L^\infty},
\end{align}
\begin{align}\label{trGkr2}
|P_{\gamma }G^{k+1}_{R,2}|_{2,+}^{2}\leq \vps|\{I-P_\ga\}G^{k+1}_{R,2}|^2_{2,+}
+C\|G^{k+1}_{R,2}\|^{2}+C\| w_qG^{k+1}_{R,1}\|_{L^\infty}^{2},
\end{align}
and
\begin{align}\label{hmi-Gkr2}
\eps\|\pa^m_{v_x}G^{k+1}_{R,2}\|^2&+\de_0\|\pa^m_{v_x}G^{k+1}_{R,2}\|^2
+\frac{1}{2}|\pa^m_{v_x}G^{k+1}_{R,2}|^2_{2,+}\notag\\
\leq&C\|G^{k+1}_{R,2}\|^2+C\sum\limits_{m'\leq m}\|w_q\pa^{m'}_{v_x}G^{k+1}_{R,1}\|^2_{L^\infty}
+C|P_\ga G^{k+1}_{R,2}|^2_{2,+}.
\end{align}
As a consequence,  combining estimates \eqref{abc1k.es}, \eqref{mi-Gkr2}, \eqref{trGkr2} and \eqref{hmi-Gkr2}  gives
\begin{align}\label{ttGkr2}
\sum\limits_{0\leq m\leq N_0}&\|\pa^m_{v_x}G^{k+1}_{R,2}\|^2
+\sum\limits_{0\leq m\leq N_0}|\pa^m_{v_x}G^{k+1}_{R,2}|^2_{2,+}\notag\\
\leq& C\sum\limits_{0\leq m\leq N_0}\|w_q\pa^{m}_{v_x}G^{k+1}_{R,1}\|^2_{L^\infty}
+C\al^2\{\|w_qG^{k}_{R,1}\|^2_{L^\infty}+\|w_qG^{k}_{R,1}\|^2_{L^\infty}\}
\notag\\&+C\al^4\{\|w_qG^{k}_{R,1}\|^4_{L^\infty}+\|w_qG^{k}_{R,2}\|^4_{L^\infty}\}.
\end{align}
Finally, by taking $C_1>0$ suitably large, we have from \eqref{GRk1}, \eqref{GRk2}, \eqref{Sk} and \eqref{ttGkr2} that
\begin{align}
\|[G^{k+1}_{R,1},G^{k+1}_{R,2}]\|_{\FX_{\al,N_0}}
\leq& \CC_0+C_1\al\|[G^{k}_{R,1},G^{k}_{R,2}]\|_{\FX_{\al,N_0}}+C_1\al^2\|[G^{k}_{R,1},G^{k}_{R,2}]\|_{\FX_{\al,N_0}}^2
\notag\\ \leq& \CC_0\{1+2\CC_0C_1\al+4C_1\CC^2_0\al^2\}\leq\frac{5}{4}\CC_0,\notag
\end{align}
provided that $\al$ is  chosen to be sufficiently small.
Thus \eqref{umbd} holds for $n=k+1$. Therefore,  \eqref{umbd} holds for all $n\geq0$.

We now turn to prove that $[G_{R,1}^n,G_{R,1}^n]_{n=0}^{\infty}$ is a Cauchy sequence in $\FX_{\al,N_0}$.
For this, denote
$$\mu^{1/2}\tilde{G}_R^{n+1}=\tilde{G}^{n+1}_{R,1}+\mu^{1/2}\tilde{G}^{n+1}_{R,2},$$
with
$$
[\tilde{G}^{n+1}_{R,1},\tilde{G}^{n+1}_{R,2}]=[G_{R,1}^{n+1}-G_{R,1}^{n},G_{R,2}^{n+1}-G_{R,2}^{n}].
$$
Then $[\tilde{G}^{n+1}_{R,1},\tilde{G}^{n+1}_{R,2}]$ satisfies
\begin{align}%\label{tGr1n}
\eps \tilde{G}^{n+1}_{R,1}&+v_y\pa_y\tilde{G}^{n+1}_{R,1}-\al v_y\pa_{v_x}\tilde{G}^{n+1}_{R,1}+\nu_0 \tilde{G}^{n+1}_{R,1}
-\chi_{M}\CK \tilde{G}^{n+1}_{R,1}+\frac{\al}{2}\sqrt{\mu}v_xv_y\tilde{G}^{n+1}_{R,2}
\notag\\=&\al\{Q(\sqrt{\mu}\tilde{G}^{n}_R,\sqrt{\mu}G_1)+Q(\sqrt{\mu}G_1,\sqrt{\mu}\tilde{G}^{n}_R)\}
\notag\\&+\al^2\{Q(\sqrt{\mu}\tilde{G}^{n}_R,\sqrt{\mu}\tilde{G}^{n}_R)+Q(\sqrt{\mu}\tilde{G}^{n}_R,\sqrt{\mu}G^{n-1}_R)
+Q(\sqrt{\mu}G^{n-1}_R,\sqrt{\mu}\tilde{G}^{n}_R)\}
\notag\\:=&\CN,
\ y\in(-1,1),\ v\in\R^3,\notag
\end{align}
\begin{align}%\label{tGr1nbd}
\tilde{G}^{n+1}_{R,1}(\pm1,v)|_{v_y\lessgtr0}=0,\ v\in\R^3,\notag
\end{align}
and
\begin{align}%\label{tGr2n}
\eps &\tilde{G}^{n+1}_{R,2}+v_y\pa_y\tilde{G}^{n+1}_{R,2}-\al v_y\pa_{v_x}\tilde{G}^{n+1}_{R,2}+L\tilde{G}^{n+1}_{R,2}-(1-\chi_{M})\mu^{-\frac{1}{2}}\CK \tilde{G}^{n+1}_{R,1}=0,\ y\in(-1,1),\ v\in\R^3,\notag
\end{align}
\begin{align}%\label{tGr2nbd}
\tilde{G}^{n+1}_{R,2}(\pm1,v)|_{v_y\lessgtr0}=\sqrt{2\pi \mu}
\dis{\int_{v_y\gtrless0}}\sqrt{\mu}\tilde{G}^{n+1}_{R}(\pm1,v)|v_y|dv,\ v\in\R^3.\notag
\end{align}
We  claim that
\begin{align}\label{cau}
\|[\tilde{G}^{n+1}_{R,1},\tilde{G}^{n+1}_{R,2}]\|_{\FX_{\al,N_0}}\leq C_m\al\|[\tilde{G}^{n}_{R,1},\tilde{G}^{n}_{R,2}]\|_{\FX_{\al,N_0}},
\end{align}
under the condition \eqref{umbd}. In fact, on the one hand, by performing a similar calculation as for obtaining \eqref{GRk1}, \eqref{GRk2} and \eqref{ttGkr2}, one has
\begin{align*}
\|[\tilde{G}^{n+1}_{R,1},\tilde{G}^{n+1}_{R,2}]\|_{\FX_{\al,N_0}}\leq C\sum\limits_{0\leq m\leq N_0}\|w_{q}\pa_{v_x}^m\CN(\tilde{G}_R^{n},\tilde{G}_R^{n})\|_{L^{\infty}}.
\end{align*}
On the other hand, we have from Lemma \ref{Ga} that
\begin{align}
\sum\limits_{0\leq m\leq N_0}\|w_{q}\pa_{v_x}^m\CN(\tilde{G}_R^{n},\tilde{G}_R^{n})\|_{L^{\infty}}\leq&
C\al\sum\limits_{0\leq m\leq N_0}\left\{\|w_{q}\pa_{v_x}^m\tilde{G}^{n}_{R,1}\|_{L^\infty}
+\|w_{q}\pa_{v_x}^m\tilde{G}^{n}_{R,2}\|_{L^\infty}\right\}\notag
\\&+C\al^2\sum\limits_{0\leq m\leq N_0}\|w_{q}\pa_{v_x}^m\tilde{G}_R^{n}\|_{L^\infty}
\sum\limits_{0\leq m\leq N_0}\|w_{q}\pa_{v_x}^mG^n_R\|_{L^\infty},\notag
\end{align}
which is further bounded by
\begin{align}
C\al\sum\limits_{0\leq m\leq N_0}\left\{\|w_{q}\pa_{v_x}^m\tilde{G}^{n}_{R,1}\|_{L^\infty}
+\|w_{q}\pa_{v_x}^m\tilde{G}^{n}_{R,2}\|_{L^\infty}\right\},\notag
\end{align}
according to \eqref{umbd}. Thus, the  claim \eqref{cau} holds. In other words, let $\al>0$ be suitably small, then $\{[G^{n}_{R,1},G^{n}_{R,2}]\}_{n=0}^{\infty}$
is a Cauchy sequence in $\FX_{\al,N_0}$. Hence,
$$
[G^{n}_{R,1},G^{n}_{R,2}]\rightarrow[G_{R,1}^\eps,G_{R,2}^\eps]
$$
strongly in $\FX_{\al,N_0}$ as $n\rightarrow+\infty$.
Moreover,  the convergence is uniform with respect to $\eps$,
and the limit $[G^{\eps}_{R,1},G^{\eps}_{R,2}]$ is a unique solution to \eqref{lmeGr1}-\eqref{lmeGr1bd} and
 \eqref{lmeGr2}-\eqref{lmeGr2bd}.
In addition, $[G^{\eps}_{R,1},G^{\eps}_{R,2}]$ also satisfies
\begin{align}\label{Gebd}
\|[G^\eps_{R,1},G^\eps_{R,2}]\|_{\FX_{\al,N_0}}\leq C,
\end{align}
where $C>0$ is independent of $\eps.$

Furthermore, by taking the limit $\eps\to 0$, we can repeat the same procedure like  letting $n\to \infty$ so that the limit function
$[G_{R,1},G_{R,2}]\in\FX_{\al,N_0}$ is the unique solution of \eqref{Gr1}-\eqref{Gr1bd} and \eqref{Gr2}-\eqref{Gr2bd}
with the same bound as \eqref{Gebd}. Then the proof of Proposition \ref{Gr.lem} is completed.
\end{proof}

Finally, Theorem \ref{st.sol} is an immediate consequence of Proposition \ref{G1.lem} and Proposition \ref{Gr.lem}, except for the non-negativity of the solution $F_{st}(y,v)$ that will be  proved from the dynamical stability of $F_{st}(y,v)$  in Theorem \ref{ust.mth}.\qed

\section{Unsteady problem: local existence}\label{loc.ex}

We now turn to the time-dependent situation.
To solve the initial boundary value problem \eqref{F}, we  set the perturbation as
\begin{equation}
\label{f.usp}
F(t,y,v)=F_{st}(y,v)+\sqrt{\mu}f(t,y,v),
\end{equation}
then $f=f(t,y,v)$ satisfies
\begin{equation}\label{f}
\left\{\begin{split}%{rll}
&\pa_tf+v_y\pa_{y}f-\al v_y\pa_{v_x}f+\frac{\al}{2}v_xv_yf+Lf\\
&\quad=\Ga(f,f)+\al\{\Ga(G_1+\al G_R,f)+\Ga(f,G_1+\al G_R)\},\\
&\qquad\qquad\qquad t>0,\ y\in(-1,1),\ v=(v_x,v_y,v_z)\in\R^3,\\%[2mm]
&\sqrt{\mu}f(0,y,v)\eqdef f_0(y,v)=F(0,y,v)-F_{st}(y,v),\ y\in(-1,1),\ v\in\R^3,\\ %[2mm]
&f(t,\pm1,v)|_{v_y\lessgtr0}=\sqrt{2\pi \mu}\dis{\int_{v_y\gtrless0}}f(t,\pm1,v)\sqrt{\mu}|v_y|dv,\ t\geq0,\ v\in\R^3.
\end{split}\right.
\end{equation}
The goal of this section is to construct the local-in-time solution to the initial boundary value problem \eqref{f}. The proof of the global existence of solutions as well as the large time behavior will be left to the next section. To resolve the difficulty caused by the growth term $\frac{\al}{2}v_xv_yf$, it is still necessary to introduce the decomposition
\begin{equation}
\label{def.tpf12}
\sqrt{\mu}f=f_1+\sqrt{\mu}f_2,
\end{equation}
where $f_1$ and $f_2$ satisfy the following initial boundary value problem
\begin{align}\label{f1}
\pa_tf_1&+v_y\pa_yf_1-\al v_y\pa_{v_x}f_1+\nu_0 f_1\notag\\=&\chi_{M}\CK f_1-\frac{\al}{2}\sqrt{\mu}v_xv_yf_2
+\al\{Q(\sqrt{\mu}f,\sqrt{\mu}\{G_1+\al G_R\})+Q(\sqrt{\mu}\{G_1+\al G_R\},\sqrt{\mu}f)\}\notag\\&+Q(\sqrt{\mu}f,\sqrt{\mu}f),
\end{align}
\begin{align}\label{f1bd}
f_1(0,y,v)=f_{0}(y,v)=F_0-F_{st},\
\ f_1(\pm1,v)|_{v_y\lessgtr0}=\sqrt{2\pi}\mu\dis{\int_{v_y\gtrless0}}f_1(\pm1,v)|v_y|dv,
\end{align}
and
\begin{align}\label{f2}
\pa_tf_2&+v_y\pa_yf_2-\al v_y\pa_{v_x}f_2+Lf_2=(1-\chi_{M})\mu^{-\frac{1}{2}}\CK f_1,
\end{align}
\begin{align}\label{f2bd}
f_2(0,y,v)=0,\ \ f_2(\pm1,v)|_{v_y\lessgtr0}=\sqrt{2\pi\mu}\dis{\int_{v_y\gtrless0}}\sqrt{\mu}f_2(\pm1,v)|v_y|dv,
\end{align}
respectively. Note that initial data for $f_2$ is set to be zero.

We will look for solutions to \eqref{f1}-\eqref{f1bd} and \eqref{f2}-\eqref{f2bd} in the following function space
\begin{equation*}%\label{fsp}
\begin{split}
\FY_{\al,T}=&\Big\{(\CG_1,\CG_2)\bigg|\sup\limits_{0\leq t\leq T}\left\{\|w_{q}\CG_1(t)\|_{L^\infty}+\|w_{q}\CG_2(t)\|_{L^\infty}\right\}<+\infty
\Big\},
\end{split}
\end{equation*}
associated with the norm
$$
\|[\CG_1,\CG_2]\|_{\FY_{\al,T}}=\sup\limits_{0\leq t\leq T}\left\{\|w_{q}\CG_1(t)\|_{L^\infty}+\|w_{q}\CG_2(t)\|_{L^\infty}\right\}.
$$
\begin{theorem}[Local existence]\label{loc.th}
Under the conditions  in Theorem \ref{ust.mth}, there exits  $T_\ast>0$ depending on $\al$ such that the coupled system \eqref{f1}-\eqref{f1bd} and \eqref{f2}-\eqref{f2bd} admits a unique local in time solution $[f_1(t,y,v),f_2(t,y,v)]$ satisfying
\begin{align*}
\|[f_1,f_2]\|_{\FY_{\al,T_\ast}}\leq C_0\vps_0,
\end{align*}
for some $C_0>0.$
\end{theorem}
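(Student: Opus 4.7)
The plan is to construct the local solution $[f_1, f_2] \in \FY_{\al, T_\ast}$ for some $T_\ast = T_\ast(\al, \vps_0) > 0$ by a Picard iteration paralleling the scheme used in Section \ref{sec.spr} for the steady remainder, but now posed on the short time interval $[0, T_\ast]$. Starting from $[f_1^0, f_2^0] = [0, 0]$, I would define $[f_1^{n+1}, f_2^{n+1}]$ at each step as the solution of the linearised IBVP obtained from \eqref{f1}--\eqref{f2bd} by replacing every bilinear interaction, the Caflisch coupling $-\tfrac{\al}{2}\sqrt{\mu}\,v_x v_y\,f_2$, the cross terms $K f_2$ and $(1-\chi_M)\mu^{-1/2}\CK f_1$, and the diffuse reflection integrals with their evaluations at $[f_1^n, f_2^n]$. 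Each component then solves a transport--absorption problem with rate $\nu_0$ along the shear characteristics \eqref{traj}, which admits an explicit mild representation through one backward bounce via \eqref{cyc}.

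For the uniform bound, the idea is to weight the mild formula by $w_q$ and take the $L^\infty$ supremum over $[0, T_\ast]$, following the proof of Lemma \ref{lifpri}. The initial datum contributes at most $C_1\vps_0$ through the absorption factor $e^{-\nu_0 t/2}$, while each linear source (including $\chi_M \CK f_1^n$, $K f_2^n$, the shear coupling $\tfrac{\al}{2}\sqrt{\mu}\,v_x v_y f_2^n$ which is bounded thanks to the $\sqrt{\mu}$ factor, and the linear-in-$f$ interactions with $G_1, G_R$) is controlled by Lemmas \ref{Kop}--\ref{Ga} and contributes at most $C_2 T_\ast \|[f_1^n, f_2^n]\|_{\FY_{\al, T_\ast}}$; the bilinear $Q$-piece gives $C_2 T_\ast \|[f_1^n, f_2^n]\|_{\FY_{\al, T_\ast}}^2$ by Lemma \ref{Ga}; and the diffuse boundary integral, bounded at a single bounce by $\|w_q\sqrt{\mu}\|_{L^\infty}\|[f_1^n, f_2^n]\|_{\FY_{\al, T_\ast}}$, adds a further $C_2 T_\ast \|[f_1^n, f_2^n]\|_{\FY_{\al, T_\ast}}$ after time integration. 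Summing,
\begin{equation*}
\|[f_1^{n+1}, f_2^{n+1}]\|_{\FY_{\al, T_\ast}} \leq C_1 \vps_0 + C_2 T_\ast\bigl(\|[f_1^n, f_2^n]\|_{\FY_{\al, T_\ast}} + \|[f_1^n, f_2^n]\|_{\FY_{\al, T_\ast}}^2\bigr),
\end{equation*}
and choosing $C_0 = 2C_1$ together with $T_\ast$ so small that $C_2 T_\ast(1 + C_0 \vps_0) \leq \tfrac{1}{2}$ yields by induction $\|[f_1^n, f_2^n]\|_{\FY_{\al, T_\ast}} \leq C_0 \vps_0$ for every $n$.

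For convergence, the differences $[\tilde f_1^{n+1}, \tilde f_2^{n+1}] := [f_1^{n+1} - f_1^n, f_2^{n+1} - f_2^n]$ solve the same linearised system with vanishing initial data and vanishing incoming contribution, and a right-hand side linear in $[\tilde f_1^n, \tilde f_2^n]$ with coefficients bounded under the uniform estimate just obtained. Repeating the mild-formula $L^\infty$ argument gives
\begin{equation*}
\|[\tilde f_1^{n+1}, \tilde f_2^{n+1}]\|_{\FY_{\al, T_\ast}} \leq C_3 T_\ast(1 + \vps_0)\,\|[\tilde f_1^n, \tilde f_2^n]\|_{\FY_{\al, T_\ast}},
\end{equation*}
which is a strict contraction after possibly shrinking $T_\ast$. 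Hence $\{[f_1^n, f_2^n]\}$ is Cauchy in $\FY_{\al, T_\ast}$, its limit is the desired local solution, and uniqueness follows from the same difference estimate applied to any two solutions sharing the initial datum.

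The main obstacle I anticipate is the interplay between the polynomial weight $w_q$ and the diffuse reflection at $y = \pm 1$: although $w_q \sqrt{\mu}$ is bounded and the boundary integral at a single bounce is immediately controlled by $\|w_q [f_1^n, f_2^n]\|_{L^\infty}$, iterating the reflection kernel through successive bounces brings back the ratio $w_q/\sqrt{\mu}$ via the measure weight $\tilde w_2$ of \eqref{wt-2}. I plan to handle this exactly as in Lemma \ref{lifpri} using the probability measure $d\Sigma_l$ of \eqref{Sigma}, but thanks to the smallness of $T_\ast$ only a uniformly bounded number of bounces ever need to be tracked, so the argument reduces to a direct estimate by $\|[f_1^n, f_2^n]\|_{\FY_{\al, T_\ast}}$ without invoking the $L^\infty$--$L^2$ loop that was essential for the steady remainder.
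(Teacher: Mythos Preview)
Your iteration scheme and the contraction-mapping strategy match the paper's approach, but there is a genuine gap in the treatment of the diffuse boundary contribution. The claim that the single-bounce boundary term in the mild formula is of order $C_2 T_\ast \|[f_1^n, f_2^n]\|_{\FY_{\al,T_\ast}}$ is incorrect: in the weighted mild representation that term reads
\[
{\bf 1}_{t_1>0}\,e^{-\int_{t_1}^t \CA\,d\tau}\,w_q(V(t_1))\sqrt{2\pi}\mu(V(t_1))\int_{n(y_1)\cdot v_1>0} w_q^{-1}(v_1)\,(w_qf_1^n)(t_1,y_1,v_1)\,|v_{1y}|\,dv_1,
\]
which is evaluated at the single hitting time $t_1$ and carries no time integral; it is bounded only by $C_q\|w_qf_1^n\|_{L^\infty}$ with $C_q$ an $O(1)$ constant depending on $q$, not on $T_\ast$. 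With that coefficient your recursive inequality becomes $\|[f_1^{n+1},f_2^{n+1}]\|_{\FY_{\al,T_\ast}}\le C_1\vps_0 + C_q\|[f_1^n,f_2^n]\|_{\FY_{\al,T_\ast}} + C_2T_\ast(\ldots)$, and since $C_q$ is not below $1$ in general, neither the uniform bound nor the contraction closes.

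The paper repairs this by iterating the mild representations \eqref{gn1.st}--\eqref{gn2.st} through $k\ge 2$ reflections rather than one, so that the residual boundary term becomes $\CI_1^{(i)}$ ($i=1,2$) carrying the indicator ${\bf 1}_{t_k>0}$; Lemma \ref{k.cyc} then supplies the small factor $\bar\vps$ (not $T_\ast$) for that piece. This couples iterate $n+1$ to all of $n,n-1,\ldots,n+1-k$, so the induction is two-tiered: the first $k$ iterates are bounded crudely by $C(k)\|w_qf_0\|_{L^\infty}$ as in \eqref{fs-ktm} (the paper also starts from $[f_1^0,f_2^0]=[f_0,0]$ rather than $[0,0]$), and for $n\ge k$ one combines the $C/q$ smallness of $\chi_M\CK$ from Lemma \ref{CK}, the $\bar\vps$ smallness of the $k$-cycle residual, and the $T_\ast$ smallness of the time-integrated $K$-term in the $f_2$ equation to reach \eqref{g1n.itp1}--\eqref{g2n.itp1} and close by a geometric argument in blocks of length $k$. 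Your last paragraph correctly locates the obstacle, but the resolution is not that few bounces occur on $[0,T_\ast]$---trajectories with large $|v_y|$ reflect arbitrarily often---rather that the $d\si$-\emph{measure} of $k$-fold reflecting trajectories is small, and extracting that requires the full $k$-cycle expansion, not a single step of the Picard map.
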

\begin{proof}
%Our proof is based on the Duhamel's principle.
We first consider the following system for approximation solutions
\begin{align}\label{af1}
\pa_tf^{n+1}_1&+v_y\pa_yf^{n+1}_1-\al v_y\pa_{v_x}f^{n+1}_1+\nu_0 f^{n+1}_1\notag\\=&\chi_{M}\CK f^{n}_1-\frac{\al}{2}\sqrt{\mu}v_xv_yf^{n}_2
+H(f_1^n,f_2^n),
\end{align}
\begin{align}\label{af1bd}
f^{n+1}_1(0,y,v)=f_{0}(y,v)=F_0-F_{st},\ \ f^{n+1}_1(\pm1,v)|_{v_y\lessgtr0}=\sqrt{2\pi}\mu\dis{\int_{v_y\gtrless0}}f^n_1(\pm1,v)|v_y|dv,
\end{align}
and
\begin{align}\label{af2}
\pa_tf^{n+1}_2&+v_y\pa_yf^{n+1}_2-\al v_y\pa_{v_x}f^{n+1}_2+\nu_0f^{n+1}_2=Kf_2^{n}+(1-\chi_{M})\mu^{-\frac{1}{2}}\CK f^{n}_1,
\end{align}
\begin{align}\label{af2bd}
f^{n+1}_2(0,y,v)=0,\ \
f^{n+1}_2(\pm1,v)|_{v_y\lessgtr0}=\sqrt{2\pi \mu}\dis{\int_{v_y\gtrless0}}\sqrt{\mu}f_2^{n}(\pm1,v)|v_y|dv,
\end{align}
where %$0<\vps_b\leq1$,
$$
H(f^{n}_1,f^{n}_2)=\al\{Q(\sqrt{\mu}f^{n},\sqrt{\mu}\{G_1+\al G_R\})+Q(\sqrt{\mu}\{G_1+\al G_R\},\sqrt{\mu}f^{n})\}
+Q(\sqrt{\mu}f^{n},\sqrt{\mu}f^{n}),
$$
and
$\sqrt{\mu}f^{n}=f_1^{n}+\sqrt{\mu}f^{n}_2$. Set  $[f_1^0,f_2^0]=[f_0,0].$

Next, one can show inductively that there exists a finite $T_{\ast}>0$ such that
\begin{align}
\sup\limits_{0\leq t\leq T_{\ast}}\|w_q[f_1^{m},f_2^{m}](t)\|_{L^\infty}\leq C_0\vps_0,\label{fn1.bd}
\end{align}
for any $m\geq0$, provided that
\begin{align}\notag
\|w_q[f_1^0,f_2^{0}]\|_{L^\infty}=\left\|w_q\left[F_0(y,v)-F_{st}(y,v)\right]\right\|_{L^\infty}\leq \vps_0.%\label{fn.it}
\end{align}
%for some positive constant $\CC_0>0$ and being independent of $n.$
This also implies that $[f_1^{n+1},f_2^{n+1}]$
is well-defined by  \eqref{af1}-\eqref{af1bd} and  \eqref{af2}-\eqref{af2bd} if $[f_1^{n},f_2^{n}]$ is bounded as in \eqref{fn1.bd}.
Denote
$$
[\mathfrak{G}^{n}_1,\RG^{n}_2]=w_q[f^{n}_1,f^{n}_2],\ \sqrt{\mu}\RG^n=\RG_1^n+\sqrt{\mu}\RG_2^n,
$$
then $\RG^n_1$ and $\RG^n_2$ satisfy
\begin{align}\label{ag1}
\pa_t\RG^{n+1}_1&+v_y\pa_y\RG^{n+1}_1-\al v_y\pa_{v_x}\RG^{n+1}_1+2q\al\frac{v_xv_y}{1+|v|^2}\RG^{n+1}_1+\nu_0 \RG^{n+1}_1
\notag\\=&\chi_{M}w_q\CK \left(\frac{\RG^{n}_1}{w_q}\right)-\frac{\al}{2}\sqrt{\mu}v_xv_y\RG^{n}_2
+w_qH(f_1^{n},f_2^{n}),
\end{align}
\begin{align}\label{ag1bd}
\RG^{n+1}_1(0,y,v)=w_qf_0(y,v),\ \ \RG^{n+1}_1(\pm1,v)|_{v_y\lessgtr0}
=\tilde{w}^{-1}_1\dis{\int_{v_y\gtrless0}}\tilde{w}_1\sqrt{2\pi}\mu\RG_1^{n}(\pm1,v)|v_y|dv,
\end{align}
and
\begin{align}\label{ag2}
\pa_t\RG^{n+1}_2&+v_y\pa_y\RG^{n+1}_2-\al v_y\pa_{v_x}\RG^{n+1}_2+2q\al\frac{v_xv_y}{1+|v|^2}\RG^{n+1}_2+\nu_0\RG^{n+1}_2\notag\\&=w_qK\left(\frac{\RG_2^{n}}{w_q}\right)
+(1-\chi_{M})w_q\mu^{-\frac{1}{2}}\CK f^{n}_1,
\end{align}
\begin{align}\label{ag2bd}
\RG^{n+1}_2(0,y,v)=0,\ \ \RG^{n+1}_2(\pm1,v)|_{v_y\lessgtr0}
=\tilde{w}^{-1}_2\dis{\int_{v_y\gtrless0}}\tilde{w}_2\sqrt{2\pi}\mu\RG_2^{n}(\pm1,v)|v_y|dv,
\end{align}
with $[\RG^0_1,\RG_2^0]=w_q[f_1^0,f_2^0]=w_q[f_0,0]$. Here,
$$
\tilde{w}_1=\tilde{w}_1(v)=(\sqrt{2\pi}w_q \mu)^{-1},
$$
and $\tilde{w}_2$ is given by \eqref{wt-2}.

Along the same characteristic line \eqref{chl} by noting that $s$ is no longer a parameter and it is non-negative,
%and taking the initial data into account, we see that
 \eqref{ag1}-\eqref{ag1bd} and  \eqref{ag2}-\eqref{ag2bd} are equivalent to
\begin{align}%\label{h10m}
\RG_1^{n+1}(t,y,v)
=&{\bf 1}_{t_1\leq0}e^{-\int_{0}^t\CA(\tau,V(\tau))\,d\tau}(w_{q}f_{0})(Y(0),V(0))\notag\\
&+\underbrace{{\bf 1}_{t_1>0}e^{-\int_{t_1}^t\CA(\tau,V(\tau))\,d\tau}
\tilde{w}_1^{-1}(V(t_1))\dis{\int_{n(y_1)\cdot v_1>0}}\tilde{w}_1\sqrt{2\pi}\mu\RG_1^{n}(t_1,y_1,v_1)|v_{1y}|dv_1}_{\CI^{(1)}_b}\notag\\
&+\int_{\max\{0,t_1\}}^{t}e^{-\int_{s}^t\CA(\tau,V(\tau))\,d\tau}\left\{\chi_{M}w_q\CK
\left(\frac{\RG^{n}_1}{w_{q}}\right)\right\}(V(s))\,ds\notag\\
&-\al\int_{\max\{0,t_1\}}^{t}e^{-\int_{s}^t\CA(\tau,V(\tau))\,d\tau}
\frac{V_x(s)V_y(s)}{2}\sqrt{\mu}(V(s))\RG^{n}_2(V(s))\,ds\notag\\
&+\int_{\max\{0,t_1\}}^{t}e^{-\int_{s}^t\CA(\tau,V(\tau))\,d\tau}\left(w_{q}H(f_1^n,f_2^n)\right)(V(s))\,ds,\label{gn1.st}
\end{align}
and
\begin{align}%\label{h20m}
\RG_2^{n+1}(t,y,v)
=&\underbrace{{\bf 1}_{t_1>0}e^{-\int_{t_1}^t\CA(\tau,V(\tau))\,d\tau}
\tilde{w}_2^{-1}(V(t_1))\dis{\int_{n(y_1)\cdot v_1>0}}\tilde{w}_2\sqrt{2\pi}\mu\RG_2^{n}(t_1,y_1,v_1)|v_{1y}|dv_1}_{\CI^{(2)}_b}\notag\\
&+\int_{\max\{0,t_1\}}^{t}e^{-\int_{s}^t\CA(\tau,V(\tau))\,d\tau}\left\{(1-\chi_{M})\mu^{-\frac{1}{2}}w_{q}\CK
\left(\frac{\RG^n_1}{w_{q}}\right)\right\}(V(s))\,ds\notag\\
&+\int_{\max\{0,t_1\}}^{t}e^{-\int_{s}^t\CA(\tau,V(\tau))\,d\tau}\left[w_{q}K\left(\frac{\RG^n_2}{w_{q}}\right)\right](V(s))\,ds,\label{gn2.st}
\end{align}
where %we have denoted %\Red{(emphasise smallness of $q\alpha$?)}
\begin{align}
\CA(\tau,V(\tau))=\nu_0+2q \al \frac{V_y(\tau)V_x(\tau)}{{1+|V(\tau)|^2}}\geq\nu_0/2,\notag
\end{align}
provided that $q\alpha$ is suitably small.

For the boundary terms $\CI^{{(1)}}_b$ and $\CI^{{(2)}}_b$, we use the equations \eqref{gn1.st} and \eqref{gn2.st} recursively to obtain
$$
\CI^{(1)}_b=\sum\limits_{j=1}^5\CI^{(1)}_j,\ \CI^{(2)}_b=\sum\limits_{j=1}^3\CI^{(2)}_j
$$
with
\begin{align}
\CI^{(1)}_1=&\underbrace{{\bf 1}_{t_1>0}e^{-\int_{t_1}^t\CA(\tau,V(\tau))d\tau}
\tilde{w}^{-1}_1(V(t_1))}_{\CW^{(1)}_0}\int_{\prod\limits_{j=1}^{k-1}\CV_j}{\bf 1}_{t_k>0}
\RG_1^{n+1-k}(t_k,y_k,V_{\mathbf{cl}}^{k-1}(t_k))\,d\bar{\Sigma}^{(1)}_{k-1}(t_k),\notag
\end{align}
\begin{align}
\CI^{(1)}_2=&\CW^{(1)}_0\sum\limits_{l=1}^{k-1}\int_{\prod\limits_{j=1}^{k-1}\CV_j}{\bf 1}_{t_{l+1}\leq 0<t_l}
(w_{q}f_{0})(Y^{l}_{\mathbf{cl}}(0),V^{l}_{\mathbf{cl}}(0))\,d\bar{\Sigma}^{(1)}_{l}(0),\notag
\end{align}
\begin{align}
\CI^{(1)}_3=&\CW^{(1)}_0\sum\limits_{l=1}^{k-1}\int_{\prod\limits_{j=1}^{k-1}\CV_j}\left\{\mathbf{1}_{\{t_{l+1}\leq 0<t_{l}\}}
\int_{0}^{t_l}+\mathbf{1}_{\{t_{l+1}>0\}}\int_{t_{l+1}}^{t_l}\right\}\left\{\chi_{M}w_q\CK
\left(\frac{\RG^{n-l}_1}{w_{q}}\right)\right\}(Y^{l}_{\mathbf{cl}},V^{l}_{\mathbf{cl}})(s)\,
d\bar{\Sigma}^{(1)}_{l}(s)ds,\notag
\end{align}
\begin{align}
\CI^{(1)}_4=&-\al\CW^{(1)}_0\sum\limits_{l=1}^{k-1}\int_{\prod\limits_{j=1}^{k-1}\CV_j}
\left\{\mathbf{1}_{\{t_{l+1}\leq 0<t_{l}\}}
\int_{0}^{t_l}+\mathbf{1}_{\{t_{l+1}>0\}}\int_{t_{l+1}}^{t_l}\right\}
\left\{\frac{v_xv_y}{2}\sqrt{\mu}\RG^{n-l}_2\right\}
(Y^{l}_{\mathbf{cl}},V^{l}_{\mathbf{cl}})(s)\,d\bar{\Sigma}^{(1)}_{l}(s)ds,\notag
\end{align}
\begin{align}
\CI^{(1)}_5=&\CW^{(1)}_0\sum\limits_{l=1}^{k-1}\int_{\prod\limits_{j=1}^{k-1}\CV_j}
\left\{\mathbf{1}_{\{t_{l+1}\leq 0<t_{l}\}}
\int_{0}^{t_l}+\mathbf{1}_{\{t_{l+1}>0\}}\int_{t_{l+1}}^{t_l}\right\}
\left(w_{q}H(f_1^{n-l},f_2^{n-l})\right)
(Y^{l}_{\mathbf{cl}},V^{l}_{\mathbf{cl}})(s)\,d\bar{\Sigma}^{(1)}_{l}(s)ds,\notag
\end{align}

\begin{align}
\CI^{(2)}_1=&\underbrace{{\bf 1}_{t_1>0}e^{-\int_{t_1}^t\CA(\tau,V(\tau))d\tau}
\tilde{w}_2^{-1}(V(t_1))}_{\CW^{(2)}_0}\int_{\prod\limits_{j=1}^{k-1}\CV_j}{\bf 1}_{t_k>0}
\RG_2^{n+1-k}(t_k,y_k,V_{\mathbf{cl}}^{k-1}(t_k))\,d\bar{\Sigma}^{(2)}_{k-1}(t_k),\notag
\end{align}
\begin{align}
\CI^{(2)}_2=&\CW^{(2)}_0\sum\limits_{l=1}^{k-1}\int_{\prod\limits_{j=1}^{k-1}\CV_j}\left\{\mathbf{1}_{\{t_{l+1}\leq 0<t_{l}\}}
\int_{0}^{t_l}+\mathbf{1}_{\{t_{l+1}>0\}}\int_{t_{l+1}}^{t_l}\right\}
\left\{w_qK\left(\frac{\RG^{n-l}_{2}}{w_{q}}\right)\right\}(Y^{l}_{\mathbf{cl}},V^{l}_{\mathbf{cl}})(s)\,
d\bar{\Sigma}^{(2)}_{l}(s)ds,\notag
\end{align}
and
\begin{align}
\CI^{(2)}_3=&\CW^{(2)}_0\sum\limits_{l=1}^{k-1}\int_{\prod\limits_{j=1}^{k-1}\CV_j}
\left\{\mathbf{1}_{\{t_{l+1}\leq 0<t_{l}\}}
\int_{0}^{t_l}+\mathbf{1}_{\{t_{l+1}>0\}}\int_{t_{l+1}}^{t_l}\right\}\left\{(1-\chi_{M})w_{q}\mu^{-\frac{1}{2}}\CK
\left(\frac{\RG_1^{n-l}}{w_{q}}\right)\right\}(Y^{l}_{\mathbf{cl}},V^{l}_{\mathbf{cl}})(s)\,d\bar{\Sigma}^{(2)}_{l}(s)ds,\notag
\end{align}
where $k\geq2$. Here, similar to \eqref{Sigma}, $\bar{\Sigma}_{l}^{(i)}(s)$ $(i=1,2)$ is given as
\begin{align}%\label{Sigma}
\bar{\Sigma}^{(i)}_{l}(s)=\prod\limits_{j=l+1}^{k-1}d\si_j e^{-\int_s^{t_l}
\CA(\tau,V_{\mathbf{cl}}^l(\tau))d\tau}\tilde{w}_i(v_l)d\si_l \prod\limits_{j=1}^{l-1}\frac{\tilde{w}_i(v_j)}{\tilde{w}_i(V^{j}_{\mathbf{cl}}(t_{j+1}))}
\prod\limits_{j=1}^{l-1}e^{-\int_{t_{j+1}}^{t_j}
\CA(\tau,V_{\mathbf{cl}}^l(\tau))d\tau}d\si_j.\notag
\end{align}

To obtain \eqref{fn1.bd}, one can first prove  that for fixed finite $k>0$ and any $t\geq0$,
\begin{align}
\sup\limits_{0\leq l\leq k}\sup\limits_{0\leq s\leq t}\|[\RG_1^{l},\RG_2^{l}](s)\|_{L^\infty}\leq C(k)\|w_qf_0\|_{L^\infty}\leq \frac{1}{2}C_0\vps_0,
\label{fs-ktm}
\end{align}
by choosing $C_0>0$ suitably large. Note that \eqref{fs-ktm} can be easily obtained by using \eqref{gn1.st} and \eqref{gn2.st} recursively
because $k$ is finite.

In the following, we prove \eqref{fn1.bd} for $m=n+1$ under the assumption that it holds for $m\leq n.$
By letting $t\leq T_\ast$ with $T_\ast>0$ being suitably small  and applying Lemma \ref{k.cyc},  we have %for $(y,v)\notin(\ga_0\cup\ga_-)$
\begin{align}\label{g1n.itp1}
\sup\limits_{0\leq t\leq T_\ast}\|\RG_1^{n+1}\|_{L^\infty}
\leq& \left(\frac{C}{q}+C\bar{\vps}\right)\sup\limits_{1\leq l\leq k}\sup\limits_{0\leq t\leq T_\ast}\|\RG_1^{n+1-l}\|_{L^\infty}
+C\al\sup\limits_{1\leq l\leq k}\sup\limits_{0\leq t\leq T_\ast}\|\RG_2^{n+1-l}\|_{L^\infty}
\notag\\&+C\sup\limits_{1\leq l\leq k}\sup\limits_{0\leq t\leq T_\ast}(\|\RG_1^{n+1-l}\|^2_{L^\infty}+\|\RG_2^{n+1-l}\|^2_{L^\infty})
+C\|w_qf_0\|_{L^\infty}\notag\\
\leq& C\|w_qf_0\|_{L^\infty}+\left(\frac{C}{q}+C\al+C\bar{\vps}\right)C_0\vps_0+CC^2_0\vps^2_0,
\end{align}
%provided $q>0$ and is large enough and $\al>0$ as well as $\vps_0>$ are suitably small,
and
\begin{align}\label{g2n.itp1}
\sup\limits_{0\leq t\leq T_\ast}\|\RG_2^{n+1}\|_{L^\infty}
\leq& CT_\ast\sup\limits_{1\leq l\leq k}\sup\limits_{0\leq t\leq T_\ast}\|\RG_1^{n+1-l}\|_{L^\infty}\notag\\&+C(T_\ast+\bar{\vps})
\sup\limits_{1\leq l\leq k}\sup\limits_{0\leq t\leq T_\ast}\|\RG_2^{n+1-l}\|_{L^\infty},
\end{align}
where Lemma \ref{CK} has been used to have the factor $\frac{1}{q}$ in \eqref{g1n.itp1}, and the coefficient $T_\ast$ on the right hand side of \eqref{g2n.itp1} comes from  the last two terms in \eqref{gn2.st} as well as $\CI^{(2)}_2$ and $\CI^{(2)}_3$.
Choosing $T_\ast$ and $\bar{\vps}$ suitably small so that $C(T_\ast+\bar{\vps})\leq\frac{1}{8}$, and using the induction argument, we have from \eqref{g2n.itp1} and \eqref{g1n.itp1} that
\begin{align}%\label{g2n.itp2}
\|\RG_2^{n+1}\|_{L^\infty}\leq& \frac{1}{8^{[\frac{n}{k}]}}
\sup\limits_{0\leq l\leq k}\|\RG_2^{l}\|_{L^\infty}
\notag\\&+\frac{8kCT_\ast}{7}\left\{C\|w_qf_0\|_{L^\infty}+\left(\frac{C}{q}+C\al+C\bar{\vps}\right)C_0\vps_0+CC^2_0\vps^2_0\right\},\ \ n\geq k,\notag
\end{align}
where $[\frac{n}{k}]$ stands for the largest integer no more than $\frac{n}{k}$. Therefore,
\begin{multline}%\label{g1-g2.itp2}
\|\RG_1^{n+1}\|_{L^\infty}+\|\RG_2^{n+1}\|_{L^\infty}\leq \frac{1}{8^{[\frac{n}{k}]}}
\sup\limits_{0\leq l\leq k}\|\RG_2^{l}\|_{L^\infty}
\notag\\+\frac{8kCT_\ast+7}{7}\left\{C\|w_qf_0\|_{L^\infty}+\left(\frac{C}{q}+C\al+C\bar{\vps}\right)C_0\vps_0+CC^2_0\vps^2_0\right\},\ \ n\geq k.\notag
\end{multline}
This together with \eqref{fs-ktm} implies that \eqref{fn1.bd} holds for $m=n+1$  because
 $q>0$ can be sufficiently large and $\vps_0>0$ as well as $\al>0$ can be suitably small.

Let us now show that $\{[f_1^n,f_2^n]\}_{n=1}^\infty$ converges strongly in the space $\FY_{\al,T_\ast}$.
We denote $[\tilde{\RG}_1^n,\tilde{\RG}_2^n]=[\RG_1^n-\RG_1^{n-1},\RG_2^n-\RG_2^{n-1}]$ with $n\geq1$. Then
$[\tilde{\RG}_1^n,\tilde{\RG}_2^n]$ satisfies
\begin{align}%\label{tag1}
\pa_t\tilde{\RG}^{n+1}_1&+v_y\pa_y\tilde{\RG}^{n+1}_1-\al v_y\pa_{v_x}\tilde{\RG}^{n+1}_1+2q\al\frac{v_xv_y}{1+|v|^2}\tilde{\RG}^{n+1}_1+\nu_0 \tilde{\RG}^{n+1}_1
\notag\\=&\chi_{M}w_q\CK \left(\frac{\tilde{\RG}^{n}_1}{w_q}\right)-\frac{\al}{2}\sqrt{\mu}v_xv_y\tilde{\RG}^{n}_2
+w_q[H(f_1^{n},f_2^{n})-H(f_1^{n-1},f_2^{n-1})],\notag
\end{align}
\begin{align}%\label{tag1bd}
\tilde{\RG}^{n+1}_1(0,y,v)=0,\ \ \tilde{\RG}^{n+1}_1(\pm1,v)|_{v_y\lessgtr0}=\tilde{w}^{-1}_1\dis{\int_{v_y\gtrless0}}\tilde{w}_1\sqrt{2\pi}\mu\tilde{\RG}_1^{n}(\pm1,v)|v_y|dv,\notag
\end{align}
and
\begin{align}%\label{tag2}
\pa_t\tilde{\RG}^{n+1}_2&+v_y\pa_y\tilde{\RG}^{n+1}_2-\al v_y\pa_{v_x}\tilde{\RG}^{n+1}_2+2q\al\frac{v_xv_y}{1+|v|^2}\tilde{\RG}^{n+1}_2+\nu_0\tilde{\RG}^{n+1}_2
\notag\\&=w_qK\left(\frac{\tilde{\RG}_2^{n}}{w_q}\right)
+(1-\chi_{M})w_q\mu^{-\frac{1}{2}}\CK \tilde{f}^{n}_1,\notag
\end{align}
\begin{align}%\label{tag2bd}
\tilde{\RG}^{n+1}_2(0,y,v)=0,\ \ \tilde{\RG}^{n+1}_2(\pm1,v)|_{v_y\lessgtr0}
=\tilde{w}^{-1}_2\dis{\int_{v_y\gtrless0}}\tilde{w}_2\sqrt{2\pi}\mu\RG_2^{n}(\pm1,v)|v_y|dv,\notag
\end{align}
where $\tilde{f}^{n}_1=f^n_1-f_1^{n-1},$ and $\sqrt{\mu}\tilde{\RG}^{n}=\tilde{\RG}_1^{n}+\sqrt{\mu}\tilde{\RG}_2^{n}$.
Then similar to \eqref{g1n.itp1}
and \eqref{g2n.itp1}, one has
\begin{align}\label{tg1n.itp1}
\sup\limits_{0\leq t\leq T_\ast}\|\tilde{\RG}_1^{n+1}\|_{L^\infty}\leq& \left(\frac{C}{q}+C\bar{\vps}\right)\sup\limits_{1\leq l\leq k}\sup\limits_{0\leq t\leq T_\ast}\|\tilde{\RG}_1^{n-l}\|_{L^\infty}+C\al\sup\limits_{1\leq l\leq k}\sup\limits_{0\leq t\leq T_\ast}\|\tilde{\RG}_2^{n-l}\|_{L^\infty}\notag\\&+C\vps_0\sup\limits_{1\leq l\leq k}\sup\limits_{0\leq t\leq T_\ast}(\|\tilde{\RG}_1^{n-l}\|_{L^\infty}+\|\tilde{\RG}_2^{n-l}\|_{L^\infty}),
\end{align}
%provided $q>0$ and is large enough and $\al>0$ as well as $\vps_0>$ are suitably small,
and
\begin{equation}\label{tg2n.itp1}
\sup\limits_{0\leq t\leq T_\ast}\|\tilde{\RG}_2^{n+1}\|_{L^\infty}\leq CT_\ast\sup\limits_{1\leq l\leq k}\sup\limits_{0\leq t\leq T_\ast}\|\tilde{\RG}_1^{n+1-l}\|_{L^\infty}+C(T_\ast+\bar{\vps})
\sup\limits_{1\leq l\leq k}\sup\limits_{0\leq t\leq T_\ast}\|\tilde{\RG}_2^{n+1-l}\|_{L^\infty}.
\end{equation}
Plugging \eqref{tg1n.itp1} into \eqref{tg2n.itp1} gives
\begin{align}\label{tg2n.itp2}
\sup\limits_{0\leq t\leq T_\ast}\|\tilde{\RG}_2^{n+1}\|_{L^\infty}\leq C(\frac{1}{q}+\al+\vps_0+T_\ast+\bar{\vps})
\sup\limits_{1\leq l\leq k}\sup\limits_{0\leq t\leq T_\ast}\|[\tilde{\RG}_1^{n+1-l},\tilde{\RG}_2^{n+1-l}]\|_{L^\infty}.
\end{align}
By taking $q>0$ sufficiently large and $\al>0$, $\vps_0>$ as well $T_\ast>0$ suitably small, we have from \eqref{tg2n.itp2} and \eqref{tg1n.itp1} that
\begin{align}%\label{tg1g2-ctf}
\sup\limits_{0\leq t\leq T_\ast}\|\tilde{\RG}_1^{n+1}\|_{L^\infty}+\|\tilde{\RG}_2^{n+1}\|_{L^\infty}\leq& \frac{1}{8^{[\frac{n}{k}]}}
\sup\limits_{0\leq l\leq k}\sup\limits_{0\leq t\leq T_\ast}\|[\tilde{\RG}_1^{l},\tilde{\RG}_2^{l}]\|_{L^\infty},\ n\geq k.\notag
\end{align}
On the other hand, $\sup_{0\leq l\leq k}\sup_{0\leq t\leq T_\ast}\|[\tilde{\RG}_1^{l},\tilde{\RG}_2^{l}]\|_{L^\infty}$ is bounded due to \eqref{fn1.bd}.
Hence it follows that $\{[f_1^n,f_2^n]\}_{n=1}^\infty$ is a Cauchy sequence in the space $\FY_{\al,T_\ast}$, and there is  a unique $[f_1,f_2]\in \FY_{\al,T_\ast}$ such that  $[f_1^{n},f_2^{n}]$ converges  strongly to  $[f_1,f_2]$ as $n\rightarrow+\infty$ and  $[f_1,f_2]$ is the desired local in time solution to the coupled system \eqref{f1}-\eqref{f1bd} and \eqref{f2}-\eqref{f2bd}.
%To complete the proof of Theorem \ref{loc.th}, it remains now to show the uniqueness of the solution constructed above.
%As a matter of fact, the proof of the uniqueness follows from the same way as in proving $\{[f_1^n,f_2^n]\}_{n=1}^\infty$ is a Cauchy sequence.
This completes the proof of Theorem \ref{loc.th}.
\end{proof}

\section{Unsteady problem: asymptotic stability and positivity}\label{ust-pro}

This section is about the global existence and large time behavior of solution to the initial boundary value problem \eqref{f}. Recall the decomposition \eqref{def.tpf12} with $f_1$ and $f_2$ satisfying the coupled system \eqref{f1}-\eqref{f1bd} and \eqref{f2}-\eqref{f2bd}. Firstly, we focus on the uniform $L^\infty\cap L^2$ estimates  under the {\it a priori} assumption
\begin{align}\label{aps}
\sup\limits_{s\geq 0 %0\leq s\leq+\infty
}\{e^{\la_0s}\|w_qf_1(s,y,v)\|_{L^\infty}+e^{\la_0s}\|w_qf_2(s,y,v)\|_{L^\infty}\}\leq\tilde{\vps},
\end{align}
for a constant $\tilde{\vps}>0$ suitably small, where $\lambda_0>0$ independent of $\alpha$ is to be determined later. And
then we will give the proof of Theorem \ref{ust.mth}.

\subsection{$L^\infty$ estimates}
As in the proof of Theorem \ref{loc.th}, the
$L^\infty$ estimates of $f$ follows from
% we have to further deduce
the uniform $L^\infty$ estimates on $f_1$ and $f_2$.
% in a more delicate way so that the {\it a priori} assumption \eqref{aps} can be closed.

\begin{lemma}\label{lem.tp1}
Let $0<\lambda_0\leq \frac{\nu_0}{4}$, then under the assumption \eqref{aps}, it holds that
\begin{align}\label{g1.es}
\sup\limits_{0\leq s\leq t}e^{\lambda_0s}\|w_qf_1(s)\|_{L^\infty}
\leq C_{q}\|w_qf_{0}\|_{L^\infty}+C(\al+\tilde{\vps})\sup\limits_{0\leq s\leq t}e^{\lambda_0s}\|w_qf_2(s)\|_{L^\infty},
\end{align}
and
\begin{align}\label{g2sum3}
\sup\limits_{0\leq s\leq t}e^{\lambda_0s}\|w_qf_2(s)\|_{L^\infty}
\leq C\|w_qf_{0}\|_{L^\infty}
+C\sup\limits_{0\leq s\leq t}\|e^{\la_0s}f_2(s)\|,
\end{align}
for any $t\geq 0$.
\end{lemma}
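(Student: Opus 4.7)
The plan is to adapt the $L^\infty$ machinery of Lemma \ref{lifpri} to the time-dependent setting, with the exponential-in-time weight absorbed into the unknowns. Setting $h_i(s,y,v)=e^{\lambda_0 s}w_q(v) f_i(s,y,v)$ for $i=1,2$, the transport coefficient along the characteristic \eqref{traj} becomes $\nu_0+2q\al\frac{V_yV_x}{1+|V|^2}-\lambda_0$, which is bounded below by $\nu_0/4$ since $\lambda_0\le \nu_0/4$ and $q\al$ is small; hence exponential decay is preserved on every backward cycle.

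To prove \eqref{g1.es}, I would write $h_1$ in the mild form along \eqref{traj} for the equation obtained from \eqref{f1}. Each backward bounce off $y=\pm 1$ produces, through the diffuse reflection in \eqref{f1bd}, a copy of $h_1$ at an earlier time against a Gaussian weight. Iterating $k\sim T_0^{5/4}$ times and applying Lemma \ref{k.cyc} from the appendix yields the smallness factor $2^{-C_2 T_0^{5/4}}\sup_{0\le s\le t}\|h_1(s)\|_{L^\infty}$ for the pure boundary contribution. Along each cycle segment four source terms arise: (i) $\chi_M w_q\CK(h_1/w_q)$, bounded by $Cq^{-1}\sup_s\|h_1(s)\|_{L^\infty}$ via Lemma \ref{CK}; (ii) the shear coupling $-\tfrac{\al}{2}\sqrt{\mu}v_xv_y\, h_2$, dominated by $C\al\sup_s\|h_2(s)\|_{L^\infty}$ thanks to $\sqrt{\mu}$; (iii) the nonlinear remainder $w_q H(f_1,f_2)$, which by Lemma \ref{Ga}, the bounds on $G_1,G_R$ from Theorem \ref{st.sol}, and the a priori assumption \eqref{aps} is dominated by $C(\al+\tilde\vps)\sup_s(\|h_1\|_{L^\infty}+\|h_2\|_{L^\infty})$; and (iv) the initial-datum contribution $C_q\|w_q f_0\|_{L^\infty}$. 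Choosing $q$ large, $T_0$ large, and $\al,\tilde\vps$ small absorbs the $h_1$ self-contributions to the left and delivers \eqref{g1.es}.

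For \eqref{g2sum3}, the same mild formulation for $h_2$ meets the self-coupling $w_q K(h_2/w_q)$, which enjoys no smallness. Following the derivation of \eqref{H2sum3}, I would iterate the mild formulation a second time so that this term appears against the product kernel ${\bf k}_w(v,v'){\bf k}_w(v',v'')$. The three-region split used for Lemma \ref{lifpri}---(a) $|v|>M$ or $|v'|$ large; (b) $|v-v'|$ or $|v'-v''|$ large with the rest bounded; (c) all velocities bounded and the singular set $|v-v'|\le 1/M$, $|v'-v''|\le 1/M$ excised via \eqref{kw-M}---renders (a) and (b) small through Lemma \ref{Kop}, while (c), after the change of variable $v_y'\mapsto\tilde y$ enforced by the lower bound $|t-s|\ge\eta_0$, is dominated by $C(M)\eta_0^{-1/2}\sup_s\|e^{\lambda_0 s}f_2(s)\|$. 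The inhomogeneous driving term $(1-\chi_M)\mu^{-1/2}w_q\CK(h_1/w_q)$ is controlled by $\sup_s\|h_1(s)\|_{L^\infty}$ which, by the already established \eqref{g1.es}, is bounded by $\|w_q f_0\|_{L^\infty}+C(\al+\tilde\vps)\sup_s\|h_2\|_{L^\infty}$. The diffuse reflection on $h_2$ is again iterated via Lemma \ref{k.cyc}, contributing $2^{-C_2T_0^{5/4}}\sup_s\|h_2(s)\|_{L^\infty}$. Collecting everything and taking $q,T_0$ large and $\al,\tilde\vps$ small gives \eqref{g2sum3}.

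The main obstacle is the double bookkeeping in Step 2: the two iterations of the mild formulation must interlace with the multiple diffuse-reflection bounces without producing any time-dependent blow-up, which requires that the cycle length $T_0$ depend only on the backward time cycles \eqref{cyc} and not on the evaluation time $t$, exactly as noted in Remark \ref{ideps}. In particular, allowing the backward time $t_k$ to become negative is what produces the initial-datum contribution $\|w_q f_0\|_{L^\infty}$ on the right-hand side of \eqref{g2sum3}; keeping this bookkeeping consistent with the probability measure $d\bar\Sigma_l^{(i)}$ analogous to \eqref{Sigma} is the delicate technical point.
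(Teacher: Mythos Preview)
Your treatment of \eqref{g1.es} is correct and essentially matches the paper: the mild form \eqref{g1mild}, Lemma~\ref{CK} for $\chi_M\CK$, Lemma~\ref{k.cyc} for the cycle bounces, and Lemma~\ref{Ga} plus \eqref{aps} for the nonlinear source combine as you describe.

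For \eqref{g2sum3} there is a gap. The double iteration of the mild formulation with the three-region velocity split (your region (c) with the change $v'_{l,y}\mapsto\tilde y$) only yields, as in the steady case, a bound on a time window of length $T_0$; this is \eqref{g1g2sum4}. Your appeal to Remark~\ref{ideps} to make this uniform in $t$ is misplaced: that remark concerns the \emph{steady} problem, where $t\in(-\infty,\infty)$ is a free parameter and backward cycles never meet an initial layer. In the initial-value problem the cycles terminate at $t=0$, and Lemma~\ref{k.cyc} only controls $\{t_k>0\}$ over an interval of length $T_0$ (cf.~Remark~\ref{ed-cyc}); with $k=C_1T_0^{5/4}$ fixed, the set $\{t_k>0\}$ is not small for $t\gg T_0$. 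The paper closes this by a discrete semigroup argument you do not mention: it reapplies the $[0,T_0]$ bound with restarted data $w_q[f_1,f_2](mT_0)$ on successive windows and iterates via \eqref{lif-lift}, using $Ce^{-\lambda_0 T_0}<1$ to sum geometrically, arriving at \eqref{g1g2decay.p2}--\eqref{g1g2decay.p3} and hence \eqref{g2sum3}. This restart step is the main new ingredient beyond the steady $L^\infty$ machinery.

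One smaller inaccuracy: in your last paragraph you say the $\|w_qf_0\|_{L^\infty}$ term on the right of \eqref{g2sum3} arises from backward cycles of $f_2$ hitting $t=0$. It does not: $f_2|_{t=0}\equiv 0$ by \eqref{f2bd}, so those trajectories contribute nothing. The initial datum enters only through the source $(1-\chi_M)\mu^{-1/2}\CK f_1$ combined with \eqref{g1.es}, exactly as you correctly described earlier in your sketch.
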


\begin{proof}
For brevity, set
\begin{equation}
\label{def.g12w}
[g_1,g_2](t,y,v)=e^{\la_0t}w_q(v)[f_1,f_2](t,y,v)
\end{equation}
with $\la_0>0$ to be chosen.
%will be determined later,
Then, the  IBVP for $[g_1,g_2]$ is given as follows:
\begin{align}%\label{g1}
\pa_tg_1&+v_y\pa_yg_1-\al v_y\pa_{v_x}g_1+2q\al\frac{v_xv_y}{1+|v|^2}g_1+(\nu_0-\la_0)g_1
\notag\\=&\chi_{M}w_q\CK \left(\frac{g_1}{w_q}\right)-\frac{\al}{2}\sqrt{\mu}v_xv_yg_2
+e^{\la_0t}w_qH(f_1,f_2),\notag
\end{align}
\begin{align}%\label{g1bd}
g_1(0,y,v)=w_qf_0(x,v),\ \ g_1(\pm1,v)|_{v_y\lessgtr0}=\tilde{w}^{-1}_1\dis{\int_{v_y\gtrless0}}\tilde{w}_1\sqrt{2\pi}\mu g_1(\pm1,v)|v_y|dv,\notag
\end{align}
and
\begin{align}%\label{g2}
\pa_tg_2&+v_y\pa_yg_2-\al v_y\pa_{v_x}g_2+2q\al\frac{v_xv_y}{1+|v|^2}g_2+(\nu_0-\la_0)g_2\notag\\&=w_qK\left(\frac{g_2}{w_q}\right)
+(1-\chi_{M})w_q\mu^{-\frac{1}{2}}\CK \left(\frac{g_1}{w_q}\right),\notag
\end{align}
\begin{align}%\label{g2bd}
g_2(0,y,v)=0,\ \ g_2(\pm1,v)|_{v_y\lessgtr0}
=\tilde{w}^{-1}_2\dis{\int_{v_y\gtrless0}}\tilde{w}_2\sqrt{2\pi}\mu g_2(\pm1,v)|v_y|dv.\notag
\end{align}
Along the characteristic line \eqref{chl}
% ($s$ is no longer a parameter and nonnegative),
%we see that
the solution to the above problem can be written in the mild form:
\begin{align}\label{g1mild}
g_1(t,y,v)
=&{\bf 1}_{t_1\leq0}e^{-\int_{0}^t\CA_1(\tau,V(\tau))\,d\tau}(w_{q}f_{0})(Y(0),V(0))\notag\\
&+\int_{\max\{0,t_1\}}^{t}e^{-\int_{s}^t\CA_1(\tau,V(\tau))\,d\tau}\left\{\chi_{M}w_q\CK
\left(\frac{g_1}{w_{q}}\right)\right\}(V(s))\,ds\notag\\
&-\al\int_{\max\{0,t_1\}}^{t}e^{-\int_{s}^t\CA_1(\tau,V(\tau))\,d\tau}
\frac{V_x(s)V_y(s)}{2}\sqrt{\mu}(V(s))g_2(V(s))\,ds\notag\\
&+\int_{\max\{0,t_1\}}^{t}e^{-\int_{s}^t\CA_1(\tau,V(\tau))\,d\tau}e^{\frac{\nu_0s}{4}}\left(w_{q}H(f_1,f_2)\right)(V(s))\,ds
+\sum\limits_{n=1}^5\CJ^{(1)}_n,
\end{align}

and
\begin{align}
g_2(t,y,v)
=&\int_{\max\{0,t_1\}}^{t}e^{-\int_{s}^t\CA_1(\tau,V(\tau))\,d\tau}\left\{(1-\chi_{M})\mu^{-\frac{1}{2}}w_{q}\CK
\left(\frac{g_1}{w_{q}}\right)\right\}(V(s))\,ds\notag\\
&+\int_{\max\{0,t_1\}}^{t}e^{-\int_{s}^t\CA_1(\tau,V(\tau))\,d\tau}\left[w_{q}K\left(\frac{g_2}{w_{q}}\right)\right](V(s))\,ds
+\sum\limits_{n=1}^3\CJ^{(2)}_n,\label{g2mild}
\end{align}
where %we have denoted
%$\la_0$ is a small positive constant and satisfies
\begin{align}
\CA_1(\tau,V(\tau))=\nu_0-\la_0+2q \al \frac{V_y(\tau)V_x(\tau)}{{1+|V(\tau)|^2}}.%\geq\nu_0/2.
\notag
\end{align}
We will take $0<\lambda_0\leq\frac{\nu_0}{4}$ and let $2q\alpha\leq \frac{\nu_0}{4}$ such that $\CA_1(\tau,V(\tau))\geq \frac{\nu_0}{2}$. Moreover, for an integer $k\geq2$, the terms $\CJ^{(1)}_n$ $(1\leq n \leq5)$ in \eqref{g1mild} are given by
\begin{align}
\CJ^{(1)}_1=&\underbrace{{\bf 1}_{t_1>0}e^{-\int_{t_1}^t\CA_1(\tau,V(\tau))d\tau}
\tilde{w}^{-1}_1(V(t_1))}_{\CW^{(1)}_1}\int_{\prod\limits_{j=1}^{k-1}\CV_j}{\bf 1}_{t_k>0}
g_1^{n+1-k}(t_k,y_k,V_{\mathbf{cl}}^{k-1}(t_k))\,d\tilde{\Sigma}^{(1)}_{k-1}(t_k),\notag
\end{align}
\begin{align}
\CJ^{(1)}_2=&\CW^{(1)}_1\sum\limits_{l=1}^{k-1}\int_{\prod\limits_{j=1}^{k-1}\CV_j}{\bf 1}_{t_{l+1}\leq 0<t_l}
(w_{q}f_{0})(Y^{l}_{\mathbf{cl}}(0),V^{l}_{\mathbf{cl}}(0))\,d\tilde{\Sigma}^{(1)}_{l}(0),\notag
\end{align}
\begin{align}
\CJ^{(1)}_3=&\CW^{(1)}_1\sum\limits_{l=1}^{k-1}\int_{\prod\limits_{j=1}^{k-1}\CV_j}\left\{\mathbf{1}_{\{t_{l+1}\leq 0<t_{l}\}}
\int_{0}^{t_l}+\mathbf{1}_{\{t_{l+1}>0\}}\int_{t_{l+1}}^{t_l}\right\}\left\{\chi_{M}w_q\CK
\left(\frac{g_1}{w_{q}}\right)\right\}(Y^{l}_{\mathbf{cl}},V^{l}_{\mathbf{cl}})(s)\,
d\tilde{\Sigma}^{(1)}_{l}(s)ds,\notag
\end{align}
\begin{align}
\CJ^{(1)}_4=&-\al\CW^{(1)}_1\sum\limits_{l=1}^{k-1}\int_{\prod\limits_{j=1}^{k-1}\CV_j}
\left\{\mathbf{1}_{\{t_{l+1}\leq 0<t_{l}\}}
\int_{0}^{t_l}+\mathbf{1}_{\{t_{l+1}>0\}}\int_{t_{l+1}}^{t_l}\right\}
\left\{\frac{v_xv_y}{2}\sqrt{\mu}g_2\right\}
(Y^{l}_{\mathbf{cl}},V^{l}_{\mathbf{cl}})(s)\,d\tilde{\Sigma}^{(1)}_{l}(s)ds,\notag
\end{align}
\begin{align}
\CJ^{(1)}_5=&\CW^{(1)}_1\sum\limits_{l=1}^{k-1}\int_{\prod\limits_{j=1}^{k-1}\CV_j}
\left\{\mathbf{1}_{\{t_{l+1}\leq 0<t_{l}\}}
\int_{0}^{t_l}+\mathbf{1}_{\{t_{l+1}>0\}}\int_{t_{l+1}}^{t_l}\right\}
\left(w_{q}H(f_1,f_2)\right)
(Y^{l}_{\mathbf{cl}},V^{l}_{\mathbf{cl}})(s)\,d\tilde{\Sigma}^{(1)}_{l}(s)ds.\notag
\end{align}
And the terms $\CJ^{(2)}_n$ $(1\leq n \leq3)$ in \eqref{g2mild} are
\begin{align}
\CJ^{(2)}_1=&\underbrace{{\bf 1}_{t_1>0}e^{-\int_{t_1}^t\CA_1(\tau,V(\tau))d\tau}
\tilde{w}_2^{-1}(V(t_1))}_{\CW^{(2)}_1}\int_{\prod\limits_{j=1}^{k-1}\CV_j}{\bf 1}_{t_k>0}
g_2(t_k,y_k,V_{\mathbf{cl}}^{k-1}(t_k))\,d\tilde{\Sigma}^{(2)}_{k-1}(t_k),\notag
\end{align}
\begin{align}
\CJ^{(2)}_2=&\CW^{(2)}_1\sum\limits_{l=1}^{k-1}\int_{\prod\limits_{j=1}^{k-1}\CV_j}\left\{\mathbf{1}_{\{t_{l+1}\leq 0<t_{l}\}}
\int_{0}^{t_l}+\mathbf{1}_{\{t_{l+1}>0\}}\int_{t_{l+1}}^{t_l}\right\}
\left\{w_qK\left(\frac{g_{2}}{w_{q}}\right)\right\}(Y^{l}_{\mathbf{cl}},V^{l}_{\mathbf{cl}})(s)\,
d\tilde{\Sigma}^{(2)}_{l}(s)ds,\notag
\end{align}
\begin{align}
\CJ^{(2)}_3=&\CW^{(2)}_1\sum\limits_{l=1}^{k-1}\int_{\prod\limits_{j=1}^{k-1}\CV_j}
\left\{\mathbf{1}_{\{t_{l+1}\leq 0<t_{l}\}}
\int_{0}^{t_l}+\mathbf{1}_{\{t_{l+1}>0\}}\int_{t_{l+1}}^{t_l}\right\}\left\{(1-\chi_{M})w_{q}\mu^{-\frac{1}{2}}\CK
\left(\frac{g_1}{w_{q}}\right)\right\}(Y^{l}_{\mathbf{cl}},V^{l}_{\mathbf{cl}})(s)\,d\tilde{\Sigma}^{(2)}_{l}(s)ds,\notag
\end{align}
where
\begin{align}%\label{Sigma}
\tilde{\Sigma}^{(i)}_{l}(s)=\prod\limits_{j=l+1}^{k-1}d\si_j e^{-\int_s^{t_l}
\CA_1(\tau,V_{\mathbf{cl}}^l(\tau))d\tau}\tilde{w}_i(v_l)d\si_l \prod\limits_{j=1}^{l-1}\frac{\tilde{w}_i(v_j)}{\tilde{w}_i(V^{j}_{\mathbf{cl}}(t_{j+1}))}
\prod\limits_{j=1}^{l-1}e^{-\int_{t_{j+1}}^{t_j}
\CA_1(\tau,V_{\mathbf{cl}}^l(\tau))d\tau}d\si_j,\ i=1,2.\notag
\end{align}
Consequently, for any $t\geq0$, by applying Lemmas \ref{Ga}, \ref{CK} and \ref{k.cyc} as well as the {\it a priori} assumption \eqref{aps}, we get from \eqref{g1mild} that
\begin{align}
\sup\limits_{0\leq s\leq t}\|g_1(s,y,v)\|_{L^\infty}
\leq&C_{q}\|w_qf_{0}\|_{L^\infty}+\left(\frac{C}{q}+C\bar{\vps}\right)\sup\limits_{0\leq s\leq t}\|g_1(s,y,v)\|_{L^\infty}
\notag\\&+C(\al+\tilde{\vps})\sup\limits_{0\leq s\leq t}\{\|g_1(s,y,v)\|_{L^\infty}+\|g_2(s,y,v)\|_{L^\infty}\},\notag%\label{adp1}
\end{align}
that gives \eqref{g1.es}.

For $g_2$, %performing the similar calculations as for deriving
similar to \eqref{H2sum1}, one has
\begin{align}\label{g2sum1}
|g_{2}(t,y,v)|
\leq& C_{q}e^{-\frac{\nu_0}{2}(t-t_1)}\int_{\max\{t_1,0\}}^te^{-\frac{\nu_0}{2}(t-s)}\int_{\R^3}
{\bf k}_{w}(V(s),v')|g_{2}(s,Y(s;t,y,v),v')|dv'ds\notag\\
&+C_{q}e^{-\frac{\nu_0}{2}(t-t_1)}\sum\limits_{l=1}^{k-1}\int_{\prod\limits_{j=1}^{k-1}\CV_j}
\int_{\max\{t_{l+1},0\}}^{t_l}\int_{\R^3}{\bf k}_{w}(V^{l}_{\mathbf{cl}}(s),v')\notag\\ &\qquad\qquad\qquad\qquad\qquad\times|g_{2}(s,Y^{l}_{\mathbf{cl}}(s;t,y,v),v')|dv'\,d\tilde{\Sigma}^{(2)}_{l}(s)ds+\CP(t),
\end{align}
where
\begin{align}
\CP(t)=&C_{q}\sup\limits_{0\leq s\leq t}\|g_1(s)\|_{L^\infty}
+\tilde{\vps} C_{q}\sup\limits_{0\leq s\leq t}\|g_{2}(s)\|_{L^\infty}.\label{CP}
\end{align}
We now have by iterating \eqref{g2sum1} that
\begin{align}\label{g2sum2}
|g_{2}(t,y,v)|\leq& C_{q}\int_{\max\{t_1,0\}}^te^{-\frac{\nu_0}{2}(t-s)}\int_{\R^3}{\bf k}_{w}(V(s),v')
\int_{\max\{t'_1,0\}}^{s}e^{-\frac{\nu_0}{2}(s-s')}\int_{\R^3}{\bf k}_{w}(\bar{V}(s';Y(s),v'),v'')
\notag\\&\quad\times|g_{2}(s',\bar{Y}(s';Y(s),v'),v'')|~dv''ds'dv'ds\notag\\
&+C_{q}\int_{\max\{t_1,0\}}^te^{-\frac{\nu_0}{2}(t-s)}\int_{\R^3}{\bf k}_{w}(V(s),v')e^{-\frac{\nu_0}{2}(s-t_1')}
\sum\limits_{\ell=1}^{\imath-1}\int_{\prod\limits_{\jmath=1}^{\imath-1}\CV_\jmath}
\int_{\max\{t'_{\ell+1},0\}}^{t'_\ell}\int_{\R^3}
\notag
\\&\qquad\times {\bf k}_{w}(\bar{V}^{\ell}_{\mathbf{cl}}(s';Y(s),v'),v'')|g_{2}(s',\bar{Y}^{\ell}_{\mathbf{cl}}(s';Y(s),v'),v'')|
dv''\,d\tilde{\Sigma}^{(2)}_{\ell}(s')ds'dv'ds\notag\\
&+C_{q}\sum\limits_{l=1}^{k-1}\int_{\prod\limits_{j=1}^{k-1}\CV_j}
\int_{\max\{t_{l+1},0\}}^{t_l}\int_{\R^3}{\bf k}_{w}(V^{l}_{\mathbf{cl}}(s),v')
\int_{\max\{t'_1,0\}}^{s}e^{-\frac{\nu_0}{2}(s-s')}\int_{\R^3}
\notag\\&\qquad\quad\times {\bf k}_{w}(\bar{V}(s';Y^{l}_{\mathbf{cl}}(s),v'),v'')|g_{2}(s',\bar{Y}(s';Y^{l}_{\mathbf{cl}}(s),v'),v'')|~dv''ds'dv'\,d\tilde{\Sigma}^{(2)}_{l}(s)ds
\notag\\&+C_{q}\sum\limits_{l=1}^{k-1}\int_{\prod\limits_{j=1}^{k-1}\CV_j}
\int_{\max\{t_{l+1},0\}}^{t_l}\int_{\R^3}{\bf k}_{w}(V^{l}_{\mathbf{cl}}(s;v),v')e^{-\frac{\nu_0}{2}(s-t_1')}
\sum\limits_{\ell=1}^{\imath-1}\int_{\prod\limits_{\jmath=1}^{\imath-1}\CV_\jmath}
\int_{\max\{t'_{\ell+1},0\}}^{t'_\ell}\int_{\R^3} \notag\\
&\qquad\quad\times {\bf k}_{w}(\bar{V}^{\ell}_{\mathbf{cl}}(s';Y^{l}_{\mathbf{cl}}(s),v'),v'')
|g_{2}(s',\bar{Y}^{\ell}_{\mathbf{cl}}(s';Y^{l}_{\mathbf{cl}}(s),v'),v'')|dv''\,d\tilde{\Sigma}^{(2)}_{\ell}(s')ds'dv'
\,d\tilde{\Sigma}^{(2)}_{l}(s)ds
\notag\\
&+C_{q}\int_{\max\{t_1,0\}}^te^{-\frac{\nu_0}{2}(t-s)}\int_{\R^3}{\bf k}_{w}(V(s),v')\CP(s)dv'ds
\notag\\&+C_{q}\sum\limits_{l=1}^{k-1}\int_{\prod\limits_{j=1}^{k-1}\CV_j}
\int_{\max\{t_{l+1},0\}}^{t_l}\int_{\R^3}{\bf k}_{w}(V^{l}_{\mathbf{cl}}(s),v')\CP(s)dv'\,d\tilde{\Sigma}^{(2)}_{l}(s)ds.
\end{align}
With \eqref{g2sum2}, similar to \eqref{H2sum3}, for sufficiently large $T_0>0$, we have
\begin{align*}%\label{g2sum3}
\sup\limits_{0\leq s\leq T_0}\|g_{2}(s)\|_{L^\infty}\leq C\bar{\vps}\sup\limits_{0\leq s\leq T_0}\|g_{2}(s)\|_{L^\infty}
+C(T_0)\sup\limits_{0\leq s\leq T_0}\|f_2(s)\|+C\sup\limits_{0\leq s\leq T_0}\CP(s),
\end{align*}
%where $\la_1$ is bigger than $\la_0$ but also small enough.
which together with \eqref{CP} gives
\begin{align}\label{g1g2sum4}
\sup\limits_{0\leq s\leq T_0}\|g_{2}(s)\|_{L^\infty}\leq C\sup\limits_{0\leq s\leq T_0}\|g_{1}(s)\|_{L^\infty}
+C(T_0)\sup\limits_{0\leq s\leq T_0}\|f_2(s)\|.
\end{align}
Next, combining \eqref{g1.es} at $t=T_0$ and \eqref{g1g2sum4}, one has
\begin{align}%\label{g1g2sum5}
\sup\limits_{0\leq s\leq T_0}\|[g_1,g_{2}](s)\|_{L^\infty}\leq& C\|w_q[f_{1}(0,y,v),f_2(0,y,v)]\|_{L^\infty}
+C(T_0)\sup\limits_{0\leq s\leq T_0}\|f_2(s)\|\notag\\
\leq& C\|w_qf_0\|_{L^\infty}
+C(T_0)\sup\limits_{0\leq s\leq T_0}\|f_2(s)\|.\notag
\end{align}
Then it follows that for any $t\in[0,T_0]$,
\begin{align}\label{g1g2decay.p0}
\|w_q[f_1,f_{2}](t)\|_{L^\infty}\leq&
Ce^{-\la_0t}\|w_qf_{0}\|_{L^\infty}
+C(T_0)e^{-\la_0t}\sup\limits_{0\leq s\leq T_0}\|f_2(s)\|.
\end{align}
In particular, we have
\begin{align}\label{g1g2decay}
\|w_q[f_1,f_{2}](T_0)\|_{L^\infty}\leq& Ce^{-\la_0T_0}\|w_q[f_{1}(0,y,v),f_2(0,y,v)]\|_{L^\infty}
+C(T_0)e^{-\la_0T_0}\sup\limits_{0\leq s\leq T_0}\|f_2(s)\|\notag\\
\leq& Ce^{-\la_0T_0}\|w_qf_{0}\|_{L^\infty}
+C(T_0)e^{-\la_0T_0}\sup\limits_{0\leq s\leq T_0}\|f_2(s)\|.
\end{align}
Moreover, \eqref{g1g2decay.p0} can be extended to
\begin{align}\label{g1g2decay.p1}
\|w_q[f_1,f_{2}](t)\|_{L^\infty}\leq& Ce^{-\la_0(t-s)}\|w_q[f_1,f_2](s)\|_{L^\infty}
+C(T_0)e^{-\la_0(t-s)}\sup\limits_{s\leq \tau\leq t}\|f_2(\tau)\|,
\end{align}
for any $t\in[s,s+T_0]$ with $s\geq0.$

Next, for any integer $m\geq 1,$ we can repeat the estimate \eqref{g1g2decay}  in finite times so that
the functions $[f_1,f_2](lT_{0}+s)$ for $l=m-1,m-2,...,0$  satisfy
\begin{align}\label{lif-lift}
\left\|w_q[f_1,f_2](mT_{0})\right\|_{L^\infty}
\leq&Ce^{-\la_0 T_{0}}\left\|w_q[f_1,f_2](\{m-1\}T_0)\right\|_{L^{\infty}}
+C(T_0)e^{-\la_0T_0}\sup\limits_{\{m-1\} T_0\leq s\leq mT_{0}}\|f_2(s)\|\notag \\[2mm]
\leq&Ce^{-\la_0 T_{0}}\left\|w_q[f_1,f_2](\{m-1\}T_0)\right\|_{L^{\infty}}
\notag\\[2mm]&+C(T_0)e^{-\la_0T_0}e^{-\la_0(m-1)T_0}\sup\limits_{\{m-1\} T_0\leq s\leq mT_{0}}\|e^{\la_0s}f_2(s)\|\notag \\[2mm]
\leq &Ce^{-\la_0mT_0}\|w_q[f_1,f_2](0)\|_{L^\infty}
\notag\\&+C(T_0)\sum_{l=0}^{m-1}e^{-m\la_0  T_0}\sup\limits_{\{m-l-1\} T_0\leq s\leq (m-l)T_{0}}\|e^{\la_0s}f_2(s)\|\notag
\\
\leq &Ce^{-\la_0mT_0}\|w_qf_{0}\|_{L^\infty}
+C(T_0)e^{-m\la_0  T_0}\sup\limits_{0\leq s\leq m T_0}\|e^{\la_0s}f_2(s)\|.
\end{align}%
Furthermore, for any $t\geq T_0 $, we can find an integer $m\geq0$ such that $t=m T_0+s$ with $0\leq s\leq T_0$. Then we have, on  one hand,
by \eqref{lif-lift}, that
\begin{equation}\label{g1g2decay.p2}
\begin{split}
\left\|[g_1,g_2](m T_0)\right\|_{\infty }
\leq C\|w_qf_{0}\|_{L^\infty}
+C(T_0)\sup\limits_{0\leq s\leq m T_0}\|e^{\la_0s}f_2(s)\|.
\end{split}
\end{equation}
On the other hand,  \eqref{g1g2decay.p1} implies that
\begin{equation}%\label{g1g2decay.p3}
\begin{split}
\left\|w_q[f_1,f_2](t)\right\|_{\infty }=&\left\|w_q[f_1,f_2](m T_0+s)\right\|_{L^{\infty}}\\
\leq& Ce^{-\la_0 s}\left\|w_q[f_1,f_2](m T_0)\right\|_{\infty }
+C(T_0)e^{-\la_0 s}\sup\limits_{m T_0\leq \tau\leq m T_0+s}\|e^{\la_0\tau}f_2(\tau)\|,\notag
\end{split}
\end{equation}
which is equivalent to
\begin{equation}\label{g1g2decay.p3}
\begin{split}
\left\|w_q[g_1,g_2](t)\right\|_{\infty }=&\left\|w_q[g_1,g_2](m T_0+s)\right\|_{L^{\infty}}\\
\leq& C\left\|w_q[g_1,g_2](m T_0)\right\|_{\infty }
+C(T_0)\sup\limits_{m T_0\leq \tau\leq m T_0+s}\|e^{\la_0\tau}f_2(\tau)\|.
\end{split}
\end{equation}
Consequently, applying \eqref{g1g2decay.p2} to \eqref{g1g2decay.p3} gives the second estimate \eqref{g2sum3}.
% that for any $t\geq 0$,
This together with \eqref{g1.es} concludes the $L^\infty$ estimate on $f_1$ and $f_2$, and then it completes the proof of Lemma \ref{lem.tp1}.
\end{proof}

\subsection{$L^2$ estimates}

In order to close the $L^\infty$ estimate in terms of \eqref{g1.es} and \eqref{g2sum3}, we need to deduce the
$L^2$ estimate on $e^{\la_0t}f_2(t,y,v)$. As pointed out in Section \ref{sp.sec}, the key is to obtain the dissipation estimate of the macroscopic component of $f_{2}$ as well as $f_1$ through  the conservation of mass. Therefore, we need resort to the original perturbation  $\sqrt{\mu}g_{\la}:=g_{1}+\sqrt{\mu}g_{2}$ with some abuse of notations
\begin{equation}
\label{def.g12to}
[g_1,g_2](t,y,v):=e^{\la_0t}[f_1,f_2](t,y,v)
\end{equation}
compared to \eqref{def.g12w} in the previous subsection. Note that the velocity weight is no longer needed for the $L^2$ estimates. Indeed, the only time-weighted function $g_{\la}$ satisfies the IBVP
\begin{align}\label{ff}
\left\{\begin{array}{rll}
&\pa_tg_{\la}+v_y\pa_{y}g_{\la}-\al v_y\pa_{v_x}g_{\la}+\frac{\al}{2}v_xv_yg_{\la}+Lg_{\la}-\la_0 g_{\la}
\\[2mm]&\quad=\underbrace{e^{\la_0 t}\Ga(f,f)+\al e^{\la_0 t}\{\Ga(G_1+\al G_R,f)+\Ga(f,G_1+\al G_R)\}}_{\mathscr{H}},\\
&\qquad\qquad t>0,\ y\in(-1,1),\ v=(v_x,v_y,v_z)\in\R^3,\\[2mm]
&\sqrt{\mu}g_{\la}(0,y,v)=f_0(y,v)=F(0,y,v)-F_{st}(y,v),\ y\in(-1,1),\ v\in\R^3,\\[2mm]
&g_{\la} (t,\pm1,v)|_{v_y\lessgtr0}=\sqrt{2\pi \mu}\dis{\int_{v_y\gtrless0}}\sqrt{\mu}g_{\la}(t,\pm1,v)|v_y|dv,\ t\geq0,\ v\in\R^3.
\end{array}\right.
\end{align}
Note that since $\int_{-1}^1\int_{\R^3}f(t,y,v)\sqrt{\mu}dvdy=0$ holds  according to \eqref{id.cons} and \eqref{f}, it is direct to see that %one can easily show
\begin{align}%\label{mass.con}
\int_{-1}^1\int_{\R^3}g_{\la}(t,y,v)\sqrt{\mu}dvdy=0,\quad \forall\,t\geq 0.
\notag
\end{align}
Next, as in \eqref{abc.def} and \eqref{abc1.def}, we define
$$
\FP_0g_{\la}=(a_\la+\Fb_\la\cdot v+c_\la(|v|^2-3))\sqrt{\mu},\quad \FP_0g_{2}=(a_{\la,2}+\Fb_{\la,2}\cdot v+c_{\la,2}(|v|^2-3))\sqrt{\mu},
$$
and
$$
\bar{\FP}_0g_{1}=(a_{\la,1}+\Fb_{\la,1}\cdot v+c_{\la,1}(|v|^2-3))\mu.
$$
We also use the notation $\Fb_{\la}=(b_{\la}^1,b_{\la}^2,b_{\la}^3).$
Obviously,
\begin{align}\label{m-m-2}
a_{\la}=a_{\la,1}+a_{\la,2},\ \Fb_{\la}=\Fb_{\la,1}+\Fb_{\la,2},\ c_{\la}=c_{\la,1}+c_{\la,2};\quad \int_{-1}^1a_{\la}(t,y)dy=0,\ \forall\,t\geq 0.
\end{align}
%where the last identity is due to \eqref{id.cons} and the conservation of mass of the system

As in Section \ref{sp.sec}, we are able to prove the following result in order to capture the macroscopic dissipation of $g_\lambda$.

\begin{lemma}\label{abc.lem}
Under the assumption \eqref{aps}, there exists an instant functional $\CE_{int}(t)$ satisfying
\begin{equation}
\label{abc.lem1}
|\CE_{int}(t)|\leq \|g_{2}\|^2+\|w_qg_{1}\|_{L^\infty}^2
\end{equation}
such that for any $t\geq 0$,
\begin{align}\label{abcla.es}
\frac{d}{dt}\CE_{int}(t)+\la\|[a_\la,\Fb_\la,c_\la]\|^2 \leq& C\|\FP_1g_{2}\|^2+C\|w_qg_{1}\|^2_{L^\infty}\notag\\&+C(\al+\tilde{\vps})\|g_{2}\|^2
+C|\{I-P_\ga\}g_{2}|^2_{2,+}.
\end{align}
\end{lemma}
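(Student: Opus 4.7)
The plan is to adapt the dual/test-function argument used in Section \ref{sec.spr} for the steady remainder $G^1_R$ (see the derivation of \eqref{abc.es}) to the time-dependent equation \eqref{ff}, with the time derivative $\partial_t g_\la$ absorbed into an interaction-type functional $\CE_{int}(t)$. Concretely, for each macroscopic coefficient I will solve an auxiliary one-dimensional elliptic problem with the appropriate boundary condition at $y=\pm1$:
\begin{align*}
\phi''_{a_\la}=a_\la,\quad \phi'_{a_\la}(\pm1)=0; \qquad -\phi''_{b^i_\la}=b^i_\la,\quad \phi_{b^i_\la}(\pm1)=0;\qquad -\phi''_{c_\la}=c_\la,\quad \phi_{c_\la}(\pm1)=0,
\end{align*}
so that $\|\phi_\ast\|_{H^2}\lesssim \|\ast\|$, and then set the test functions $\Psi_{a_\la},\Psi_{b^i_\la},\Psi_{c_\la}$ exactly as in Section \ref{sec.spr} but with time-dependent $\phi$'s (i.e.\ involving $\frac{d}{dy}\phi_{a_\la(t,\cdot)}$, etc.). The choice of boundary condition for each $\phi$ is dictated by two requirements: the test function must kill the diffuse-reflection contribution $\lag v_y P_\ga g_\la,\Psi\rag$ at $y=\pm1$ (as in the steady derivation), and the standard elliptic estimates must yield $\|\phi\|_{H^2}\lesssim\|\cdot\|$.

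Next I take the $L^2_{y,v}$ inner product of \eqref{ff} with each $\Psi$. The term $(\partial_t g_\la,\Psi)$ is rewritten as $\tfrac{d}{dt}(g_\la,\Psi)-(g_\la,\partial_t\Psi)$; the total time derivative defines the pieces summed into $\CE_{int}(t)$ (a finite linear combination of $(g_\la,\Psi_{a_\la})$, $(g_\la,\Psi_{b^i_\la})$, $(g_\la,\Psi_{c_\la})$), while $\partial_t\Psi$ contains $\partial_t\phi$ which, through the elliptic problems and the transport equation \eqref{ff} itself, is expressible back in terms of $v_y\partial_y g_\la$ etc.; these will contribute only $\FP_1 g_\la$ type terms (hence absorbable into $\|\FP_1 g_2\|^2+\|w_qg_1\|^2_{L^\infty}$) plus controlled low-order pieces. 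For the remaining terms I mimic the steady proof almost verbatim: the streaming term $-(v_y g_\la,\partial_y\Psi)$ produces the positive lower bound $\lambda\|[a_\la,\Fb_\la,c_\la]\|^2$ (after decomposing $g_\la=\FP_0 g_\la+\FP_1 g_\la$); the linear operator $L$ gives $\FP_1 g_2$ and $w_qg_1$ contributions via Lemma \ref{Ga} and $Lg_\la=-\{\Ga(g_\la,\sqrt\mu)+\Ga(\sqrt\mu,g_\la)\}$ combined with the split $\sqrt{\mu}g_\la=g_1+\sqrt{\mu}g_2$; the shear terms $-\alpha v_y\partial_{v_x}g_\la+\tfrac{\alpha}{2}v_xv_y g_\la$ are $O(\alpha)\|g_2\|^2$ contributions plus $w_q g_1$ contributions; the source $\mathscr{H}$ is handled by Lemma \ref{Ga} together with the a priori bound \eqref{aps} and Proposition \ref{G1.lem}--\ref{Gr.lem}, giving at worst $C\tilde\vps\|g_2\|^2 + C\|w_qg_1\|^2_{L^\infty}$; the damping $-\la_0 g_\la$ is harmless since $\la_0$ is small.

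For the boundary contribution $\lag v_y g_\la(1),\Psi(1)\rag-\lag v_y g_\la(-1),\Psi(-1)\rag$ I split $g_\la(\pm1)=P_\ga g_\la(\pm1)+\{I-P_\ga\}g_\la(\pm1)$; the $P_\ga$ part vanishes by our choice of boundary condition for $\phi_{b^i_\la}$ and $\phi_{c_\la}$ (since $\Psi$ then vanishes or is odd in $v_y$ at $y=\pm1$), and the residual $\{I-P_\ga\}g_\la$ piece, after using $\sqrt{\mu}g_\la=g_1+\sqrt{\mu}g_2$, is bounded by $\eta\|[a_\la,\Fb_\la,c_\la]\|^2+C_\eta|\{I-P_\ga\}g_2|^2_{2,+}+C_\eta\|w_qg_1\|^2_{L^\infty}$. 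Summing the three inequalities with small weights (so that the $\eta$-terms are absorbed into the left side) yields \eqref{abcla.es}, and the bound $|\CE_{int}(t)|\lesssim\|g_2\|^2+\|w_qg_1\|^2_{L^\infty}$ follows from Cauchy--Schwarz together with $\|\phi_\ast\|_{H^2}\lesssim\|\ast\|\lesssim\|\FP_0 g_\la\|\lesssim \|g_2\|+\|w_qg_1\|_{L^\infty}$ via \eqref{m-m-2} and the $L^\infty$ control of $[a_{\la,1},\Fb_{\la,1},c_{\la,1}]$ by $\|w_qg_1\|_{L^\infty}$ (as in \eqref{m1-if}).

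The main technical obstacle is the $\partial_t\Psi$ term: since $\Psi$ depends on the macroscopic moments of the unknown $g_\la$, one must show that $\partial_t\phi_\ast$ (which in principle requires time regularity of $g_\la$) can be recast, via the equation \eqref{ff} tested against the moments $\sqrt\mu,v\sqrt\mu,|v|^2\sqrt\mu$, purely in terms of spatial derivatives of fluxes that are themselves controlled by $\FP_1 g_2$, $w_qg_1$, and the shear/collision source. This conversion (analogous to the standard trick in Guo-type macroscopic estimates) avoids any loss of regularity in time and keeps all contributions on the right-hand side of \eqref{abcla.es} within the claimed form.
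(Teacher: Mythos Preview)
Your proposal is correct and follows essentially the same approach as the paper's proof: the same test functions $\Psi_{a_\la},\Psi_{b_\la^i},\Psi_{c_\la}$ built from auxiliary elliptic problems, the same splitting $(\partial_t g_\la,\Psi)=\frac{d}{dt}(g_\la,\Psi)-(g_\la,\partial_t\Psi)$ with the latter handled via local conservation laws (e.g.\ $\partial_t a_\la+\partial_y b_\la^2=0$), and the same linear combination defining $\CE_{int}(t)$. One minor clarification: the $\partial_t\Psi$ contribution does not yield purely microscopic terms but rather a hierarchical cross-term (e.g.\ $\|b_\la^2\|^2$ appears on the right of the $a_\la$ estimate), which is then absorbed by choosing the combination weights $0<\kappa_1\ll\kappa_2\ll 1$; your phrase ``controlled low-order pieces'' presumably covers this, but it is worth making explicit.
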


\begin{proof}
The proof of \eqref{abcla.es} is similar to that of \eqref{abc.es} in Section \ref{sp.sec}. For brevity,  we only show how to derive the $L^2$ estimate on $a_{\la}$.
%, and the details for estimates on $\Fb_\la$ and $c_\la$ will be skipped for brevity.
By letting $\Psi=\Psi(t,y,v)\in C^\infty([0,\infty)\times[-1,1]\times\R^3)$ be a test function and taking the inner product of  \eqref{ff} and $\Psi$, one has
\begin{align}%\notag
\frac{d}{dt}(g_{\la},\Psi)&-(g_{\la},\pa_t\Psi)-(v_yg_{\la},\pa_y\Psi)+\langle v_y,(g_{\la}\Psi)(1)\rangle-\langle v_y,(g_{\la}\Psi)(-1) \rangle
\notag\\&+\al (v_yg_{\la},\pa_{v_x}\Psi)+\frac{\al}{2}(v_xv_yg_{\la},\Psi)+((-\la_0+L)g_{\la},\Phi)
=\left(\mathscr{H},\Phi\right).\label{fla.tt}
\end{align}
%As mentioned before, the mass of $g_{2}$ is not conserved due to the presence of inhomogeneous term in \eqref{laf2}. Therefore,
Choose
$$\Psi=\Psi_{a_{\la}}=v_y\pa_y\phi_{a_{\la}}(t,y) (|v|^2-10)\sqrt{\mu},$$
where
\begin{align}
\pa^2_y\phi_{a_{\la}}=a_{\la},\ \pa_y\phi_{a_{\la}}(\pm1)=0,\ \int_{-1}^1a_\la(y)dy=0.\label{ala}
\end{align}
It follows
\begin{align}\label{epa.la}
\|\phi_{a_{\la}}\|_{H^2}\leq C\|a_{\la}\|.
\end{align}
We now compute the terms in \eqref{fla.tt} one by one. The Cauchy-Schwarz inequality and \eqref{epa.la} directly give
\begin{align}
|(g_{\la},\Psi_{a_\la})|\leq C\|g_{2}\|^2+C\|w_qg_{1}\|^2_{L^\infty},\notag
\end{align}
\begin{align}
\al |(v_yg_{\la},\pa_{v_x}\Psi_{a_\la})|\leq C\al\|g_{2}\|^2+C\al\|w_qg_{1}\|^2_{L^\infty},\notag
\end{align}
\begin{align}
\frac{\al}{2}|(v_xv_yg_{\la},\Psi_{a_{\la}})|\leq C\al\|g_{2}\|^2+C\al\|w_qg_{1}\|^2_{L^\infty},\notag
\end{align}
\begin{align}
|(Lg_{\la},\Psi_{a_{\la}})|\leq \eta\|a_{\la,2}\|^2+C_\eta\|g_{2}\|^2+C_\eta\|w_qg_{1}\|^2_{L^\infty}.\notag
\end{align}
And we have from Lemma \ref{Ga} and the {\it a priori} assumption \eqref{aps} that
\begin{align}
|(\SH,\Psi_{a_{\la}})|\leq C(\al+\tilde{\vps}+\eta)\|a_{\la}\|^2+C_\eta(\al+\tilde{\vps})\{\|w_qg_{2}\|_{L^\infty}^2+\|w_qg_{1}\|^2_{L^\infty}\},\notag
\end{align}
where we have used %the following type of estimate
\begin{align}
|(e^{\la_0 t}\Ga(f,f),\Psi_{a_{\la}})|\leq&\eta\|a_{\la}\|^2+C_\eta\int_{-1}^1\int_{\R^3}\left[e^{\la_0 t}\Ga(f,f)|v_y|(|v|^2-10)^2\sqrt{\mu}\right]^2dvdy\notag\\
\leq&\eta\|a_{\la}\|^2+
C_\eta\|w_{q}Q(\mu^{\frac{1}{2}}g_{\la},\mu^{\frac{1}{2}}g_{\la})\|^2_{L^\infty}
\int_{-1}^1\left(\int_{\R^3}w_{-q}\left[|v_y|(|v|^2-10)^2\right]^2\,dv\right)^2dy\notag\\
\leq& \eta\|a_{\la}\|^2+C_\eta\|w_q\mu^{\frac{1}{2}}g_{\la}\|_{L^\infty}^4\notag\\
\leq&  \eta\|a_{\la}\|^2+ C_\eta\tilde{\vps}^2\{\|w_qg_{2}\|_{L^\infty}^2+\|w_qg_{1}\|^2_{L^\infty}\}.\notag
\end{align}
For the second term on the left hand side of \eqref{fla.tt}, from the inner product $\langle\eqref{laf2},\sqrt{\mu}\rangle$,
we have in the weak sense that
\begin{align}
\pa_ta_{\la}+\pa_yb^2_{\la}=0,\notag
\end{align}
which yields
\begin{align}
|(g_{\la},\pa_t\Psi_{a_{\la}})|\leq C\|b^2_{\la}\|^2+C\|\FP_1g_{2}\|^2+C\|w_qg_{1}\|_{L^{\infty}}^2.\notag
\end{align}
In particular, the third term on the left hand side of \eqref{fla.tt} gives the following main contribution
\begin{align}
-(v_yg_{\la},\pa_y\Psi_{a_{\la}})=&-(v_y\FP_0g_{\la},\pa_y\Psi_{\la,2})-(v_y\FP_1g_{\la},\pa_y\Psi_{\la})\notag\\
\geq&5\|a_{\la}\|^2-\eta\|a_{\la}\|^2-C_\eta\|\FP_1g_{\la}\|^2.\notag
\end{align}
The boundary term $\langle v_y,(g_{\la}\Psi_{a_{\la}})(1)\rangle-\langle v_y,(g_{\la}\Psi_{a_{\la}})(-1)\rag$ vanishes due to the boundary condition in \eqref{ala}. Putting all the above estimates for $a_\la$ together, we have
\begin{align}\label{fa.es}
\frac{d}{dt}(g_{\la},\Psi_{a_\la})+\ka\|a_\la\|^2\leq C\|b^y_{\la}\|^2+C\|\FP_1g_{2}\|^2+C\|w_qg_{1}\|_{L^{\infty}}^2+C(\al+\tilde{\vps})\|w_qg_{2}\|_{L^{\infty}}^2.
\end{align}
Next, let
\begin{align*}
\Psi=\Psi_{b_{\la}^i}=\left\{\begin{array}{rll}
&v_yv_{x}\frac{d}{dy}\phi_{b_{\la,1}}(y)\sqrt{\mu},\ i=1,\\[2mm]
&v_yv_{z}\frac{d}{dy}\phi_{b_{\la,3}}(y)\sqrt{\mu},\ i=3,\\[2mm]
&v_y^2(|v|^2-5)\frac{d}{dy}\phi_{b_{\la,2}}(y)\sqrt{\mu},\ i=2,
\end{array}
\right.
\end{align*}
where
\begin{align*}
-\phi''_{b_{\la}^i}=b_{\la}^i,\ \phi_{b_{\la}^i}(\pm1)=0,
\end{align*}
and
\begin{align*}
\Psi=\Psi_{c_\la}=v_y(|v|^2-5)\frac{d}{dy}\phi_{c_\la}(y)\sqrt{\mu},
\end{align*}
where
$$
-\phi_{c_{\la}}''=c_\la,\ \phi_{c_\la}(\pm1)=0.
$$
Similar to \eqref{b.es} and \eqref{c.es}, one can show that
\begin{align}\label{fb.es}
\frac{d}{dt}(g_{\la},\Psi_{\Fb_\la})+\ka\|\Fb_\la\|^2 \leq& C\|c_\la\|^2+C\|\FP_1g_{2}\|^2+C\|w_qg_{1}\|_{L^{\infty}}^2+C(\al+\tilde{\vps})\|w_qg_{2}\|_{L^{\infty}}^2
\notag\\&+C|\{I-P_\ga\}g_{2}|^2_{2,+},
\end{align}
and
\begin{align}\label{fc.es}
\frac{d}{dt}(g_{\la},\Psi_{c_\la})+\ka\|c_\la\|^2 \leq& C\|\FP_1g_{2}\|^2+C\|w_qg_{1}\|_{L^{\infty}}^2+C(\al+\tilde{\vps})\|w_qg_{2}\|_{L^{\infty}}^2
\notag\\&+C|\{I-P_\ga\}g_{2}|^2_{2,+},
\end{align}
respectively. Note that the decomposition $\sqrt{\mu}g_{\la}=g_{1}+\sqrt{\mu}g_{2}$ has been also used to handle the terms involving $\langle v_y,(\{I-P_\ga\}g_{\la}\Psi)(1)\rangle-\langle v_y,(\{I-P_\ga\}g_{\la}\Psi)(-1)\rangle.$

Consequently, by choosing $0<\ka_1\ll \ka_2\ll1$, we have from  $\ka_1\times\eqref{fa.es}+\ka_2\times\eqref{fb.es}+\eqref{fc.es}$ that
\begin{align}\label{fabc}
\frac{d}{dt}\{\ka_1(g_{\la},\Psi_{\Fb_\la})&+\ka_2(g_{\la},\Psi_{\Fb_\la})+(g_{\la},\Psi_{c_\la})\}
+\ka\|[a_\la,\Fb_\la,c_\la\|^2\notag\\
\leq& C\|\FP_1g_{2}\|^2+C\|w_qg_{1}\|_{L^{\infty}}^2+C(\al+\tilde{\vps})\|w_qg_{2}\|_{L^{\infty}}^2+C|\{I-P_\ga\}g_{2}|^2_{2,+}.
\end{align}
Finally, \eqref{abcla.es} follows from \eqref{fabc} by defining
\begin{align}\label{Edef}
\CE_{int}(t)=\ka_1(g_{\la},\Psi_{\Fb_\la})+\ka_2(g_{\la},\Psi_{\Fb_\la})+(g_{\la},\Psi_{c_\la}).
\end{align}
Note that \eqref{abc.lem1} is satisfied. Thus the proof of
Lemma \ref{abc.lem} is completed.
\end{proof}

%\subsection{Proof of Theorem \ref{ust.mth}}
Now, with Lemma \ref{abc.lem} and Lemma \ref{lem.tp1}, we are ready to complete the
proof of Theorem \ref{ust.mth}.

\begin{proof}[Proof of Theorem \ref{ust.mth}]
The global existence of solution to the problem \eqref{f} follows from the local existence constructed in Section \ref{loc.ex} and the {\it a priori} estimates in the weighted $L^\infty$ space by the continuity argument. Therefore, to prove Theorem \ref{ust.mth}, it remains  to show the uniform estimate \eqref{lif.decay} under the {\it a priori} assumption \eqref{aps}. Indeed, by \eqref{m-m-2}, we can rewrite \eqref{abcla.es} as
\begin{equation}\label{mf2.es}
\frac{d}{dt}\CE_{int}(t)+\la\|\FP_0g_{2}\|^2 \leq C\|\FP_1g_{2}\|^2+C\|w_qg_{1}\|^2_{L^\infty}
+C(\al+\tilde{\vps})\|w_qg_{2}\|_{L^{\infty}}^2+C|\{I-P_\ga\}g_{2}|^2_{2,+},
\end{equation}
where $[g_1,g_2]$ is defined in  \eqref{def.g12to}. For  the $L^2$ estimate on $\FP_1g_{2}$,
note that $g_{2}$ satisfies
\begin{align}\label{laf2}
\pa_tg_{2}&+v_y\pa_yg_{2}-\al v_y\pa_{v_x}g_{2}+(-\la_0+L)g_{2}=(1-\chi_{M})\mu^{-\frac{1}{2}}\CK g_{1},
\end{align}
and
\begin{align}%\label{laf2bd}
g_{2}(0,y,v)=0,\ \ g_{2}(\pm1,v)|_{v_y\lessgtr0}
=\sqrt{2\pi \mu}\dis{\int_{v_y\gtrless0}}\sqrt{\mu}g_{2}(\pm1,v)|v_y|dv.\notag
\end{align}
By taking the inner product of \eqref{laf2} and $g_{2}$ with respect to $y$ and $v$ over $(-1,1)\times\R^3,$ one has
\begin{align}\label{laf2.l2}
\frac{d}{dt}\|g_{2}\|^2&+|\{I-P_\ga\}g_{2}|^2_{2,+}+\de_0\|\FP_1g_{2}\|^2
\leq C_\eta\|w_qg_{1}\|^2_{L^\infty}+C(\eta+\la_0)\|w_qg_{2}\|^2.
\end{align}
Let $\tilde{C}>0$ be a constant sufficiently large. By taking the summation of $\tilde{C}\times\eqref{laf2.l2}$ and \eqref{mf2.es} %and then  adjusting the constants,
we have
\begin{equation}\label{ttf2.es}
\frac{d}{dt}\{\tilde{C}\|g_{2}(t)\|^2+\CE_{int}(t)\}
+\la\|g_{2}\|^2+\la|\{I-P_\ga\}g_{2}|^2_{2,+}\leq C\|w_qg_{1}\|^2_{L^\infty}+C(\al+\tilde{\vps})\|w_qg_{2}\|_{L^{\infty}}^2.
\end{equation}
Denote
$$
\CE(t)=\tilde{C}\|g_{2}(t)\|^2+\CE_{int}(t).
$$
For $\tilde{C}>0$ being large enough, from \eqref{abc.lem1}  there exist constants $C_1>0$ and $C_2>0$ such that for any $t\geq 0$,
\begin{align}\label{lbdE}
2\tilde{C}\|g_{2}(t)\|^2+C_2\|w_qg_{1}(t)\|^2_{L^\infty}\geq\CE(t)\geq \frac{\tilde{C}}{2}\|g_{2}(t)\|^2-C_1\|w_qg_{1}(t)\|^2_{L^\infty}.
\end{align}
Then, from \eqref{ttf2.es} and \eqref{lbdE}, it follows
\begin{align}%\label{CEine}
\frac{d}{dt}\CE(t)
+\frac{2\la}{\tilde{C}}\CE(t)+\la|\{I-P_\ga\}g_{2}|^2_{2,+}
\leq C\|w_qg_{1}\|^2_{L^\infty}+C(\al+\tilde{\vps})\|w_qg_{2}\|_{L^{\infty}}^2.\notag
\end{align}
%which after taking integration in time, further gives the bound as
Hence
\begin{align}
&\CE(t)+\la\int_0^te^{-\frac{2\la}{\tilde{C}}(t-s)} |\{I-P_\ga\}g_{2}(s)|^2_{2,+}ds\notag\\
&\leq \CE(0)e^{-\frac{2\la}{\tilde{C}} t}
+C\int_0^te^{-\frac{2\la}{\tilde{C}}(t-s)} \|w_qg_{1}(s)\|^2_{L^\infty}ds+C(\al+\tilde{\vps})\int_0^te^{-\frac{2\la}{\tilde{C}}(t-s)} \|w_qg_{2}(s)\|_{L^{\infty}}^2ds\notag\\
&\leq \CE(0)+C\sup\limits_{0\leq s\leq t}\|w_qg_{1}(s)\|^2_{L^\infty}+C(\al+\tilde{\vps})\sup\limits_{0\leq s\leq t}\|w_qg_{2}(s)\|_{L^{\infty}}^2,
\label{lbdE-intap1}
\end{align}
for any $t\geq 0$.
Therefore, by using \eqref{lbdE}  and  \eqref{def.g12to}, it follows from \eqref{lbdE-intap1} that
\begin{align*}%\label{}
\sup\limits_{0\leq s\leq t}e^{\la_0s}\|f_2(s)\|\leq C\sup\limits_{0\leq s\leq t}e^{\la_0s}\|w_qf_1(s)\|_{L^\infty}+C(\al+\tilde{\vps})\sup\limits_{0\leq s\leq t}e^{\la_0s}\|w_qf_{2}(s)\|_{L^{\infty}}.
\end{align*}
By putting the above estimate back to \eqref{g2sum3} and using the smallness of $\alpha$ and $\tilde{\vps}$, one has
\begin{align}\label{f2.l2fl}
\sup\limits_{0\leq s\leq t}e^{\lambda_0s}\|w_qf_2(s)\|_{L^\infty}
\leq C\|w_qf_{0}\|_{L^\infty}
+C\sup\limits_{0\leq s\leq t}e^{\la_0s}\|w_qf_1(s)\|_{L^\infty}.
\end{align}
Moreover, by plugging \eqref{f2.l2fl} to \eqref{g1.es} and  using the smallness of $\alpha$ and $\tilde{\vps}$ as well as  \eqref{f2.l2fl}, it holds that
\begin{equation*}
%\label{ }
\sup\limits_{0\leq s\leq t}e^{\la_0s}\|w_q[f_1,f_2](s)\|_{L^\infty}\leq  C\|w_qf_{0}\|_{L^\infty},
\end{equation*}
which gives \eqref{lif.decay}. Since $\|w_qf_{0}\|_{L^\infty}$ is sufficiently small, the {\it a priori} assumption \eqref{aps} is closed.

Finally, the non-negativity of the global solution constructed above can be proved  similar to \cite{DL-2020}
%, and the details are omitted for brevity. T
so that the proof of Theorem \ref{ust.mth} is  completed.
\end{proof}

\section{Appendix}\label{app-sec}

Recall the backward time cycle starting at $(t_0,y_0,v_0)=(t,y,v)$ in \eqref{cyc}, the boundary probability measure $d\si_l$ on $\CV_l$ in \eqref{def.sil} and the product measure $d\Sigma_l(s)$ over $\prod_{j=1}^{k-1}\CV_j$ in \eqref{Sigma}. The following lemma gives an estimate on the measure of the phase space $\Pi _{j=1}^{k-1}\mathcal{V}_{j}$ when there are $k$ times bounce.

\begin{lemma}\label{k.cyc}
For any $\bar{\vps}>0$ and any $T_0>0$, there exists an integer $k_{0}=k_0(\bar{\vps}
,T_{0})$ such that for any integer $k\geq k_{0}$ and for all $(t,y,v)\in[0, T_{0}]\times [-1,1]\times\R^{3}$, it holds
\begin{equation}\label{cy-1}
\int_{\Pi_{l=1}^{k-1}\mathcal{V}_{l}}\mathbf{1}_{\mathcal{\{}%
t_{k}(t,y,v,v_{1},v_{2}...,v_{k-1})>0\}}\Pi _{l=1}^{k-1}d\sigma _{l}\leq
\bar{\vps}.
\end{equation}%
In particular, let $T_{0}>0$ large enough, there exist constants $
C_{1}$ and $C_{2}>0$ independent of $T_{0}$ such that for $k=C_{1}T_{0}^{5/4}$ with a suitable choice of $C_1$ such that $k$ is an integer and for all $(t,y,v)\in \lbrack 0,\infty)\times [-1,1]\times \R^{3}$, it holds
\begin{equation}\label{cy-2}
\int_{\Pi _{j=1}^{k-1}\mathcal{V}_{j}}
\mathbf{1}_{\{t_{k}(t,y,v,v_{1},v_{2},\cdots ,v_{k-1})>0\}}\Pi _{l=1}^{k-1}d\sigma _{l}\leq
\left\{ \frac{1}{2}\right\} ^{C_{2}T_{0}^{5/4}}.
\end{equation}%
Furthermore, for any $q>0$ in the weight function $w_q(v)$, there exist constants $C_3$ and $C_4>0$ independent of $k$ and $T_0$ such that
\begin{equation}\label{cy-3}
\begin{split}
\int_{\Pi _{j=1}^{k-1}\mathcal{V}_{j}}\sum_{l=1}^{k-1}\mathbf{1}_{\{t_{l+1}\leq 0<t_{l}\}}
\int_0^{t_l} d\Sigma_l(s)ds\leq C_3,
\end{split}
\end{equation}
and
\begin{equation}\label{cy-4}
\begin{split}
\int_{\Pi _{j=1}^{k-1}\mathcal{V}_{j}}\sum_{l=1}^{k-1}\mathbf{1}_{\{t_{l+1}>0\}}\int_{t_{l+1}}^{t_l} d\Sigma_l(s)ds\leq
C_4.
\end{split}
\end{equation}
%where $\tilde{\Sigma}_l(s)$ stands for both $\Sigma_l(s)$ and $\bar{\Sigma}_l(s)$.
\end{lemma}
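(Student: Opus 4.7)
The plan is to mirror the probabilistic splitting argument of Guo \cite{Guo-2010} for diffuse reflection (see also the trace-counting estimates in Esposito--Guo--Kim--Marra), with the crucial preliminary observation that the shear force in \eqref{chl} only modifies the $v_x$-component of the characteristic curve, while $dY/ds=v_y$ and $dV_y/ds=0$ are unchanged. Consequently, for every $l\geq 1$ the backward exit time
\[
t_\mathbf{b}(y_l,v_l)=\frac{2}{|v_{ly}|},\qquad y_l=\pm1,\ v_l\in\mathcal{V}_l,
\]
is the same as in the force-free case; in particular, $t_l-t_{l+1}$ depends only on $v_{ly}$, not on $\alpha$. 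Thus the combinatorics underlying \eqref{cy-1} and \eqref{cy-2} reduce exactly to those of the classical setting. The small-$\alpha$ shear only enters \eqref{cy-3}--\eqref{cy-4} through the weight ratios $\tilde{w}_i(v_j)/\tilde{w}_i(V^j_{\mathbf{cl}}(t_{j+1}))$, which I shall handle separately.

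For \eqref{cy-1} and \eqref{cy-2}, I would fix a small parameter $\delta>0$ and split
\[
\mathcal{V}_l=B_\delta\cup B_\delta^c,\qquad B_\delta:=\{v_l\in\mathcal{V}_l:\ \delta\leq|v_{ly}|\ \text{and}\ |v_l|\leq 1/\delta\}.
\]
A direct Gaussian computation gives $\sigma_l(B_\delta^c)\leq C(\delta^2+e^{-c/\delta^2})\leq C_0\delta^2$, while on $B_\delta$ we have $t_\mathbf{b}(y_l,v_l)=2/|v_{ly}|\geq 2\delta$. Hence if $t_k>0$, the total time consumed $t_1-t_k\leq T_0$ forces the number of indices $l$ with $v_l\in B_\delta$ to be at most $T_0/(2\delta)$; in particular, at least $k-1-T_0/(2\delta)$ of the $v_l$'s must lie in $B_\delta^c$. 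Since the $v_l$'s are independent under $\prod d\sigma_l$, a union/combinatorial bound gives
\[
\int_{\Pi\mathcal{V}_l}\mathbf{1}_{\{t_k>0\}}\prod d\sigma_l\leq \binom{k-1}{\lceil k-T_0/(2\delta)\rceil}\bigl(C_0\delta^2\bigr)^{k-T_0/(2\delta)}.
\]
For \eqref{cy-1}, picking $\delta=\bar{\vps}/T_0$ and any $k\geq k_0:=T_0/\delta$ makes the RHS $\leq\bar{\vps}$. For \eqref{cy-2}, I would take $\delta=T_0^{-1/4}$ and $k=C_1T_0^{5/4}$ so that $k-T_0/(2\delta)\geq k/2=C_1T_0^{5/4}/2$; the bound above becomes $\leq(4C_0\delta^2)^{k/2}\leq(1/2)^{C_2T_0^{5/4}}$ once $T_0$ is large enough that $4C_0\delta^2<1/2$.

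For \eqref{cy-3} and \eqref{cy-4}, I would exploit the exponential factor in \eqref{Sigma}: $\mathcal{A}^\varepsilon\geq\nu_0/2$ (guaranteed by \eqref{CAe-lbd}) yields
\[
\int_{t_{l+1}}^{t_l}e^{-\int_s^{t_l}\mathcal{A}^\varepsilon(\tau,V^l_{\mathbf{cl}}(\tau))d\tau}\,ds\leq\int_{t_{l+1}}^{t_l}e^{-\nu_0(t_l-s)/2}\,ds\leq \tfrac{2}{\nu_0}\bigl(1-e^{-\nu_0(t_l-t_{l+1})/2}\bigr).
\]
After carrying out the $s$-integration, the remaining factor in \eqref{Sigma} is a product of probability measures $d\sigma_j$ (for $j>l$) and exponentially damped analogues (for $j<l$) multiplied by the weight ratios $\tilde{w}_i(v_j)/\tilde{w}_i(V^j_{\mathbf{cl}}(t_{j+1}))$. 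Since $V^j_{\mathbf{cl}}(t_{j+1})$ differs from $v_j$ only in the $x$-component by a shift of size $\alpha(t_j-t_{j+1})|v_{jy}|=2\alpha$, the Gaussian in the denominator $\sqrt{\mu}$ of $\tilde{w}_i$ produces a bounded ratio, and similarly $w_q(V^j_{\mathbf{cl}}(t_{j+1}))/w_q(v_j)$ is bounded above and below for bounded shift. Then I integrate each $d\sigma_j$ trivially against $1$ to collect unit-sized factors, and finally use the telescoping relation $\sum_{l}(1-e^{-\nu_0(t_l-t_{l+1})/2})\leq 1+\frac{\nu_0}{2}(t_1-t_k)$ together with the constraint imposed by the indicators $\mathbf{1}_{\{t_{l+1}\leq 0<t_l\}}$ or $\mathbf{1}_{\{t_{l+1}>0\}}$ (which restrict the relevant $s$-interval to $[0,t_l]\subset[0,T_0]$ on the first kind and keep telescoping on the second) to conclude the uniform bounds $C_3$, $C_4$.

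The main obstacle I anticipate is the careful accounting of the weight ratios $\tilde{w}_i(v_j)/\tilde{w}_i(V^j_{\mathbf{cl}}(t_{j+1}))$ inside the iterated product in \eqref{Sigma} when proving \eqref{cy-3}--\eqref{cy-4}: these ratios are close to $1$ only because $\alpha$ is small and because the inter-bounce times $t_j-t_{j+1}=2/|v_{jy}|$ can be made effectively bounded (on the good set) or their contribution integrated out (on the bad set). Keeping track of this uniformly in $k$ (which is large, of order $T_0^{5/4}$) requires checking that the cumulative effect of the shear-induced shifts does not compound exponentially; this is where the $\int e^{-\nu_0(t_j-t_{j+1})/2}d\sigma_j\leq 1$ bound (combined with the Gaussian in $\tilde{w}_i$) will do the decisive work.
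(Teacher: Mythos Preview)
Your proposal is correct and follows essentially the same route as the paper. For \eqref{cy-1} and \eqref{cy-2} the paper simply invokes Lemma~23 of \cite{Guo-2010}, whose proof is precisely the $B_\delta$-splitting you sketch; your observation that the shear leaves $t_\mathbf{b}=2/|v_{ly}|$ untouched is the reason the classical argument transfers verbatim. For \eqref{cy-3} and \eqref{cy-4} the paper also uses $\mathcal{A}^\varepsilon\ge\nu_0/2$, telescopes the exponentials, and bounds the weight ratios via the identity $(t_j-t_{j+1})|v_{jy}|=2$ so that $|V^j_{\mathbf{cl}}(t_{j+1})-v_j|=2\alpha$; your anticipated ``main obstacle'' is handled exactly as you suggest, by absorbing each per-step ratio into the Gaussian of $d\sigma_j$ together with the factor $e^{-\nu_0(t_j-t_{j+1})/2}=e^{-\nu_0/|v_{jy}|}$, which keeps the integrated per-step contribution $\le 1$ for small $\alpha$ and hence prevents any compounding in $k$.
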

\begin{proof}
We only give the proof for \eqref{cy-3}, since \eqref{cy-1}, \eqref{cy-2}, and \eqref{cy-4} can be proved
similarly by using Lemma 23 in \cite[pp. 781]{Guo-2010}.
Recall the definition \eqref{Sigma}. We have
\begin{multline*}%\label{sp.k1}
%\begin{split}
\int_{\Pi _{j=1}^{k-1}\mathcal{V}_{j}}\sum_{l=1}^{k-1}\mathbf{1}_{\{t_{l+1}\leq 0<t_{l}\}}
\int_0^{t_l} d\Sigma_l(s)ds\\
=\int_{\Pi_{j=1}^{k-1}\mathcal{V}_j} \sum_{l=1}^{k-1}\mathbf{1}_{\{t_{l+1}\leq 0<t_{l}\}}
\int_0^{t_l}\prod\limits_{j=l+1}^{k-1}d\si_j e^{-\int_s^{t_l}
\CA^\eps(\tau,V_{\mathbf{cl}}^l(\tau))d\tau}\tilde{w}(v_l)d\si_l \prod\limits_{j=1}^{l-1}\frac{\tilde{w}(v_j)}{\tilde{w}(V^{j}_{\mathbf{cl}}(t_{j+1}))}\\
\times\prod\limits_{j=1}^{l-1}e^{-\int_{t_{j+1}}^{t_j}
\CA^\eps(\tau,V_{\mathbf{cl}}^l(\tau))d\tau}d\si_jds.
%\end{split}
\end{multline*}
which using direct calculations, can be bounded by
%show
\begin{multline}%\label{sp.k1}
%\begin{split}
\int_{\Pi_{j=1}^{k-1}\mathcal{V}_j} \mathbf{1}_{t_{k}\leq 0}
\int_0^{t_l}\prod\limits_{j=l+1}^{k-1}d\si_j e^{-\frac{\nu_0}{2}(t_l-s)}\tilde{w}(v_l)d\si_l \prod\limits_{j=1}^{l-1}\frac{\tilde{w}(v_j)}{\tilde{w}(V^{j}_{\mathbf{cl}}(t_{j+1}))}\prod\limits_{j=1}^{l-1}e^{-\frac{\nu_0}{2}(t_j-t_{j+1})}d\si_jds
\\
\leq C\int_{\Pi_{j=1}^{l}\mathcal{V}_j}
\int_0^{t_l} e^{-\frac{\nu_0}{2}(t_1-s)}\tilde{w}(v_l)d\si_l
\prod\limits_{j=1}^{l-1}d\si_jds\leq C.\notag
%\end{split}
\end{multline}
Here we have used% the following estimates
$$
\int_{\mathcal{V}_l}\tilde{w}_2(v_l)d\si_l<+\infty,
$$
and
\begin{align}
\frac{\tilde{w}(v_j)}{\tilde{w}(V^{j}_{\mathbf{cl}}(t_{j+1}))}
=&\frac{w_{q}(V^{j}_{\mathbf{cl}}(t_{j+1}))\mu^{\frac{1}{2}}(V^{j}_{\mathbf{cl}}(t_{j+1}))}{w_q(v_j)\mu^{\frac{1}{2}}(v_j)}
=\frac{(1+|V^{j}_{\mathbf{cl}}(t_{j+1})|^2)^q}{(1+|v_j|^2)^q}\cdot e^{\frac{|v_j|^2-|V^{j}_{\mathbf{cl}}(t_{j+1})|^2}{4}}\notag\\
\leq&(1+|V^{j}_{\mathbf{cl}}(t_{j+1})-v_j|^2)^q e^{\frac{\al^2(t_b(v_j)v_{jy})^2}{4}}\leq (1+4\al^2)^qe^{\al^2},\notag
\end{align}
by the Peetre's inequality and the fact that $|V^{j}_{\mathbf{cl}}(t_{j+1})-v_j|=\al|t_b(v_j)v_{jy}|\leq 2\al.$
Then the proof of lemma  is completed.
\end{proof}

\begin{remark}\label{ed-cyc}
The time interval $[0,T_0]$ in Lemma \ref{k.cyc} can be replaced by any interval $[s,t]$ with the length $t-s=T_0$.
% if one considers time to start from $s$.
In addition, since
$$
\int_{\mathcal{V}_l}\tilde{w}_1(v_l)d\si_l<+\infty,\  q>3/2,
$$
%holds true
and $\CA^\eps$, $\CA$ and $\CA_1$ have the same lower bound $\nu_0/2$, the statement in Lemma \ref{k.cyc} is
also valid if $\Sigma_l(s)$ is replaced by either $\bar{\Sigma}_l(s)$ or $\tilde{\Sigma}^{(i)}_l(s)$ $(i=1,2)$.
\end{remark}

\noindent {\bf Acknowledgements:}
Renjun Duan's research was partially supported by the General Research Fund (Project No.~14302817) from RGC of Hong Kong and a Direct Grant from CUHK. Shuangqian Liu's research was supported by grants from the National Natural Science Foundation of China (contracts: 11971201 and 11731008), and Hong Kong Institute for Advanced Study No.9360157.
Tong Yang's research was supported by a fellowship award from the Research Grants Council of the
 Hong Kong Special Administrative Region, China (Project no. SRF2021-1S01).

%\newpage

\end{document}